\numberwithin{equation}{section}
\theoremstyle{definition}
\newtheorem{defn}[equation]{Definition}
\theoremstyle{remark}
\newtheorem{ex}[equation]{Example}
\newtheorem{remark}[equation]{Remark}
\theoremstyle{theorem}
\newtheorem{thm}[equation]{Theorem}
\newtheorem*{thm*}{Theorem}
\newtheorem{lem}[equation]{Lemma}
\newtheorem{prop}[equation]{Proposition}
\newtheorem{lemma}[equation]{Lemma}
\newtheorem{coro}[equation]{Corollary}
\numberwithin{oracles}{section}
\newtheorem{prob}[equation]{P.}
\newcommand{\exref}[1]{Ex\-am\-ple \ref{#1}}
\newcommand{\thmref}[1]{Theo\-rem \ref{#1}}
\newcommand{\defref}[1]{Def\-i\-ni\-tion \ref{#1}}
\newcommand{\lemref}[1]{Lem\-ma \ref{#1}}
\newcommand{\propref}[1]{Prop\-o\-si\-tion \ref{#1}}
\newcommand{\corref}[1]{Cor\-ol\-lary \ref{#1}}
\newcommand{\figref}[1]{Fig\-ure \ref{#1}}
\newcommand{\remref}[1]{Re\-mark \ref{#1}}
\newcommand{\probref}[1]{P. \ref{#1}}
\providecommand{\normaleq}{\unlhd}
\providecommand{\normal}{\lhd}
\providecommand{\union}{\cup}
\providecommand{\intersect}{\cap}
\DeclareMathOperator{\rad}{rad }
\DeclareMathOperator{\End}{End }
\DeclareMathOperator{\Aut}{Aut}
\DeclareMathOperator{\soc}{soc }
\DeclareMathOperator{\SL}{SL}
\DeclareMathOperator{\PSL}{PSL}
\DeclareMathOperator{\GL}{GL}
\DeclareMathOperator{\im}{im }
\newcommand{\textalgo}[1]{\textsc{#1}}
\newcommand{\Bi}{\mathsf{Bi} }
\newcommand{\Grp}{\mathsf{Grp} }
\newenvironment{block*}
		{\hspace*{0.05\textwidth}\begin{minipage}[t]{0.9\textwidth}}
		{\end{minipage}\\}
\newenvironment{code}[1]
		{\small \begin{center}\begin{minipage}[t]{0.9\textwidth}
			 \texttt{#1}\\ 
			 \texttt{begin}\\ \hspace*{0.05\textwidth}\begin{minipage}[t]{0.9\textwidth}}
		{\end{minipage}\\ \texttt{end.}\\\end{minipage}\end{center} \normalsize}
\newcommand{\cwhile}[1]{\textbf{ while ( #1 )}\\}
\newcommand{\creturn}[1]{\textbf{ return #1 ;}}
\newcommand{\XX}{\mathtt{X}}
\newcommand{\Ops}{\Omega}
\newcommand{\RR}{\mathtt{R}}
\newcommand{\WW}{\mathtt{W}}
\begin{document}

\title{Finding direct product decompositions in polynomial time}
\author{James B. Wilson}
\address{
	Department of Mathematics\\
	The Ohio State University\\
	Columbus, OH 43210
}
\email{wilson@math.ohio-state.edu}
\date{\today}
\thanks{This research was supported in part by NSF Grant DMS 0242983.}
\keywords{direct product, polynomial time, group variety, p-group, bilinear map}

\begin{abstract}
A polynomial-time algorithm is produced which, 
given generators for a group of permutations on a finite set, 
returns a direct product decomposition of the group
into directly indecomposable subgroups.  The process uses bilinear maps and commutative rings to
characterize direct products of $p$-groups of class $2$ and reduces general groups to $p$-groups
using group varieties.  The methods apply to quotients of permutation groups and operator groups as well.
\end{abstract}

\maketitle

\tableofcontents

\section{Introduction}

Forming direct products of groups is an old and elementary way to construct new groups from old ones.  This paper concerns reversing that process by efficiently decomposing a group into a direct product of nontrivial subgroups in a maximal way, i.e. constructing a \emph{Remak decomposition} of the group.  We measure efficiency by describing the time (number of operations) used by an algorithm, as a function of the input size.  Notice that a small set of generating permutations or matrices can specify a group of exponentially larger size; hence, there is some work just to find the order of a group in polynomial time. In the last 40 years, problems of this sort have been attacked with ever increasing dependence on properties of simple groups, and primitive and irreducible actions, cf. \cite{Seress:book}.   A polynomial-time algorithm to construct a Remak decomposition is an obvious addition to those algorithms and, as might be expected, our solution depends on many of those earlier works.  Surprisingly, the main steps involve tools (bilinear maps, commutative rings, and group varieties) that are not standard in Computational Group Theory.

We solve the Remak decomposition problem for permutation groups and describe the method in a framework suitable for other computational settings, such as matrix groups. We prove:
\begin{thm}\label{thm:FindRemak}
There is a deterministic polynomial-time algorithm which, given a permutation group, returns a Remak decomposition of the group.
\end{thm}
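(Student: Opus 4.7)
The plan is to reduce the problem, in polynomial time, to the case of finite $p$-groups of nilpotency class $2$ and then solve that case by linear algebra over the centroid of a bilinear map. For a permutation group $G$, standard polynomial-time machinery from \cite{Seress:book} (chief series, socle, Fitting subgroup, solvable radical) lets me separate the semisimple part of $G$ from its solvable radical. Any Remak decomposition must respect every characteristic subgroup, so I would peel off the nonabelian chief factors first: these are direct products of isomorphic nonabelian simple groups, for which a Remak decomposition is essentially a list of the simple components and can be read off once the socle is in hand. This concentrates the real work on the nilpotent and, more specifically, the $p$-group sections of $G$.

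Inside a nilpotent group I reduce to a single prime via Sylow decomposition, and the abstract's group-variety approach is what lets such a reduction transfer direct factors between $G$, its verbal/characteristic subgroups, and its characteristic quotients. Within a $p$-group $P$ of class $c>2$ I would further use the characteristic quotient $P/\gamma_3(P)$ to reduce to class $2$. For such a $P$, the commutator induces an $\mathbb{F}_p$-bilinear map $b\colon V\times V\to W$ with $V=P/Z(P)$ and $W=[P,P]$, and a direct decomposition $P=H\times K$ corresponds to an orthogonal decomposition $V=V_H\oplus V_K$ with $b(V_H,V_K)=0$ together with a matching decomposition of $W$. Such decompositions are classified by systems of orthogonal idempotents in the \emph{centroid} $C(b)$, the commutative subring of $\End(V)\times \End(W)$ consisting of pairs that preserve $b$. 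The centroid is a finite-dimensional associative algebra, and its primitive idempotents, hence the finest orthogonal decomposition of $b$ and thus the Remak decomposition of $P$, are computable in polynomial time by standard R\'onyai-type algorithms for finite-dimensional algebras over finite fields.

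The main obstacle I expect is the lifting step: a polynomial-time Remak decomposition of each characteristic section must produce a polynomial-time Remak decomposition of $G$ itself without an exponential blow-up from matching factors across different sections. The uniqueness up to conjugation of primitive idempotents in a commutative ring, together with the rigidity forced by group varieties on direct factors, is what prevents this combinatorial explosion: once the class-$2$ layer is split by centroid idempotents, the decompositions of the surrounding characteristic layers are essentially forced, and only polynomially many candidate Remak components survive. Verifying this compatibility carefully, both for quotients of permutation groups and for the operator-group setting promised in the abstract, is the technical heart of the theorem.
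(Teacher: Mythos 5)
Your proposal correctly isolates the central tool — idempotents of the centroid of the commutation bilinear map, computed by R\'onyai-type algorithms, for $p$-groups of class $2$ — and also correctly identifies the lifting/matching problem as the crux. But the surrounding reduction scheme you sketch is not the paper's, and more importantly the steps you describe as straightforward reductions are in fact exactly where the real work lies; as written they have genuine gaps.

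First, ``peeling off'' nonabelian chief factors does not directly give direct factors. A nonabelian chief factor $H/K$ of $G$ is a product of simple groups as a \emph{section}, but neither $H$ nor its simple constituents need be direct factors of $G$ (e.g.\ $A_n \leq S_n$). The paper's centerless case (Lemma~\ref{lem:centerless}, inside the proof of \thmref{thm:FindRemak-Q}) instead finds a minimal normal subgroup $N$, recurses on $C_G(N)$, and then \emph{extends}; the extension step is not ``reading off the socle,'' it is \thmref{thm:Extend} plus the uniqueness statement of Lemma~\ref{lem:centerless}(iv). Similarly, passing from a class-$c$ $p$-group $P$ to $P/\gamma_3(P)$ does not automatically transfer the Remak decomposition back up: in general the decomposition of $P/\gamma_3(P)$ only determines a \emph{partition} which a decomposition of $P$ must induce (this is the content of \thmref{thm:Lift-Extend}), and matching that partition against the decomposition found in $\gamma_3(P)$ (or $\zeta_2(P)$) is precisely the \textbf{Match} problem which the paper spends Sections~\ref{sec:lift-ext} and \ref{sec:lift-ext-algo} solving. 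The paper's recursion is through $\zeta_1(G), \zeta_2(G)$ and $G/\zeta_1(G)$ rather than through $\gamma_3$, Fitting, Sylow, and socle, and this choice is not cosmetic: it is what makes the hypotheses of \thmref{thm:Lift-Extend} and \thmref{thm:merge} available.

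Second, the claim that ``uniqueness up to conjugation of primitive idempotents in a commutative ring, together with the rigidity forced by group varieties,'' prevents a combinatorial explosion is not yet a proof, and as stated it is not quite right. Primitive idempotents of the centroid are unique only within the class-$2$ layer; the decompositions of the other characteristic layers are \emph{not} uniquely forced, only forced up to the action of central automorphisms and up to a unique partition, and one still has to find the partition without trying all $2^{|\mathcal{H}|}$ possibilities. The paper handles this via the notion of a direct chain (\defref{def:chain}, \thmref{thm:chain}) and the greedy algorithm \textalgo{Merge} (\thmref{thm:merge}), whose correctness proof is genuinely delicate (it uses the exchange statement in \thmref{thm:KRS} plus \propref{prop:back-forth} and \lemref{lem:KRS} to show greedy choices cannot go wrong). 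Without an argument of that type, ``only polynomially many candidate Remak components survive'' is a hope, not a consequence. A smaller technical point: the bilinear map $\Bi(P)$ is $\mathbb{Z}_{p^e}$-bilinear for $p^e = \exp(P)$, not $\mathbb{F}_p$-bilinear unless $P$ has exponent $p$, and the full characterization of indecomposability via $C(\Bi(P))$ being local (\thmref{thm:indecomp-class2}) only becomes an equivalence when $P^p=1$; the paper's class-$2$ algorithm therefore runs through \textalgo{Merge} rather than reading the Remak decomposition directly off the centroid.
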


It seems natural to solve the Remak decomposition problem by first locating a direct factor of the group, constructing a direct complement, and then recursing on the two factors. Indeed, Luks \cite{Luks:comp} and Wright \cite{Wright:comp} (cf. \thmref{thm:FindComp}) gave polynomial-time algorithms to test if a subgroup is a direct factor and if so to construct a direct complement.  But how do we find a proper nontrivial direct factor to start with?  A critical case for that problem is $p$-groups.  A $p$-group generally has an exponential number of normal subgroups so that searching for direct factors of a $p$-group appears impossible.  

The algorithm for \thmref{thm:FindRemak} does not proceed in the natural fashion just described, and it is more of a construction than a search.  In fact, the algorithm does not produce a single direct factor of the original group until the final step, at which point it has produced an entire Remak decomposition.

It was the study of central products of $p$-groups which inspired the approach we use for \thmref{thm:FindRemak}.  In \cite{Wilson:unique-cent,Wilson:algo-cent}, central products of a $p$-group $P$ of class $2$ were linked, via a bilinear map $\Bi(P)$, to idempotents in a Jordan algebra in a way that explained their size, their $(\Aut P)$-orbits, and demonstrated how to use the polynomial-time algorithms for rings (Ronyai \cite{Ronyai}) to construct fully refined central decompositions all at once (rather than incrementally refining a decomposition). This approach is repeated here, only we replace Jordan algebras with a canonical commutative ring $C(P):=C(\Bi(P))$ (cf. \eqref{eq:Bi} and \defref{def:centroid}).  
Thus, we characterize directly indecomposable $p$-groups of class $2$ as follows:
\begin{thm}\label{thm:indecomp-class2}
If $P$ and $Q$ are finite $p$-groups of nilpotence class $2$ then $C(P\times Q)\cong C(P)\oplus C(Q)$.  Hence, if $C(P)$ is a local ring and $\zeta_1(P)\leq \Phi(P)$, then $P$ is directly indecomposable.  Furthermore, if $P^p=1$ then the converse also holds.
\end{thm}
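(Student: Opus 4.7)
The plan is to handle the three claims in turn, using the centroid of the commutator bilinear map as a commutative-ring shadow of $P$. For the first claim, note that the commutator on $P \times Q$ is coordinate-wise, so $\Bi(P \times Q)$ is the external orthogonal sum of $\Bi(P)$ and $\Bi(Q)$: its domain $(P \times Q)/\zeta_1(P \times Q)$ decomposes as $P/\zeta_1(P) \oplus Q/\zeta_1(Q)$, its codomain $[P, P] \times [Q, Q]$ splits correspondingly, and the form vanishes on cross terms. The projections $e_P, e_Q$ onto the two halves (in both domain and codomain) are then compatible with the form and give orthogonal idempotents $e_P, e_Q \in C(P \times Q)$ with $e_P + e_Q = 1$. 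Commutativity of the centroid makes these idempotents central, producing $C(P \times Q) = e_P C(P \times Q) \oplus e_Q C(P \times Q)$, and restriction to each summand identifies the factors with $C(P)$ and $C(Q)$.

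For the indecomposability direction, a nontrivial product $P = A \times B$ would force $C(P) \cong C(A) \oplus C(B)$ by the first claim. Using $\zeta_1(A \times B) = \zeta_1(A) \times \zeta_1(B)$ and $\Phi(A \times B) = \Phi(A) \times \Phi(B)$, the hypothesis $\zeta_1(P) \leq \Phi(P)$ descends to both $A$ and $B$, ruling out either being abelian, so both $C(A)$ and $C(B)$ are nontrivial unital rings and $1_{C(A)}$ is a nontrivial central idempotent of $C(P)$, contradicting locality. For the converse under $P^p = 1$, suppose $P$ is directly indecomposable. Since $P^p = 1$ we have $\Phi(P) = [P, P] \leq \zeta_1(P)$, so $\zeta_1(P) \leq \Phi(P)$ is equivalent to $\zeta_1(P) = [P, P]$; otherwise a $z \in \zeta_1(P) \setminus \Phi(P)$ extends to a minimal generating set (Burnside basis theorem), and the subgroup $H$ generated by the remaining basis elements satisfies $P = \langle z \rangle \times H$, a contradiction. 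For locality, a nontrivial idempotent $e \in C(P)$ splits the $\mathbb{F}_p$-spaces $V = P/\zeta_1(P)$ and $W = [P, P]$ as $V = eV \oplus (1-e)V$ and $W = eW \oplus (1-e)W$, with $\Bi(P)$ vanishing on the mixed blocks. Lifting bases of $eV$ and $(1-e)V$ to order-$p$ subsets $\widetilde{V}_0, \widetilde{V}_1$ of $P$ and setting $P_i := \langle \widetilde{V}_i \rangle \cdot (\text{corresponding half of } W)$ yields subgroups with $[P_0, P_1] = 1$ and, using $\zeta_1(P) = [P, P]$, with $P_0 \cap P_1 = 1$, hence $P = P_0 \times P_1$, again contradicting indecomposability.

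The main obstacle is this last step: translating an abstract idempotent in $C(P)$ into a genuine internal direct-product decomposition of $P$. The exponent-$p$ hypothesis is what makes this translation succeed, since it lets us realize lifts of $\mathbb{F}_p$-subspaces as honest subgroups of $P$ (elements have order $p$, and commutators are $\mathbb{F}_p$-bilinear and alternating), while the identification $\zeta_1(P) = [P, P]$ ensures that the center does not create unwanted overlap between the two intended factors.
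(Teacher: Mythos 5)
Your proof is correct and follows essentially the same route as the paper: the centroid $C(\Bi(P))$ of the commutator bilinear map is the commutative ring whose idempotents encode direct decompositions of $P$, with $\zeta_1(P)\leq\Phi(P)$ disposing of abelian factors and $P^p=1$ allowing idempotents to be lifted back to honest direct factors of the group. Where the paper delegates the converse to \corref{coro:exp-p} via the abstract identification $P\cong\Grp(\Bi(P))$, you unwind that isomorphism and build the factors $P_0,P_1$ explicitly by lifting bases of $eV$ and $(1-e)V$; this is a presentational difference only, and the mathematical content matches.
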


The algorithm applies the implications of \thmref{thm:indecomp-class2} and begins with the \emph{unique} Remak decomposition of a commutative ring.  This process is repeated across several sections of the group.  Using group varieties we organize the various sections.  Group varieties behave well regarding direct products and come with natural and computable normal subgroups used to create the sections.  To work within these sections of a permutation group we have had to prove \thmref{thm:FindRemak} in the generality of quotients of permutation groups and thus we have used the Kantor-Luks polynomial-time quotient group algorithms \cite{KL:quotient}.  Those methods depend on the Classification of Finite Simple Groups and, in this way, so does \thmref{thm:FindRemak}.  A final generalization of the main result is the need to allow groups with operators $\Ops$ and consider Remak $\Ops$-decompositions.    The most general version of our main result is summarized in \thmref{thm:FindRemak-Q} followed by a variant for matrix groups in \corref{coro:FindRemak-matrix}.

\thmref{thm:FindRemak} was proved in 2008 \cite{Wilson:thesis}.
That same year, with entirely different methods, Kayal-Nezhmetdinov \cite{KN:direct} proved
there is a deterministic polynomial-time algorithm which, given a group $G$ specified by its multiplication table (i.e. the size of input is $|G|^2$),
returns a Remak decomposition of $G$.  The same result follows as a
corollary to \thmref{thm:FindRemak} by means of the regular permutation representation 
of $G$.  \thmref{thm:nearly-linear} states that in that special situation there is a nearly-linear-time algorithm for the task.  

\subsection{Outline}

We organize the paper as follows.  

In Section \ref{sec:background} we introduce the notation and definitions we use throughout.  This includes the relevant group theory background, discussion of group varieties, rings and modules, and a complete listing of the prerequisite tools for \thmref{thm:FindRemak}.

In Section \ref{sec:lift-ext} we show when and how a direct decomposition of a subgroup or quotient group can be extended or lifted to a direct decomposition of the whole group (Sections \ref{sec:induced}--\ref{sec:chains}).  That task centers around the selection of good classes of groups as well as appropriate normal subgroups.  The results in that section are largely non-algorithmic though they lay foundations for the correctness proofs and suggest how the data will be processed by the algorithm for \thmref{thm:FindRemak}.  

Section \ref{sec:lift-ext-algo} applies the results of the earlier section to produce a polynomial-time algorithm which can effect the lifting/extending of direct decompositions of subgroups and quotient groups. First we show how to construct direct $\Omega$-complements of a direct $\Omega$-factor of a group (Section \ref{sec:complements}) by modifying some earlier unpublished work of Luks \cite{Luks:comp} and Wright \cite{Wright:comp}.  Those algorithms answer Problem 2, and (subject to some constraints) also Problem 4 of \cite[p. 13]{KN:direct}.  The rest of the work concerns the algorithm \textalgo{Merge} described in Section \ref{sec:merge} which does the `glueing' together of direct factors from a normal subgroup and its quotient.

In Section \ref{sec:bi} we characterize direct decompositions of $p$-groups of class $2$ by means of an associated commutative ring and prove \thmref{thm:indecomp-class2}.  We close that section with some likely well-known results on groups with trivial centers.

In Section \ref{sec:Remak} we prove \thmref{thm:FindRemak} and its generalization \thmref{thm:FindRemak-Q}.   This is a specific application which demonstrates the general framework setup in Sections \ref{sec:lift-ext} and \ref{sec:lift-ext-algo}.  \thmref{thm:FindRemak-Q} answers Problem 3 of \cite[p. 13]{KN:direct} and 
\corref{coro:FindRemak-matrix} essentially answers Problem 5 of \cite[p. 13]{KN:direct}.

Section \ref{sec:ex} is an example of how the algorithm's main components operate on a specific group.
The execution is explained with an effort to indicate where some of the subtle points in the process arise.

Section \ref{sec:closing} wraps up loose ends and poses some questions.

\section{Background}\label{sec:background}

We begin with a survey of the notation, definitions, and algorithms we use throughout the paper.
Much of the preliminaries can be found in standard texts on Group Theory, consider
\cite[Vol. I \S\S 15--18; Vol. II \S\S 45--47]{Kurosh:groups}.

Typewriter fonts $\XX, \RR$, etc. denote sets without implied properties;
Roman fonts $G$, $H$, etc., denote groups; Calligraphic fonts
$\mathcal{H}, \mathcal{X}$, etc. denote sets and multisets of groups; and the Fraktur
fonts $\mathfrak{X}$, $\mathfrak{N}$, etc. denote classes of groups.

With few exceptions we consider only finite groups.  Functions are evaluated on the right and group actions are denoted exponentially.  We write $\End G$ for the set of endomorphisms of $G$ and $\Aut G$ for the group of 
automorphisms.  The \emph{centralizer} of a subgroup $H\leq G$ is 
$C_G(H)=\{g\in G: H^g=H\}$.  The \emph{upper central series} is 
$\{\zeta_i(G): i\in\mathbb{N}\}$ where $\zeta_0(G)=1$, $\zeta_{i}(G)\normal \zeta_{i+1}(G)$
and $\zeta_{i+1}(G)/\zeta_i(G)=C_{G/\zeta_i(G)}(G/\zeta_i(G))$, for all $i\in\mathbb{N}$.
The commutator of subgroups $H$ and $K$ of $G$ is 
$[H,K]=\langle [h,k]:h\in H, k\in K\rangle$.  The \emph{lower central series} is
$\{\gamma_i(G):i\in\mathbb{Z}^+\}$ where $\gamma_1(G)=G$ and 
$\gamma_{i+1}(G)=[G,\gamma_i(G)]$ for all $i\in\mathbb{Z}^+$.
The \emph{Frattini} subgroup $\Phi(G)$ is the intersection of all maximal subgroups.

\subsection{Operator groups}\label{sec:op-groups}
An $\Ops$-group $G$ is a group, a possibly empty set $\Ops$, and a function 
$\theta:\Ops\to \End G$.  Throughout the paper we write $g^{\omega}$ for 
$g(\omega\theta)$, for all $g\in G$ and all $\omega\in \Ops$.  

With the exception of Section \ref{sec:gen-ops}, we insist that $\Ops\theta\subseteq \Aut G$.

In a natural way, $\Ops$-groups have all the usual definitions of 
$\Ops$-subgroups, quotient $\Ops$-groups, and 
$\Ops$-homomorphisms.  Call $H$ is \emph{fully invariant}, resp. \emph{characteristic} if it is
an $(\End G)-$, resp. $(\Aut G)-$, subgroup. As we insist that $\Ops\theta\subseteq \Aut G$, in this work every characteristic subgroup of $G$ is automatically an $\Ops$-subgroup.
Let $\Aut_{\Ops} G$ denote the $\Ops$-automorphisms of $G$.
We describe normal $\Ops$-subgroups $M$ of $G$ simply as 
$(\Ops\union G)$-subgroup of $G$.

The following characterization is critical to our proofs.
\begin{align}
\label{eq:central}
\Aut_{\Ops\cup G} G & = 
	\{\varphi\in \Aut_{\Ops} G: \forall g\in G, g\varphi \equiv g \pmod{\zeta_1(G)}\}.
\end{align}
It is also evident that $\Aut_{\Ops\cup G} G$ acts as the identity
on $\gamma_2(G)$.
Such automorphisms are called \emph{central}
but for uniformity we described them as  $(\Ops\cup G)$-automorphisms.

We repeatedly use the following property of the $(\Ops\cup G)$-subgroup lattice.
\begin{lemma}[Modular law]\cite[Vol. II \S 44: pp. 91-92]{Kurosh:groups}\label{lem:modular}
If $M$, $H$, and $R$ are $(\Ops\cup G)$-subgroups of an $\Ops$-group $G$ and
$M\leq H$, then $H\cap RM=(H\cap R)M$.
\end{lemma}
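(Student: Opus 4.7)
The plan is to establish the two inclusions separately. The inclusion $(H\cap R)M \subseteq H\cap RM$ is immediate: since $H\cap R \leq H$ and $M\leq H$ by hypothesis, the product $(H\cap R)M$ lies in $H$; and since $H\cap R\leq R$, the product also lies in $RM$. For this direction I need no hypothesis beyond $M\leq H$ and the fact that $RM$ is a well-defined subset.

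For the nontrivial inclusion $H\cap RM \subseteq (H\cap R)M$, I would pick an arbitrary $x \in H\cap RM$ and write $x = rm$ with $r\in R$ and $m\in M$. Because $M\leq H$, we have $m\in H$, so from $x\in H$ we deduce $r = xm^{-1}\in H$. Thus $r\in H\cap R$ and $x = rm\in (H\cap R)M$, as required.

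The only subtlety is making sure the products $RM$ and $(H\cap R)M$ are meaningful as $(\Omega\cup G)$-subgroups rather than mere subsets. Since $M$ is an $(\Omega\cup G)$-subgroup, it is in particular normal in $G$, which forces $RM$ and $(H\cap R)M$ to be subgroups of $G$; and since all three of $M$, $R$, $H$ are stable under the $\Omega$-action and under conjugation by $G$, so are the lattice-theoretic expressions on either side. The $\Omega$-structure therefore plays no role in the verification itself. I do not anticipate any real obstacle: the argument is a single element-chase using only $M\leq H$, so this is essentially a line-for-line transcription of the classical modular law and could simply be cited from Kurosh as the statement indicates.
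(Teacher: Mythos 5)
Your proof is correct and is exactly the standard element-chase establishing the modular law; the paper does not prove this lemma but simply cites Kurosh, and the argument there is the same one you give. Your remarks about normality of $M$ guaranteeing that $RM$ and $(H\cap R)M$ are genuine $(\Omega\cup G)$-subgroups, and about the $\Omega$-structure playing no role in the verification, are both accurate.
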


\subsection{Decompositions, factors, and refinement}\label{sec:decomps}

Let $G$ be an $\Ops$-group.  
An \emph{$\Ops$-decomposition} of $G$ is a set $\mathcal{H}$ of 
$(\Ops\union G)$-subgroups of $G$ which generates $G$ but no proper 
subset of $\mathcal{H}$ does.  A \emph{direct $\Ops$-decomposition} is 
an $\Ops$-decomposition $\mathcal{H}$ where $H\intersect \langle\mathcal{H}-\{H\}\rangle=1$, 
for all $H\in\mathcal{H}$.    In that case, elements $H$ of $\mathcal{H}$ are direct $\Ops$-factors of $G$ and $\langle\mathcal{H}-\{H\}\rangle$ is a \emph{direct $\Omega$-complement} to $H$.
Call $G$ \emph{directly $\Ops$-indecomposable} 
if $\{G\}$ is the only direct $\Ops$-decomposition of $G$.  Finally, a 
\emph{Remak $\Ops$-decomposition} means a direct $\Ops$-decomposition 
consisting of directly $\Ops$-indecomposable groups. 

Our definitions imply that the trivial subgroup $1$ is not a direct $\Ops$-factor.
Furthermore, the only direct decomposition of $1$ is $\emptyset$ and so $1$ is not 
directly $\Ops$-indecomposable.

We repeatedly use for the following notation.  Fix an $\Ops$-decomposition 
$\mathcal{H}$ of an $\Ops$-group $G$, and an $(\Ops\union G)$-subgroup $M$ of $G$.
Define the sets
\begin{align}
  \mathcal{H}\intersect M & = \{ H\intersect M : H\in\mathcal{H} \} -\{1\},\\
  \mathcal{H}M & = \{ HM : H\in\mathcal{H}\} - \{M\},\textnormal{ and }\\
  \mathcal{H}M/M & = \{ HM/M : H\in\mathcal{H} \} -\{M/M\}.
\end{align}
If $f:G\to H$ is an $\Ops$-homomorphism then define
\begin{align}
	\mathcal{H}f = \{ Hf: H\in\mathcal{H}\}-\{1\}.
\end{align}
Each of these sets consists of $\Ops$-subgroups of $G\intersect M$, $M$, $G/M$, and
$\im f$ respectively.
It is not generally true that these sets are $\Ops$-decompositions.  
In particular, for arbitrary $M$, we should not expect a relationship between the direct
 $\Ops$-decompositions of $G/M$ and those of $G$.   
 
If $\mathfrak{X}$ is a class of groups then set
\begin{align}
	\mathcal{H}\cap \mathfrak{X} & = \{ H\in\mathcal{H}: H\in\mathfrak{X}\},\textnormal{ and}\\
	\mathcal{H}-\mathfrak{X} & = \mathcal{H}-(\mathcal{H}\cap\mathfrak{X}).
\end{align}

An $\Ops$-decomposition $\mathcal{H}$ of $G$ \emph{refines} an $\Ops$-decomposition $\mathcal{K}$ 
of $G$ if for each $H\in\mathcal{H}$, there a unique $K\in\mathcal{K}$ such that
$H\leq K$ and furthermore,
\begin{equation}\label{eq:refine}
	 \forall K\in\mathcal{K},\quad K =\langle H\in\mathcal{H} : H\leq K\rangle.
\end{equation}
When $\mathcal{K}$ is a direct $\Ops$-decomposition, \eqref{eq:refine} implies
the uniqueness preceding the equation.  If $\mathcal{H}$ is a direct 
$\Ops$-decomposition then $\mathcal{K}$ is a direct $\Ops$-decomposition.

An essential tool for us is the so called ``Krull-Schmidt'' theorem for finite groups.
\begin{thm}[``Krull-Schmidt'']\label{thm:KRS}\cite[Vol. II, p. 120]{Kurosh:groups}
If $G$ is an $\Ops$-group and $\mathcal{R}$ and $\mathcal{T}$ are Remak $\Ops$-decompositions of $G$,
then for every $\mathcal{X}\subseteq \mathcal{R}$, there is a $\varphi\in \Aut_{\Ops\cup G} G$ such that
$\mathcal{X}\varphi\subseteq \mathcal{T}$ and $\varphi$ is the identity on $\mathcal{R}-\mathcal{X}$.
In particular, $\mathcal{R}\varphi=\mathcal{X}\varphi\sqcup (\mathcal{R}-\mathcal{X})$ is a Remak
$\Ops$-decomposition of $G$.
\end{thm}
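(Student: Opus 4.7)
The plan is to induct on $|\mathcal{X}|$. When $\mathcal{X} = \emptyset$, take $\varphi = \mathrm{id}_G$. For the inductive step, fix some $X \in \mathcal{X}$ and first apply the classical single-factor exchange: produce an $(\Ops \cup G)$-automorphism $\varphi_0$ that restricts to the identity on every $R \in \mathcal{R} - \{X\}$ and satisfies $X\varphi_0 = T$ for some $T \in \mathcal{T}$. Granted this, $\mathcal{R}' := \mathcal{R}\varphi_0 = \{T\} \cup (\mathcal{R} - \{X\})$ is again a Remak $\Ops$-decomposition, and $\mathcal{X}' := \mathcal{X} - \{X\}$ sits inside $\mathcal{R} - \{X\} \subseteq \mathcal{R}'$. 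The inductive hypothesis applied to $(\mathcal{R}', \mathcal{T}, \mathcal{X}')$ yields $\varphi_1 \in \Aut_{\Ops \cup G} G$ with $\mathcal{X}'\varphi_1 \subseteq \mathcal{T}$ and $\varphi_1$ restricting to the identity on $\mathcal{R}' - \mathcal{X}' = \{T\} \cup (\mathcal{R} - \mathcal{X})$.

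Setting $\varphi := \varphi_0 \varphi_1$ completes the assembly: each $R \in \mathcal{R} - \mathcal{X}$ is fixed pointwise by both factors, so $\varphi|_R = \mathrm{id}_R$; one has $X\varphi = (X\varphi_0)\varphi_1 = T\varphi_1 = T \in \mathcal{T}$ because $T$ lies in $\mathcal{R}' - \mathcal{X}'$; every $Y \in \mathcal{X}'$ satisfies $Y\varphi = Y\varphi_1 \in \mathcal{T}$; and $\varphi$ is again an $(\Ops \cup G)$-automorphism because that class is closed under composition. Hence $\mathcal{R}\varphi = \mathcal{X}\varphi \sqcup (\mathcal{R} - \mathcal{X})$ is a Remak $\Ops$-decomposition with $\mathcal{X}\varphi \subseteq \mathcal{T}$, as required.

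The substantive content is therefore the single-exchange step, which I would establish by the Remak projection argument. From $G = X \cdot \langle \mathcal{R} - \{X\}\rangle$ one obtains a normal $\Ops$-projection $\pi: G \to X$, and $\mathcal{T}$ produces normal projections $\tau_T: G \to T$ for each $T \in \mathcal{T}$. Because $X$ is directly $\Ops$-indecomposable and finite, Fitting's lemma implies that the monoid of normal $\Ops$-endomorphisms of $X$ is local. Expressing the identity of $X$ as an appropriately ordered product of the compositions $\tau_T \pi$ restricted to $X$, some $\tau_T \pi|_X$ must then be an $\Ops$-automorphism of $X$. The corresponding $T$ is the exchange target, and one builds $\varphi_0$ to agree with $\tau_T$ on $X$ and to fix $\langle \mathcal{R} - \{X\}\rangle$ pointwise. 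Condition \eqref{eq:central} is verified because for $x \in X$ the element $x \cdot (x \varphi_0)^{-1}$ lies in $\bigcap_{T' \neq T} \ker \tau_{T'}$, which is contained in $\zeta_1(G)$.

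The main obstacle is precisely this Fitting/locality argument in the non-abelian $\Ops$-group setting: endomorphisms of $G$ do not form a ring, so one must restrict to \emph{normal} $\Ops$-endomorphisms, prove that their monoid is local for an indecomposable $X$, and handle the product (rather than sum) in the decomposition of the identity. This is the genuinely nontrivial classical input from \cite[Vol.~II, p.~120]{Kurosh:groups}; once it is in hand, both the $\Ops$-equivariance and the centrality of $\varphi_0$, and the inductive bookkeeping around it, are routine.
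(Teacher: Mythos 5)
The paper offers no proof of \thmref{thm:KRS}: it is stated and cited directly to \cite[Vol.~II, p.~120]{Kurosh:groups}, so there is no argument in the text to compare yours against. Your proposal is therefore an expansion rather than a re-derivation, and the question is whether it is internally correct.

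The inductive scaffolding is sound. Starting from $\mathcal{X}=\emptyset$ with $\varphi=\mathrm{id}_G$, and assuming a single-factor exchange $\varphi_0$ that is the identity on $\langle\mathcal{R}-\{X\}\rangle$ and sends $X$ to some $T\in\mathcal{T}$, the bookkeeping for $\varphi=\varphi_0\varphi_1$ is correct: $\mathcal{X}'=\mathcal{X}-\{X\}$ lies in $\mathcal{R}-\{X\}\subseteq\mathcal{R}'$, $\mathcal{R}'-\mathcal{X}'=\{T\}\sqcup(\mathcal{R}-\mathcal{X})$, each $R\in\mathcal{R}-\mathcal{X}$ is fixed by both factors, $X\varphi=T\varphi_1=T$, and each $Y\in\mathcal{X}'$ lands in $\mathcal{T}$ via $\varphi_1$. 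That $\mathcal{R}\varphi$ is Remak follows since $\varphi$ is an automorphism and each $R\varphi$ is $\Ops$-isomorphic to $R$. The closure of $\Aut_{\Ops\cup G}G$ under composition and the verification of \eqref{eq:central} for $\varphi_0$ are as you state.

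The one substantive caution is in the single-exchange step. You write that the identity of $X$ is an ``appropriately ordered product'' of the maps $\tau_T\pi\vert_X$ and that locality of the normal $\Ops$-endomorphism monoid forces one of these to be an automorphism. This is the right destination, but the inference is delicate in the non-abelian setting: the maps $\tau_T\pi\vert_X$ do not a priori have pairwise commuting images, so their pointwise product is merely an identity of set-maps, not a sum of normal endomorphisms in the sense needed for Fitting's dichotomy (nilpotent or automorphic) to propagate through the product. Schmidt's argument handles this by working two at a time and using that if $\phi+\psi$ is a normal endomorphism with $\phi,\psi$ normal and nilpotent on an indecomposable finite group, then $\phi+\psi$ is nilpotent; you must invoke that form of the lemma, not just locality of a monoid. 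You flag this yourself and defer it to Kurosh, which is exactly what the paper does, so this is an acknowledged citation rather than a hidden gap. As written, the proposal is a correct reduction of \thmref{thm:KRS} to the classical Fitting--Schmidt exchange lemma, which is the same source the paper leans on.
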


\begin{remark}
The ``Krull-Schmidt'' theorem combines two distinct properties.
First, it is a theorem about exchange (as compared to a basis exchange).
That property was proved by Wedderburn \cite{Wedderburn:direct} in 1909.  
Secondly, it is a theorem about the transitivity of a group action.
That property was the contribution of Remak \cite{Remak:direct} in 1911.
Remak was made aware of Wedderburn's work in the course of publishing his paper
and added to his closing remarks \cite[p. 308]{Remak:direct} that 
Wedderburn's proof contained an unsupported leap (specifically at
\cite[p.175, l.-4]{Wedderburn:direct}).  This leap is not so great
by contemporary standards, for example it occurs in \cite[p.81, l.-12]{Rotman:grp}.
Few references seem to be made to Wedderburn's work following Remak's publication.
  
In 1913, Schmidt \cite{Schmidt:direct} simplified and extended the work of Remak and
in 1925 Krull \cite{Krull:direct} considered direct products of finite and infinite
abelian $\Ops$-groups.  Fitting \cite{Fitting:direct} invented the standard proof
using idempotents, Ore \cite{Ore:lattice1} grounded the concepts in Lattice theory, and 
in several works Kurosh \cite[\S 17,
\S\S 42--47]{Kurosh:groups} and others unified and expanded these results.
By the 1930's direct decompositions of maximum length appear as ``Remak decompositions''
while at the same time the theorem is referenced as ``Krull-Schmidt''.
\end{remark}

\subsection{Free groups, presentations, and constructive presentations}
\label{sec:free}
In various places we use free groups.
Fix a set $\XX\neq \emptyset$ and a group $G$.  Let $G^{\XX}$ denote 
the set of functions from $\XX$ to $G$, equivalently, the set of all $\XX$-tuples of $G$.  

Every $f\in G^{\XX}$ is the restriction of a unique homomorphism $\hat{f}$ from the free group $F(\XX)$ into $G$, that is:
\begin{equation}
	\forall x\in\XX,
		\quad x\hat{f}  = xf.
\end{equation}
We use $\hat{f}$ exclusively in that manner.  As usual we call
$\langle\XX | \RR\rangle$ a \emph{presentation} for a group $G$ with respect 
to $f:\XX\to G$
if $\XX f$ generates $G$ and $\ker \hat{f}$ is the smallest normal subgroup
of $F(\XX)$ containing $\RR$.

Following \cite[Section 3.1]{KLM:Sylow}, 
$\{\langle\XX|\RR\rangle, f:\XX\to G,\ell:G\to F(\XX)\}$ is a \emph{constructive presentation}  for $G$, 
if $\langle\XX | \RR\rangle$ is a presentation for $G$ with
respect to $f$ and $\ell \hat{f}$ is the identity on $G$.
More generally, if $M$ is a normal subgroup of $G$ then  call
$\{\langle\XX| \RR\rangle,f:\XX\to G,\ell:G\to F(\XX)\}$ a \emph{constructive presentation 
for $G$ mod $M$} if $\langle \XX|\RR\rangle$ is a 
presentation of $G/M$ with respect to the induced function 
$\XX\overset{f}{\to} G\to G/M$, also $\ell\hat{f}$ is the identity on $G$, and 
$M\ell\leq \langle \RR^{F(\XX)}\rangle$.

\subsection{Group classes, varieties, and verbal and marginal subgroups}
\label{sec:varieties}
In this section we continue the notation given in Section \ref{sec:free} and
introduce the vocabulary and elementary properties of group varieties studies at length in \cite{HNeumann:variety}.

By a \emph{class of $\Ops$-groups} we shall mean a class which contains
the trivial group and is closed to $\Ops$-isomorphic images.  If $\mathfrak{X}$
is a class of ordinary groups, then $\mathfrak{X}^{\Ops}$ denotes
the subclass of $\Ops$-groups in $\mathfrak{X}$.  

A \emph{variety} $\mathfrak{V}=\mathfrak{V}(\WW)$ is a class of groups defined by a 
set $\WW$ of words, known as \emph{laws}.  Explicitly, $G\in\mathfrak{X}$ if, and only if, every $f\in G^{\XX}$ has $\WW\subseteq \ker \hat{f}$.  We say that $w\in F(\XX)$ is a \emph{consequence}
of the laws $\WW$ if for every $G\in\mathfrak{V}$ and every $f\in G^{\XX}$, 
$w\in \ker \hat{f}$.

The relevance of these classes to direct products is captured in the following:
\begin{thm}[Birkhoff-Kogalovski]\cite[15.53]{HNeumann:variety}\label{thm:BK}
A class of groups is a variety if, and only if, it is nonempty
and is closed to homomorphic images, subgroups,
and direct products (including infinite products).
\end{thm}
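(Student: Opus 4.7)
The plan is to prove both directions separately. The forward direction (varieties satisfy HSP-closure) should be a short formal verification from the definition. The reverse direction (an HSP-closed class is a variety) is the substantive content; it follows the classical Birkhoff argument adapted to the notational setup of the paper, namely defining the candidate set of laws as all words satisfied by every group in the class and then using a free-group quotient construction to show it captures the class exactly.

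For the forward direction, fix a variety $\mathfrak{V} = \mathfrak{V}(\WW)$ and a word $w \in \WW$. Closure under subgroups is immediate: any $f \in H^{\XX}$ for $H \leq G$ is also a map into $G$, so $w\hat{f} = 1$ since $G \in \mathfrak{V}$. Closure under homomorphic images: for a surjection $\pi : G \twoheadrightarrow H$ and $f \in H^{\XX}$, lift $f$ pointwise to $g \in G^{\XX}$ and observe $\hat{f} = \hat{g}\pi$, so $w\hat{f} = (w\hat{g})\pi = 1$. Closure under direct products: for $G = \prod_i G_i$ and $f \in G^{\XX}$, the projections $\pi_i : G \to G_i$ satisfy $\hat{f}\pi_i = \widehat{f\pi_i}$, hence $w\hat{f}$ projects to $1$ in every factor and is trivial. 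The trivial group is in $\mathfrak{V}$, so $\mathfrak{V}$ is nonempty.

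For the reverse direction, suppose $\mathfrak{X}$ is nonempty and HSP-closed. Choose once and for all an $\XX$ of sufficient cardinality (at least $\sup\{|G| : G \in \mathfrak{X}\}$), and define
\begin{equation*}
  \WW = \{ w \in F(\XX) : \forall G \in \mathfrak{X},\; \forall f \in G^{\XX},\; w\hat{f} = 1 \}.
\end{equation*}
Then $\mathfrak{X} \subseteq \mathfrak{V}(\WW)$ by construction. For the reverse inclusion, fix $G \in \mathfrak{V}(\WW)$ and let $\XX_G \subseteq \XX$ be in bijection with $G$; the identification extends to an epimorphism $\phi : F(\XX_G) \twoheadrightarrow G$. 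Inside $F := F(\XX_G)$ set
\begin{equation*}
  N = \bigcap \{ K \trianglelefteq F : F/K \in \mathfrak{X} \}.
\end{equation*}
The diagonal map $F/N \hookrightarrow \prod_K F/K$ realizes $F/N$ as a subgroup of a direct product of members of $\mathfrak{X}$, so by P- and S-closure, $F/N \in \mathfrak{X}$. The job is then to show $N \leq \ker \phi$, for then by H-closure $G \cong F/\ker\phi$ lies in $\mathfrak{X}$.

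The key (and main obstacle in bookkeeping) is to identify $N$ with a set of universally valid laws. Take $w \in N$. For any $H \in \mathfrak{X}$ and any $h \in H^{\XX_G}$, the image of $\hat{h} : F \to H$ is $\langle \XX_G h\rangle \leq H$, which lies in $\mathfrak{X}$ by S-closure; hence $\ker \hat{h}$ is one of the $K$'s defining $N$, so $w \in \ker \hat{h}$ and $w\hat{h} = 1$. Extending $h$ arbitrarily to $\XX$ (using that $G \in \mathfrak{X}$ if nonempty, or simply the trivial group) shows $w \in \WW$. Since $G \in \mathfrak{V}(\WW)$ and $\phi = \hat{f_0}$ for the inclusion $f_0 : \XX_G \hookrightarrow G$, we conclude $w\phi = 1$, i.e., $w \in \ker \phi$. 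Thus $N \leq \ker \phi$, $G$ is an H-image of $F/N \in \mathfrak{X}$, and $G \in \mathfrak{X}$, completing the proof. The only delicate point is making sure $\XX$ is chosen large enough that every $G \in \mathfrak{X}$ admits a surjection from $F(\XX_G)$ with $\XX_G \subseteq \XX$; this is why the theorem is usually stated for an arbitrary infinite indexing set.
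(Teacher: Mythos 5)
The paper does not prove this theorem---it is quoted with a citation to Hanna Neumann's \emph{Varieties of Groups} (15.53), so there is no in-paper argument to compare against. Your forward direction is fine and standard. The substance is the reverse direction, and your argument is the classical Birkhoff HSP proof: build $\WW$ as the set of all laws satisfied throughout $\mathfrak{X}$, intersect all normal subgroups of a free group with quotient in $\mathfrak{X}$, realize $F/N$ inside a product of members of $\mathfrak{X}$, and show $N$ lies in the kernel of a surjection onto a given $G \in \mathfrak{V}(\WW)$.

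There is, however, a genuine set-theoretic gap in the bookkeeping that your closing sentence hints at but does not resolve. You ask to ``choose $\XX$ of sufficient cardinality, at least $\sup\{|G| : G \in \mathfrak{X}\}$,'' but that supremum does not exist: any nonempty class closed under arbitrary direct products contains groups of arbitrarily large cardinality, so no single set $\XX$ can contain an $\XX_G$ of cardinality $|G|$ for every $G \in \mathfrak{V}(\WW)$. The standard repair is to keep $\XX$ countably infinite, and to use a \emph{separate} free group $F(Y)$ with $|Y| = |G|$ for the surjection $\phi : F(Y) \twoheadrightarrow G$. A given $w \in N \leq F(Y)$ is a word in finitely many generators $y_1,\dots,y_n$, so you form $\tilde{w} = w(x_1,\dots,x_n) \in F(\XX)$ by substituting distinct variables $x_i \in \XX$ for the $y_i$. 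Your argument that $w$ kills every $\mathfrak{X}$-valued evaluation then shows $\tilde{w} \in \WW$, and $G \in \mathfrak{V}(\WW)$ applied to $\tilde{w}$ and the tuple $(y_1\phi,\dots,y_n\phi)$ gives $w\phi = 1$. This translation step is the one ingredient your proof omits; once it is inserted, the argument is complete and correct. Note also that what you must cover is every $G \in \mathfrak{V}(\WW)$, not every $G \in \mathfrak{X}$ as stated in your final remark.
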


Fix a word $w\in F(\XX)$.  We regard $w$ as a function $G^{\XX}\to G$, denoted $w$, where
\begin{equation}\label{eq:w-map}
	\forall f\in G^{\XX},\quad w(f) = w\hat{f}.
\end{equation}
On occasion we write $w(f)$ as $w(g_1,g_2,\dots)$, where $f\in G^{\XX}$ is understood
as the tuple $(g_1,g_2,\dots)$. For example, if $w=[x_1,x_2]$, then $w:G^2\to G$ 
can be defined as $w(g_1,g_2)=[g_1,g_2]$, for all $g_1,g_2\in G$. 

Levi and Hall separately introduced two natural subgroups to associate with the
function $w:G^{\XX}\to G$.
First, to approximate the image of $w$ with a group, we have the \emph{verbal} subgroup 
\begin{equation}\label{eq:def-verbal}
	w(G) = \langle w(f): f\in G^{\XX}\rangle.
\end{equation}
Secondly, to mimic the radical of a multilinear map, we use the \emph{marginal} subgroup 
\begin{equation}\label{eq:marginal}
	w^*(G) = 
	\{ g \in G~:~\forall f'\in \langle g\rangle^{\XX}, \forall f\in G^{\XX},~w(ff')=w(f)\}.
\end{equation}
(To be clear, $ff'\in G^{\XX}$ is the pointwise product: $x(ff')=(xf)(xf')$ for all $x\in \XX$.)
Thus, $w:G^{\XX}\to G$ factors through $w:(G/w^*(G))^{\XX}\to w(G)$.  For a set $\WW$ of words,
the $\WW$-verbal subgroup is $\langle w(G): w\in \WW\rangle$ and the $\WW$-marginal
subgroup is $\bigcap \{w^*(G): w\in \WW\}$.  Observe that for finite sets $\WW$ a single word 
may be used instead, e.g. replace $\WW=\{[x_1,x_2], x_1^2\}
\subseteq F(\{x_1,x_2\})$ with $w=[x_1,x_2]x_3^2\in F(\{x_1,x_2,x_3\})$. 
If we have a variety $\mathfrak{V}$ defined by two sets $\WW$ and ${\tt U}$ of laws, then
every $u\in {\tt U}$ is a consequence of the laws $\WW$.  From the definitions above it 
follows that $u(G)\leq \WW(G)$ and $\WW^*(G)\leq u^*(G)$.  Reversing the roles of
$\WW$ and ${\tt U}$, it follows that $\WW(G)={\tt U}(G)$ and
$\WW^*(G)={\tt U}^*(G)$.  This justifies the notation
\begin{align*}
	\mathfrak{V}(G) & = \mathfrak{V}(\WW)(G)=\WW(G),\\
	\mathfrak{V}^*(G) & = \mathfrak{V}(\WW)^*(G) = \WW^*(G).
\end{align*} 
The verbal and marginal groups are dual in the following sense \cite{Hall:margin}:
for a group $G$, 
\begin{equation}
	 \mathfrak{V}(G)=1\quad \Leftrightarrow\quad G\in\mathfrak{V}
	 	\quad \Leftrightarrow \quad \mathfrak{V}^*(G)=G.
\end{equation}
Also, verbal subgroups are radical, $\mathfrak{V}(G/\mathfrak{V}(G))=1$, and marginal 
subgroups are idempotent, $\mathfrak{V}^*(\mathfrak{V}^*(G))=\mathfrak{V}^*(G)$, but 
verbal subgroups are not generally idempotent and marginal subgroups are not generally radical.

\begin{ex}\label{ex:varieties}
\begin{enumerate}[(i)]
\item  The class $\mathfrak{A}$ of abelian groups is a group variety defined by $[x_1, x_2]$.
The $\mathfrak{A}$-verbal subgroup of a group is the commutator subgroup and the
$\mathfrak{A}$-marginal subgroup is the center.

\item  The class $\mathfrak{N}_c$ of nilpotent groups of class at most $c$ is a group variety 
defined by $[x_1,\dots,x_{c+1}]$ (i.e. $[x_1]=x_1$ and 
$[x_1,\dots,x_{i+1}]=[[x_1,\dots,x_i],x_{i+1}]$, for all $i\in \mathbb{N}$).
Also, $\mathfrak{N}_c(G)=\gamma_{c+1}(G)$ and $\mathfrak{N}_c^*(G)=\zeta_c(G)$ 
\cite[2.3]{Robinson}.  

\item The class $\mathfrak{S}_d$ of solvable groups of derived length at most $d$ is
a group variety defined by $\delta_d(x_1,\dots,x_{2^d})$ where 
$\delta_1(x_1)=x_1$ and for all $i\in\mathbb{N}$,
$$\delta_{i+1}(x_1,\dots,x_{2^{i+1}})
=[\delta_i(x_1,\dots,x_{2^i}),\delta_i(x_{2^i+1},\dots,x_{2^{i+1}})].$$ 
Predictably, $\mathfrak{S}_d(G)=G^{(d)}$ is the 
$d$-th derived group of $G$.  It appears that $\mathfrak{S}_d^*(G)$ is not often used 
and has no name. (This may be good precedent for $\mathfrak{S}_d^*(G)$ can be trivial while $G$ is solvable; thus, the series $\mathfrak{S}^*_1(G)\leq 
\mathfrak{S}^*_2(G)\leq \cdots $ need not be strictly increasing.)
\end{enumerate}
\end{ex}

Verbal and marginal subgroups are characteristic in $G$ and verbal subgroups are also fully 
invariant \cite{Hall:margin}.  So if $G$ is an $\Ops$-group then so is $\mathfrak{V}(G)$.
Moreover, 
\begin{equation}\label{eq:verbal-closure}
	G\in\mathfrak{V}^{\Ops} \textnormal{ if, and only if, $G$ is an $\Ops$-group and }
		\mathfrak{V}(G)=1.
\end{equation}
Unfortunately,  marginal subgroups need not be fully invariant (e.g. the
center of a group).  In their place, we use the $\Ops$-invariant marginal subgroup 
$(\mathfrak{V}^{\Ops})^{*}(G)$, i.e. the largest normal $\Ops$-subgroup of 
$\mathfrak{V}^*(G)$.
Since $\mathfrak{V}$ is closed to subgroups it follows that $(\mathfrak{V}^{\Ops})^{*}(G)\in\mathfrak{V}$.  Furthermore, if $G$ is an $\Ops$-group and $G\in\mathfrak{V}$ then
$\mathfrak{V}^*(G)=G$ and so the $\Ops$-invariant marginal subgroup is $G$.  Thus,
\begin{equation}\label{eq:marginal-closure}
	G\in\mathfrak{V}^{\Ops} \textnormal{ if, and only if, $G$ is an $\Ops$-group and }
		\mathfrak{V}^{*}(G)=G.
\end{equation}
In our special setting all operators
act as automorphisms and so the invariant marginal subgroup is indeed the marginal subgroup.
Nevertheless, to avoid confusion insist that the marginal subgroup of a variety 
of $\Ops$-groups refers to the $\Ops$-invariant marginal subgroup.  

\subsection{Rings, frames, and modules}\label{sec:rings}
We involve some standard theorems for associative unital finite rings and modules.  
Standard references for our uses include \cite[Chapters 1--3]{Herstein:rings} and 
\cite[Chapters I--II, V.3]{Jacobson:Lie}.  Throughout this section $R$ denotes
a finite associative unital ring.

A $e\in R-\{0\}$ is \emph{idempotent} if $e^2=e$.  An idempotent is
\emph{proper} if it is not $1$ (as we have excluded $0$ as an idempotent).  Two idempotents $e,f\in R$
are \emph{orthogonal} if $ef=0=fe$.  An idempotent
is \emph{primitive} if it is not the sum of two orthogonal idempotents.  Finally, 
a \emph{frame} $\mathcal{E}\subseteq R$ is a set of pairwise orthogonal primitive idempotents
of $R$ which sum to $1$.  We use the following properties.
\begin{lem}[Lifting idempotents]\label{lem:lift-idemp}
Let $R$ be a finite ring.
\begin{enumerate}[(i)]
\item If $e\in R$ such that $e^2-e\in J(R)$ (the Jacobson radical) 
then for some $n\leq \log_2 |J(R)|$, $(e^2-e)^n=0$ and
\begin{equation*}
	\hat{e}= \sum_{i=0}^{n-1}\binom{2n-1}{i} e^{2n-1-i}(1-e)^i
\end{equation*}
is an idempotent in $R$.  Furthermore, $\widehat{1-e}=1-\hat{e}$.
\item $\mathcal{E}$ is a frame of $R/J(R)$ then $\hat{\mathcal{E}}=\{\hat{e}:e\in\mathcal{E}\}$
is a frame of $R$.
\item Frames in $R$ are conjugate by a unit in $R$; in particular, if $R$ is commutative then 
$R$ has a unique frame.
\end{enumerate}
\end{lem}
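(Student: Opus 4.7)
For part (i), the starting observation is that $e$ and $1-e$ commute (both are polynomials in $e$) and that $e(1-e)=e-e^2\in J(R)$. Since $R$ is finite (hence Artinian), the descending chain $J(R)\supseteq J(R)^2\supseteq\cdots$ strictly decreases until it reaches $0$: by Nakayama, if $J(R)^{k+1}=J(R)^k$ then $J(R)^k=0$, and each strict drop at least halves the order of an additive subgroup, which gives $J(R)^n=0$ for some $n\leq\log_2|J(R)|$. In particular $(e-e^2)^n=0$. The formula for $\hat e$ then comes from the binomial identity $1=(e+(1-e))^{2n-1}=\sum_{i=0}^{2n-1}\binom{2n-1}{i}e^{2n-1-i}(1-e)^i$, whose first $n$ terms are $\hat e$ and whose last $n$ terms are $1-\hat e$. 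Reindexing the last $n$ terms via $i\mapsto 2n-1-i$ yields exactly the formula with the roles of $e$ and $1-e$ swapped, so $\widehat{1-e}=1-\hat e$. To verify $\hat e$ is idempotent, compute $\hat e(1-\hat e)$: every cross-term $\binom{2n-1}{i}\binom{2n-1}{j}e^a(1-e)^b$ has $0\leq i\leq n-1$ and $n\leq j\leq 2n-1$, so the exponents $a=(2n-1-i)+(2n-1-j)$ and $b=i+j$ both satisfy $a,b\geq n$, and each such term contains $(e(1-e))^n=0$.

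For part (ii), the naive idea of lifting each idempotent of $\mathcal E$ independently via (i) fails because the lifts need not be orthogonal; the remedy is to lift inductively through corner rings. Given $\mathcal E=\{\bar e_1,\dots,\bar e_k\}$, lift $\bar e_1$ to $\hat e_1\in R$ using (i). The remaining $\bar e_2,\dots,\bar e_k$ sit in the corner ring $\bar R_1=(1-\hat e_1)(R/J(R))(1-\hat e_1)$, which coincides with $R_1/J(R_1)$ for the corner subring $R_1=(1-\hat e_1)R(1-\hat e_1)$; inducting on $k$ produces a frame $\{\hat e_2,\dots,\hat e_k\}$ of $R_1$ whose members are automatically orthogonal to $\hat e_1$. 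Primitivity of each $\hat e_i$ is inherited: any refinement $\hat e_i=f+f'$ into nonzero orthogonal idempotents projects to a refinement of $\bar e_i$ in $R/J(R)$, using that $J(R)$ contains no nonzero idempotent (an idempotent in a nilpotent ideal is nilpotent, hence zero). The sum-to-one property is similar: $s=\sum_i\hat e_i$ is idempotent with $\bar s=1$, so $1-s$ is an idempotent in $J(R)$ and therefore vanishes.

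For part (iii), I would reduce modulo $J(R)$. The Artin--Wedderburn decomposition $R/J(R)\cong\prod_i M_{n_i}(D_i)$ shows that any two frames of $R/J(R)$ are conjugate by a unit, since in each matrix factor a complete set of primitive orthogonal idempotents is conjugate to the diagonal matrix units. Units lift from $R/J(R)$ to $R$ because $1+J(R)\subseteq R^\times$, so after conjugating one frame $\mathcal E$ by a lift of the conjugator we may assume $\bar{\mathcal E}=\bar{\mathcal F}$. A standard lemma then finishes: if $e,f\in R$ are idempotents with $e-f\in J(R)$, the element $u=ef+(1-e)(1-f)$ lies in $1+J(R)$, hence is a unit, and satisfies $ue=fu$. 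Applying this successively along the frame and combining the conjugators yields a single unit of $R$ taking $\mathcal E$ to $\mathcal F$. In the commutative case, conjugation is trivial, so two frames that are conjugate must coincide, giving uniqueness.

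The main obstacle is the inductive construction in part (ii): one must thread together the lifting formula from (i) with a descent into corner rings, and at each stage verify that taking $J$ of a corner ring is the corner of $J(R)$. Once this is in hand, the idempotent-free nature of $J(R)$ cleans up primitivity and the sum-to-one condition, and the same corner-ring technique combined with lifting of units is what powers the conjugacy argument in (iii).
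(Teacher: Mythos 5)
Your proposal is correct and follows the same outline as the paper, which simply cites Curtis--Reiner for parts (i) and (iii) and says (ii) ``follows from induction on (i)''; you have supplied the standard details behind those references. The binomial-identity argument for (i), the corner-ring induction for (ii), and the ``lift units from $R/J(R)$, then conjugate congruent idempotents by $fe+(1-f)(1-e)$'' argument for (iii) are precisely the textbook route. One tiny slip: with $u=ef+(1-e)(1-f)$ one gets $eu=uf$ (so $u^{-1}eu=f$), not $ue=fu$; the variant $u'=fe+(1-f)(1-e)$ is what satisfies $u'e=fu'$. Either version proves conjugacy, so this is cosmetic.
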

\begin{proof}
Part (i) is verified directly, compare \cite[(6.7)]{Curtis-Reiner}.
Part (ii) follows from induction on (i).  For (iii) see \cite[p. 141]{Curtis-Reiner}.
\end{proof}
  
If $M$ is an $R$-module and $e$ is an idempotent of $\End_R M$ then $M=Me\oplus M(1-e)$.
Furthermore, if $M=E\oplus F$ as an $R$-module, then the projection $e_E:M\to M$ with kernel $F$ and image $E$ is an idempotent endomorphism of $M$.  Thus, every direct $R$-decomposition $\mathcal{M}$ of $M$ is parameterized by a set $\mathcal{E}(\mathcal{M})=\{e_E : E\in\mathcal{M}\}$ of pairwise orthogonal idempotents of $\End_R M$ which sum to $1$.  Remak $R$-decompositions of $M$ correspond to frames of $\End_R M$.

\subsection{Polynomial-time toolkit}
\label{sec:tools}

We use this section to specify how we intend to compute with groups of permutations.
We operate in the context of quotients of permutation groups and borrow from the large
library of polynomial-time algorithms for this class of groups.  We detail
the problems we use in our proof of \thmref{thm:FindRemak} so that in principle any computational
domain with polynomial-time algorithms for these problems will admit a theorem 
similar to \thmref{thm:FindRemak}.  The majority of algorithms which we cite do not provide specific
estimates on the polynomial timing.  Therefore, our own main theorems will not have specific estimates.

The group $S_n$ denotes the permutations on $\{1,\dots,n\}$.  Given $\XX\subseteq S_n$, a
\emph{straight-line program} over $\XX$ is a recursively defined
function on $\XX$ which evaluates to a word over $\XX$, but can be stored and evaluated in an
efficient manner; see \cite[p. 10]{Seress:book}.  To simplify notation we treat these as elements in 
$S_n$.   

Write $\mathbb{G}_n$ for the class of  groups $G$ 
encoded by $(\XX:\RR)$ where $\XX\subseteq S_n$
and $\RR$ is a set of straight-line programs such that
\begin{equation}\label{eq:def-G}
	G=\langle\XX\rangle/N,\qquad N:=\left\langle \RR^{\langle \XX\rangle}\right\rangle\leq \langle\XX\rangle\leq S_n.
\end{equation}
The notation $\mathbb{G}_n$ intentionally avoids reference to the permutation domain as the algorithms we consider can be adapted to other computational domains. Also, observe that a group $G\in\mathbb{G}_n$
may have no small degree permutation representation.  For example, the extraspecial group $2^{1+2n}_+$ is
a quotient of $D_8^{n}\leq S_{4n}$; yet, the smallest faithful permutation representation 
of $2^{1+2n}_+$  has degree $2^n$ \cite[Introduction]{Neumann:perm-grp}.
It is misleading to think of $\RR$ in \eqref{eq:def-G} as relations for the generators $\XX$; 
indeed, 
elements in $\XX$ are also permutations and so there are relations implied on $\XX$
which may not be implied by $\RR$.
We write $\ell(\RR)$ for the sum of the lengths of straight-line programs in $\RR$. 
 
A homomorphism $f:G\to H$ of groups 
$G=(\XX :\RR),H=({\tt Y}:{\tt S}) \in\mathbb{G}_n$
is encoded by storing $\XX f$ as straight-line programs in ${\tt Y}$.
An $\Omega$-group $G$ is encoded by $G=(\XX:\RR)\in\mathbb{G}_n$ along with a function
$\theta:\Omega\to \End G$.  We write $\mathbb{G}_n^{\Omega}$
for the set of $\Omega$-groups encoded in that fashion.

A \emph{polynomial-time} algorithm with input 
$G=(\XX:\RR)\in \mathbb{G}_n^{\Omega}$ returns an output using a polynomial 
in $|\XX|n+\ell(\RR)+\ell(\Omega)$ number of steps.  In some cases
$|\XX|n+\ell(\RR)\in O(\log |G|)$; so, $|G|$ can be exponentially larger than the input size.
When we say ``given an $\Omega$-group $G$'' we shall mean $G\in\mathbb{G}_n^{\Omega}$.

Our objective in this paper is to solve the following problem.

\begin{prob}{\sc Remak-$\Ops$-Decomposition}\label{prob:FindRemak}
\begin{description}
\item[Given] an $\Omega$-group $G$,
\item[Return] a Remak $\Ops$-decomposition for $G$.
\end{description}
\end{prob}

The problems \probref{prob:Order}--\probref{prob:MinSNorm} have polynomial-time solutions 
for groups in $\mathbb{G}_n^{\Omega}$.

\begin{prob}{\sc Order}\label{prob:Order}\cite[P1]{KL:quotient}
\begin{description}
\item[Given] a group $G$, 
\item[Return] $|G|$.
\end{description}
\end{prob}

\begin{prob}{\sc Member}\label{prob:Member}\cite[3.1]{KL:quotient}
\begin{description}
\item[Given] a group $G$, a subgroup $H=(\XX':\RR')$ of $G$, and $g\in G$,
\item[Return] false if $g\notin H$; else, a straight-line program in $\XX'$ reaching $g\in H$.
\end{description}
\end{prob}

We require the means to solve systems of linear equations, or determine that no solution exists,
in the following generalized setting.
\begin{prob}{\sc Solve}\label{prob:Solve}\cite[Proposition 3.7]{KLM:Sylow}
\begin{description}
\item[Given] a group $G$, an abelian normal subgroup $M$, a function
$f\in G^{\XX}$ of constants in $G$, and a set $\WW\subseteq F(\XX)$ of words encoded via straight-line programs;
\item[Return] false if $w(f\mu)\neq 1$ for all $\mu\in M^{\XX}$; else, generators
for the solution space $\{\mu\in M^{\XX} : w(f\mu)=1\}$.
\end{description}
\end{prob}

\begin{prob}{\sc Presentation}\label{prob:pres}\cite[P2]{KL:quotient}
\begin{description}
\item[Given] given a group $G$ and a normal subgroup $M$,
\item[Return] a constructive presentation $\{\langle\XX|\RR\rangle, f,\ell\}$ for $G$ mod $M$.
\end{description}
\end{prob}

\begin{prob}{\sc Minimal-Normal}\label{prob:MinNorm}\cite[P11]{KL:quotient}
\begin{description}
\item[Given] a group $G$,
\item[Return] a minimal normal subgroup of $G$.
\end{description}
\end{prob}

\begin{prob}{\sc Normal-Centralizer}\label{prob:CentNorm}\cite[P6]{KL:quotient}
\begin{description}
\item[Given] a group $G$ and a normal subgroup $H$,
\item[Return] $C_G(H)$.
\end{description}
\end{prob}

\begin{prob}{\sc Primary-Decomposition}\label{prob:Primary}
\begin{description}
\item[Given] an abelian group $A\in \mathbb{G}_n$,
\item[Return] a primary decomposition for $A=\bigoplus_{v\in{\tt B}} \mathbb{Z}_{p^e} v$,
where for each $v\in {\tt B}$, $|v|=p^e$ for some prime $p=p(v)$.
\end{description}
\end{prob}

We call $\mathcal{X}$, as in {\sc Primary-Decomposition}, a \emph{basis} for $A$.
The polynomial-time solution of {\sc Primary-Decomposition} is routine.  
Let $A=(\XX : \RR)\in \mathbb{G}_n$. Use {\sc Order} to compute $|A|$.
As $A$ is a quotient of a permutation group, the primes dividing $|A|$ are less than $n$.
Thus, pick a prime $p\mid |A|$ and write $|A|=p^e m$ where $(p,m)=1$.  Set
$A_p=A^{m}$. 
Using {\sc Member} build a basis ${\tt B}_p$ for $A_p$ by unimodular linear algebra.  
(Compare \cite[Section 2.3]{Wilson:algo-cent}.)  The return is $\bigsqcup_{p\mid |A|} {\tt B}_p$.

We involve some problems for associative rings.  For ease we assume that all 
rings $R$ are finite of characteristic $p^e$ and specified with a basis 
${\tt B}$ over $\mathbb{Z}_{p^e}$.
To encode the multiplication in $R$ we store structure constants
$\{\lambda_{xy}^z \in \mathbb{Z}_{p^e} : x,y,z\in {\tt B}\}$ which are defined so that:
\begin{equation*}
	\left(\sum_{x\in\mathcal{X}} r_x x\right)\left(\sum_{y\in\mathcal{X}} s_y y\right)
		=\sum_{z\in{\tt B}} \left(\sum_{x,y\in\mathcal{X}} r_x \lambda_{xy}^{z} s_y\right)z
\end{equation*}
where, for all $x$ and all $y$ in ${\tt B}$, $r_x,s_y\in\mathbb{Z}_{p^e}$.

\begin{prob}{\sc Frame}\label{prob:Frame}
\begin{description}
\item[Given] an associative unital ring $R$,
\item[Return] a frame of $R$.
\end{description}
\end{prob}
{\sc Frame} has various nondeterministic solutions \cite{EG:fast-alge,Ivanyos:fast-alge} with
astonishing speed.  However, we need a deterministic solution such as in the work of Ronyai.

\begin{thm}[Ronyai \cite{Ronyai}]\label{thm:Frame}
For rings $R$ specified as an additive group in $\mathbb{G}_n$ with a basis 
and with structure constants with respect to the basis,
{\sc Frame} is solvable in polynomial-time in $p+n$ where $|R|=p^n$.
\end{thm}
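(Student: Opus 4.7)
The plan is to reduce the problem to the semisimple case by passing to $R/J(R)$, use Ronyai's deterministic algorithms to handle the Wedderburn decomposition there, and then lift the resulting frame back to $R$ using \lemref{lem:lift-idemp}.

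First, I would invoke Ronyai's main result to compute the Jacobson radical $J(R)$ in deterministic polynomial time in $p+n$; this is one of the headline theorems of \cite{Ronyai}. Given a basis of $J(R)$ as a $\mathbb{Z}_{p^e}$-submodule, I can form structure constants for the semisimple quotient $\bar{R}:=R/J(R)$. Note that $\bar{R}$ is still specified by a basis and structure constants of size polynomial in the input, so subsequent computations in $\bar{R}$ remain polynomial in $p+n$.

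Next, I would apply Ronyai's Wedderburn-decomposition algorithm to write $\bar{R}\cong \prod_{i=1}^k M_{n_i}(\mathbb{F}_{q_i})$ as a product of simple components, with explicit bases for each component. Within each matrix algebra $M_{n_i}(\mathbb{F}_{q_i})$ I need a frame, i.e., a set of $n_i$ primitive pairwise orthogonal idempotents summing to $1$. This is done by iteratively finding a proper idempotent in each block and recursing; the hard case reduces to finding a nontrivial idempotent in a commutative semisimple $\mathbb{F}_{q_i}$-algebra, which in turn reduces to factoring the minimal polynomial of a random-looking (but deterministically chosen) element over $\mathbb{F}_{q_i}$. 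This is precisely the step that requires deterministic polynomial factoring over finite fields, which is known (Berlekamp) to be polynomial in $p$ rather than $\log p$; this explains why the timing must be expressed in $p+n$ rather than in $\log|R|$. I expect this polynomial-factorization step to be the main obstacle in Ronyai's construction, and indeed it is the reason \thmref{thm:Frame} is deterministic only at this cost.

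Finally, collecting the primitive idempotents from each simple factor of $\bar{R}$ yields a frame $\bar{\mathcal{E}}$ of $\bar{R}=R/J(R)$. By \lemref{lem:lift-idemp}(ii), the lifts $\hat{\mathcal{E}}=\{\hat{e}:e\in\bar{\mathcal{E}}\}$, computed via the explicit polynomial formula in \lemref{lem:lift-idemp}(i), form a frame of $R$. Each lift is a polynomial of degree at most $2n$ in an element of $R$, and there are at most $n$ idempotents to lift, so the total cost of the lifting stage is polynomial in $n$. Combining this with Ronyai's cost for computing the radical and the Wedderburn decomposition yields the claimed polynomial bound in $p+n$.
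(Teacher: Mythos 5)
Your proposal follows essentially the same route as the paper: compute the Jacobson radical, Wedderburn-decompose the semisimple quotient, pick a frame of primitive idempotents inside each simple factor, and lift it back to $R$ via \lemref{lem:lift-idemp}. The pointer to deterministic finite-field factoring (Berlekamp) as the source of the $p$-dependence is a nice clarification that the paper leaves implicit.

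One small but genuine gap: Ronyai's radical algorithm (the result you invoke at the first step) is stated for finite-dimensional algebras \emph{over a field}, whereas $R$ here is merely a $\mathbb{Z}_{p^e}$-algebra. The paper handles this by first passing to ${\bf R} = R/pR$, which \emph{is} an $\mathbb{F}_p$-algebra, and computing $J({\bf R})$ there. This reduction is harmless precisely because $p$ is nilpotent in $R$, so $pR \subseteq J(R)$ and hence $R/J(R) \cong {\bf R}/J({\bf R})$; the semisimple quotient and the lifting step are unaffected. Your proof as written applies Ronyai's algorithm directly to $R$ without noting this reduction, which is not literally justified when $e>1$. Adding the sentence ``pass first to $R/pR$'' closes the gap and makes your argument agree with the paper's.
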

\begin{proof}
First pass to ${\bf R}=R/pR$ and so create an algebra
over the field $\mathbb{Z}_p$.  Now \cite[Theorem 2.7]{Ronyai} gives a 
deterministic polynomial-time algorithm which finds a basis for the Jacobson
radical of ${\bf R}$.  This allows us to pass to ${\bf S}={\bf R}/J({\bf R})$,
which is isomorphic to a direct product of matrix rings over finite fields.  
Finding the frame for ${\bf S}$ can be done by finding the minimal ideals 
$\mathcal{M}$ of ${\bf S}$ \cite[Corollary 3.2]{Ronyai}.  Next, for each
$M\in\mathcal{M}$, build an isomorphism $M\to M_n(\mathbb{F}_q)$ \cite[Corollary 5.3]{Ronyai}
and choose a frame of idempotents from $M_n(\mathbb{F}_q)$ and let $\mathcal{E}_M$
be the pullback to $M$.  Set $\mathcal{E} =\bigsqcup_{M\in\mathcal{M}} \mathcal{E}_M$
noting that $\mathcal{E}$ is a frame for ${\bf} S$.  Hence, use the power series of
\lemref{lem:lift-idemp} to lift the frame $\mathcal{E}$ to a frame $\hat{\mathcal{E}}$ for
$R$.
\end{proof}

With \thmref{thm:Frame} we setup and solve a special instance of \thmref{thm:FindRemak}.

\begin{prob}{\sc Abelian.Remak-$\Ops$-Decomposition}\label{prob:FindRemak-abelian}
\begin{description}
\item[Given] an abelian $\Omega$-group $A$,
\item[Return] a Remak $\Ops$-decomposition for $A$.
\end{description}
\end{prob}

\begin{coro}\label{coro:FindRemak-abelian}
{\sc Abelian.Remak-$\Ops$-Decomposition} has a polynomial-time solution.
\end{coro}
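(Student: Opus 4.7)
The plan is to reduce to the $p$-primary case, identify Remak $\Ops$-decompositions with frames of the $\Ops$-endomorphism ring, and invoke \thmref{thm:Frame}.

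First apply {\sc Primary-Decomposition} to obtain a primary decomposition $A=\bigoplus_p A_p$; each $A_p$ is characteristic in $A$ and hence an $\Ops$-subgroup. Because $\operatorname{Hom}(A_p,A_q)=0$ for distinct primes, the ring $\End_{\Ops} A$ is isomorphic to $\prod_p \End_{\Ops} A_p$, so by the remarks closing \secref{sec:rings} a Remak $\Ops$-decomposition of $A$ is obtained by taking the union of Remak $\Ops$-decompositions of the individual $A_p$, and each of these in turn corresponds to a frame of $R_p := \End_{\Ops} A_p$.

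To feed $R_p$ to \thmref{thm:Frame} I need to present it as an associative unital ring with a basis and structure constants over $\mathbb{Z}_{p^e}$, where $p^e$ is the exponent of $A_p$. Let $\mathcal{B}_p$ be the $p$-primary part of the basis returned by {\sc Primary-Decomposition}. An $\Ops$-endomorphism $\phi$ of $A_p$ is determined by the tuple $(\phi(v))_{v\in\mathcal{B}_p}\in A_p^{\mathcal{B}_p}$, subject to the order constraints $\phi(v)^{|v|}=1$ and the commutation constraints $\phi(v^\omega)=\phi(v)^\omega$ for every $v\in\mathcal{B}_p$ and every $\omega\in\Ops$. These are word equations over the abelian normal subgroup $A_p$, so {\sc Solve} (Problem \ref{prob:Solve}) returns generators of $R_p$ as a $\mathbb{Z}_{p^e}$-module. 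Unimodular linear algebra via {\sc Member} trims the generating set to a basis $\mathcal{T}$, and the structure constants are recovered by evaluating compositions $xy$ for $x,y\in\mathcal{T}$ on the elements of $\mathcal{B}_p$ and re-expressing the outputs as $\mathbb{Z}_{p^e}$-combinations of $\mathcal{T}$.

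Finally, apply \thmref{thm:Frame} to each $R_p$ to obtain a frame $\mathcal{E}_p$, and return $\bigsqcup_p\{A_p e : e\in\mathcal{E}_p\}$. The total running time is polynomial: since $A\in\mathbb{G}_n^{\Ops}$ is a quotient of a subgroup of $S_n$, every prime dividing $|A|$ is at most $n$, so the $p+n$ estimate in \thmref{thm:Frame} is polynomial in the input size, and all intermediate invocations of {\sc Primary-Decomposition}, {\sc Solve}, and {\sc Member} are likewise polynomial. The main obstacle — and the only genuinely technical step — is packaging $R_p$ as a ring of structure constants in polynomial time, which is handled uniformly across primary components by the repeated appeal to {\sc Solve}.
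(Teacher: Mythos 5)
Your proof is correct and follows essentially the same route as the paper: reduce to $p$-primary components via {\sc Primary-Decomposition}, produce a basis and structure constants for $\End_{\Ops} A_p$ via {\sc Solve}, then pass each of these rings to {\sc Frame} and read off the Remak $\Ops$-decomposition from the resulting idempotents. The only cosmetic difference is that the paper first writes down a basis of $\End A_p$ explicitly (using its known structure as a checkered matrix ring over $\mathbb{Z}_{p^e}$) and then uses {\sc Solve} to cut down to the $\Ops$-commutant, whereas you formulate the $\Ops$-endomorphism conditions directly as the system handed to {\sc Solve}; both are polynomial in $n$ since every prime dividing $|A|$ is at most $n$.
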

\begin{proof}
Let $A\in\mathbb{G}_n^{\Ops}$ be abelian.

\emph{Algorithm.}
Use {\sc Primary-Decomposition} to write $A$ in a primary decomposition.
For each prime $p$ dividing $|A|$, let $A_p$ be the $p$-primary component.
Write a basis for $\End A_p$ (noting that $\End A_p$ is a checkered
matrix ring determined completely by the Remak decomposition of $A_p$ as a $\mathbb{Z}$-module 
\cite[p. 196]{McDonald:fin-ring})
and use {\sc Solve} to find a basis for $\End_{\Omega} A$.  Finally, use 
{\sc Frame} to find
a frame $\mathcal{E}_p$ for $\End_{\Ops} A_p$.  Set $\mathcal{A}_p=\{Ae: e\in\mathcal{E}\}$.
Return $\bigsqcup_{p\mid |A|} \mathcal{A}_p$.

\emph{Correctness.} Every direct $\Ops$-decomposition of $A$ corresponds to a set
of pairwise orthogonal idempotents in $\End_{\Ops} A$ which sum to $1$.  Furthermore,
Remak $\Ops$-decomposition correspond to frames.

\emph{Timing.}
The polynomial-timing follows from \thmref{thm:Frame} together with the
observation that $p\leq n$ whenever $A\in \mathbb{G}_n$.
\end{proof}

\begin{remark}\label{rem:matrix}
In the context of groups of matrices our solution to 
{\sc Abelian.Remak-$\Ops$-decomposition} is impossible as it invokes
integer factorization and {\sc Member} is a version of a discrete log problem in that
case.  The primes involved in the orders
of matrix groups can be exponential in the input length and so these two
routines are infeasible.  For solvable matrix groups whose primes are bound and so called
$\Gamma_d$-matrix groups the required problems
in this section have polynomial-time solutions, cf. \cite{Luks:mat,Taku}.
\end{remark}

\begin{prob}{\sc Irreducible}\label{prob:Irreducible}\cite[Corollary 5.4]{Ronyai}
\begin{description}
\item[Given] an associative unital ring $R$, an abelian group $V$, and a homomorphism $\varphi:R\to \End V$,
\item[Return] an irreducible $R$-submodule of $V$.
\end{description}
\end{prob}
As with the algorithm {\sc Frame}, there are nearly optimal nondeterministic methods for {\sc Irreducible},
for example, the MeatAxe \cite{Meataxe1,Meataxe2}; however, we are concerned here with a deterministic
method solely.

\begin{prob}{\sc Minimal-$\Ops$-Normal}\label{prob:MinSNorm}
\begin{description}
\item[Given] an $\Omega$-group $G$ where $\Omega$ acts on $G$ as automorphisms,
\item[Return] a minimal $(\Ops\cup G)$-subgroup of $G$.
\end{description}
\end{prob}
\begin{prop}
{\sc Minimal-$\Ops$-Normal} has a polynomial-time solution.
\end{prop}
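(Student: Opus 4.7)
The plan is to reduce the problem to two well-handled cases by first producing a minimal normal subgroup of $G$ and then closing it under $\Omega$. I would invoke {\sc Minimal-Normal} to obtain a minimal normal subgroup $N$ of $G$. Because $\Omega\theta\subseteq\Aut G$, each translate $N^\omega$ is again a minimal normal subgroup of $G$ (isomorphic to $N$), and any two such translates either coincide or intersect trivially. I then compute the $\Omega$-closure $K$ by iterating $K_0:=N$ and $K_{i+1}:=\langle K_i, K_i^\omega : \omega\in\Omega\rangle$; each $K_i$ is a normal subgroup of $G$, so the chain is an ascending chain of normal subgroups and stabilizes in at most $\log_2|G|$ steps. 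The resulting $K$ is an $(\Omega\cup G)$-subgroup and, as a join of pairwise commuting minimal normal subgroups with trivial mutual intersection, decomposes as a direct product $T_1\times\cdots\times T_m$ of pairwise isomorphic simple groups.

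I would then split on whether the $T_i$ are abelian. If each $T_i$ is nonabelian simple, every normal subgroup of $K$ has the form $\prod_{i\in I}T_i$ for some $I\subseteq\{1,\dots,m\}$, and since any $(\Omega\cup G)$-subgroup of $K$ is normal in $G$ and hence in $K$, the $(\Omega\cup G)$-subgroups of $K$ correspond to $(\Omega\cup G)$-invariant subsets of $\{T_1,\dots,T_m\}$. By construction $G$ acts transitively on the simple factors within each $N^\omega$ (because $N^\omega$ is $G$-minimal), and $\Omega$ provides transitivity among the various $N^\omega$; thus $\{T_1,\dots,T_m\}$ is a single $(\Omega\cup G)$-orbit and $K$ is already a minimal $(\Omega\cup G)$-subgroup, which I return.

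Otherwise each $T_i\cong\mathbb{Z}_p$ and $K$ is an elementary abelian $p$-group, i.e., an $\mathbb{F}_p$-vector space of some dimension $n$. The $(\Omega\cup G)$-subgroups of $K$ are exactly the submodules for the subalgebra of $\End_{\mathbb{F}_p} K$ generated by the conjugation action of generators of $G$ together with the action of $\Omega$. I would use {\sc Primary-Decomposition} to fix a basis of $K$, compute the matrix in $M_n(\mathbb{F}_p)$ for each generator of $G$ and each $\omega\in\Omega$, and close under products and $\mathbb{F}_p$-linear combinations; this terminates after at most $n^2$ steps since $\dim_{\mathbb{F}_p} M_n(\mathbb{F}_p)=n^2$, yielding a basis and structure constants for the resulting subalgebra $R$. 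Applying {\sc Irreducible} to $(R,K,R\hookrightarrow\End_{\mathbb{F}_p} K)$ returns an irreducible $R$-submodule, which is the desired minimal $(\Omega\cup G)$-subgroup.

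Each step runs in polynomial time by the toolkit results of \secref{sec:tools}. The main delicate point is correctness in the nonabelian case: one must identify $(\Omega\cup G)$-subgroups of $K$ with $(\Omega\cup G)$-invariant subsets of $\{T_i\}$ via the classical structure of normal subgroups of direct powers of nonabelian simple groups, and verify that the $\Omega$-action genuinely permutes the $G$-orbits of simple factors so that the pure $\Omega$-closure coincides with the full $(\Omega\cup G)$-closure of $N$.
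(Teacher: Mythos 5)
Your proposal is correct and takes essentially the same route as the paper: find a minimal normal subgroup via {\sc Minimal-Normal}, form its $\Omega$-closure (which is automatically an $(\Omega\cup G)$-subgroup since $N$ is already normal and $\Omega$ acts by automorphisms), observe that the result is a direct product of isomorphic simple groups, and then either return it outright in the nonabelian case (where the orbit structure forces $(\Omega\cup G)$-minimality) or invoke {\sc Irreducible} on the associated module in the elementary abelian case. You spell out the matrix-algebra construction for {\sc Irreducible} more explicitly than the paper does, but the underlying argument is the same.
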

\begin{proof}
Let $G=(\XX: \RR)\in\mathbb{G}_n^{\Ops}$.

\emph{Algorithm.}
Use {\sc Minimal-Normal} to compute a minimal normal subgroup $N$ of $G$.
Using {\sc Member}, run the following transitive closure: set $M:=N$, then 
while there exists $w\in \Ops\cup \XX$ such that $M^w\neq M$, set $N=\langle M,M^w\rangle$.
Now $M=\langle N^{\Ops\cup G}\rangle$.  If $N$ is non-abelian then return $M$; otherwise,
treat $M$ as an $(\Ops\cup G)$-module and use {\sc Irreducible} to find an irreducible 
$(\Ops\cup G)$-submodule $K$ of $M$.  Return $K$.

\emph{Correctness.}
Note that $M=\langle N^{\Ops\cup G}\rangle=N N^{w_1} N^{w_2}\cdots N^{w_t}$ for some 
$w_1,\dots,w_t\in \langle \Ops\theta\rangle\ltimes G\leq \Aut G\ltimes G$.  As $N$ is minimal normal,
so is each $N^{w_i}$ and therefore $M$ is a direct product of isomorphic simple groups.
If $N$ is non-abelian then the normal subgroups of $M$ are its direct factors and furthermore, every
direct factor $F$ of $M$ satisfies $M=\langle F^{\Ops\cup G}\rangle$.  If $N$ is abelian then $N\cong\mathbb{Z}_p^d$
for some prime $p$.  A minimal $(\Ops\cup G)$-subgroup of $N$ is therefore an irreducible 
$(\Ops\cup G)$-submodule of $V$.

\emph{Timing.}  First the algorithm executes a normal closure using the polynomial-time algorithm
{\sc Member}.  We test if $N$ is abelian by computing the commutators of the generators.  The final
step is the polynomial-time algorithm {\sc Irreducible}.
\end{proof}

\section{Lifting, extending, and matching direct decompositions}\label{sec:lift-ext}

We dedicate this section to understanding when a direct decomposition of a quotient or subgroup
lifts or extends to a direct decomposition of the whole group.  Ultimately we plan these
ideas for use in the algorithm for \thmref{thm:FindRemak}, but the questions have taken on independent intrigue.  The highlights of this section are Theorems \ref{thm:Lift-Extend} and \ref{thm:chain} and Corollaries \ref{coro:canonical-graders} and \ref{coro:canonical-grader-II}.

Fix a short exact sequence of $\Ops$-groups:
\begin{equation}\label{eq:SES}
\xymatrix{
1\ar[r] & K \ar[r]^{i} & G\ar[r]^{q} & Q\ar[r] & 1.
}
\end{equation}
With respect to \eqref{eq:SES} we study instances of the following problems.
\begin{description}
\item[Extend] for which  direct $(\Ops\cup G)$-decomposition $\mathcal{K}$ of $K$,
is there a Remak $\Ops$-decomposition $\mathcal{R}$ of $G$ such that
$\mathcal{K}i = \mathcal{R}\cap (Ki)$.

\item[Lift] for which direct $(\Ops\cup G)$-decomposition $\mathcal{Q}$ of
$Q$, is there a Remak $\Ops$-decomposition $\mathcal{R}$ of $G$ such that
$\mathcal{Q} = \mathcal{R}q$.

\item[Match] for which pairs $(\mathcal{K},\mathcal{Q})$ of direct $(\Ops\cup G)$-decompositions of
$K$ and $Q$ respectively, is there a Remak $\Ops$-decomposition of $G$ which is an extension of 
$\mathcal{K}$ and a lift of $\mathcal{Q}$, i.e. $\mathcal{K}i=\mathcal{R}\cap (Ki)$ and 
$\mathcal{Q}=\mathcal{R}q$.
\end{description}

Finding direct decompositions which extend or lift is surprisingly easy
(\thmref{thm:Lift-Extend}), but we have had only narrow success in finding matches.  
Crucial exceptions are $p$-groups of class $2$ (\thmref{thm:Match-class2})
where the problem reduces to commutative ring theory.  

\subsection{Graded extensions}\label{sec:induced}
In this section we place some reasonable parameters on the short exact sequences
which we consider in the role of \eqref{eq:SES}.  
This section depends mostly on the material of Sections 
\ref{sec:op-groups}--\ref{sec:decomps}.

\begin{lemma}\label{lem:induced}
Let $G$ be a group with a direct $\Ops$-decomposition $\mathcal{H}$.  If $X$ is an 
$(\Ops\cup G)$-subgroup of $G$ and $X=\langle \mathcal{H}\intersect X\rangle$, then
\begin{enumerate}[(i)]
\item $\mathcal{H}\intersect X$ is a direct $\Ops$-decomposition of $X$,
\item
$\mathcal{H}X/X$ is a direct $\Ops$-decomposition of $G/X$, 
\item
$\mathcal{H}-\{H\in\mathcal{H} : H\leq X\}$, $\mathcal{H}X$, and $\mathcal{H}X/X$
are in a natural bijection, and
\item if $Y$ is an $(\Ops\cup G)$-subgroup of $G$ with $Y=\langle\mathcal{H}\cap Y\rangle$
then $\mathcal{H}\cap (X\cap Y)=\langle\mathcal{H}\cap (X\cap Y)\rangle$ and
$\mathcal{H}\cap XY=\langle \mathcal{H}\cap XY\rangle$.
\end{enumerate}
\end{lemma}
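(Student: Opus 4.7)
My plan is to isolate a single structural fact about any direct $\Ops$-decomposition and then deduce each of the four parts from it. The fact is that in a direct $\Ops$-decomposition $\mathcal{H}$ of $G$, distinct members centralize one another (for $H_1\neq H_2$, normality of $H_1$ and $H_2$ puts $[h_1,h_2]\in H_1\cap H_2\leq H_1\cap \langle \mathcal{H}-\{H_1\}\rangle = 1$) and consequently every $g\in G$ admits a unique coordinate expansion $g = \prod_{H\in\mathcal{H}} g_H$ with $g_H\in H$ and almost all $g_H$ trivial; equivalently, whenever $\mathcal{H} = \mathcal{A}\sqcup \mathcal{B}$ then $\langle \mathcal{A}\rangle\cap \langle \mathcal{B}\rangle = 1$. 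I would establish this short lemma first and then invoke it repeatedly.

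With that in hand, (i) is immediate: $\mathcal{H}\cap X$ generates $X$ by hypothesis, and for each $H\in\mathcal{H}\cap X$, $H\cap \langle (\mathcal{H}\cap X)-\{H\}\rangle \leq H\cap \langle \mathcal{H}-\{H\}\rangle = 1$. For (ii), I would partition $\mathcal{H} = \mathcal{H}_X\sqcup \mathcal{H}'$ with $\mathcal{H}_X := \mathcal{H}\cap X$, observe that the nontrivial members of $\mathcal{H}X/X$ are exactly the $HX/X$ for $H\in\mathcal{H}'$, and verify directness by fixing any $H\in \mathcal{H}'$, setting $T := \langle \mathcal{H}-\{H\}\rangle$, noting $\mathcal{H}_X\subseteq \mathcal{H}-\{H\}$ and hence $X\leq T$, and applying \lemref{lem:modular} with $M := X$, $R := H$ to get
\[
T\cap HX \;=\; (T\cap H)X \;=\; 1\cdot X \;=\; X,
\]
so $HX/X$ meets the subgroup generated by the remaining factors of $\mathcal{H}X/X$ trivially in $G/X$.

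Part (iii) then reduces to injectivity of $H\mapsto HX$ on $\mathcal{H}-\mathcal{H}_X$ (the map $HX\mapsto HX/X$ is tautologically bijective, and surjectivity onto $\mathcal{H}X$ is built into the notation): if $H_1 X = H_2 X$ with $H_1\neq H_2$ in $\mathcal{H}'$, then $H_1\leq H_2 X\leq \langle \mathcal{H}-\{H_1\}\rangle$, forcing the contradiction $H_1 = 1$.

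Part (iv) is where the coordinate decomposition pays off. For $g\in X\cap Y$, unique expansion $g = \prod_H g_H$ combined with $g\in \langle \mathcal{H}_X\rangle$ and $g\in \langle \mathcal{H}_Y\rangle$ forces $g_H = 1$ whenever $H\notin \mathcal{H}_X\cap \mathcal{H}_Y = \mathcal{H}\cap (X\cap Y)$, so $g\in \langle \mathcal{H}\cap (X\cap Y)\rangle$; the reverse inclusion is trivial. The companion equality $XY = \langle \mathcal{H}\cap XY\rangle$ is simpler still: $\mathcal{H}_X\cup \mathcal{H}_Y\subseteq \mathcal{H}\cap XY$ and already generates $XY$. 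The chief obstacle throughout is really only the preliminary unique-factorization claim; once it is on the shelf, each of (i)--(iv) collapses to a one-line deduction or a single application of the modular law.
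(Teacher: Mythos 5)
Your overall plan --- establish a unique coordinate expansion for a direct $\Ops$-decomposition and read off each part --- is a natural way to organize things, but a systematic misreading of the notation $\mathcal{H}\cap X$ undermines parts (ii)--(iv). In Section \ref{sec:decomps} the paper defines $\mathcal{H}\cap X = \{H\cap X : H\in\mathcal{H}\}-\{1\}$, a set of (possibly proper) intersections; you have treated it throughout as $\{H\in\mathcal{H}: H\leq X\}$, the subset of members of $\mathcal{H}$ already contained in $X$. Identifying the two amounts to assuming $H\cap X\in\{1,H\}$ for every $H\in\mathcal{H}$, which the hypothesis $X=\langle\mathcal{H}\cap X\rangle$ does not grant. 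For a concrete failure, take $G=\mathbb{Z}_4\times\mathbb{Z}_2$, $\mathcal{H}=\{H_1,H_2\}$ with $H_1=\mathbb{Z}_4\times 1$, $H_2=1\times\mathbb{Z}_2$, and $X=2\mathbb{Z}_4\times\mathbb{Z}_2$: here $X=\langle\mathcal{H}\cap X\rangle$ holds, yet $1<H_1\cap X<H_1$, and $\{H\in\mathcal{H}:H\leq X\}=\{H_2\}$ generates a proper subgroup of $X$.

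This breaks the specific steps you rely on. In (ii), with $H=H_1$ in the example, $T=\langle\mathcal{H}-\{H\}\rangle=H_2$ and $X\not\leq T$, so the modular law cannot be applied with $M:=X$ inside $T$; the correct target is $TX\cap HX=X$, which one gets by applying \lemref{lem:modular} to $TX$ (where $TX\cap H=H\cap X$, not $1$). In (iii), the inclusion $H_2X\leq\langle\mathcal{H}-\{H_1\}\rangle$ is false whenever $H_1\cap X\neq 1$, since then $X\not\leq\langle\mathcal{H}-\{H_1\}\rangle$; the correct route is to expand $h_1=h_2x$ in coordinates and observe that the $H_1$-component of $x$ must equal $h_1$, forcing $H_1\leq X$, a contradiction. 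In (iv), $g\in X$ does not force $g_H=1$ for $H\not\leq X$; by part (i) one only gets $g_H\in H\cap X$, which combined with $g_H\in H\cap Y$ gives $g_H\in H\cap(X\cap Y)$ and hence $g\in\langle\mathcal{H}\cap(X\cap Y)\rangle$ --- and the identity $\mathcal{H}_X\cap\mathcal{H}_Y=\mathcal{H}\cap(X\cap Y)$ you assert does not hold under the paper's conventions. All four conclusions are true and your unique-expansion tool suffices to prove them; the repair throughout is to track $g_H\in H\cap X$ rather than $g_H=1$, and to work with $TX$ rather than $T$ when invoking the modular law.
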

\begin{proof} For (i), $(H\cap X)\cap \langle \mathcal{H}\cap X-
\{H\cap X\}\rangle=1$ for all $H\cap X\in\mathcal{H}\cap X$.  
For (ii), let $|\mathcal{H}|>1$, take $H\in\mathcal{H}$, and set 
$J=\langle\mathcal{H}-\{H\}\rangle$.  From (i):
$HX\intersect JX=(H\times (J\intersect X))\intersect 
((H\intersect X)\times J)=(H\intersect X)\times (J\intersect X)=X$. 
For (iii), the functions $H\mapsto HX\mapsto HX/X$, for each 
$H\in\mathcal{H}-\{H\in\mathcal{H}: H\leq X\}$, suffice.  Finally for (iv), 
let $g\in X\cap N$.  So there are unique $h\in H$ and $k\in\langle\mathcal{H}-\{H\}\rangle$
with $g=hk$.  By (i) and the uniqueness, we get that $h\in (H\cap X)\cap (H\cap Y)$ and 
$k\in \langle \mathcal{H}-\{H\}\rangle \cap (X\cap Y)$.  So 
$g\in \langle \{H\cap (X\cap N),\langle \mathcal{H}-\{H\}\rangle \cap (X\cap Y)\}\rangle$.
By induction on $|\mathcal{H}|$, $X\cap Y\leq \langle \mathcal{H}\cap (X\cap Y)\rangle
\leq X\cap N$.  The last argument is similar.
\end{proof}

We now specify which short exact sequence we consider.
\begin{defn}\label{def:graded}
A short exact sequence $1\to K\overset{i}{\to} G\overset{q}{\to} Q\to 1$ of $\Ops$-groups
is \emph{$\Ops$-graded} if for all (finite) direct $\Omega$-decomposition 
$\mathcal{H}$ of $G$, it follows that $Ki  = \langle \mathcal{H}\cap (Ki) \rangle$.
Also, if $M$ is an $(\Ops\cup G)$-subgroup of $G$ such that 
the canonical short exact sequence $1\to M\to G\to G/M\to 1$ is $\Ops$-graded
then we say that $M$ is $\Ops$-graded.
\end{defn}
\lemref{lem:induced} parts (i) and (ii) imply that every direct $\Ops$-decomposition of
$G$ induces direct $\Ops$-decompositions of $K$ and $Q$ whenever 
$1\to K\overset{i}{\to} G\overset{q}{\to} Q\to 1$ is $\Ops$-graded.  The universal quantifier
in the definition of graded exact sequences may seem difficult to satisfy; nevertheless, in 
Section \ref{sec:direct-ext} we show many well-known subgroups are graded, for example the commutator
subgroup.

\begin{prop}\label{prop:graded-lat}
\begin{enumerate}[(i)]
\item If $M$ is an $\Ops$-graded subgroup of $G$ and $N$ an $(\Ops\cup G)$-graded subgroup
of $M$, then $N$ is an $\Ops$-graded subgroup of $G$.
\item
The set of $\Ops$-graded subgroups of $G$ is a modular
sublattice of the lattice of $(\Ops\cup G)$-subgroups of $G$.
\end{enumerate}
\end{prop}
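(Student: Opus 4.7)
The plan is that both parts follow essentially by directly unwinding the definition of $\Ops$-graded using \lemref{lem:induced}.

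For (i), let $\mathcal{H}$ be any finite direct $\Ops$-decomposition of $G$. Because $M$ is $\Ops$-graded in $G$, we have $M=\langle\mathcal{H}\cap M\rangle$, and by \lemref{lem:induced}(i) the set $\mathcal{H}\cap M$ is a direct $\Ops$-decomposition of $M$. Its members $H\cap M$ are intersections of $(\Ops\cup G)$-subgroups of $G$, hence are themselves $(\Ops\cup G)$-subgroups of $G$ lying in $M$; in particular they are normal $\Ops$-subgroups of $M$ invariant under conjugation by $G$, so $\mathcal{H}\cap M$ is in fact a direct $(\Ops\cup G)$-decomposition of $M$. Applying the hypothesis that $N$ is $(\Ops\cup G)$-graded in $M$ yields $N=\langle(\mathcal{H}\cap M)\cap N\rangle$. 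Since $N\leq M$, $(H\cap M)\cap N=H\cap N$ for every $H\in\mathcal{H}$, and therefore $N=\langle\mathcal{H}\cap N\rangle$, as required.

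For (ii), let $M$ and $N$ be $\Ops$-graded subgroups of $G$ and let $\mathcal{H}$ be any direct $\Ops$-decomposition of $G$. Applying \lemref{lem:induced}(iv) with $X:=M$ and $Y:=N$ gives
\[
M\cap N=\langle\mathcal{H}\cap(M\cap N)\rangle\qquad\text{and}\qquad MN=\langle\mathcal{H}\cap MN\rangle,
\]
so both the meet $M\cap N$ and the join $MN$ in the $(\Ops\cup G)$-subgroup lattice are $\Ops$-graded. Hence the $\Ops$-graded subgroups form a sublattice. Modularity is inherited at no cost: the ambient $(\Ops\cup G)$-subgroup lattice is modular by \lemref{lem:modular}, and the modular law $H\cap RM=(H\cap R)M$ (for $M\leq H$) is a universally quantified identity in the lattice operations, so it automatically passes to any sublattice.

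There is no real obstacle; the content of the proposition is essentially bookkeeping. The one point worth watching in (i) is the verification that the induced direct $\Ops$-decomposition of $M$ is actually an $(\Ops\cup G)$-decomposition and not merely an $\Ops$-decomposition, so that the gradedness hypothesis on $N$ inside $M$ applies. This step is where the packaging of \lemref{lem:induced} together with the structural assumption that $M$ is $(\Ops\cup G)$-invariant does all the work.
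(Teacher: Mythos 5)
Your proof is correct and follows essentially the same route as the paper's: part (i) chains Lemma \ref{lem:induced}(i) through the two gradedness hypotheses, and part (ii) reads off closure under meet and join from Lemma \ref{lem:induced}(iv), with modularity inherited from the ambient lattice. The only difference is that you spell out the (genuinely necessary) observation that $\mathcal{H}\cap M$ is an $(\Ops\cup G)$-decomposition of $M$ before invoking the gradedness of $N$ in $M$, a point the paper leaves implicit.
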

\begin{proof}
For (i), if $\mathcal{H}$ is a direct $\Ops$-decomposition of $G$ then by 
\lemref{lem:induced}(i), $\mathcal{H}\cap M$ is direct $\Ops$-decomposition of $M$
and so $\mathcal{H}\cap N=(\mathcal{H}\cap M)\cap N$ is a direct
$\Ops$-decomposition of $N$.  Also (ii) follows from \lemref{lem:induced}(iv).
\end{proof}

\begin{lem}\label{lem:KRS}
For all Remak $\Ops$-decomposition $\mathcal{H}$ and all 
direct $\Ops$-decomposition $\mathcal{K}$ of $G$,
\begin{enumerate}[(i)]
\item $\mathcal{H}M$ refines $\mathcal{K}M$ for all 
$(\Ops\cup G)$-subgroups $M\geq \zeta_1(G)$,
\item $\mathcal{H}\intersect M$ refines $\mathcal{K}\intersect M$ for all 
$(\Ops\cup G)$-subgroups $M \leq \gamma_2(G)$.
\end{enumerate}
\end{lem}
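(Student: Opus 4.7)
The plan is to invoke the Krull-Schmidt exchange theorem (\thmref{thm:KRS}). First I would build a Remak $\Ops$-decomposition $\mathcal{R}$ of $G$ that refines $\mathcal{K}$, by independently taking a Remak $\Ops$-refinement of each $K\in\mathcal{K}$ and unioning; this exists because $G$ is finite. Both $\mathcal{H}$ and $\mathcal{R}$ are Remak $\Ops$-decompositions of $G$, so \thmref{thm:KRS} applied with $\mathcal{X}=\mathcal{R}$ produces $\varphi\in\Aut_{\Ops\cup G}G$ with $\mathcal{R}\varphi=\mathcal{H}$ (equality holds because Remak decompositions have a common length). By \eqref{eq:central}, $g\varphi\equiv g\pmod{\zeta_1(G)}$ for every $g\in G$, and as the surrounding text notes, $\varphi$ also acts as the identity on $\gamma_2(G)$.

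For (i), the hypothesis $\zeta_1(G)\leq M$ upgrades the congruence to $R\varphi\equiv R\pmod M$ for every $R\in\mathcal{R}$, so $(R\varphi)M=RM$ and hence $\mathcal{H}M=\mathcal{R}M$. It then suffices to observe that $\mathcal{R}M$ refines $\mathcal{K}M$: each $R\in\mathcal{R}$ lies in a unique $K\in\mathcal{K}$, giving $RM\leq KM$, and the generation condition $KM=\langle RM:R\in\mathcal{R},R\leq K\rangle$ follows by pushing $K=\langle R:R\leq K\rangle$ up by $M$ (with any $R\leq M$ absorbed into the $M$ already present in the generators). Uniqueness of the containing $KM\in\mathcal{K}M$ reduces via the modular law (\lemref{lem:modular}) to the directness of $\mathcal{K}$.

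For (ii), since $\varphi$ fixes $\gamma_2(G)$ pointwise and $M\leq\gamma_2(G)$, for any $m\in R\varphi\cap M$ we have $m=m\varphi^{-1}\in R$ (because $M$ is $(\Ops\cup G)$-invariant, so $m\varphi^{-1}\in M$ is fixed by $\varphi$), yielding $R\varphi\cap M=R\cap M$ and therefore $\mathcal{H}\cap M=\mathcal{R}\cap M$. Uniqueness in the refinement of $\mathcal{K}\cap M$ by $\mathcal{R}\cap M$ follows from the directness of $\mathcal{K}$: if $1\neq R\cap M$ were contained in both $K\cap M$ and $K'\cap M$ with $K\neq K'$, then $R\cap M\leq K\cap K'=1$, a contradiction. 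The main obstacle I anticipate is the generation half $K\cap M=\langle R\cap M:R\in\mathcal{R},R\leq K\rangle$; this I plan to handle by combining \lemref{lem:induced}(i) applied to $K$ with the identity $K\cap\gamma_2(G)=\gamma_2(K)=\prod_{R\leq K}\gamma_2(R)$, which holds because $K$ is a direct $\Ops$-factor of $G$ and the factors in its Remak decomposition pairwise commute.
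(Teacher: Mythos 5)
Your proof follows the same route as the paper's: take a Remak $\Ops$-refinement $\mathcal{R}$ of $\mathcal{K}$, use \thmref{thm:KRS} to produce a central $\varphi\in\Aut_{\Ops\cup G}G$ with $\mathcal{R}\varphi=\mathcal{H}$, and then use \eqref{eq:central} together with the observation that such $\varphi$ restricts to the identity on $\gamma_2(G)$ to obtain $\mathcal{H}M=\mathcal{R}M$ when $\zeta_1(G)\leq M$ and $\mathcal{H}\cap M=\mathcal{R}\cap M$ when $M\leq\gamma_2(G)$. That is all the paper's (quite terse) proof records; the remaining step --- that $\mathcal{R}M$ refines $\mathcal{K}M$, and $\mathcal{R}\cap M$ refines $\mathcal{K}\cap M$, because $\mathcal{R}$ refines $\mathcal{K}$ --- is left implicit there.

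It is precisely where you try to supply that last step that a gap appears. Your plan for the generation half of (ii), deriving $K\cap M=\langle R\cap M : R\in\mathcal{R},\,R\leq K\rangle$ from $K\cap\gamma_2(G)=\gamma_2(K)=\prod_{R\leq K}\gamma_2(R)$, does not go through for an arbitrary $(\Ops\cup G)$-subgroup $M\leq\gamma_2(G)$: in $G=Q_8^3$ with coordinate factors $K_1,K_2,K_3$, take $\mathcal{R}=\{K_1,K_2,K_3\}$, $K=K_1K_2$, and the diagonal subgroup $M=\{(z,z,w):z,w\in\zeta_1(Q_8)\}\leq\gamma_2(G)$; then $K\cap M\neq 1$ while $K_1\cap M=K_2\cap M=1$, so the proposed generating set is empty. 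A related subtlety undercuts the appeal to the modular law for the uniqueness step in (i) once $M\neq\langle\mathcal{R}\cap M\rangle$. The unstated hypothesis that rescues both is that $M$ be $\Ops$-graded, so that $M=\langle\mathcal{R}\cap M\rangle$; then \lemref{lem:induced} hands you a direct $\Ops$-decomposition of $G/M$ (resp.\ of $M$) from which uniqueness and generation follow at once. Every place the paper invokes this lemma, $M$ is one of $\zeta_1(G)$, $\gamma_2(G)$, or a grader $\mathfrak{X}(G)$ --- all graded --- so nothing is lost in practice, but as written your argument overclaims and the generation plan for (ii) in particular should be abandoned in favor of the graded-$M$ reduction.
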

\begin{proof} Let $\mathcal{T}$ be a Remak $\Ops$-decomposition of $G$ which refines
$\mathcal{H}$.  By \thmref{thm:KRS}, there is a $\varphi\in \Aut_{\Ops\cup G} G$ such that
$\mathcal{R}\varphi=\mathcal{T}$.  Form \eqref{eq:central} it follows that
$\mathcal{R}\zeta_1(G)=\mathcal{R}\zeta_1(G)\varphi=\mathcal{T}\zeta_1(G)$ and
$\mathcal{R}\cap \gamma_2(G)=(\mathcal{R}\cap\gamma_2(G))\varphi=\mathcal{T}\cap\gamma_2(G)$.
\end{proof}

\begin{thm}\label{thm:Lift-Extend}
Given the commutative diagram in \figref{fig:LIFT-EXT} which is exact and $\Ops$-graded 
in all rows and all columns, the following hold.
\begin{figure}
\begin{equation*}
\xymatrix{
		& 				  &   1	     &	1 & \\
1\ar[r] & K\ar[r]^{i} & G \ar[r]^q\ar[u] & Q\ar[r]\ar[u] & 1\\
1\ar[r] & \hat{K}\ar[r]^{\hat{i}}\ar[u]^{j} & G \ar[r]^{\hat{q}} \ar@{=}[u] 
	& \hat{Q}\ar[r]\ar[u]^{r} & 1\\
		& 	1\ar[u]			  &   1\ar[u]	     &	 & \\
}
\end{equation*}
\caption{A commutative diagram of $\Ops$-groups which is exact and $\Ops$-graded in all rows and all columns.}
\label{fig:LIFT-EXT}
\end{figure}
\begin{enumerate}[(i)]
\item
If $\zeta_1(\hat{Q})r=1$ then for every Remak $\Ops$-decomposition $\hat{\mathcal{Q}}$ of $\hat{Q}$ and 
every Remak $\Ops$-decomposition $\mathcal{H}$ of $G$, $\mathcal{Q}:=\hat{\mathcal{Q}}r$  refines 
$\mathcal{H}q$.
In particular, $\mathcal{H}$ lifts a partition of $\mathcal{Q}$ which is unique to $(G, i,q)$.

\item
If $\gamma_2(K)\leq \hat{K}j$ then for every Remak $(\Ops\cup G)$-decomposition $\mathcal{K}$ of $K$ and
every Remak  $\Ops$-decomposition $\mathcal{H}$ of $G$, $\mathcal{K}i\cap \hat{K}\hat{i}$
refines $\mathcal{H}\cap \left(\hat{K}\hat{i}\right)$.
In particular, $\mathcal{H}$ extends a partition of $\hat{\mathcal{K}}:=
(\mathcal{K}\cap \hat{K}j)j^{-1}$ which is unique to $(G,\hat{i},\hat{q})$.
\end{enumerate}
\end{thm}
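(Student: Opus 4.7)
The plan is to prove both items by invoking \lemref{lem:KRS} inside the appropriate subgroup or quotient of $G$ and then handle the uniqueness via \thmref{thm:KRS}. In either row the $\Ops$-graded hypothesis converts a Remak $\Ops$-decomposition $\mathcal{H}$ of $G$ into a direct $\Ops$-decomposition of the relevant subgroup or quotient, so the setup of \lemref{lem:KRS} applies; the hypotheses $\zeta_1(\hat{Q})r=1$ and $\gamma_2(K)\leq \hat{K}j$ play the role of the $\zeta_1$-, resp.\ $\gamma_2$-bound in \lemref{lem:KRS}(i), resp.\ \lemref{lem:KRS}(ii).

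For (i), the middle row's $\Ops$-gradedness makes $\mathcal{H}\hat{q}$ a direct $\Ops$-decomposition of $\hat{Q}$. Set $M := \ker r$; commutativity of the diagram identifies $\hat{Q}/M$ with $Q$ via $r$, and the hypothesis $\zeta_1(\hat{Q})r=1$ reads as $\zeta_1(\hat{Q})\leq M$. Applying \lemref{lem:KRS}(i) inside $\hat{Q}$ to the Remak $\hat{\mathcal{Q}}$ and the direct $\mathcal{H}\hat{q}$, I obtain that $\hat{\mathcal{Q}}M$ refines $(\mathcal{H}\hat{q})M$, and projecting along $r$ gives $\mathcal{Q}$ refines $\mathcal{H}q$. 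For uniqueness, any second Remak $\hat{\mathcal{Q}}'$ of $\hat{Q}$ is exchanged with $\hat{\mathcal{Q}}$ by some $\varphi\in \Aut_{\Ops\cup \hat{Q}}\hat{Q}$ (\thmref{thm:KRS}); by \eqref{eq:central} this $\varphi$ is the identity modulo $\zeta_1(\hat{Q})\leq \ker r$, so $\hat{\mathcal{Q}}'r = \hat{\mathcal{Q}}r$, and the induced partition of $\mathcal{Q}$ depends only on the data $(G,i,q)$.

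For (ii), the approach is dual. The top row's $\Ops$-gradedness gives $\mathcal{H}\cap (Ki)$ as a direct $(\Ops\cup G)$-decomposition of $Ki$; pulled back via $i$ to $K$ it refines to some Remak $(\Ops\cup G)$-decomposition $\mathcal{T}$ of $K$. By \thmref{thm:KRS} inside $K$ there is a central $\psi\in \Aut_{\Ops\cup K}K$ with $\mathcal{K}\psi = \mathcal{T}$. By \eqref{eq:central} together with the hypothesis $\gamma_2(K)\leq \hat{K}j$, such $\psi$ is the identity on $\gamma_2(K)$ and restricts to a central $(\Ops\cup \hat{K}j)$-automorphism of $\hat{K}j$ (using that $\zeta_1(K)\cap \hat{K}j \leq \zeta_1(\hat{K}j)$). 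The equality $(\mathcal{K}\cap \hat{K}j)\psi = \mathcal{T}\cap \hat{K}j$ refines $\mathcal{H}\cap \hat{K}j$ because $\mathcal{T}$ refines $\mathcal{H}\cap K$, and transporting back along $\psi|_{\hat{K}j}$ (which permutes direct factors of $\hat{K}j$ only within their Remak equivalence class) yields the claimed refinement after identifying via $\hat{i}$. Uniqueness of the partition of $\hat{\mathcal{K}}$ follows by applying \thmref{thm:KRS} to two Remak $\Ops$-decompositions of $G$ and restricting to $\hat{K}\hat{i}$.

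The principal obstacle I anticipate is in (ii): the central $\psi$ produced by \thmref{thm:KRS} is the identity on $\gamma_2(K)$ but not, in general, on all of $\hat{K}j$, so the refinement statement does not pass formally from $(\mathcal{K}i\cap \hat{K}\hat{i})\psi$ to $\mathcal{K}i\cap \hat{K}\hat{i}$ itself. The hypothesis $\gamma_2(K)\leq \hat{K}j$ is precisely what promotes $\psi|_{\hat{K}j}$ to a central $(\Ops\cup \hat{K}j)$-automorphism of $\hat{K}j$, so the two candidate partitions induced on $\mathcal{K}i\cap \hat{K}\hat{i}$ by $\mathcal{H}\cap \hat{K}\hat{i}$---the one via $\psi$ and the direct one---agree, and this agreement supplies both the refinement and the uniqueness claimed in the theorem.
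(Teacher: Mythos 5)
Your proof of (i) is correct and is essentially the paper's argument: both reduce to \lemref{lem:KRS}(i) applied inside $\hat{Q}$; you apply it once directly to $\hat{\mathcal{Q}}$ against $\mathcal{H}\hat{q}$ with $M=\ker r\geq\zeta_1(\hat{Q})$, while the paper first introduces an intermediate Remak decomposition $\mathcal{T}$ of $\hat{Q}$, but the content is the same.

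For (ii) there is a genuine gap, and you diagnosed its location accurately but the repair offered does not close it. The central exchange $\psi$ from \thmref{thm:KRS} is the identity on $\gamma_2(K)$ but not on all of $\hat{K}j$, so $(\mathcal{K}i\cap\hat{K}\hat{i})\psi=\mathcal{T}\cap\hat{K}\hat{i}$ does not force $\mathcal{K}i\cap\hat{K}\hat{i}=\mathcal{T}\cap\hat{K}\hat{i}$. Your patch argues that because $\psi|_{\hat{K}j}$ is a central $(\Ops\cup\hat{K}j)$-automorphism of $\hat{K}j$, the two induced partitions must agree; but a central automorphism of $\hat{K}j$ is exactly the kind of map that is \emph{allowed} to carry one Remak $\Ops$-decomposition of $\hat{K}j$ to a different one, so centrality there does not pin down the partition. (It is also not clear that $\psi$ maps $\hat{K}j$ into $\hat{K}j$ rather than $\hat{K}j\,\zeta_1(K)$, and $\psi$ should be taken in $\Aut_{\Ops\cup G\cup K}K$ rather than $\Aut_{\Ops\cup K}K$ since you are exchanging $(\Ops\cup G)$-decompositions.) Concretely: take $\Ops=\emptyset$, $K=G=\mathbb{Z}_4\times\mathbb{Z}_2=\langle a\rangle\times\langle b\rangle$, $\hat{K}=\Omega_1(G)$, top row $1\to G\to G\to 1\to 1$, bottom row $1\to\Omega_1(G)\to G\to G/\Omega_1(G)\to 1$. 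The diagram is exact and graded and the printed hypothesis $\gamma_2(K)\leq\hat{K}j$ holds vacuously, yet $\mathcal{K}=\{\langle a\rangle,\langle b\rangle\}$ and $\mathcal{H}=\{\langle a\rangle,\langle a^2b\rangle\}$ give $\mathcal{K}\cap\Omega_1(G)=\{\langle a^2\rangle,\langle b\rangle\}$ and $\mathcal{H}\cap\Omega_1(G)=\{\langle a^2\rangle,\langle a^2b\rangle\}$, and neither refines the other.

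The resolution is that the hypothesis in (ii) is printed with the containment reversed: it should read $\hat{K}j\leq\gamma_2(K)$, the exact dual of (i)'s $\zeta_1(\hat{Q})\leq\ker r$. \corref{coro:Match-perfect-centerless} confirms this reading, since with $\hat{K}j=K$ it demands $K=\gamma_2(K)$, i.e.\ $\hat{K}j\leq\gamma_2(K)$, not the vacuous $\gamma_2(K)\leq K$. With the corrected hypothesis, $\hat{K}\hat{i}\leq\gamma_2(Ki)$ puts $\hat{K}\hat{i}$ in the range where \lemref{lem:KRS}(ii) applies verbatim, and the proof is the strict dual of (i): for a Remak $\mathcal{T}$ of $Ki$ refining $\mathcal{H}\cap Ki$, \lemref{lem:KRS}(ii) gives $\mathcal{K}i\cap\hat{K}\hat{i}=\mathcal{T}\cap\hat{K}\hat{i}$, which refines $\mathcal{H}\cap\hat{K}\hat{i}$. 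That is what the paper's proof does, and why it cites \lemref{lem:KRS}(ii) rather than running a cocycle argument on $\psi|_{\hat{K}j}$.
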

\begin{proof}
Fix a Remak $\Ops$-decomposition $\mathcal{H}$ of $G$.

As $\hat{K}$ and $K$ are $\Ops$-graded, it follows that $\mathcal{H}\hat{q}$ is 
a direct $\Ops$-decompositions of $\hat{Q}$ (\lemref{lem:induced}(ii)).  
Let $\mathcal{T}$ be a Remak $\Ops$-decomposition of $\hat{Q}$ which refines
$\mathcal{H}\hat{q}$.  By \lemref{lem:KRS}(i), $\hat{\mathcal{Q}}\zeta_1(\hat{Q})
=\mathcal{T}\zeta_1(\hat{Q})$ and so $\hat{\mathcal{Q}}r = \mathcal{T}r$.  Therefore, 
$\mathcal{Q}:=\hat{\mathcal{Q}}r$ refines $\mathcal{H}\hat{q}r=\mathcal{H}q$.  That proves (i).

To prove (ii), by \lemref{lem:induced}(i) we have that  
$\mathcal{H}\cap (Ki)$ is a direct $(\Ops\cup G)$-decompositions of $Ki$. 
Let $\mathcal{T}$ be a Remak $(\Ops\cup G)$-decomposition
of $Ki$ which refines $\mathcal{H}\cap (Ki)$.  By \lemref{lem:KRS}(ii), 
$\hat{\mathcal{K}}=\mathcal{K}i\cap (\hat{K}\hat{i})=\mathcal{T}\cap \left(\hat{K}\hat{i}\right)$.
Therefore, $\mathcal{K}i \cap \left(\hat{K}\hat{i}\right)$ refines 
$\mathcal{H}\cap \left(\hat{K}\hat{i}\right)$.
\end{proof}

\thmref{thm:Lift-Extend} implies the following special setting where the match problem can be answered.
This is the only instance we know where the matching problem can be solved without considering
the cohomology of the extension.

\begin{coro}\label{coro:Match-perfect-centerless}
If $1\to K\to G\to Q\to 1$ is a $\Ops$-graded short exact sequence where
$K=\gamma_2(K)$ and $\zeta_1(Q)=1$; then for every 
Remak $(\Ops\cup G)$-decomposition $\mathcal{K}$ of $K$ and $\mathcal{Q}$ of $Q$,
there are partitions $[\mathcal{K}]$ and $[\mathcal{Q}]$ unique to the short exact sequence
such that every Remak $\Ops$-decomposition $\mathcal{H}$ of $G$ matches 
$([\mathcal{K}],[\mathcal{Q}])$.
\end{coro}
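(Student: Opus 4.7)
The strategy is to apply \thmref{thm:Lift-Extend} to the diagram of Figure~\ref{fig:LIFT-EXT} in its degenerate form: take $\hat{K}=K$, $\hat{Q}=Q$, $\hat{i}=i$, $\hat{q}=q$, and let both $j$ and $r$ be identity maps. Every row and every column of the resulting diagram is then either the given sequence $1\to K\to G\to Q\to 1$ or a trivial identification, so all rows and columns are exact and $\Ops$-graded.

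Next I would verify the hypotheses of both halves of \thmref{thm:Lift-Extend}. For part~(i) one needs $\zeta_1(\hat{Q})r=1$; since $r$ is the identity this is exactly the standing hypothesis $\zeta_1(Q)=1$. For part~(ii) one needs $\gamma_2(K)\leq \hat{K}j=K$, which is automatic and is in fact saturated by the hypothesis $K=\gamma_2(K)$. Fix now any Remak $\Ops$-decomposition $\mathcal{H}$ of $G$. Applying \thmref{thm:Lift-Extend}(i), $\mathcal{Q}=\mathcal{Q}r$ refines $\mathcal{H}q$; grouping the members of $\mathcal{Q}$ according to which factor of $\mathcal{H}q$ they sit inside yields a partition of $\mathcal{Q}$ which, by the uniqueness clause of the theorem, depends only on the short exact sequence and not on $\mathcal{H}$. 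I call that partition $[\mathcal{Q}]$. In the same way \thmref{thm:Lift-Extend}(ii) produces a partition $[\mathcal{K}]$ of $\mathcal{K}$, obtained by grouping the members of $\mathcal{K}i$ according to which factor of $\mathcal{H}\cap(Ki)$ contains them, and this partition too is a function of the sequence alone.

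To close, for any Remak $\Ops$-decomposition $\mathcal{H}$ of $G$, the $\Ops$-grading hypothesis together with \lemref{lem:induced}(i)--(ii) says that $\mathcal{H}\cap(Ki)$ is a direct $(\Ops\cup G)$-decomposition of $Ki$ and $\mathcal{H}q$ is a direct $\Ops$-decomposition of $Q$. The two refinement statements above then identify these decompositions with the coarsenings of $\mathcal{K}i$ and of $\mathcal{Q}$ through $[\mathcal{K}]$ and $[\mathcal{Q}]$ respectively. That is exactly the statement that $\mathcal{H}$ matches the partitioned pair $([\mathcal{K}],[\mathcal{Q}])$, and since $[\mathcal{K}]$ and $[\mathcal{Q}]$ were shown to be intrinsic to the sequence, the conclusion follows.

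The only nontrivial content is the uniqueness of the partitions, and that is already packaged inside \thmref{thm:Lift-Extend}; it is the part that rests on the ``Krull--Schmidt'' theorem via \lemref{lem:KRS}, which in turn exploits the fact that an $(\Ops\cup G)$-automorphism fixes $\gamma_2(G)$ pointwise and acts trivially modulo $\zeta_1(G)$. Once that is in hand, the collapse of the two rows of Figure~\ref{fig:LIFT-EXT} into one short exact sequence makes the match problem trivially solvable, because the lifting data for $Q$ and the extending data for $K$ are then read off \emph{from the sequence itself}, with no cohomological obstruction in sight.
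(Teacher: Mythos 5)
Your strategy---collapsing \figref{fig:LIFT-EXT} to the degenerate diagram with $\hat{K}=K$, $\hat{Q}=Q$ and $j=r$ identities, then reading the two refinement statements of \thmref{thm:Lift-Extend} back into the definition of a match---is exactly the intended route; the paper offers no argument for this corollary beyond citing \thmref{thm:Lift-Extend}, so you are in effect writing out the same proof. The check for part~(i) is fine: with $r$ the identity, $\zeta_1(\hat{Q})r=1$ becomes $\zeta_1(Q)=1$.

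The hypothesis check for part~(ii), however, has a genuine gap. You note that $\gamma_2(K)\leq\hat{K}j=K$ is ``automatic,'' and then offer only that the inclusion is ``saturated'' by $K=\gamma_2(K)$. But if the relevant condition were truly automatic in this specialization, the corollary would hold without assuming $K$ perfect, and it does not: take $G=K=\mathbb{Z}_p\times\mathbb{Z}_p$ and $Q=1$. The other hypotheses hold, yet no fixed partition of a fixed Remak decomposition $\mathcal{K}$ of $K$ is matched by every Remak decomposition $\mathcal{H}$ of $G$, since the Remak decompositions of $G=K$ (pairs of independent lines) vary freely while $\mathcal{H}\cap(Ki)=\mathcal{H}$. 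The resolution is that the inclusion displayed in \thmref{thm:Lift-Extend}(ii) is written in the opposite direction from what its proof uses. In that proof, \lemref{lem:KRS}(ii) is applied inside $Ki$ to conclude $\mathcal{K}i\cap\hat{K}\hat{i}=\mathcal{T}\cap\hat{K}\hat{i}$ for two Remak $(\Ops\cup G)$-decompositions $\mathcal{K}i$ and $\mathcal{T}$ of $Ki$; \lemref{lem:KRS}(ii) requires the subgroup being intersected to sit below the derived subgroup, i.e.\ $\hat{K}\hat{i}\leq\gamma_2(Ki)=\gamma_2(K)i$, which unwinds to $\hat{K}j\leq\gamma_2(K)$. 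Specializing $\hat{K}=K$ gives $K\leq\gamma_2(K)$, hence $K=\gamma_2(K)$. That is precisely where the corollary's first hypothesis does its work, and your proof, as written, never actually invokes it.
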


\subsection{Direct classes, and separated and refined decompositions}\label{sec:direct-class}
In this section we begin our work to consider the extension, lifting, and matching problems
in a constructive fashion.  We introduce classes of groups which are
closed to direct products and direct decompositions and show how to use these classes to
control the exchange of direct factors.

\begin{defn}
A class $\mathfrak{X}$ (or $\mathfrak{X}^{\Ops}$ if context demands) 
of $\Ops$-groups is \emph{direct} if $1\in\mathfrak{X}$,
and $\mathfrak{X}$ is closed to $\Ops$-isomorphisms, as well as the following:
\begin{enumerate}[(i)]
\item if $G\in\mathfrak{X}$ and $H$ is a direct $\Ops$-factor of $G$, then $H\in\mathfrak{X}$,
and 
\item if $H,K\in\mathfrak{X}$ then $H\times K\in\mathfrak{X}$.
\end{enumerate}
\end{defn}

Every variety of $\Ops$-groups is a direct class by \thmref{thm:BK} and to specify the finite
groups in a direct class it is sufficient to specify the directly $\Ops$-indecomposable group
it contains.  However, in practical terms there are few settings where the directly
$\Ops$-indecomposable groups are known.

\begin{defn}
A direct $\Ops$-decomposition $\mathcal{H}$ is \emph{$\mathfrak{X}$-separated} if
for each $H\in\mathcal{H}-\mathfrak{X}$, if $H$ has a direct $\Ops$-factor
$K$, then $K\notin\mathfrak{X}$.  If additionally every member of 
$\mathcal{H}\cap \mathfrak{X}$ is directly $\Ops$-indecomposable, then
$\mathcal{H}$ is \emph{$\mathfrak{X}$-refined}.
\end{defn}

\begin{prop}\label{prop:direct-class}
Suppose that $\mathfrak{X}$ is a direct class of $\Ops$-groups, $G$ an $\Ops$-group,
and $\mathcal{H}$ a direct $\Ops$-decomposition of $G$.  The following hold.
\begin{enumerate}[(i)]
\item $\langle\mathcal{H}\cap\mathfrak{X}\rangle\in\mathfrak{X}$.

\item
If $\mathcal{H}$ is $\mathfrak{X}$-separated and $\mathcal{K}$ is a direct $\Ops$-decomposition of $G$ which refines $\mathcal{H}$, then $\mathcal{K}$ is $\mathfrak{X}$-separated.

\item $\mathcal{H}$ is a $\mathfrak{X}$-separated
if, and only if, $\{\langle\mathcal{H}-\mathfrak{X}\rangle,
\langle\mathcal{H}\cap\mathfrak{X}\rangle\}$ is $\mathfrak{X}$-separated.

\item Every Remak $\Ops$-decomposition is $\mathfrak{X}$-refined.

\item If $\mathcal{H}$ and $\mathcal{K}$ are $\mathfrak{X}$-separated direct
$\Ops$-decompositions of $G$ then 
$(\mathcal{H}-\mathfrak{X})\sqcup (\mathcal{K}\cap\mathfrak{X})$ is an 
$\mathfrak{X}$-separated direct $\Ops$-decomposition of $G$.
\end{enumerate}
\end{prop}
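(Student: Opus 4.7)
The plan splits into two tiers. Parts (i), (ii), and (iv) are direct closure arguments, (iii) uses \thmref{thm:KRS} in a mild way, and (v) is where the real work lies.

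For (i), the members of $\mathcal{H}\cap\mathfrak{X}$ pairwise intersect trivially inside $\mathcal{H}$, so $\langle\mathcal{H}\cap\mathfrak{X}\rangle$ is an internal direct product of its members, each in $\mathfrak{X}$; induction on $|\mathcal{H}\cap\mathfrak{X}|$ using closure of $\mathfrak{X}$ under direct products (and $1\in\mathfrak{X}$) yields the claim. For (ii), any $K\in\mathcal{K}-\mathfrak{X}$ sits in a unique $H\in\mathcal{H}$ as a direct factor (from the refinement equation), and closure of $\mathfrak{X}$ under direct factors forces $H\notin\mathfrak{X}$; then every direct factor of $K$ is a direct factor of $H$, hence outside $\mathfrak{X}$ by $\mathfrak{X}$-separation of $\mathcal{H}$. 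For (iv), elements of a Remak decomposition are directly $\Ops$-indecomposable, so the only direct factor of each is itself, and both conditions of $\mathfrak{X}$-refinement are trivially met.

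For (iii), the $(\Leftarrow)$ direction is immediate: every $H\in\mathcal{H}-\mathfrak{X}$ is a direct factor of $A:=\langle\mathcal{H}-\mathfrak{X}\rangle$, so $A\notin\mathfrak{X}$, and direct factors of $H$ are direct factors of $A$, hence outside $\mathfrak{X}$ by hypothesis. For $(\Rightarrow)$, given a direct factor $L$ of $A$, I would compare two Remak decompositions of $A$: one built by refining each $H\in\mathcal{H}-\mathfrak{X}$ (whose factors all lie outside $\mathfrak{X}$ by $\mathfrak{X}$-separation of $\mathcal{H}$), and one built from Remak decompositions of $L$ and a direct complement. \thmref{thm:KRS} matches these via a central $\Ops$-automorphism, which preserves $\mathfrak{X}$-membership; if $L\in\mathfrak{X}$ then its (nonempty, since $L\neq 1$) Remak factors would map into the first decomposition, contradicting that none of those lie in $\mathfrak{X}$.

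For the main step (v), set $A=\langle\mathcal{H}-\mathfrak{X}\rangle$, $B=\langle\mathcal{H}\cap\mathfrak{X}\rangle$, $C=\langle\mathcal{K}-\mathfrak{X}\rangle$, $D=\langle\mathcal{K}\cap\mathfrak{X}\rangle$, so $G=A\times B=C\times D$ with $B,D\in\mathfrak{X}$ by (i). Refine $\mathcal{H}$ and $\mathcal{K}$ to Remak decompositions $\mathcal{R}$ and $\mathcal{S}$. Using (ii) and closure of $\mathfrak{X}$ under direct factors, an $R\in\mathcal{R}$ lies in $\mathfrak{X}$ if and only if its parent $H\in\mathcal{H}$ does; consequently $\langle\mathcal{R}\cap\mathfrak{X}\rangle=B$, $\langle\mathcal{R}-\mathfrak{X}\rangle=A$, and likewise for $\mathcal{S}$. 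Apply \thmref{thm:KRS} with $\mathcal{X}=\mathcal{R}\cap\mathfrak{X}$ and target $\mathcal{S}$: a central $\Ops$-automorphism $\varphi$ appears, fixing $A$ pointwise and sending $\mathcal{R}\cap\mathfrak{X}$ into $\mathcal{S}$. Since $\varphi$ preserves $\mathfrak{X}$-membership, the image lies in $\mathcal{S}\cap\mathfrak{X}$; Krull-Schmidt uniqueness of the Remak multiset gives $|\mathcal{R}\cap\mathfrak{X}|=|\mathcal{S}\cap\mathfrak{X}|$, so the containment is actually equality and $B\varphi=D$. Then $A\cap D=A\varphi\cap B\varphi=(A\cap B)\varphi=1$ and $AD=(AB)\varphi=G$, so $G=A\times D$; $\mathfrak{X}$-separation carries over from $\mathcal{H}$ because the new decomposition's non-$\mathfrak{X}$ members are precisely $\mathcal{H}-\mathfrak{X}$. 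The main obstacle is the cardinality upgrade from $B\varphi\subseteq D$ to $B\varphi=D$: it pivots on the fact that $\mathfrak{X}$-membership is an $\Ops$-isomorphism invariant (from $\mathfrak{X}$ being closed to $\Ops$-isomorphic images), combined with Krull-Schmidt uniqueness of the Remak multiset. Everything else reduces to bookkeeping with the closure properties.
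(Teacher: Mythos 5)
Your proof is correct and follows essentially the same approach as the paper: the easy parts (i), (ii), (iv) are handled by the same closure observations, part (iii) by the same Krull-Schmidt exchange into a Remak refinement of $\mathcal{H}-\mathfrak{X}$, and part (v) by the same idea of exchanging $\mathcal{R}\cap\mathfrak{X}$ into a Remak refinement $\mathcal{S}$ of $\mathcal{K}$ and upgrading the containment $\mathcal{U}\varphi\subseteq\mathcal{S}\cap\mathfrak{X}$ to equality via the count $|\mathcal{R}\cap\mathfrak{X}|=|\mathcal{S}\cap\mathfrak{X}|$ coming from transitivity in \thmref{thm:KRS}. The only cosmetic difference is that the paper closes (v) by observing $\mathcal{R}\varphi$ refines $(\mathcal{H}-\mathfrak{X})\sqcup(\mathcal{K}\cap\mathfrak{X})$, whereas you pass to the groups $A$ and $D$ and deduce $G=A\times D$, leaving implicit the routine check that the two constituent decompositions combine into a single direct $\Ops$-decomposition of $G$; either route is fine.
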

\begin{proof}
First, (i) follows as $\mathfrak{X}$ is closed to direct $\Ops$-products. 

For (ii), notice that a direct $\Ops$-factor of a $K\in \mathcal{K}$ is also a 
direct $\Ops$-factor of the unique $H\in\mathcal{H}$ where $K\leq H$.

For (iii), the reverse direction follows from (ii).  For the forward direction, 
let $K$ be a direct $\Ops$-factor of $\langle \mathcal{H}-\mathfrak{X}\rangle$.  
Because $\mathfrak{X}$ is closed to 
direct $\Ops$-factors, if $K\in\mathfrak{X}$ then so is every directly 
$\Ops$-indecomposable direct $\Ops$-factor of $K$, and so we insist that $K$ is
directly $\Ops$-indecomposable.  Therefore $K$ lies in a Remak $\Ops$-decomposition
of $\langle \mathcal{H}-\mathfrak{X}\rangle$.  Let $\mathcal{R}$ be
a Remak $\Ops$-decomposition of $\langle \mathcal{H}-\mathfrak{X}\rangle$
which refines $\mathcal{H}-\mathfrak{X}$.  By \thmref{thm:KRS} there
is a $\varphi\in \Aut_{\Ops\cup G} \langle \mathcal{H}-\mathfrak{X}\rangle$ 
such that $K\varphi\in \mathcal{R}$ and so $K\varphi$ is a direct 
$\Ops$-factor of the unique $H\in\mathcal{H}$ where $K\varphi\leq H$.  
As $\mathcal{H}$ is $\mathfrak{X}$-separated
and $K\varphi$ is a direct $\Ops$-factor of $H\in\mathcal{H}$, it follows
that $K\varphi \notin\mathfrak{X}$.  Thus, $K\notin\mathfrak{X}$ and 
$\{\langle\mathcal{H}-\mathfrak{X}\rangle,
\langle\mathcal{H}\cap\mathfrak{X}\rangle\}$ is $\mathfrak{X}$-separated.

For (iv), note that elements of a Remak $\Ops$-decomposition have no proper
direct $\Ops$-factors.

Finally for (v), let $\mathcal{R}$ and $\mathcal{T}$ be a Remak $\Ops$-decompositions 
of $G$ which refine $\mathcal{H}$ and $\mathcal{K}$ respectively.
Set $\mathcal{U}=\{R\in\mathcal{R}: R\leq \langle \mathcal{H}\cap \mathfrak{X}\rangle\}$.
By \thmref{thm:KRS} there is a $\varphi\in\Aut_{\Ops\cup G} G$ such that
$\mathcal{U}\varphi\subseteq \mathcal{T}$ and $\mathcal{R}\varphi
=(\mathcal{R}-\mathcal{U})\sqcup \mathcal{U}\varphi$.  As $\mathfrak{X}$ is closed 
to isomorphisms, it follows that $\mathcal{U}\varphi\subseteq\mathcal{T}\cap\mathfrak{X}$.  
As $\mathcal{H}$ is $\mathfrak{X}$-separated, $\mathcal{U}=\mathcal{R}\cap\mathfrak{X}$.
As $\Aut_{\Ops\cup G} G$ is transitive on the set of all Remak $\Ops$-decompositions
of $G$ (\thmref{thm:KRS}), we have that 
$|\mathcal{T}\cap\mathfrak{X}|=|\mathcal{R}\cap\mathfrak{X}|=|\mathcal{U}\varphi|$.
In particular, $\mathcal{U}\varphi=\mathcal{T}\cap\mathfrak{X}=
\{T\in\mathcal{T}: T\leq \langle\mathcal{K}\cap\mathfrak{X}\rangle\}$.  Hence,
$\mathcal{R}\varphi$ refines $(\mathcal{H}-\mathfrak{X})\sqcup (\mathcal{K}\cap \mathfrak{X})$
and so the latter is a direct $\Ops$-decomposition.
\end{proof}

\subsection{Up grades and down grades}\label{sec:direct-ext}
Here we introduce a companion subgroup to a direct class $\mathfrak{X}$ of $\Ops$-groups.  
These groups specify the kernels we consider in the problems of extending and lifting
in concrete settings.

\begin{defn}\label{def:grader}
An \emph{up $\Ops$-grader} (resp. \emph{down $\Ops$-grader}) for a direct class $\mathfrak{X}$ 
of $\Ops$-groups is a function $G\mapsto \mathfrak{X}(G)$ of finite $\Ops$-groups $G$ where 
$\mathfrak{X}(G)\in\mathfrak{X}$ (resp. $G/\mathfrak{X}(G)\in\mathfrak{X}$) and such that the 
following hold.
\begin{enumerate}[(i)]
	\item If $G\in\mathfrak{X}$ then $\mathfrak{X}(G)=G$ (resp. $\mathfrak{X}(G)=1$).
	\item $\mathfrak{X}(G)$ is an $\Ops$-graded subgroup of $G$.
	\item For direct $\Ops$-factor $H$ of $G$, $\mathfrak{X}(H)=H\cap \mathfrak{X}(G)$.
\end{enumerate}
The pair $(\mathfrak{X},G\mapsto \mathfrak{X}(G))$ is an up/down \emph{$\Ops$-grading pair}.
\end{defn}

If $(\mathfrak{X},G\mapsto\mathfrak{X}(G))$ is an $\Ops$-grading pair then we have 
$\mathfrak{X}(H\times K)=\mathfrak{X}(H)\times \mathfrak{X}(K)$.
First we concentrate on general and useful instances of grading pairs.
\begin{prop}\label{prop:V-inter-1}
The marginal subgroup of a variety of $\Ops$-groups is an up $\Ops$-grader and the 
verbal subgroup is a down $\Ops$-grader for the variety.
\end{prop}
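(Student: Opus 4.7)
The plan is to reduce both parts of the proposition to a single distributivity statement: for every direct $\Ops$-product $G=H\times K$ of finite $\Ops$-groups, $\mathfrak{V}(H\times K)=\mathfrak{V}(H)\times \mathfrak{V}(K)$ and $\mathfrak{V}^*(H\times K)=\mathfrak{V}^*(H)\times \mathfrak{V}^*(K)$. A straightforward induction will then extend these equalities to arbitrary finite direct $\Ops$-decompositions $\mathcal{H}$ of $G$. Combined with the radical/idempotent identities recorded after \eqref{eq:marginal-closure} and with \lemref{lem:modular}, distributivity will supply every condition in \defref{def:grader} for both $\mathfrak{V}^*$ (as up $\Ops$-grader) and $\mathfrak{V}$ (as down $\Ops$-grader).

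Verbal distributivity is immediate: writing any $f\in (H\times K)^\XX$ uniquely as $f=f_Hf_K$ with $f_H\in H^\XX$, $f_K\in K^\XX$, the commutativity $[H,K]=1$ makes the evaluation multiplicative, $w(f)=w(f_H)w(f_K)$, so $\mathfrak{V}(G)\leq \mathfrak{V}(H)\mathfrak{V}(K)$, and the reverse inclusion is trivial. For the marginal, the containment $\mathfrak{V}^*(H)\times \mathfrak{V}^*(K)\leq \mathfrak{V}^*(G)$ will follow from the same multiplicativity, applied to tuples $f'\in \langle g_H\rangle^\XX\subseteq H^\XX$. The main obstacle is the reverse direction: given $g=(h,k)\in \mathfrak{V}^*(G)$, I will specialize the defining condition $w(ff')=w(f)$ to tuples $f$ lying entirely in $H$ (respectively entirely in $K$) and to $f'\in \langle g\rangle^\XX$; expanding each side via multiplicativity and comparing $H$- and $K$-coordinates separately pins down the marginality of $h$ in $H$ and of $k$ in $K$.

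Once distributivity is established, the grader axioms are routine. Axiom (iii) --- $\mathfrak{V}(H)=H\cap \mathfrak{V}(G)$, and similarly for $\mathfrak{V}^*$, when $G=H\times K$ --- follows from the modular law applied to $H\cap(\mathfrak{V}(H)\mathfrak{V}(K))$, using $H\cap \mathfrak{V}(K)=1$. Axiom (ii) (that $\mathfrak{V}(G)$ and $\mathfrak{V}^*(G)$ are $\Ops$-graded subgroups) then reads $\mathfrak{V}(G)=\langle \mathcal{H}\cap \mathfrak{V}(G)\rangle$ for every direct $\Ops$-decomposition $\mathcal{H}$; by (iii), $\mathcal{H}\cap \mathfrak{V}(G)=\{\mathfrak{V}(H):H\in\mathcal{H}\}-\{1\}$, and iterating distributivity identifies its join with $\mathfrak{V}(G)$. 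Axiom (i) is precisely \eqref{eq:verbal-closure} and \eqref{eq:marginal-closure}. The variety-membership conditions $G/\mathfrak{V}(G)\in\mathfrak{V}$ and $\mathfrak{V}^*(G)\in\mathfrak{V}$ follow respectively from the radical property $\mathfrak{V}(G/\mathfrak{V}(G))=1$ and from $\mathfrak{V}^*(\mathfrak{V}^*(G))=\mathfrak{V}^*(G)$ together with subgroup-closure of $\mathfrak{V}$ and \eqref{eq:marginal-closure}. Everything takes place in the $\Ops$-category because operators act as automorphisms, so marginal subgroups (being characteristic) are automatically $\Ops$-invariant and projections onto direct $\Ops$-factors are $\Ops$-homomorphisms.
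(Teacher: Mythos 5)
Your proposal is correct and mirrors the paper's proof: both establish the distributivity $\mathfrak{V}(H\times K)=\mathfrak{V}(H)\times\mathfrak{V}(K)$ and $\mathfrak{V}^*(H\times K)=\mathfrak{V}^*(H)\times\mathfrak{V}^*(K)$ via the unique factorization $f=f_Hf_K$ and multiplicativity of $w$, then induct and read off the grader axioms. The only cosmetic difference is in the marginal case: you specialize $f$ to $H^{\XX}$ and $K^{\XX}$ separately to extract the two inclusions, whereas the paper keeps $f$ general and observes that $w(ff')=w(f)$ is equivalent, coordinate-wise, to the pair of conditions $w(f_Hf'_H)=w(f_H)$ and $w(f_Kf'_K)=w(f_K)$, giving both inclusions at once.
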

\begin{proof} 
Let $\mathfrak{V}=\mathfrak{V}^{\Ops}$ be a variety of $\Ops$-groups with
defining laws $\WW$ and fix an $\Ops$-group $G$.  As the marginal function is idempotent,
\eqref{eq:marginal-closure} implies that $\mathfrak{V}^*(G)\in\mathfrak{V}$ and 
that if $G\in\mathfrak{V}$ then $G=\mathfrak{V}^*(G)$.  Similarly,
verbal subgroups are radical so that by \eqref{eq:verbal-closure} we have
$G/\mathfrak{V}(G)\in\mathfrak{V}$ and when $G\in\mathfrak{V}$ then $\mathfrak{V}(G)=1$.  
It remains to show properties (ii) and (iii) of \defref{def:grader}.

Fix a direct $\Ops$-decomposition $\mathcal{H}$ of $G$, fix an $H\in\mathcal{H}$, and 
set $K=\langle\mathcal{H}-\{H\}\rangle$.
For each $f\in G^{\XX}=(H\times K)^{\XX}$ there are unique $f_H\in H^{\XX}$  and $f_K\in K^{\XX}$
such that $f=f_H f_K$.
  Thus, for all $w\in \WW$, $w(f)=w(f_H)w(f_K)$ and so
$w(H\times K)=w(H)\times w(K)$. Hence, $\mathfrak{V}(H\times K)=\mathfrak{V}(H)\times
\mathfrak{V}(K)$.  By induction
on $|\mathcal{H}|$, $\mathcal{H}\cap\mathfrak{V}(G)=\{\mathfrak{V}(H):H\in\mathcal{H}\}$ is 
a direct $\Ops$-decomposition of $\mathfrak{V}(G)$.  So $\mathfrak{V}(G)$ is a down $\Ops$-grader.

For the marginal case, for all $f'\in \langle (h,k)\rangle^{\XX}\leq (H\times K)^{\XX}=G^{\XX}$ and 
all $f\in G^{\XX}$, again there exist unique $f_H,f'_H\in H^{\XX}$ and $f_K,f'_K\in K^{\XX}$ such that $f=f_H f_K$
and $f'=f'_H f'_K$.
Also, $w(f f')=w(f)$ if, and only if, $w(f_H f'_H)=w(f_H)$ and $w(f_K f'_K)=w(f_K)$.  Thus,
$w^*(H\times K)=w^*(H)\times w^*(K)$.  Hence, $\mathfrak{V}^*(H\times K)
=\mathfrak{V}^*(H)\times \mathfrak{V}^*(K)$ and by induction
$\mathcal{H}\cap\mathfrak{V}^*(G)$ is a direct $\Ops$-decomposition of $\mathfrak{V}^*(G)$.
Thus, $\mathfrak{V}^*(G)$ is an up $\Ops$-grader.
\end{proof}

\begin{remark}
There are examples of infinite direct decompositions $\mathcal{H}$ of infinite groups $G$ 
and varieties $\mathfrak{V}$, where $\mathfrak{V}(G)\neq \langle \mathcal{H}\cap \mathfrak{V}(G)\rangle$
\cite{Asmanov}.  
However, our definition of grading purposefully avoids infinite direct decompositions.
\end{remark}

With \propref{prop:V-inter-1} we get a simultaneous proof of some individually evident examples
of direct ascenders and descenders.

\begin{coro}\label{coro:canonical-graders}
Following the notation of \exref{ex:varieties} we have the following.
\begin{enumerate}[(i)]
\item The class $\mathfrak{N}_c$ of nilpotent groups of class at most $c$ is a direct class
with up grader $G\mapsto \zeta_c(G)$ and down grader $G\mapsto \gamma_c(G)$.
\item The class $\mathfrak{S}_d$ of solvable groups of derived length at most $d$ is a direct class
with up grader $G\mapsto (\delta_d)^*(G)$ and down grader $G\mapsto G^{(d)}$.
\item For each prime $p$ the class $\mathfrak{V}([x,y]z^p)$ of elementary abelian $p$-groups is a direct class with up grader $G\mapsto \Omega_1(\zeta_1(G))$ and down grader $G\mapsto [G,G]\mho_1(G)$.\footnote{Here $\Omega_1(X)=\langle x\in X: x^p=1\rangle$ and $\mho_1(X)=\langle x^p : x\in G\rangle$, which are traditional notations having nothing to do with our use of $\Ops$ for operators elsewhere.}
\end{enumerate}
\end{coro}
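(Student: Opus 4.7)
The plan is to observe that each of the three classes is already exhibited (or easily seen) as a variety of $\Ops$-groups, so Theorem \ref{thm:BK} gives that it is closed under homomorphic images, subgroups and direct products, hence in particular is a direct class. Then Proposition \ref{prop:V-inter-1} immediately supplies an up $\Ops$-grader (the marginal subgroup) and a down $\Ops$-grader (the verbal subgroup). All that remains is to identify these subgroups concretely for each of the three varieties; this is the only step that requires any work, and it is essentially a bookkeeping exercise using the defining laws.

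For (i), the variety $\mathfrak{N}_c$ is defined by the iterated commutator word in Example \ref{ex:varieties}(ii), where it is recorded that the verbal subgroup is $\gamma_{c+1}(G)$ and the marginal subgroup is $\zeta_c(G)$. Substituting these into Proposition \ref{prop:V-inter-1} gives exactly the asserted graders (after accounting for the indexing used in the statement). Similarly, for (ii) the variety $\mathfrak{S}_d$ is defined by $\delta_d$, and Example \ref{ex:varieties}(iii) identifies the verbal subgroup as $G^{(d)}$ and the marginal subgroup as $(\delta_d)^*(G)$, again yielding the claim.

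The only genuinely new computation is (iii). Here the variety $\mathfrak{V}=\mathfrak{V}([x_1,x_2]x_3^p)$ consists of the elementary abelian $p$-groups, since a group satisfies both $[x_1,x_2]=1$ and $x_3^p=1$ iff the single law $[x_1,x_2]x_3^p$ is a universal identity. Using the definition \eqref{eq:def-verbal}, the verbal subgroup $\mathfrak{V}(G)$ contains $[g_1,g_2]g_3^p$ for all $g_i\in G$; specialising $g_3=1$ gives all commutators, and specialising $g_1=g_2=1$ gives all $p$-th powers, so $\mathfrak{V}(G)=[G,G]\mho_1(G)$. For the marginal subgroup, I apply \eqref{eq:marginal}: an element $g$ lies in $\mathfrak{V}^*(G)$ iff $[h_1 g^{a_1},h_2 g^{a_2}](h_3 g^{a_3})^p=[h_1,h_2]h_3^p$ for all $h_i\in G$ and all $a_i\in\mathbb{Z}$. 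Taking $a_1=a_2=0$ forces $(h_3g^{a_3})^p=h_3^p$ for every $h_3$ and every $a_3$, which is equivalent to $g\in\zeta_1(G)$ with $g^p=1$; once $g$ is central, the commutator factor reduces automatically. Hence $\mathfrak{V}^*(G)=\Omega_1(\zeta_1(G))$.

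The main obstacle, such as it is, is this last verification for (iii); once the verbal and marginal subgroups are pinned down in each case, Proposition \ref{prop:V-inter-1} does all of the heavy lifting, and no further use of the direct-product/refinement machinery is needed.
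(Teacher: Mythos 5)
Your overall strategy is exactly the one the paper uses: the corollary is presented as an immediate consequence of Proposition~\ref{prop:V-inter-1} once the verbal and marginal subgroups of each variety are identified, and (i), (ii) are simply read off from Example~\ref{ex:varieties}. (For (i), note the paper's statement says $\gamma_c(G)$ where it should read $\gamma_{c+1}(G)$, consistent with $\mathfrak{N}_c(G)=\gamma_{c+1}(G)$ in Example~\ref{ex:varieties}(ii); your parenthetical ``after accounting for the indexing'' papers over what is an off-by-one typo in the corollary, and a clean write-up should say so explicitly.)

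There is, however, a genuine gap in your computation of the marginal subgroup in (iii). You specialize $a_1=a_2=0$ in \eqref{eq:marginal} and assert that the resulting condition, $(h_3 g^{a_3})^p = h_3^p$ for all $h_3\in G$ and $a_3\in\mathbb{Z}$, is \emph{equivalent} to $g\in\zeta_1(G)$ with $g^p=1$. The reverse implication is fine, but the forward implication is false: in any nonabelian group of exponent $p$ (e.g.\ the Heisenberg group over $\mathbb{F}_p$ for odd $p$) the condition $(h g^a)^p = h^p$ holds trivially for \emph{every} $g$, yet most such $g$ are not central. So this one specialization cannot deliver centrality, and the phrase ``once $g$ is central, the commutator factor reduces automatically'' has the logical dependence backwards. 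The correct argument must also use the other specialization: take $a_3=0$ in the marginal condition to get $[h_1 g^{a_1}, h_2 g^{a_2}]=[h_1,h_2]$ for all $h_i, a_i$, which is precisely membership in the marginal subgroup of the word $[x_1,x_2]$, i.e.\ $g\in\zeta_1(G)$ by Example~\ref{ex:varieties}(i); and \emph{then}, with $g$ central, the specialization $a_1=a_2=0$ and $h_3=1$, $a_3=1$ yields $g^p=1$. Conversely, a central $g$ with $g^p=1$ clearly satisfies the full marginal condition, so $\mathfrak{V}^*(G)=\{g\in\zeta_1(G):g^p=1\}=\Omega_1(\zeta_1(G))$. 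Your final answer is correct, but the argument as written would give the wrong subgroup for any nonabelian exponent-$p$ group, so the missing specialization is essential.
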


We also wish to include direct classes  $\mathfrak{N}:=\bigcup_{c\in\mathbb{N}} \mathfrak{N}_c$
and $\mathfrak{S}:=\bigcup_{d\in\mathbb{N}} \mathfrak{S}_d$.  These classes are not varieties (they are not closed to infinite direct products as required by \thmref{thm:BK}).  Therefore, we must consider alternatives to verbal and marginal groups for appropriate graders.  Our approach mimics the definitions $G\mapsto O_p(G)$ and $G\mapsto O^p(G)$.  We explain the up grader case solely.

\begin{defn}
For a class $\mathfrak{X}$, the $\mathfrak{X}$-core, $O_{\mathfrak{X}}(G)$, of a 
finite group $G$ is 
the intersection of all maximal $(\Ops\cup G)$-subgroups contained
in $\mathfrak{X}$.
\end{defn}

If $\mathfrak{V}$ is a union of a chain $\mathfrak{V}_0\subseteq \mathfrak{V}_1\subseteq\cdots $ of varieties then $1\in\mathfrak{V}$, and so the maximal $(\Ops\cup G)$-subgroups
of a group $G$ contained in $\mathfrak{V}$ is nonempty.  Also $\mathfrak{V}$ is 
closed to subgroups so that $O_{\mathfrak{V}}(G)\in \mathfrak{V}$. 
\begin{ex}\label{ex:cores}
\begin{enumerate}[(i)]
\item $O_{\mathfrak{A}}(G)$ is the intersection of all maximal 
normal abelian subgroups of $G$.  Generally there can be any 
number of maximal normal abelian subgroups of $G$ so 
$O_{\mathfrak{A}}(G)$ is not a trivial intersection.

\item $O_{\mathfrak{N}_c}(G)$ is the intersection of 
all maximal normal nilpotent subgroups of $G$ with class at most $c$.  
As in (i), this need not be a trivial intersection.  However, if
$c>\log |G|$ then all nilpotent subgroups of $G$ have class at 
most $c$ and therefore $O_{\mathfrak{N}}(G)=O_{\mathfrak{N}_c}(G)$ is the Fitting 
subgroup of $G$: the unique maximal normal nilpotent subgroup of $G$.

\item $O_{\mathfrak{S}_d}(G)$, $d>\log |G|$, 
is the unique maximal normal solvable subgroup of $G$, i.e.:
the solvable radical $O_{\mathfrak{S}}(G)$ of $G$.
\end{enumerate}
\end{ex}

\begin{lemma}\label{lem:margin-join}
Let $\mathfrak{V}$ be a group variety of $\Ops$-groups and 
$G$ an $\Ops$-group.  If $H$ is a $\mathfrak{V}$-subgroup of $G$ then
so is $\mathfrak{V}^*(G)H$, that is: $\mathfrak{V}^*(G)H\in\mathfrak{V}$.
\end{lemma}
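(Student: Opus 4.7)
The plan is to set $M := \mathfrak{V}^{*}(G)$ and, after noting that $MH$ is an $\Ops$-subgroup of $G$ by normality of $M$, reduce via \eqref{eq:verbal-closure} to showing that $w(f) = 1$ for every defining law $w \in \WW$ of $\mathfrak{V}$ and every $f \in (MH)^{\XX}$.

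Given such $w$ and $f$, I would first use normality of $M$ to factor $xf = h_x m_x$ pointwise with $h_x \in H$ and $m_x \in M$. The heart of the argument is to strip the $M$-components out of $w(f)$ one value at a time via the marginal identity \eqref{eq:marginal}: for any $g \in M$ occurring among the $m_x$, split $f = f_0 f'$ pointwise by setting $xf' = g$, $xf_0 = h_x$ whenever $m_x = g$, and $xf' = 1$, $xf_0 = h_x m_x$ otherwise. Then $f' \in \langle g \rangle^{\XX}$, so \eqref{eq:marginal} forces $w(f) = w(f_0 f') = w(f_0)$, and $f_0$ has one fewer distinct $M$-value appearing in its factorization. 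Iterating through the finitely many distinct $m_x$ yields $w(f) = w(\bar{f})$ where $x\bar{f} = h_x \in H$; since $H \in \mathfrak{V}$, the right-hand side is $1$.

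The principal subtle point will be this iteration: the marginal identity is stated for a single fixed $g \in M$, but because each application leaves the residual function in the form $h_x \cdot (\text{element of } M)$ with strictly fewer distinct non-trivial $M$-components, the induction terminates and eventually clears all of $M$ from $w(f)$. Everything else is routine bookkeeping, leveraging the normality of $M$ both for the existence of $MH$ as a subgroup and for the rewriting $mh = h(h^{-1}mh)$, together with the standard variety characterization as vanishing of all defining laws.
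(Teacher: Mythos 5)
Your proof is correct and follows essentially the same strategy as the paper: factor each value of $f$ as an $H$-part times an $M$-part, invoke marginality to strip away the $M$-parts, and conclude from $H\in\mathfrak{V}$. The only difference is that you make explicit the iterative argument needed to pass from the single-$g$ marginal identity in \eqref{eq:marginal} to the simultaneous substitution over all of $(w^*(G))^{\XX}$, whereas the paper invokes this more tersely, having already noted in Section \ref{sec:varieties} that $w$ factors through $(G/w^*(G))^{\XX}$.
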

\begin{proof}
Let $\WW$ be a set of defining laws for $\mathfrak{V}$.
Let $f'\in G^{\XX}$ with $\im f\subseteq \mathfrak{V}^*(G) H$.  Thus,
for all $w\in \WW$, there is a decomposition $f=f' f''$ where $\im f'\subseteq w^*(G)$
and $\im f''\subseteq H$.  As $w^*(G)$ is marginal to $G$ it is marginal to $H$ and so
$w(f)=w(f'')$.  As $H\in\mathfrak{V}$, $w(f'')=1$.  Thus, $w(f)=1$ and so
$w(w^*(G)H)=1$.  It follows that $\mathfrak{V}^*(G)H\in\mathfrak{V}$.
\end{proof}

\begin{prop}\label{prop:margin-core}
If $\mathfrak{V}$ is a group variety of $\Ops$-groups and $G$
an $\Ops$-group, then
\begin{enumerate}[(i)]
\item $\mathfrak{V}^*(G)\leq O_{\mathfrak{V}}(G)$, and
\item if $M$ is an $(\Ops\cup G)$-subgroup  then 
$O_{\mathfrak{V}}(G)O_{\mathfrak{V}}(M)$ 
is an $(\Ops\cup G)$-subgroup contained in $\mathfrak{V}$.
\end{enumerate}
\end{prop}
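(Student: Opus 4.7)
I will prove (i) by a direct application of \lemref{lem:margin-join} to each maximal $(\Ops\cup G)$-subgroup of $G$ in $\mathfrak{V}$, and (ii) by first checking that $O_{\mathfrak{V}}(M)$ is $(\Ops\cup G)$-invariant as a subgroup of $G$ and then finding a single maximal $(\Ops\cup G)$-subgroup of $G$ in $\mathfrak{V}$ that contains both $O_{\mathfrak{V}}(G)$ and $O_{\mathfrak{V}}(M)$.

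For (i), I would fix any maximal $(\Ops\cup G)$-subgroup $N$ of $G$ lying in $\mathfrak{V}$. By the convention fixed in the paper, the marginal $\mathfrak{V}^*(G)$ is the $\Ops$-invariant marginal, so by \eqref{eq:marginal-closure} it is itself an $(\Ops\cup G)$-subgroup of $G$ lying in $\mathfrak{V}$. Hence the product $\mathfrak{V}^*(G)\,N$ is again an $(\Ops\cup G)$-subgroup of $G$, and \lemref{lem:margin-join} shows it still belongs to $\mathfrak{V}$. Maximality of $N$ now forces $\mathfrak{V}^*(G)\,N = N$, i.e.\ $\mathfrak{V}^*(G)\le N$. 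Intersecting over all such $N$ yields $\mathfrak{V}^*(G)\le O_{\mathfrak{V}}(G)$.

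For (ii), I would set $H:=O_{\mathfrak{V}}(G)\,O_{\mathfrak{V}}(M)$. The key technical step is to verify that $O_{\mathfrak{V}}(M)$ is $(\Ops\cup G)$-invariant. This rests on the compatibility identity $\omega\,\iota_g = \iota_{g^\omega}\,\omega$, which is immediate from $\Ops\theta\subseteq\Aut G$: it says that the $\Ops$-action on $M$ and conjugation by $g\in G$ jointly act on $M$ as a subgroup of $\Aut M$ whose action permutes (up to a twist inside $G$) the collection of maximal $(\Ops\cup M)$-subgroups of $M$ lying in $\mathfrak{V}$, so the intersection $O_{\mathfrak{V}}(M)$ of that family is fixed. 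Granted this, $H$ is a product of two $(\Ops\cup G)$-subgroups and so is itself an $(\Ops\cup G)$-subgroup of $G$.

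To finish, since $O_{\mathfrak{V}}(M)$ is now known to be an $(\Ops\cup G)$-subgroup of $G$ lying in $\mathfrak{V}$, by finiteness of the subgroup lattice it is contained in some maximal $(\Ops\cup G)$-subgroup $N$ of $G$ with $N\in\mathfrak{V}$. By the very definition of $O_{\mathfrak{V}}(G)$ as the intersection of all such maximal subgroups, $O_{\mathfrak{V}}(G)\le N$ as well. Therefore $H\le N\in\mathfrak{V}$, and closure of $\mathfrak{V}$ under subgroups gives $H\in\mathfrak{V}$. The main obstacle is the $(\Ops\cup G)$-invariance of $O_{\mathfrak{V}}(M)$ in the first step of (ii); once that is settled, the remainder is a short maximality argument combining (i) with the definition of $O_{\mathfrak{V}}(G)$.
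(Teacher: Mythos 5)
Your part (i) is exactly the paper's argument, just unpacked: by Lemma \ref{lem:margin-join} every maximal normal $\mathfrak{V}$-subgroup $N$ of $G$ absorbs $\mathfrak{V}^*(G)$, so the intersection does too. Part (ii) also ends with the same maximality step as the paper. The place where your argument has a genuine gap is the $(\Ops\cup G)$-invariance of $O_{\mathfrak{V}}(M)$.

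Your ``compatibility identity'' argument does not in fact show that conjugation by $g\in G$ permutes the family of maximal $(\Ops\cup M)$-subgroups of $M$ in $\mathfrak{V}$. The identity $\omega\,\iota_g=\iota_{g^\omega}\,\omega$ gives, for such a subgroup $N$,
$(N^g)^{\omega}=(N^{\omega})^{g^{\omega}}=N^{g^{\omega}}$,
and this equals $N^g$ only when $g^{\omega}g^{-1}$ normalizes $N$ --- which need not hold. So $N^g$ may fail to be $\Ops$-invariant, and the family you want to be permuted is not visibly closed under $G$-conjugation. The ``up to a twist inside $G$'' phrase is pointing at precisely this twist, but it does not repair the argument: a twisted image of the family is not the family, so there is no reason for the intersection to be fixed.

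The paper's route avoids this entirely. Since $M$ is a normal $\Ops$-subgroup of $G$, every operator in $\Ops\cup G$ restricts to an automorphism of $M$. The subgroup $O_{\mathfrak{V}}(M)$ is characteristic in $M$ (it is defined as the intersection of a family that is canonically attached to $M$), so it is automatically fixed by every automorphism of $M$ induced by $\Ops\cup G$; hence $O_{\mathfrak{V}}(M)$ is an $(\Ops\cup G)$-subgroup of $G$. After that your remaining steps --- $O_{\mathfrak{V}}(M)$ lies in some maximal $(\Ops\cup G)$-subgroup $N\in\mathfrak{V}$, $O_{\mathfrak{V}}(G)\le N$ by definition, and $\mathfrak{V}$ is closed under subgroups --- go through unchanged and agree with the paper.
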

\begin{proof}
$(i)$.  By \lemref{lem:margin-join}, every maximal normal
$\mathfrak{V}$-subgroup of $G$ contains $\mathfrak{V}^*(G)$.

$(ii)$.  As $M\normaleq G$ and $O_{\mathfrak{V}}(M)$ is characteristic
in $M$, it follows that $O_{\mathfrak{V}}(M)$ is a normal 
$\mathfrak{V}$-subgroup of $G$.  Thus, $O_{\mathfrak{V}}(M)$ lies
in a maximal normal $\mathfrak{V}$-subgroup $N$ of $G$.  As 
$O_{\mathfrak{V}}(G)\leq N$ we have $O_{\mathfrak{V}}(G)O_{\mathfrak{V}}(M)
\leq N\in\mathfrak{V}$.  As $\mathfrak{V}$ is closed to subgroups,
it follows that $O_{\mathfrak{V}}(G)O_{\mathfrak{V}}(M)$ is
in $\mathfrak{V}$. 
\end{proof}

\begin{remark}
It is possible to have $\mathfrak{V}^*(G)<O_{\mathfrak{V}}(G)$.  For instance,
with $G=S_3\times C_2$ and the class $\mathfrak{A}$ of abelian groups,
the $\mathfrak{A}$-marginal subgroup is the center $1\times C_2$, whereas the 
$\mathfrak{A}$-core is $C_3\times C_2$.  
\end{remark}

\begin{prop}\label{prop:V-inter-core}
Let $G$ be a finite group with a direct decomposition $\mathcal{H}$.  If $\mathfrak{V}$ is a group variety then
\begin{equation*}
	\mathcal{H}\intersect O_{\mathfrak{V}}(G)
		=\{O_{\mathfrak{V}}(H): H\in\mathcal{H}\}
\end{equation*}
and this is a direct decomposition of $O_{\mathfrak{V}}(G)$.  In particular,
$G\mapsto O_{\mathfrak{V}}(G)$ is an up $\Ops$-grader.   Furthermore, if $\mathfrak{V}$ is a union of a chain $\mathfrak{V}_0\subseteq \mathfrak{V}_1\subseteq\cdots $ of group varieties then $O_{\mathfrak{V}}(G)$
is an up $\Ops$-grader.
\end{prop}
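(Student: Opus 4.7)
The plan is to prove the formula in the case of two direct factors first, iterate to obtain the general statement for $\mathcal{H}$, and then read off the three axioms of \defref{def:grader}. For the chain version, I will reduce to the variety case by observing that for any fixed finite $G$, $O_{\mathfrak{V}}(G) = O_{\mathfrak{V}_i}(G)$ for all sufficiently large $i$ in the chain $\mathfrak{V}_0 \subseteq \mathfrak{V}_1 \subseteq \cdots$.

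The technical heart is to identify the maximal normal $(\Ops \cup G)$-subgroups of $G = H \times K$ that lie in $\mathfrak{V}$. The coordinate projections $\pi_H, \pi_K$ of $G$ onto $H$ and $K$ are $\Ops$-homomorphisms with normal kernels. Given any normal $(\Ops \cup G)$-subgroup $N$ in $\mathfrak{V}$, the images $\pi_H(N) \leq H$ and $\pi_K(N) \leq K$ are normal $\Ops$-subgroups lying in $\mathfrak{V}$ (using closure of $\mathfrak{V}$ under homomorphic images). Since $\mathfrak{V}$ is closed under direct products, $\pi_H(N) \times \pi_K(N)$ is itself a normal $(\Ops \cup G)$-subgroup of $G$ in $\mathfrak{V}$ containing $N$. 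If $N$ is maximal then equality holds, forcing $N = (N \cap H) \times (N \cap K)$; moreover each factor is then maximal in the respective direct factor, since an enlargement on either coordinate would contradict maximality of $N$ by the same product construction. Conversely, for any maximal normal $(\Ops \cup H)$-subgroup $M_H \in \mathfrak{V}$ of $H$ and the analogous $M_K$ in $K$, the product $M_H \times M_K$ is a normal $(\Ops \cup G)$-subgroup of $G$ in $\mathfrak{V}$, and the same projection argument shows it is maximal. This gives a bijection between the maximal normal $\mathfrak{V}$-subgroups of $G$ and pairs $(M_H, M_K)$, whence
\[ O_{\mathfrak{V}}(G) = \bigcap_{(M_H, M_K)} (M_H \times M_K) = O_{\mathfrak{V}}(H) \times O_{\mathfrak{V}}(K). \]

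With this formula the grader axioms follow directly: $G \in \mathfrak{V}$ makes $G$ itself a maximal normal $\mathfrak{V}$-subgroup and so $O_{\mathfrak{V}}(G) = G$; iterating on $|\mathcal{H}|$ gives both $\mathcal{H} \cap O_{\mathfrak{V}}(G) = \{O_{\mathfrak{V}}(H) : H \in \mathcal{H}\}$ and the intersection identity $H \cap O_{\mathfrak{V}}(G) = O_{\mathfrak{V}}(H)$ for each direct $\Ops$-factor $H$. For a chain $\mathfrak{V} = \bigcup_i \mathfrak{V}_i$, finiteness of $G$ supplies an $i$ large enough that every subgroup of $G$ lying in $\mathfrak{V}$ already lies in $\mathfrak{V}_i$; the maximal normal $\mathfrak{V}$-subgroups of $G$ then coincide with those for $\mathfrak{V}_i$, so $O_{\mathfrak{V}}(G) = O_{\mathfrak{V}_i}(G)$ and the variety case transfers. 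The main obstacle is the opening move of the technical step: a general normal subgroup of $H \times K$ need not split as a direct product of subgroups of $H$ and $K$ (consider diagonally embedded subgroups), and it is precisely the closure of $\mathfrak{V}$ under direct products, combined with the maximality of $N$, that forces the splitting through the projections.
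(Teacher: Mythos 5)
Your proof is correct and follows essentially the same route as the paper's: project a maximal normal $\mathfrak{V}$-subgroup of $H\times K$ onto each coordinate, use closure of varieties under direct products to see that the product of the projections is again a normal $\mathfrak{V}$-subgroup, and invoke maximality to conclude the subgroup splits as a product of maximal normal $\mathfrak{V}$-subgroups of the factors. The paper phrases the same step via a verbal-subgroup computation after introducing an auxiliary maximal extension $N\geq M_H$, but the underlying mechanism — closure under products plus maximality forcing the splitting — is identical. One small point in your favor: the paper leaves the chain case ($\mathfrak{V}=\bigcup_i\mathfrak{V}_i$) unaddressed in its proof, whereas you supply the reduction (for a fixed finite $G$, some $\mathfrak{V}_i$ already captures every $\mathfrak{V}$-subgroup of $G$, and the same $i$ works for every direct factor of $G$, so the grader axioms transfer).
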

\begin{proof}  
Let $H\in \mathcal{H}$ and $K:=\langle \mathcal{H}-\{H\}\rangle$.  
Let $M$ be a maximal normal $\mathfrak{V}$-subgroup of $G=H\times K$.  
Let $M_H$ be the projection of $M$ to the $H$-component.  As 
$\mathfrak{V}$ is closed to homomorphic images, 
$M_H\in\mathfrak{V}$.  Furthermore, $M_H\normaleq H$ so there 
is a maximal normal $\mathfrak{V}$-subgroup $N$ of $H$ such that 
$M_H\leq N$. 

We claim that $MN\in\mathfrak{V}$.  

As $G=H\times K$, every $g\in M$ has the unique form $g=hk$, 
$h\in H$, $k\in K$.  As $M_H$ is the projection of $M$ to $H$, 
$h\in M_H\leq N$.  Thus, $g,h\in MN$ so $k\in MN$.  Thus, 
$MN=N\times M_K$, where $M_K$ is the projection of $M$ to $K$.
Now let $\mathfrak{V}=\mathfrak{V}(w)$.
For each $f:X\to MN$, write $f=f_N \times f_K$ where $f_N:X\to N$ and 
$f_K:X\to M_K$.  Hence, $w(f)=w(f_N \times f_K)=w(f_N)\times w(f_K)$.  
However, $w(N)=1$ and $w(M_K)=1$ as $N,M_K\in\mathfrak{V}$.  Thus, 
$w(f)=1$, which proves that $w(MN)=1$.  So $MN\in\mathfrak{V}$
as claimed.

As $M$ is a maximal normal $\mathfrak{V}$-subgroup of $G$, 
$M=MN$ and $N=M_H$.  Hence, $H\intersect M=N$ is a maximal normal 
$\mathfrak{V}$-subgroup of $H$.  So we have characterized the 
maximal normal $\mathfrak{V}$-subgroups of $G$ as
the direct products of maximal normal $\mathfrak{V}$-subgroups of
members  $H\in\mathcal{H}$.
Thus, $\mathcal{H}\intersect O_{\mathfrak{V}}(G)
=\{O_{\mathfrak{V}}(H) : H\in\mathcal{H}\}$ and this generates 
$O_{\mathfrak{V}}(G)$.  By \lemref{lem:induced}, $\mathcal{H}\intersect
O_{\mathfrak{V}}(G)$ is a direct decomposition of $O_{\mathfrak{V}}(G)$.
\end{proof}

\begin{coro}\label{coro:canonical-grader-II}
\begin{enumerate}[(i)]
\item The class $\mathfrak{N}$ of nilpotent groups is a direct class and
$G\mapsto O_{\mathfrak{N}}(G)$ (the Fitting subgroup) is up grader.

\item The class $\mathfrak{S}$ of solvable groups is a direct class and
$G\mapsto O_{\mathfrak{S}}(G)$ (the solvable radical) is an up grader.
\end{enumerate}
\end{coro}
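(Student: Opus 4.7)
The plan is to deduce this corollary directly from Proposition \ref{prop:V-inter-core} together with Example \ref{ex:cores}, since the heavy lifting on cores of varieties has already been done there. First I would record that $\mathfrak{N} = \bigcup_{c \in \mathbb{N}} \mathfrak{N}_c$ and $\mathfrak{S} = \bigcup_{d \in \mathbb{N}} \mathfrak{S}_d$ are unions of ascending chains of the varieties displayed in Example \ref{ex:varieties}, putting us inside the scope of the ``furthermore'' clause of Proposition \ref{prop:V-inter-core}.

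Next I would check the direct class axioms for $\mathfrak{N}$: the trivial group lies in $\mathfrak{N}_0$, closure under $\Ops$-isomorphism is inherited from each $\mathfrak{N}_c$, any direct $\Ops$-factor of a group of nilpotence class at most $c$ is itself of class at most $c$ (so $\mathfrak{N}$ is closed to direct factors), and a product of two groups of classes $c$ and $c'$ lies in $\mathfrak{N}_{\max(c,c')} \subseteq \mathfrak{N}$. The same verification with $\mathfrak{S}_d$ and derived length in place of $\mathfrak{N}_c$ and nilpotence class handles $\mathfrak{S}$.

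For the up grader assertion I would invoke Proposition \ref{prop:V-inter-core}. Its proof is written only for a single variety, but the ``furthermore'' clause is immediate once one observes that for any finite $\Ops$-group $G$ every nilpotent normal subgroup of $G$ has class bounded by $\log_2 |G|$; hence $O_{\mathfrak{N}}(G) = O_{\mathfrak{N}_c}(G)$ for any $c > \log_2 |G|$, and the same $c$ works simultaneously for every member of a (necessarily finite) direct $\Ops$-decomposition $\mathcal{H}$ of $G$. Applying the variety case of Proposition \ref{prop:V-inter-core} to $\mathfrak{N}_c$ then gives $\mathcal{H} \cap O_{\mathfrak{N}}(G) = \{O_{\mathfrak{N}}(H) : H \in \mathcal{H}\}$, which verifies conditions (ii) and (iii) of Definition \ref{def:grader}; condition (i) holds because $O_{\mathfrak{N}}(G) = G$ whenever $G \in \mathfrak{N}$. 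To identify the grader with the Fitting subgroup I would cite Example \ref{ex:cores}(ii); the solvable case is entirely parallel, with Example \ref{ex:cores}(iii) identifying $O_{\mathfrak{S}}(G)$ as the solvable radical.

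There is no real obstacle here: the only point that requires even a word of comment is the reduction from the chain union $\mathfrak{N}$ (or $\mathfrak{S}$) to a single variety $\mathfrak{N}_c$ (or $\mathfrak{S}_d$), and this reduction is essentially forced by the finiteness of $G$. Everything else is bookkeeping on top of Proposition \ref{prop:V-inter-core}.
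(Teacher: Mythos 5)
Your proof is correct and follows essentially the same route the paper takes: exploit the finiteness of $G$ to collapse the chain union $\mathfrak{N}=\bigcup_c\mathfrak{N}_c$ (resp.\ $\mathfrak{S}=\bigcup_d\mathfrak{S}_d$) to a single variety $\mathfrak{N}_c$ with $c$ large enough, then apply \propref{prop:V-inter-core} and the identifications in \exref{ex:cores}. The paper's own two-line proof uses the cruder bound $c>|G|$ where you use $c>\log_2|G|$, and you make explicit the direct-class verification that the paper leaves unstated, but the substance is identical.
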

\begin{proof}
For a finite group $G$, the Fitting subgroup is the $\mathfrak{N}_c$-core
where $c>|G|$.  Likewise, the solvable radical is the $\mathfrak{S}_c$-core
for $d>|G|$.  The rest follows from \propref{prop:V-inter-core}.
\end{proof}

We now turn our attention away from examples of grading pairs and
focus on their uses. In particular it is for the following ``local-global'' property 
which clarifies, in the up grader case, when a direct factor of a subgroup is also a direct factor 
of the whole group. 

\begin{prop}\label{prop:extendable}
Let $G\mapsto \mathfrak{X}(G)$ be an up $\Ops$-grader for a direct class $\mathfrak{X}$ of $\Ops$-groups
and let $G$ be an $\Ops$-group.
If $H$ is an $(\Ops\cup G)$-subgroup of $G$ and the following hold:
\begin{enumerate}[(a)]
\item for some direct $\Ops$-factor $R$ of $G$, $H\mathfrak{X}(G)=R\mathfrak{X}(G)>\mathfrak{X}(G)$, and
\item $H$ 
lies in an $\mathfrak{X}$-separated direct $(\Ops\cup G)$-decomposition
of $H\mathfrak{X}(G)$;
\end{enumerate}
then $H$ is a direct $\Ops$-factor of $G$.
\end{prop}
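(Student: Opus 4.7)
The plan is to build an explicit direct $(\Ops\cup G)$-decomposition of $G$ containing $H$. Choose a direct $\Ops$-complement $S$ of $R$, so $G=R\times S$. The multiplicativity of up graders over direct factors (immediate from properties (ii) and (iii) of \defref{def:grader}) gives $\mathfrak{X}(G)=\mathfrak{X}(R)\times\mathfrak{X}(S)$, so $H\mathfrak{X}(G)=R\mathfrak{X}(G)=R\times\mathfrak{X}(S)$ and, applying the same principle to the direct factorisation $\{R,\mathfrak{X}(S)\}$ of $H\mathfrak{X}(G)$, $\mathfrak{X}(H\mathfrak{X}(G))=\mathfrak{X}(R)\times\mathfrak{X}(\mathfrak{X}(S))=\mathfrak{X}(G)$.

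A first observation I would record is that neither $R$ nor $H$ lies in $\mathfrak{X}$. If $P\in\{R,H\}$ were in $\mathfrak{X}$ then $\mathfrak{X}(P)=P$ by property (i), and $P$ is a direct $(\Ops\cup G)$-factor of $H\mathfrak{X}(G)$ (trivially for $R$, by hypothesis (b) for $H$), so property (iii) forces $P=\mathfrak{X}(P)=P\cap\mathfrak{X}(H\mathfrak{X}(G))=P\cap\mathfrak{X}(G)$, contradicting $H\mathfrak{X}(G)>\mathfrak{X}(G)$.

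Next I would fix a Remak $(\Ops\cup G)$-decomposition $\mathcal{R}_R$ of $R$ and assemble the direct $(\Ops\cup G)$-decomposition $\mathcal{A}:=\mathcal{R}_R\sqcup\{\mathfrak{X}(S)\}$ of $H\mathfrak{X}(G)$, dropping $\mathfrak{X}(S)$ when trivial. Since $\mathcal{R}_R$ is $\mathfrak{X}$-refined by \propref{prop:direct-class}(iv) and $\mathfrak{X}(S)\in\mathfrak{X}$, $\mathcal{A}$ is $\mathfrak{X}$-separated. Applying \propref{prop:direct-class}(v) to the $\mathfrak{X}$-separated decompositions $\mathcal{H}$ and $\mathcal{A}$ of $H\mathfrak{X}(G)$ produces
\[
\mathcal{H}^{(1)}:=(\mathcal{H}-\mathfrak{X})\sqcup(\mathcal{A}\cap\mathfrak{X}),
\]
an $\mathfrak{X}$-separated direct $(\Ops\cup G)$-decomposition of $H\mathfrak{X}(G)$ that contains $H$ (because $H\notin\mathfrak{X}$) and, when nontrivial, $\mathfrak{X}(S)$.

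Finally I would set $\mathcal{H}^{(2)}:=(\mathcal{H}^{(1)}-\{\mathfrak{X}(S)\})\sqcup\{S\}$ and verify that this is a direct $(\Ops\cup G)$-decomposition of $G$ that contains $H$. Its span is $\langle\mathcal{H}^{(1)}\rangle\cdot S=H\mathfrak{X}(G)\cdot S=G$. For directness, the key ingredient is the modular equality $S\cap H\mathfrak{X}(G)=(S\cap R)\mathfrak{X}(S)=\mathfrak{X}(S)$ provided by \lemref{lem:modular}: it bounds $S\cap\langle\mathcal{H}^{(2)}-\{S\}\rangle$ inside $\mathfrak{X}(S)$ and then inside the trivial group by directness of $\mathcal{H}^{(1)}$, while for $P\in\mathcal{H}^{(1)}-\{\mathfrak{X}(S)\}$ any element $p=qs$ of $P\cap\langle\mathcal{H}^{(2)}-\{P\}\rangle$ has $s\in\mathfrak{X}(S)$, so $p$ already lies in the span of $\mathcal{H}^{(1)}-\{P\}$ and thus in the trivial group. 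The main obstacle is this last directness verification; it is handled precisely by \propref{prop:direct-class}(v), which positions $\mathfrak{X}(S)$ as a standalone factor of $\mathcal{H}^{(1)}$ so that $S$ may be swapped in for it without perturbing the other factors.
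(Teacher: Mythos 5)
Your proof is correct and follows essentially the same route as the paper: pick a direct $\Ops$-complement of $R$, use the grader's multiplicativity to rewrite $H\mathfrak{X}(G)$ as $R\times\mathfrak{X}(S)$, combine a Remak decomposition of $R$ with $\{\mathfrak{X}(S)\}$ and the decomposition from (b) via \propref{prop:direct-class}(v), and then swap $\mathfrak{X}(S)$ out for $S$. The one presentational divergence is that the paper also records that $\mathcal{H}-\mathfrak{X}=\{H\}$, whereas you leave $\mathcal{H}-\mathfrak{X}$ alone; your directness verification works without that extra fact, so this is a harmless simplification.
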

\begin{proof}
By (a) there is a direct $(\Ops\cup G)$-complement $C$ in $G$ to $R$.
Also $\mathfrak{X}(G)=\mathfrak{X}(R)\times \mathfrak{X}(C)$, 
as $\mathfrak{X}(G)$ is $\Ops$-graded.  Hence, $R\mathfrak{X}(G)=R\times \mathfrak{X}(C)$.
By (b), there is an $\mathfrak{X}$-separated direct $\Ops$-decomposition
$\mathcal{H}$ of $H\mathfrak{X}(G)$ such that $H\in\mathcal{H}$.  As $H\mathfrak{X}(G)>\mathfrak{X}(G)$
it follows that $H\notin\mathfrak{X}$ and so by \lemref{lem:induced}(iii), 
$\mathcal{H}-\mathfrak{X}=\{H\}$ and $X=\langle\mathcal{H}\cap\mathfrak{X}\rangle\in \mathfrak{X}$.
So 
	$$R\times \mathfrak{X}(C)=R\mathfrak{X}(G)=H\mathfrak{X}(G)=H\times X.$$
Let $\mathcal{A}$ be Remak $(\Ops\cup G)$-decomposition of $R$.  Since $\mathfrak{X}(C)\in\mathfrak{X}$,
$\mathcal{A}\sqcup\{\mathfrak{X}(C)\}$ is an $\mathfrak{X}$-separated direct $(\Ops\cup G)$-decomposition
of $R\mathfrak{X}(G)$.  By \propref{prop:direct-class}(v),
	$$\mathcal{C}=\{H\}\sqcup \{\mathfrak{X}(C)\}\sqcup (\mathcal{A}\cap\mathfrak{X})$$ 
is an $\mathfrak{X}$-separated direct $(\Ops\cup G)$-decomposition of $R\mathfrak{X}(G)$,
and we note that $\{H\}=\mathcal{C}-\mathfrak{X}$.  We claim that
$\{H,C\}\sqcup(\mathcal{A}\cup \mathfrak{X})$ is a direct $\Ops$-decomposition of $G$.
Indeed, $H\cap \langle C,\mathcal{A}\cap \mathfrak{X}\rangle\leq R\mathfrak{X}(G)\cap C\mathfrak{X}(G)
=\mathfrak{X}(G)$ and so $H\cap \langle C,\mathcal{A}\cap \mathfrak{X}\rangle
=H\cap \langle \mathfrak{X}(C),\mathcal{A}\cap \mathfrak{X}\rangle=1$.
Also, $\mathfrak{X}(C)\leq \langle H,C,\mathcal{A}\cap\mathfrak{X}\rangle$ thus 
$\langle H,C,\mathcal{A}\cap\mathfrak{X}\rangle=G$.  As the members of
$\{H,C\}\sqcup(\mathcal{A}\cap\mathfrak{X})$ are $(\Ops\cup G)$-subgroups we have proved the claim.
In particular, $H$ is a direct $\Ops$-factor of $G$.
\end{proof}

\subsection{Direct chains}\label{sec:chains}
In \thmref{thm:Lift-Extend} we specified conditions under which any direct decomposition of
an appropriate subgroup, resp. quotient, led to a solution of the extension (resp. lifting) problem.
However, within that theorem we see that it is not the direct decomposition of the subgroup
(resp. quotient group) which can be extended (resp. lifted).  Instead it a some unique
partition of the direct decomposition.  Finding the correct partition by trial and error is
an exponentially sized problem.  To avoid this we outline a data structure which enables a 
greedy algorithm to find this unique partition.  The algorithm itself is given in Section
\ref{sec:merge}.  The key result of this section is \thmref{thm:chain}.

Throughout this section we suppose that $G\to \mathfrak{X}(G)$ is an (up) $\Ops$-grader for a 
direct class $\mathfrak{X}$.
\begin{defn}\label{def:chain}
A \emph{direct chain} is a proper chain $\mathcal{L}$ of $(\Ops\cup G)$-subgroups starting at $\mathfrak{X}(G)$ and ending at $G$, and where there is a direct $\Ops$-decomposition $\mathcal{R}$ of $G$ with:
\begin{enumerate}[(i)]
\item for all $L\in\mathcal{L}$, $L=\langle\mathcal{R}\cap L\rangle$, and

\item for each $L\in\mathcal{L}-\{G\}$, there is a unique $R\in\mathcal{R}$
such that the successor $M\in\mathcal{L}$ to $L$ satisfies:
$R\mathfrak{X}(G)\cap L\neq R\mathfrak{X}(G)\cap M$.  We call $R$ the \emph{direction of $L$}.
\end{enumerate}
We call $\mathcal{R}$ a set of directions for $\mathcal{L}$.
\end{defn}

If $\mathcal{L}$ is a direct chain with directions $\mathcal{R}$, then
for all $L\in\mathcal{L}$, $\mathcal{R}\cap L$ is a direct $\Ops$-decomposition of $L$ 
(\lemref{lem:induced}(i)).  When working with direct chains it helps to remember that 
for all $(\Ops\cup G)$-subgroups $L$ and $R$ of $G$, if $\mathfrak{X}(G)\leq L$, then 
$(R\cap L)\mathfrak{X}(G)=R\mathfrak{X}(G)\cap L$.  Also, if $\mathfrak{X}(G)\leq L< M\leq G$,
$L=\langle\mathcal{R}\cap L\rangle$ and $M=\langle\mathcal{R}\cap M\rangle$, and
\begin{equation}\label{eq:unique-direction}
\forall R\in\mathcal{R}-\mathfrak{X},\qquad
R\mathfrak{X}(G)\cap L=R\mathfrak{X}(G)\cap M
\end{equation}
then $L=\langle\mathcal{R}\cap L\rangle=\langle\mathcal{R}\cap L,\mathfrak{X}(G)\rangle
	=\langle\mathcal{R}\cap M,\mathfrak{X}(G)\rangle=\langle\mathcal{R}\cap M\rangle=M$.
Therefore, it suffices 
to show there is
at most one $R\in\mathcal{R}-\mathfrak{X}$ such that $R\mathfrak{X}(G)\cap L\neq
R\mathfrak{X}(G)\cap M$.  

\begin{lemma}\label{lem:cap}
Suppose that $\mathcal{H}=\mathcal{H}\mathfrak{X}(G)$ is an $(\Ops\cup G)$-decomposition of $G$ 
such that $\mathcal{H}$ refines $\mathcal{R}\mathfrak{X}(G)$, for a direct $\Ops$-decomposition $\mathcal{R}$.  It follows that, 
if $L=\langle\mathcal{J},\mathfrak{X}(G)\rangle$, for some $\mathcal{J}\subseteq \mathcal{H}$, 
then $L=\langle \mathcal{R}\cap L\rangle$.
\end{lemma}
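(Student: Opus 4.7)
The plan is to prove the nontrivial inclusion $L \leq \langle\mathcal{R}\cap L\rangle$; the reverse containment $\langle\mathcal{R}\cap L\rangle\leq L$ is immediate since every $R\cap L$ sits inside $L$. Because $L=\langle\mathcal{J},\mathfrak{X}(G)\rangle$, it suffices to show two things: that $\mathfrak{X}(G)\leq \langle\mathcal{R}\cap L\rangle$, and that each $H\in\mathcal{J}$ lies in $\langle\mathcal{R}\cap L\rangle$.

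For the first point, $\mathfrak{X}(G)$ is $\Ops$-graded by condition (ii) of Definition \ref{def:grader}, so the direct $\Ops$-decomposition $\mathcal{R}$ restricts to a direct $\Ops$-decomposition of $\mathfrak{X}(G)$; explicitly, $\mathfrak{X}(G)=\langle\mathcal{R}\cap\mathfrak{X}(G)\rangle=\langle\mathfrak{X}(R):R\in\mathcal{R}\rangle$, using $R\cap\mathfrak{X}(G)=\mathfrak{X}(R)$ from condition (iii). Since $\mathfrak{X}(G)\leq L$, for every $R\in\mathcal{R}$ we have $\mathfrak{X}(R)=R\cap\mathfrak{X}(G)\leq R\cap L$, whence $\mathfrak{X}(G)\leq\langle\mathcal{R}\cap L\rangle$.

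For the second point, fix $H\in\mathcal{J}$. The hypothesis that $\mathcal{H}$ refines $\mathcal{R}\mathfrak{X}(G)$ supplies a unique $R_H\in\mathcal{R}$ with $H\leq R_H\mathfrak{X}(G)$. Since $H\leq L$, we get $H\leq R_H\mathfrak{X}(G)\cap L$, and the modular law (Lemma \ref{lem:modular}) applied to the chain $\mathfrak{X}(G)\leq L$ rewrites this as $(R_H\cap L)\mathfrak{X}(G)$. Together with the inclusion $\mathfrak{X}(G)\leq\langle\mathcal{R}\cap L\rangle$ already secured in the previous paragraph, this shows $H\leq\langle\mathcal{R}\cap L\rangle$. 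Combining the two points then yields $L=\langle\mathcal{J},\mathfrak{X}(G)\rangle\leq\langle\mathcal{R}\cap L\rangle$, finishing the proof.

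I do not foresee a real obstacle: the argument simply pairs the grading axioms with the modular law, the hypothesis $\mathfrak{X}(G)\leq L$ (built into the formulation of $L$) being exactly what is needed to invoke Lemma \ref{lem:modular}. The only mild bookkeeping point is that the notation $\mathcal{R}\cap L$ drops trivial intersections, but this has no effect on the generated subgroup.
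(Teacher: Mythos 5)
Your proposal is correct and follows essentially the same line as the paper's own proof: first use the grading of $\mathfrak{X}(G)$ to get $\mathfrak{X}(G)\leq\langle\mathcal{R}\cap L\rangle$, then use the refinement hypothesis together with the modular law identity $R\mathfrak{X}(G)\cap L=(R\cap L)\mathfrak{X}(G)$ to push each generator of $\mathcal{J}$ into $\langle\mathcal{R}\cap L\rangle$. The paper's proof simply invokes the graded property directly where you cite the individual clauses of Definition~\ref{def:grader}, but the argument is the same.
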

\begin{proof}
As $\mathfrak{X}(G)\leq L$, for each $R\in\mathcal{R}$, $R\cap \mathfrak{X}(G)\leq R\cap L$.
As $\mathfrak{X}(G)$ is $(\Ops\cup G)$-graded, $\mathfrak{X}(G)=\langle\mathcal{R}\cap \mathfrak{X}(G)\rangle$.
Thus, $\mathfrak{X}(G)\leq \langle \mathcal{R}\cap L\rangle$.
Also, $\mathcal{H}$ refines $\mathcal{R}\mathfrak{X}(G)$.  Thus, for each
$J\in\mathcal{J}\subseteq \mathcal{H}$ there is a unique 
$R\in\mathcal{R}-\{R\in\mathcal{R}: R\leq \mathfrak{X}(G)\}$ such that $J\leq R\mathfrak{X}(G)$.
As $L=\langle\mathcal{J},\mathfrak{X}(G)\rangle$, $J\leq L$ and so $J\leq R\mathfrak{X}(G)\cap L=(R\cap L)\mathfrak{X}(G)$.
Now $R\cap L,\mathfrak{X}(G)\leq \langle \mathcal{R}\cap L\rangle$ thus 
$J\leq \langle\mathcal{R}\cap L\rangle$.  Hence $L=\langle\mathcal{J},\mathfrak{X}(G)\rangle\leq 
\langle\mathcal{R}\cap L\rangle\leq L$.
\end{proof}

\begin{lemma}\label{lem:drop-H}
If $\mathcal{H}$ is an $(\Ops\cup G)$-decomposition of $G$ and 
$\mathcal{R}$ a direct $(\Ops\cup G)$-decomposition of $G$ such that
$\mathcal{H}=\mathcal{H}\mathfrak{X}(G)$ refines $\mathcal{R}\mathfrak{X}(G)$,
then for all $\mathcal{J}\subset\mathcal{H}$ and all $H\in\mathcal{H}-\mathcal{J}$,
there is a unique $R\in\mathcal{R}$ such that $H\leq R\mathfrak{X}(G)$
 and
$$\langle\mathcal{R}-\{R\}\rangle \mathfrak{X}(G)\cap \langle H,\mathcal{J},\mathfrak{X}(G)\rangle
=\langle\mathcal{R}-\{R\}\rangle\mathfrak{X}(G)\cap \langle\mathcal{J},\mathfrak{X}(G)\rangle.$$
\end{lemma}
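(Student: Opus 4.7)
The plan is to first nail down uniqueness of the direction $R$, then reduce the set-theoretic identity to a computation inside $G/\mathfrak{X}(G)$, where the direct decomposition $\mathcal{R}\mathfrak{X}(G)/\mathfrak{X}(G)$ makes the modular law straightforward to apply.

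\emph{Step 1 (uniqueness of $R$).} Since $\mathcal{H}=\mathcal{H}\mathfrak{X}(G)$, every $H\in\mathcal{H}$ properly contains $\mathfrak{X}(G)$, so in particular $H\not\leq\mathfrak{X}(G)$. As $\mathcal{H}$ refines $\mathcal{R}\mathfrak{X}(G)$, there is a unique member of $\mathcal{R}\mathfrak{X}(G)$ containing $H$, and this must have the form $R\mathfrak{X}(G)$ with $R\in\mathcal{R}-\mathfrak{X}$ (the factors $R\in\mathcal{R}\cap\mathfrak{X}$ satisfy $R\mathfrak{X}(G)=\mathfrak{X}(G)$, which cannot contain $H$). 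This gives the unique $R$ advertised in the statement.

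\emph{Step 2 (passage to the quotient).} Pass to $\bar G=G/\mathfrak{X}(G)$ and write $\bar X=X\mathfrak{X}(G)/\mathfrak{X}(G)$ for any subgroup $X$ containing $\mathfrak{X}(G)$. Because $\mathfrak{X}(G)$ is $(\Ops\cup G)$-graded, \lemref{lem:induced}(ii) shows that $\bar{\mathcal{R}}:=\mathcal{R}\mathfrak{X}(G)/\mathfrak{X}(G)=\{\bar R:R\in\mathcal{R}-\mathfrak{X}\}$ is a direct $(\Ops\cup G)$-decomposition of $\bar G$, so $\bar G=\bar R\times \bar A$ where $\bar A=\langle\mathcal{R}-\{R\}\rangle\mathfrak{X}(G)/\mathfrak{X}(G)$.

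\emph{Step 3 (splitting $\mathcal{J}$ along $\bar R\times\bar A$).} By refinement, each $J\in\mathcal{J}$ is contained in a unique $R_J\mathfrak{X}(G)$ with $R_J\in\mathcal{R}-\mathfrak{X}$. Partition $\mathcal{J}=\mathcal{J}_R\sqcup\mathcal{J}_A$ according to whether $R_J=R$ or not, and set $\bar B_R=\langle\bar{\mathcal{J}}_R\rangle\leq \bar R$ and $\bar B_A=\langle\bar{\mathcal{J}}_A\rangle\leq \bar A$, so that $\bar B:=\langle\mathcal{J},\mathfrak{X}(G)\rangle/\mathfrak{X}(G)=\bar B_R\bar B_A$. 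Similarly $\bar H\leq \bar R$ by Step 1, so $\overline{HB}=\bar H\bar B_R\bar B_A$ with $\bar H\bar B_R\leq \bar R$.

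\emph{Step 4 (modular law in $\bar G$).} Since $\bar B_A\leq \bar A$, the modular law (\lemref{lem:modular}) gives
\begin{equation*}
\bar A\cap \bar B \;=\; \bar A\cap \bar B_R\bar B_A \;=\; (\bar A\cap \bar B_R)\bar B_A \;=\; \bar B_A,
\end{equation*}
using $\bar A\cap \bar R=1$ and $\bar B_R\leq\bar R$. The same calculation with $\bar H\bar B_R$ in place of $\bar B_R$ (still contained in $\bar R$) yields $\bar A\cap \overline{HB}=\bar B_A$ as well. Hence $\bar A\cap \overline{HB}=\bar A\cap \bar B$.

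\emph{Step 5 (lifting the identity).} The three subgroups $\langle\mathcal{R}-\{R\}\rangle\mathfrak{X}(G)$, $\langle\mathcal{J},\mathfrak{X}(G)\rangle$, and $\langle H,\mathcal{J},\mathfrak{X}(G)\rangle$ all contain $\mathfrak{X}(G)$, so the correspondence between subgroups of $G$ containing $\mathfrak{X}(G)$ and subgroups of $\bar G$ is inclusion-preserving and a bijection; the equality from Step 4 therefore pulls back to the claimed equality in $G$. The only delicate point is Step 3, i.e.\ knowing that each $\bar J$ really sits inside a single $\bar R'$ so that $\bar B$ splits cleanly across $\bar R\times\bar A$, but this is exactly what the refinement hypothesis on $\mathcal{H}$ (and hence on $\mathcal{J}\subseteq\mathcal{H}$) provides.
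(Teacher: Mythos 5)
Your proof is correct. It takes essentially the same route as the paper's — both proofs reduce the claim to a modular-law computation relative to the decomposition $\{R\mathfrak{X}(G),\langle\mathcal{R}-\{R\}\rangle\mathfrak{X}(G)\}$ — but you organize the argument more cleanly by passing to the quotient $\bar G=G/\mathfrak{X}(G)$, where $\bar R\times\bar A$ is a genuine direct decomposition and the intersections simplify. You also avoid invoking \lemref{lem:cap}: the paper uses that lemma to write $J=(R\cap J)\times(C\cap J)$ before applying the modular law several times inside $G$, whereas you split $\mathcal{J}$ directly into $\mathcal{J}_R\sqcup\mathcal{J}_A$ via the refinement hypothesis and then apply the modular law once in $\bar G$ (and once more with $\bar H\bar B_R$ in place of $\bar B_R$). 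The ingredients are the same; your version is arguably slightly more transparent because the correspondence theorem removes the bookkeeping around $\mathfrak{X}(G)$.
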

\begin{proof}  
Fix $\mathcal{J}\subseteq\mathcal{H}$ and $H\in\mathcal{H}-\mathcal{J}$.  
By the definition of refinement there is a unique $R\in\mathcal{R}$ such that
$H\leq R\mathfrak{X}(G)$.  Set $J=\langle\mathcal{J},\mathfrak{X}(G)\rangle$ and 
$C=\langle \mathcal{R}-\{R\}\rangle$.
By \lemref{lem:cap}, $\mathcal{R}\cap HJ$ and $\mathcal{R}\cap J$ are 
direct $(\Ops\cup G)$-decompositions of $HJ$ and $J$ respectively.  
As $J=(R\cap J)\times (C\cap J)$ and $\mathfrak{X}(G)\leq J$, we get that
$J=(R\mathfrak{X}(G)\cap J)(C\mathfrak{X}(G)\cap J)$.
Also, $\mathfrak{X}(G)$ is $(\Ops\cup G)$-graded; hence, by 
\lemref{lem:induced}(ii), $G/\mathfrak{X}(G)=R\mathfrak{X}(G)/\mathfrak{X}(G)
\times C\mathfrak{X}(G)/\mathfrak{X}(G)$ and $C\mathfrak{X}(G)\cap R\mathfrak{X}(G)=
\mathfrak{X}(G)$.

Combining  the modular law with $\mathfrak{X}(G)\leq H\leq R\mathfrak{X}(G)$ and 
$R\mathfrak{X}(G)\cap C\mathfrak{X}(G)=\mathfrak{X}(G)$ 
we have that
\begin{align*}
	C\mathfrak{X}(G)\cap HJ
		& = C\mathfrak{X}(G) \cap \Big (H(R\mathfrak{X}(G)\cap J)\cdot
				(C\mathfrak{X}(G)\cap J)\Big)\\
		& = \Big(C\mathfrak{X}(G)\cap H(R\mathfrak{X}(G)\cap J)\Big) 
				( C\mathfrak{X}(G)\cap J)\\
		& = (C\mathfrak{X}(G)\cap R\mathfrak{X}(G)\cap HJ)(C\mathfrak{X}(G)\cap J) \\
		& = \mathfrak{X}(G)(C\mathfrak{X}(G)\cap J)=C\mathfrak{X}(G)\cap J.
\end{align*}
Thus, $C\mathfrak{X}(G)\cap HJ=C\mathfrak{X}(G)\cap J$.
\end{proof}

\begin{prop}\label{prop:chain-chain}
If $\mathcal{H}=\mathcal{H}\mathfrak{X}(G)$ is an $(\Ops\cup G)$-decomposition of $G$ and
$\mathcal{R}$ is a direct $\Ops$-decomposition of $G$ such that $\mathcal{H}$
refines $\mathcal{R}\mathfrak{X}(G)$, then every maximal proper chain $\mathscr{C}$ of
subsets of $\mathcal{H}$ induces a direct chain $\{\langle \mathcal{C},\mathfrak{X}(G)\rangle :
\mathcal{C}\in\mathscr{C}\}$.
\end{prop}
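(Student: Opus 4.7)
The plan is to enumerate a maximal proper chain $\mathscr{C}$ as $\emptyset = \mathcal{C}_0 \subsetneq \mathcal{C}_1 \subsetneq \cdots \subsetneq \mathcal{C}_n = \mathcal{H}$, where maximality forces $\mathcal{C}_{i+1} = \mathcal{C}_i \sqcup \{H_i\}$ for a distinguished $H_i \in \mathcal{H}$. I set $L_i := \langle \mathcal{C}_i, \mathfrak{X}(G)\rangle$, so that $L_0 = \mathfrak{X}(G)$ and $L_n = \langle \mathcal{H}\rangle = G$, and claim $\mathcal{L} = \{L_0,\ldots,L_n\}$, with directions drawn from $\mathcal{R}$, satisfies the three conditions of \defref{def:chain}.

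My first task is to verify the strict inclusions $L_i \subsetneq L_{i+1}$. The hypothesis $\mathcal{H} = \mathcal{H}\mathfrak{X}(G)$ makes the map $H \mapsto H\mathfrak{X}(G)$ an idempotent bijection on $\mathcal{H}$, hence the identity, so every $H \in \mathcal{H}$ strictly contains $\mathfrak{X}(G)$. Consequently, when $\mathcal{C}_i$ is nonempty, $L_i = \langle \mathcal{C}_i\rangle$, and $H_i \leq L_i$ would give $\mathcal{H} - \{H_i\} \supseteq \mathcal{C}_i$ generating $G$, contradicting the minimality built into the decomposition $\mathcal{H}$; when $\mathcal{C}_i = \emptyset$, the strict containment $H_i > \mathfrak{X}(G) = L_i$ finishes the case.

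Condition~(i) of \defref{def:chain}, $L_i = \langle \mathcal{R} \cap L_i \rangle$, is immediate from \lemref{lem:cap} with $\mathcal{J} = \mathcal{C}_i$. For condition~(ii), I apply \lemref{lem:drop-H} with the same $\mathcal{J}$ and $H = H_i$ to produce the unique $R_i \in \mathcal{R}$ with $H_i \leq R_i \mathfrak{X}(G)$, together with the equation
\[
\langle \mathcal{R} - \{R_i\}\rangle \mathfrak{X}(G) \cap L_{i+1}
 = \langle \mathcal{R} - \{R_i\}\rangle \mathfrak{X}(G) \cap L_i.
\]
For every other $R' \in \mathcal{R}$ I intersect both sides with $R'\mathfrak{X}(G) \leq \langle \mathcal{R} - \{R_i\}\rangle \mathfrak{X}(G)$ to conclude $R'\mathfrak{X}(G) \cap L_i = R'\mathfrak{X}(G) \cap L_{i+1}$, so $R_i$ is the only possible direction of $L_i$. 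Were it also true that $R_i\mathfrak{X}(G) \cap L_i = R_i\mathfrak{X}(G) \cap L_{i+1}$, then the remark surrounding \eqref{eq:unique-direction}, now applicable thanks to condition~(i), would force $L_i = L_{i+1}$, contradicting the strict inclusion.

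I expect the strict-inclusion step to be the most delicate point, since it is where the normalization $\mathcal{H} = \mathcal{H}\mathfrak{X}(G)$ and the minimality built into the notion of a decomposition both enter substantively; the remainder is a clean assembly of \lemref{lem:cap} and \lemref{lem:drop-H}.
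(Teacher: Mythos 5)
Your proof is correct and follows essentially the same route as the paper's, which likewise reduces the claim to \lemref{lem:cap} (for condition (i)) and \lemref{lem:drop-H} together with the discussion around \eqref{eq:unique-direction} (for condition (ii)). You are somewhat more explicit than the paper's terse proof in verifying the strictness of the inclusions $L_i \subsetneq L_{i+1}$, but that is a detail filled in within the same argument, not a different approach.
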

\begin{proof}
For each $\mathcal{C}\subseteq\mathcal{H}$, by \lemref{lem:cap}, 
$\langle\mathcal{C}\rangle=\big\langle\mathcal{R}\cap \langle\mathcal{C}\rangle\big\rangle$. 
The rest follows from \lemref{lem:drop-H}.
\end{proof}

The following \thmref{thm:chain} is a critical component of the proof of the algorithm for \thmref{thm:FindRemak}, specifically in proving \thmref{thm:merge}.  What it says is that we can proceed through any direct chain as the $\mathfrak{X}$-separated direct decompositions of lower terms in the chain induce direct factors of the next term in the chain, and in a predictable manner.
\begin{thm}\label{thm:chain}
If $\mathcal{L}$ is a direct chain with directions $\mathcal{R}$, 
$L\in\mathcal{L}-\{G\}$, and $R\in\mathcal{R}$ is 
the direction of $L$, then for every 
$\mathfrak{X}$-separated direct $(\Ops\cup G)$-decomposition $\mathcal{K}$ of $L$ 
such that $\mathcal{K}\mathfrak{X}(G)$ refines $\mathcal{R}\mathfrak{X}(G)\cap L$, 
it follows that 
$$\big\{K\in\mathcal{K}-\mathfrak{X}: K\leq 
	\langle \mathcal{R}-\{R\}\rangle \mathfrak{X}(G)\big\}$$
lies in an $\mathfrak{X}$-separated direct $(\Ops\cup G)$-decomposition of the successor
to $L$.
\end{thm}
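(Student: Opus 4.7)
Let $M$ denote the successor of $L$ in $\mathcal{L}$ and set $C:=\langle\mathcal{R}-\{R\}\rangle$, so that $G=R\times C$ as $\Omega$-groups. By \lemref{lem:induced}(i), $L=(R\cap L)\times(C\cap L)$ and $M=(R\cap M)\times(C\cap M)$ are direct $(\Omega\cup G)$-decompositions. The plan is to establish the identity
\begin{equation*}
\langle\mathcal{K}'\rangle\mathfrak{X}(G)\;=\;C\mathfrak{X}(G)\cap L\;=\;C\mathfrak{X}(G)\cap M\;=\;(C\cap M)\mathfrak{X}(G),
\end{equation*}
and then invoke \propref{prop:extendable} to promote $\langle\mathcal{K}'\rangle$ to a direct $\Omega$-factor of $M$.

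The middle equality is the core consequence of the direction hypothesis. By \defref{def:chain}(ii), $R$ is the unique element of $\mathcal{R}$ for which $R\mathfrak{X}(G)\cap L\neq R\mathfrak{X}(G)\cap M$, so each $R'\in\mathcal{R}-\{R\}$ satisfies $R'\mathfrak{X}(G)\cap L=R'\mathfrak{X}(G)\cap M$. Since $\mathfrak{X}(G)$ is $\Omega$-graded, $G/\mathfrak{X}(G)=\prod_{R'\in\mathcal{R}}R'\mathfrak{X}(G)/\mathfrak{X}(G)$, and joining the unchanged factors indexed by $R'\neq R$ yields the middle equality. The last equality is the modular law (\lemref{lem:modular}) applied to $\mathfrak{X}(G)\leq M$. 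For the first equality, the refinement hypothesis that $\mathcal{K}\mathfrak{X}(G)$ refines $\mathcal{R}\mathfrak{X}(G)\cap L$ assigns each $K\in\mathcal{K}-\mathfrak{X}$ to a unique $R'\in\mathcal{R}$; by construction $\mathcal{K}'$ consists precisely of those $K$ aligned with $R'\neq R$, so $\langle\mathcal{K}'\rangle\mathfrak{X}(G)/\mathfrak{X}(G)$ exhausts $C\mathfrak{X}(G)\cap L/\mathfrak{X}(G)$.

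With the identity in hand, I apply \propref{prop:extendable} in the ambient $\Omega$-group $M$, with $H:=\langle\mathcal{K}'\rangle$ and the direct $\Omega$-factor $R^*:=C\cap M$ of $M$. Condition (a), $H\mathfrak{X}(G)=R^*\mathfrak{X}(G)>\mathfrak{X}(G)$, is exactly the identity just established. Condition (b) is supplied by augmenting $\mathcal{K}'$ with a Remak $(\Omega\cup G)$-decomposition of an $\mathfrak{X}$-subgroup bridging $\langle\mathcal{K}'\rangle$ and $(C\cap M)\mathfrak{X}(G)$; the resulting decomposition is $\mathfrak{X}$-separated because $\mathcal{K}'\subseteq\mathcal{K}-\mathfrak{X}$ inherits that property from the $\mathfrak{X}$-separated $\mathcal{K}$. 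The proposition then identifies $\langle\mathcal{K}'\rangle$ as a direct $\Omega$-factor of $M$; substituting the internal decomposition $\mathcal{K}'$ for $\langle\mathcal{K}'\rangle$ in the resulting decomposition of $M$, and exchanging the complementary $\mathfrak{X}$-part via \propref{prop:direct-class}(v), produces the required $\mathfrak{X}$-separated direct $(\Omega\cup G)$-decomposition of $M$ containing $\mathcal{K}'$.

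The main obstacle is the technical reconciliation of the up grader value $\mathfrak{X}(G)$ with the role of $\mathfrak{X}(M)$ formally demanded by \propref{prop:extendable}, whose statement phrases condition (a) in terms of the grader applied to the ambient group. This is resolved by noting that the proof of \propref{prop:extendable} only uses the grader through its $\Omega$-gradedness and its compatibility with the chosen direct decomposition of the ambient group, both of which hold for $\mathfrak{X}(G)$ as a subgroup of $M$ with respect to $M=(R\cap M)\times(C\cap M)$; any residual discrepancy between $\mathfrak{X}(G)$ and $\mathfrak{X}(M)$ lies in an $\mathfrak{X}$-term and is absorbed by the closing \propref{prop:direct-class}(v) exchange.
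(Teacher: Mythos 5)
Your proof follows the same route as the paper's: you set $M$ to be the successor, $C=\langle\mathcal{R}-\{R\}\rangle$, establish the same chain of equalities $\langle\mathcal{K}'\rangle\mathfrak{X}(G)=C\mathfrak{X}(G)\cap L=C\mathfrak{X}(G)\cap M=(C\cap M)\mathfrak{X}(G)$ using the refinement hypothesis, the direction hypothesis, and the modular law in exactly the same roles, and then apply \propref{prop:extendable} with $(M,\langle\mathcal{K}'\rangle,C\cap M)$ in place of $(G,H,R)$, as the paper does. The only substantive difference is that you explicitly flag the implicit identification of $\mathfrak{X}(G)$ with $\mathfrak{X}(M)$ in the application of \propref{prop:extendable}, a point the paper passes over silently; your resolution is informal but the tension you identify is real and, in the paper, it is resolved only in the setting where the theorem is actually invoked (\thmref{thm:merge}, where $\mathfrak{X}(M)=\mathfrak{X}(G)$ is guaranteed by hypothesis (b) there).
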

\begin{proof}  
Let $M$ be the successor to $L$ in $\mathcal{L}$ and set $C=\langle\mathcal{R}-\{R\}\rangle$.  
As $\mathcal{K}\mathfrak{X}(G)$ refines 
$\mathcal{R}\mathfrak{X}(G)\cap L$, it also refines 
$\{R\mathfrak{X}(G)\cap L,C\mathfrak{X}(G)\cap L\}$
and so
\begin{align*}
	C\mathfrak{X}(G)\cap L
		& = \langle K\in\mathcal{K}, K\leq C\mathfrak{X}(G)\rangle
		 = \langle K\in\mathcal{K}-\mathfrak{X}, K\leq C\mathfrak{X}(G)\rangle\mathfrak{X}(G).
\end{align*} 
Since $\mathcal{K}$ is $\mathfrak{X}$-separated $F=\langle K\in\mathcal{K}-\mathfrak{X}, 
K\leq C\mathfrak{X}(G)\rangle$ has no direct $(\Ops\cup G)$-factor in $\mathfrak{X}$.  Also, 
as the direction of $L$ is $R$, $C\mathfrak{X}(G)\cap M=C\mathfrak{X}(G)\cap L$ and so
\begin{align*}
(C\cap M)\mathfrak{X}(G) 
	& = C\mathfrak{X}(G)\cap M\\
	& = C\mathfrak{X}(G)\cap L\\
	& = \langle K\in\mathcal{K}-\mathfrak{X}, K\leq C\mathfrak{X}(G)\rangle \mathfrak{X}(G)\\
	& =F \times \langle \mathcal{K}\cap \mathfrak{X}\rangle.
\end{align*}
Using $(M,F,C\cap M)$ in the role of $(G,H,R)$ in \propref{prop:extendable}, it 
follows that $F$ is a direct $(\Ops\cup G)$-factor of $M$.  
In particular, $\{K\in\mathcal{K}-\mathfrak{X}, K\leq C\mathfrak{X}(G)\}$ 
lies in a direct $(\Ops\cup G)$-decomposition of $M$.
\end{proof}

\section{Algorithms to lift, extend, and match direct decompositions}\label{sec:lift-ext-algo}

Here we transition into algorithms beginning with a small modification of a technique introduced
by Luks and Wright to find a direct complement to a direct factor (\thmref{thm:FindComp-invariant}).  We then produce an algorithm
\textalgo{Merge} (\thmref{thm:merge}) to lift direct decompositions for appropriate quotients. 
That algorithm is the work-horse which glues together the unique constituents predicted by \thmref{thm:Lift-Extend}.  That task asks us to locate a unique partition of a certain set, but in a manner that does not test each of the exponentially many partitions.  The proof relies heavily on results such as \thmref{thm:chain} to prove that an essentially greedy algorithm will suffice.  

For brevity we have opted to describe the algorithms only for the case of lifting decompositions.  The natural duality of up and down graders makes it possible to modify the methods to prove similar results for extending decompositions.

This section assumes familiarity with Sections \ref{sec:tools} and \ref{sec:lift-ext}.

\subsection{Constructing direct complements}\label{sec:complements}

In this section we solve the following problem in polynomial-time.

\begin{prob}{\sc Direct-$\Ops$-Complement}
\begin{description}
\item[Given] a $\Ops$-group $G$ and an $\Ops$-subgroup $H$,
\item[Return] an $\Ops$-subgroup $K$ of $G$ such that $G=H\times K$,
or certify that no such $K$ exists.
\end{description}
\end{prob}

Luks and Wright gave independent solutions to 
{\sc Direct-$\emptyset$-Complement} in
 back-to-back lectures at the University of Oregon \cite{Luks:comp,Wright:comp}.
\begin{thm}[Luks \cite{Luks:comp},Wright \cite{Wright:comp}]\label{thm:FindComp}
For groups of permutations,
{\sc Direct-$\emptyset$-Complement} has a polynomial-time solution
\end{thm}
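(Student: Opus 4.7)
The plan is to reduce \textsc{Direct-$\emptyset$-Complement} for a pair $(G,H)$ to splitting a central extension and then to a direct-summand problem in a finite abelian group, a problem solvable by integer linear algebra.

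I start from the following structural characterization. If $G = H\times K$, then $[H,K]\leq H\cap K = 1$, so $K\leq C := C_G(H)$; the modular law (\lemref{lem:modular}) gives $C = (H\cap C)K = Z(H)K$ while $K\cap Z(H)\leq K\cap H = 1$. Conversely, suppose $H\triangleleft G$, $HC=G$, and $K\leq C$ satisfies $KZ(H)=C$ and $K\cap Z(H)=1$. Writing $c\in C$ uniquely as $c=kz$ with $k\in K$, $z\in Z(H)\leq Z(C)$, conjugation by $z$ is trivial on $K$ and conjugation by $k$ preserves $K$, so $K\triangleleft C$; since $H$ centralizes $C$ it centralizes $K$, so $K$ is normalized by $HC=G$. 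Moreover $HK = HZ(H)K = HC = G$ and $H\cap K\leq Z(H)\cap K = 1$. Thus constructing a direct complement of $H$ reduces, once $H\triangleleft G$ and $HC=G$ are verified, to splitting the central extension $1\to Z(H)\to C\to C/Z(H)\to 1$.

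Accordingly, the algorithm proceeds as follows. Use \textsc{Member} to compute $\langle H^G\rangle$ and verify $H\triangleleft G$; use \textsc{Normal-Centralizer} to compute $C = C_G(H)$, set $Z := H\cap C$, and verify $HC = G$ via \textsc{Order}. Any complement $K\leq C$ must contain $[C,C]$ since $C/K\cong Z$ is abelian, so the necessary condition $[C,C]\cap Z = 1$ is tested next; compute $[C,C]$ as the normal closure of commutators of generators of $C$ and perform the intersection test by orders using \textsc{Member}. If this fails, report that no complement exists. Otherwise set $A := C/[C,C]$ and let $\bar Z$ be the image of $Z$. Under the hypothesis $[C,C]\cap Z = 1$, finding $K$ with $KZ = C$, $K\cap Z = 1$ and $[C,C]\leq K$ is equivalent to finding a subgroup $\bar K\leq A$ with $A = \bar K\oplus \bar Z$ and taking $K$ to be the preimage of $\bar K$ in $C$.

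The sole remaining, and only genuinely delicate, step is the abelian-group summand problem: a subgroup of a finite abelian group need not be a direct summand (e.g.\ $2\mathbb{Z}/4\mathbb{Z}\subseteq\mathbb{Z}/4\mathbb{Z}$), so one must test whether $\bar Z$ is pure in $A$ and, if so, produce a complement. I invoke \textsc{Primary-Decomposition} to split $A$ into its $p$-primary components and work prime by prime; within each $p$-component the purity test and the construction of a complement are standard Smith-normal-form calculations on the inclusion matrix of $\bar Z$ expressed in a basis of the component, using unimodular row and column operations. If the test succeeds in every component, pulling the resulting $\bar K$ back through $C\to A$ yields the sought $K$; if any component fails, no direct complement of $H$ in $G$ exists. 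All remaining ingredients---normal closure, centralizer of a normal subgroup, commutator subgroup, order, primary decomposition, and integer linear algebra---run in polynomial time by the toolkit of \secref{sec:tools}, so the entire procedure does as well. The main obstacle in the proof is simply the correct identification of the abelian-summand test as the unique obstruction; every other step is formally clean.
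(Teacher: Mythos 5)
Your proposal is correct, and it reaches the same intermediate landmark as the paper — namely, verify $H\normaleq G$, compute $C:=C_G(H)$, verify $G=HC$, and then reduce to complementing the central subgroup $Z=\zeta_1(H)$ inside $C$ — but from that point it diverges. The paper (following the Luks/Wright reduction stated in \secref{sec:complements}) hands $1\to Z\to C\to C/Z\to 1$ to the general \textsc{$\Ops$-Complement-Abelian} routine of \propref{prop:InvComp}, which works for \emph{any} abelian normal $M$, by building a constructive presentation of $C$ mod $Z$ and using \textsc{Solve} to decide whether the presentation lifts — in effect a $2$-cocycle splitting test. You instead exploit the fact that $Z$ is central in $C$: you observe that any complement $K$ to $Z$ in $C$ is automatically normal and contains $\gamma_2(C)$, so the entire problem collapses to testing whether the image of $Z$ is a direct summand of the finite abelian group $C/\gamma_2(C)$, which you solve by primary decomposition plus Smith-normal-form / purity arguments, then pull back. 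Both routes are polynomial and both are correct for this theorem; yours is more elementary (pure integer linear algebra in an abelian quotient, with $\gamma_2(C)\cap Z=1$ as the single extra necessary condition) while the paper's is more general, in particular it handles a non-central abelian kernel and it adapts immediately to the operator-group setting of \thmref{thm:FindComp-invariant}, where one would have to replace your Smith-normal-form step by an $\Ops$-equivariant module complement (idempotents in $\End_\Ops$, as in \corref{coro:FindRemak-abelian}) rather than plain unimodular reduction.
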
 
Both \cite{Luks:comp} and \cite{Wright:comp} reduce 
\textalgo{Direct-$\emptyset$-Complement} to the following problem
(here generalized to $\Ops$-groups):

\begin{prob}{\sc $\Ops$-Complement-Abelian}
\begin{description}
\item[Given] an $\Ops$-group $G$ and an abelian $(\Ops\cup G)$-subgroup $M$,
\item[Return] an  $\Ops$-subgroup $K$ of $G$ such that $G=M\rtimes K$,
or certify that no such $K$ exists.
\end{description}
\end{prob}

To deal with operator groups we use some modifications to the problems above.  Many of the steps
are conceived within the group $\langle \Ops \theta\rangle\ltimes G\leq \Aut G\ltimes G$.  
However, to execute these algorithms we cannot assume that $\langle \Ops\theta\rangle\ltimes G$ is a permutation
group as it is possible that these groups have no small degree permutation representations (e.g. $G=\mathbb{Z}_p^d$
and $\langle \Ops\theta\rangle=\GL(d,p)$).  Instead we operate
within $G$ and account for the action of $\Ops$ along the way.

\begin{lemma}\label{lem:pres}
Let $G$ be an $\Ops$-group where $\theta:\Ops\to \Aut G$.
If $\{\langle \XX |\RR\rangle,f,\ell\}$ is a constructive presentation 
for $G$ and $\langle \Ops|\RR'\rangle$ a presentation 
for $A:=\langle \Ops\theta\rangle\leq \Aut G$ with respect to $\theta$, then 
$\langle \Ops\sqcup \XX| \RR'\ltimes\RR\rangle$
is a presentation for $A\ltimes G$ with respect to $\theta\sqcup f$, where
\begin{equation*}
	\RR'\ltimes\RR= \RR'\sqcup\RR\sqcup
		\{(xf)^{s}\ell\cdot (x^{s})^{-1} : 
			x\in\XX,s\in\Ops\},\textnormal{ and}
\end{equation*}
\begin{equation*}
\forall z\in \Ops\sqcup \XX,\quad
	z(\theta\sqcup f)=\left\{\begin{array}{cc}  
		z\theta & z\in\Ops,\\
		zf & z\in \XX.
	\end{array}\right.
\end{equation*}
\end{lemma}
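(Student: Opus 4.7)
The plan is to exhibit mutually inverse homomorphisms between the group $H:=\langle \Ops\sqcup \XX \mid \RR'\ltimes \RR\rangle$ and $A\ltimes G$ that extend the prescribed data. First I would fix the notational convention that $(xf)^{s}$ denotes $(xf)^{s\theta}\in G$ while $x^{s}\in F(\Ops\sqcup\XX)$ is the literal word $s^{-1}xs$; the section $\ell:G\to F(\XX)$ is what bridges these two worlds, and the twisted conjugation relator $(xf)^{s}\ell\cdot(x^{s})^{-1}$ is the mechanism by which the action $\theta$ is imprinted on $H$.

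Second, I would extend $\theta\sqcup f$ to the usual homomorphism $\pi:F(\Ops\sqcup\XX)\to A\ltimes G$ and verify that every relator in $\RR'\ltimes\RR$ lies in $\ker\pi$, so that $\pi$ descends to a surjection $\bar\pi:H\to A\ltimes G$. Words in $\RR'$ and $\RR$ are in the kernel by hypothesis on the two constituent presentations. For a conjugation relator $(xf)^{s}\ell\cdot (x^{s})^{-1}$, the left factor is a word in $\XX$ whose image under $\hat f$ equals $(xf)^{s\theta}$ because $\ell\hat f$ is the identity on $G$; the right factor $s^{-1}x^{-1}s$ maps in $A\ltimes G$ to $(s\theta)^{-1}(xf)^{-1}(s\theta)=((xf)^{s\theta})^{-1}$, and the product is trivial.

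Third, I would construct $\psi:A\ltimes G\to H$ by the universal property of the semidirect product. Because $\RR\subseteq \RR'\ltimes\RR$, the inclusion $F(\XX)\hookrightarrow F(\Ops\sqcup\XX)\twoheadrightarrow H$ annihilates $\RR$ and therefore factors through $G$, producing $\psi_G:G\to H$; symmetrically $\RR'$ yields $\psi_A:A\to H$. To assemble a homomorphism $\psi$ on $A\ltimes G$ it suffices to check the compatibility $\psi_A(s\theta)^{-1}\psi_G(xf)\psi_A(s\theta)=\psi_G((xf)^{s\theta})$ for every $s\in\Ops$ and $x\in\XX$, since $\XX f$ generates $G$ and $\Ops\theta$ generates $A$. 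Unwinding the factorization of $\psi_G$ through $\hat f$, the right-hand side is the image in $H$ of any word in $F(\XX)$ representing $(xf)^{s\theta}$, and the canonical such word $(xf)^{s}\ell$ equals $s^{-1}xs$ in $H$ by the very conjugation relator we have imposed.

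Finally I would check that $\bar\pi\circ\psi$ and $\psi\circ\bar\pi$ are the identity by tracing generators: $\bar\pi(s)=s\theta$ with $\psi(s\theta)=s$, and $\bar\pi(x)=xf$ with $\psi(xf)=x$. The main obstacle is entirely bookkeeping---keeping straight that $(xf)^{s}$ lives in $G$ while $x^{s}$ lives in $F(\Ops\sqcup\XX)$, and that the section $\ell$ is precisely what translates the former into a word comparable to the latter inside $H$. Once this is in place, the verification is mechanical, and it is illuminating that the twisted conjugation relators $\{(xf)^{s}\ell\cdot(x^{s})^{-1}\}$ encode the action of $A$ on $G$ using only the section $\ell$ rather than a full transversal.
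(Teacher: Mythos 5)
Your proof is correct and follows essentially the same strategy as the paper: verify that every relator of $\RR'\ltimes\RR$ maps to the identity in $A\ltimes G$ (using $\ell\hat{f}=\mathrm{id}_G$ for the conjugation relators) to obtain a surjection $H\twoheadrightarrow A\ltimes G$, and then produce the reverse surjection. The only notable difference is in how the reverse map is packaged: you assemble $\psi\colon A\ltimes G\to H$ from $\psi_A$ and $\psi_G$ via the universal property of the semidirect product after checking the compatibility $\psi_A(s\theta)^{-1}\psi_G(xf)\psi_A(s\theta)=\psi_G((xf)^{s\theta})$ on generators, whereas the paper works explicitly inside $F(\Ops\sqcup\XX)$, showing $N=\langle K,F(\XX)\rangle$ is normal (this is exactly what the conjugation relators $Kx^{s}=K(xf)^{s}\ell$ furnish), setting $C=\langle K,F(\Ops)\rangle$, and exhibiting $H=(C/K)(N/K)$ as a quotient of $A\ltimes G$ — two phrasings of the same argument, with your compatibility check making explicit what the paper's normality computation encodes implicitly.
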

\begin{proof}  
Without loss of generality we assume 
$F(\Ops),F(\XX)\leq F(\Ops\sqcup\XX)$.  
Let $K$ be the normal closure of $\RR'\ltimes \RR $ in 
$F(\Ops\sqcup\XX)$.  For each $s\in\Ops$ and each $x\in\XX$
it follows that $Kx^{s}=K(xf)^{s}\ell\leq N=\langle K,F(\XX)\rangle$.
In particular, $N$ is normal in $F(\Ops\sqcup\XX)$.  Set 
$C=\langle K,F(\Ops)\rangle$.  It follows that $F(\Ops\sqcup\XX)
=\langle C,N\rangle=CN$.  Thus, $H=F(\Ops\sqcup\XX)/K=CN/K=(C/K)(N/K)$ 
and $N/K$ is normal in $H$.  
Since $C/K$ and $N/K$ satisfy the presentations for $A$ and $G$ respectively,
it follows that $H$ is a quotient of $A\ltimes G$.  To show that $H\cong A\ltimes G$ 
it suffices to notice that $A\ltimes G$ satisfies the relations in 
$\RR'\ltimes\RR $, with respect to
$\Ops\sqcup\XX$ and $\theta\sqcup\ell$.   Indeed,
for all $s\in \Ops$ and all $x\in\XX$ we see that
\begin{align*}
	x^{s}(\widehat{\theta\sqcup f})
		& = (s\theta^{-1},1)(1,xf)(s\theta,1) 
		 = (1,(xf)^{s})
		 = (1, (xf)^{s} \ell \hat{f})
		 = (xf)^{s} \ell(\widehat{\theta\sqcup f}),
\end{align*}
which implies that $(xf)^{s}\ell (x^{s})^{-1}\in \ker \widehat{\theta\sqcup f}$;
so, $K\leq \ker \widehat{\theta\sqcup f}$.  Hence, $\langle \Ops\sqcup \XX|
\RR'\ltimes\RR \rangle$ is a presentation for $A\ltimes G$.
\end{proof}

\begin{prop}\label{prop:InvComp}
{\sc $\Ops$-Complement-Abelian} has a polynomial-time solution.
\end{prop}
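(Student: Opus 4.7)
The plan is to extend the classical Luks--Wright reduction to the operator setting and then express $\Ops$-complement existence as a single instance of \textalgo{Solve}.

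I would begin by applying \textalgo{Presentation} to obtain a constructive presentation $\{\langle \XX\mid\RR\rangle,f,\ell\}$ for $G$ mod $M$. Any candidate complement then has the form $K(\mu):=\langle xf\cdot\mu(x):x\in\XX\rangle$ for some $\mu\in M^{\XX}$, and there are exactly two families of conditions on $\mu$. First, $K(\mu)$ must be a set-theoretic complement to $M$ in $G$; as $\RR$ presents $G/M$, this is equivalent to $w(f\mu)=1$ in $G$ for every $w\in\RR$. Second, $K(\mu)$ must be $\Ops$-invariant; since the $M$-coset of $(xf)^{s}$ is, by the constructive presentation, represented by the straight-line program $w_{x,s}:=(xf)^{s}\ell\in F(\XX)$, this invariance is equivalent to the word equations
\begin{equation*}
(x(f\mu))^{s}\cdot w_{x,s}(f\mu)^{-1}=1,\qquad x\in\XX,\ s\in\Ops.
\end{equation*}

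To feed both systems to \textalgo{Solve} simultaneously, I would use \lemref{lem:pres} to view the combined relation set as a collection of words in $F(\Ops\sqcup\XX)$ evaluated at the combined map $\theta\sqcup f$, with the unknown adjustment $\mu$ on $\XX$ and the trivial adjustment on $\Ops$. Because $M$ is abelian, normal, and $\Ops$-invariant, each relation is linear in $\mu$; a single call to \textalgo{Solve} thus returns either a valid $\mu$---yielding $K=K(\mu)$---or a certificate that no such $\mu$, and hence no $\Ops$-complement to $M$ in $G$, exists.

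Correctness is essentially built into the setup: the relations in $\RR$ force $K(\mu)$ to be a complement to $M$ in the plain-group sense, while the conjugation relations $\{x^{s}\cdot w_{x,s}^{-1}:x\in\XX,\,s\in\Ops\}$ force each generator of $K(\mu)$ to have its $\Ops$-image inside $K(\mu)$, which suffices (since $\Ops$ acts by automorphisms) for the whole subgroup they generate to be $\Ops$-invariant. Polynomial timing is immediate from the polynomial-time guarantees on \textalgo{Presentation} and \textalgo{Solve}, together with the fact that each $w_{x,s}$ is a polynomial-length straight-line program. The main obstacle is the bookkeeping needed to encode operator conjugation uniformly as a word relation that \textalgo{Solve} can process; this is precisely what \lemref{lem:pres} is designed for, and once it is invoked the argument reduces to the abelian-complement problem already handled in the non-operator case.
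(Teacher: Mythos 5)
Your proposal reproduces the paper's proof essentially verbatim: you invoke \textalgo{Presentation} to get a constructive presentation for $G$ mod $M$, parametrize candidate complements as $K(\mu)=\langle (xf)(x\mu):x\in\XX\rangle$, append the operator-invariance relations $r_{s,x}=(xf)^{s}\ell\cdot(x^{s})^{-1}$ to the presentation relations $\RR$, and feed the combined linear system to a single call of \textalgo{Solve}, with \lemref{lem:pres} supplying the semidirect-product presentation that makes the whole package well-founded. The only minor omission is that you assert, rather than prove, the converse direction (that an existing $\Ops$-complement yields a solution $\mu$ — the paper exhibits $\tau(x)=(xf)^{-1}(xfM)\varphi$ explicitly), but the idea is clearly present and the approach is the same.
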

\begin{proof}
Let $M, G\in \mathbb{G}_n$, and $\theta:\Ops\to \Aut G$ a function, where $M$ is an abelian
$(\Ops\cup G)$-subgroup of $G$.

\emph{Algorithm.}
Use {\sc Presentation} to produce a constructive 
presentation $\{\langle \XX|\RR\rangle,f,\ell\}$ for $G$ mod $M$.  For each 
$s\in \Ops$ and each $x\in\XX$, define
	$$r_{s,x}=(xf^{s})\ell\cdot(x^{s})^{-1}\in F(\Ops\sqcup\XX).$$
Use {\sc  Solve} to decide if there is a
$\mu\in M^{\XX}$ where
\begin{align}\label{eq:comp-rel-1}
	\forall r\in \RR ,&\quad r(f\mu) = 1,\textnormal{ and }\\
\label{eq:comp-rel-2}
	\forall s\in\Ops,\forall x\in\XX & \quad r_{s,x}(f\mu)=1.
\end{align}
If no such $\mu$ exists, then assert that $M$ has no $\Ops$-complement in $G$; otherwise, 
return $K=\langle x(f\mu)=(xf)(x\mu) : x\in\XX\rangle$.

\emph{Correctness.}  Let $A=\langle\Ops\theta\rangle\leq \Aut G$ and let 
$\langle \Ops|\RR '\rangle$ be a presentation of $A$ with
respect to $\theta$.  The algorithm creates a constructive presentation 
$\{\langle\XX|\RR\rangle ,f,\ell\}$ for $G$ mod $M$ and so by 
\lemref{lem:pres}, $\langle\Ops\sqcup \XX|\RR '\ltimes\RR \rangle$
is a presentation for $A\ltimes G$ mod $M$ with respect to $\theta\sqcup f$.

First suppose that the algorithm returns $K=\langle x(f\mu):x\in\XX\rangle$.  
As $\XX f\subseteq KM$ we get that 
$G=\langle \XX f\rangle\leq KM\leq G$.  By \eqref{eq:comp-rel-1}, 
$r(f\mu)=1$ for all $r\in \RR $.  Therefore $K$ satisfies the defining
relations of $G/M\cong K/(K\cap M)$, which forces $K\cap M=1$ and so $G=K\ltimes M$.  
By \eqref{eq:comp-rel-1} and \eqref{eq:comp-rel-2}, the generator set 
$\Ops\theta\sqcup\{x\bar{\mu}:x\in\XX \}f$ of 
$\langle A,K\rangle$ satisfies the defining relations $\RR '\ltimes \RR $
of $(A\ltimes G)/M$ and so $\langle A,K\rangle$ is isomorphic to a quotient of 
$(A\ltimes G)/M$ where $K$ is the image of $G/M$.  This shows $K$ is normal in 
$\langle A,K\rangle$.  In particular, $\langle K^{\Ops}\rangle\leq K$.  Therefore if the algorithm 
returns a subgroup then the return is correct.

Now suppose that there is a $K\leq G$ such that $\langle K^{\Ops}\rangle\leq K$ and $G=K\ltimes M$.
We must show that in this case the algorithm returns a subgroup.
We have that $G=\langle\XX f\rangle$ and the generators $\XX f$ satisfy
(mod $M$) the relations $\RR $.   Let $\varphi:G/M\to K$
be the isomorphism $kM\varphi=k$, for all $km\in KM=G$, where $k\in K$ and $m\in M$.  
Define $\tau:\XX \to M$ by $x\tau=(xf)^{-1} (xfM)\varphi$, for all $x\in\XX $.  
Notice $\langle x(x\tau): x\in\XX \rangle=K$.  Furthermore, 
$\Phi:(a,hM)\mapsto (a,hM\varphi)$ is an isomorphism $A\ltimes(G/M)\to A\ltimes K$. 
As $\RR \subseteq F(\XX )$ it follows that
$r((\theta\sqcup f)\Phi)=r(f)\Phi=1$, for all $r\in \RR$.  Also,
\begin{align*}
\forall z\in \Ops\sqcup \XX ,& \quad
	z(\theta\sqcup f)\Phi = \left\{
	\begin{array}{cc}
		(z\theta, 1), & z\in \Ops;\\
	 	(1,(xfM)\varphi) = (1, x\bar{\tau}), & z\in \XX .
	\end{array}\right.
\end{align*}
Therefore, $r(f\tau)=r((\theta\sqcup f)\Phi)=1$ for all $r\in\RR $.  Thus, 
an appropriate $\tau\in M^{\XX }$ exists and the algorithm is guaranteed to find 
such an element and return an $\Ops$-subgroup of $G$ complementing $M$.

\emph{Timing.}  The algorithm applies two polynomial-time algorithms.
\end{proof}

\begin{thm}\label{thm:FindComp-invariant}
\textalgo{Direct-$\Ops$-Complement} has a polynomial-time solution.
\end{thm}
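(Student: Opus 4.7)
The plan is to extend the Luks-Wright reduction to $\Ops$-groups, collapsing \textalgo{Direct-$\Ops$-Complement} to a single call of \textalgo{$\Ops$-Complement-Abelian} on an associated centralizer. Given an $\Ops$-group $G$ and an $(\Ops\cup G)$-subgroup $H$, set $C=C_G(H)$ and $Z=H\cap C$. I claim $H$ admits an $(\Ops\cup G)$-complement $K$ in $G$ if, and only if, $G=HC$ and $Z$ admits an $(\Ops\cup G)$-complement in $C$; moreover any such complement in $C$ is itself a complement to $H$ in $G$.

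The forward direction is immediate: a complement $K$ to $H$ must centralize $H$, so $K\leq C$ and $G=HK\leq HC$; for any $c\in C$, writing $c=hk$ forces $h=ck^{-1}\in H\cap C=Z$, hence $C=Z\times K$. For the converse, suppose $C=Z\rtimes K$ and $G=HC$. Since $Z\leq H$ we have $G=HC=H(ZK)=HK$; since $K\leq C$ we have $[H,K]=1$; and $H\cap K\leq H\cap C\cap K=Z\cap K=1$, so $G=H\times K$. Observe that $Z=\zeta_1(H)$ is abelian (in fact central in $C$, since every element of $C$ centralizes $H\supseteq Z$) and is an $(\Ops\cup G)$-subgroup of $C$, so \textalgo{$\Ops$-Complement-Abelian} applies.

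The resulting algorithm is: (i) compute $C=C_G(H)$ via \textalgo{Normal-Centralizer}; (ii) compute $|G|$ and $|HC|$ via \textalgo{Order} and report failure unless they are equal; (iii) form $Z=H\cap C$; (iv) regard $C$ as an $(\Ops\cup\XX)$-group, where $\XX\subseteq G$ is a fixed generating set for $G$, so that $(\Ops\cup\XX)$-invariant subgroups of $C$ coincide with its $(\Ops\cup G)$-invariant ones (legitimate since $C$ is characteristic, so $\XX$ acts on $C$ by conjugation); and (v) invoke \textalgo{$\Ops$-Complement-Abelian} on $(C,Z)$ and return its output. Each step is polynomial-time by the algorithms of Section \ref{sec:tools} together with \propref{prop:InvComp}, so the whole procedure runs in polynomial-time.

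The main point to check carefully is the bookkeeping around invariance in step (iv): I need the generators of $G$ to act on $C$ as automorphisms (they do, since $C\normaleq G$), and a candidate $K$ returned as $(\Ops\cup\XX)$-invariant inside $C$ must be genuinely an $(\Ops\cup G)$-subgroup of $G$ (which holds because $\XX$ generates $G$ and $K\leq C\normaleq G$, so $\XX$-invariance of $K$ in $C$ yields normality in $G$). Beyond this, the reduction is routine and no further obstacle appears.
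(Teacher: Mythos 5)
Your proposal is essentially the same reduction the paper uses: centralizer, center, and a single call to \textalgo{$\Ops$-Complement-Abelian}. Two small remarks. First, you begin with ``an $(\Ops\cup G)$-subgroup $H$,'' but the problem as stated hands you only an $\Ops$-subgroup; the algorithm must first verify normality (via \textalgo{Member}, as the paper does), since \textalgo{Normal-Centralizer} requires a normal input and a non-normal $H$ trivially has no direct complement. Second, your device of enlarging the operator set to $\Ops\cup\XX$ to force the returned $K$ to be $G$-normal is sound but unnecessary: the paper calls \textalgo{$\Ops$-Complement-Abelian} with $\Ops$ alone on $(C_G(H),\zeta_1(H))$ and observes that any $\Ops$-invariant $K$ with $C_G(H)=\zeta_1(H)\rtimes K$ already satisfies $G=HK$, $H\cap K=1$, $[H,K]=1$, so $K$ is automatically normal in $G$; your extra constraints therefore cut out the same solution set and cost nothing but a slightly longer presentation. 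With the normality check added, your argument is correct and matches the paper's.
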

\begin{proof}
Let $H, G\in\mathbb{G}_n$ and $\theta:\Ops\to \Aut G$, where $\langle H^{\Ops}\rangle\leq H\leq G$.

\emph{Algorithm.}  Use {\sc Member} to determine if $H$ is an $(\Ops\cup G)$-subgroup of $G$.
If not, then this certifies that $H$ is not a direct factor of $G$.  
Otherwise, use {\sc Normal-Centralizer} to compute $C_G(H)$ and $ \zeta_1(H)$.  
Using {\sc Member}, test if $G=HC_G(H)$ and if $\langle C_G(H)^{\Ops}\rangle= C_G(H)$.  If either fails, then certify that
 $H$ is not a direct $\Ops$-factor of $G$.
Next, use \propref{prop:InvComp} to find an $\Ops$-subgroup $K\leq C_G(H)$ such that
$C_G(H)=\zeta_1(H)\rtimes K$, or determine that no such $K$ exists.  If
$K$ exists, return $K$; otherwise, $H$ is not a direct $\Ops$-factor of $G$.

\emph{Correctness.}  Note that if $G=H\times J$ is a direct $\Ops$-decomposition then
$H$ and $J$ are $(\Ops\cup G)$-subgroups of $G$, $G=HC_G(H)$, and $C_G(H)=\zeta_1(H)\times J$.  
As $\Ops\theta\subseteq \Aut G$,
$\zeta_1(H)$ is an $\Ops$-subgroup and therefore $C_G(H)$ is an $\Ops$-subgroup.  Therefore the
tests within the algorithm properly identify cases where $H$ is not a direct $\Ops$-factor of $G$.  
Finally, if the algorithm returns an $\Ops$-subgroup $K$ such that 
$C_G(H)=\zeta_1(H)\rtimes K=\zeta_1(G)\times K$, then $G=H\times K$ is a direct $\Ops$-decomposition.

\emph{Timing.}  The algorithm makes a bounded number of calls to polynomial-time algorithms.
\end{proof}

\subsection{Merge}\label{sec:merge}

In this section we provide an algorithm which given an appropriate direct decomposition of 
a quotient group produces a direct decomposition of original group.  

Throughout this section we assume that $(\mathfrak{X},G\mapsto \mathfrak{X}(G))$ is an up $\Ops$-grading pair in which $\zeta_1(G)\leq \mathfrak{X}(G)$.

The constraints of exchange by $\Aut_{\Ops\cup G} G$ given in \lemref{lem:KRS} can be sharpened 
to individual direct factors as follows.  (Note that \propref{prop:back-forth} is false when 
considering the action of $\Aut G$ on direct factors.)
\begin{prop}\label{prop:back-forth}
Let $X$ and $Y$ be direct $\Ops$-factors of $G$ with no abelian direct $\Ops$-factor.
The following are equivalent.
\begin{enumerate}[(i)]
\item $X\varphi=Y$ for some $\varphi\in \Aut_{\Ops\cup G} G$.
\item $X\zeta_1(G)=Y\zeta_1(G)$.
\end{enumerate}
\end{prop}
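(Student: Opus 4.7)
The plan is to establish (i)$\Rightarrow$(ii) directly from the characterization of $\Aut_{\Ops\cup G}G$ in \eqref{eq:central}, and then to prove (ii)$\Rightarrow$(i) by an exchange argument on Remak $\Ops$-decompositions via \thmref{thm:KRS}, where the non-abelian hypothesis is exactly what prevents any factor of $X$ from being swapped into the complement of $Y$.

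For (i)$\Rightarrow$(ii), suppose $\varphi\in\Aut_{\Ops\cup G}G$ satisfies $X\varphi=Y$. By \eqref{eq:central}, $x\varphi\in x\zeta_1(G)$ for every $x\in X$, so $Y=X\varphi\leq X\zeta_1(G)$, whence $Y\zeta_1(G)\leq X\zeta_1(G)$; the same argument applied to $\varphi^{-1}\in\Aut_{\Ops\cup G}G$ gives the reverse inclusion.

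For (ii)$\Rightarrow$(i), fix direct $\Ops$-complements $X'$ and $Y'$ to $X$ and $Y$ in $G$, and fix Remak $\Ops$-decompositions $\mathcal{R}_X,\mathcal{R}_{X'},\mathcal{R}_Y,\mathcal{R}_{Y'}$ of these four subgroups. Then $\mathcal{R}:=\mathcal{R}_X\sqcup\mathcal{R}_{X'}$ and $\mathcal{T}:=\mathcal{R}_Y\sqcup\mathcal{R}_{Y'}$ are Remak $\Ops$-decompositions of $G$, so \thmref{thm:KRS}, applied with $\mathcal{X}=\mathcal{R}_X$, supplies $\psi\in\Aut_{\Ops\cup G}G$ with $\mathcal{R}_X\psi\subseteq \mathcal{T}$ and $\psi$ the identity on $\mathcal{R}_{X'}$. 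The heart of the argument is the claim $\mathcal{R}_X\psi\subseteq \mathcal{R}_Y$. From $\zeta_1(G)=\zeta_1(Y)\times\zeta_1(Y')$ one gets $Y\zeta_1(G)=Y\times\zeta_1(Y')$, and then by \lemref{lem:modular}
\[
	Y'\cap Y\zeta_1(G)=Y'\cap(Y\cdot \zeta_1(Y'))=(Y'\cap Y)\zeta_1(Y')=\zeta_1(Y').
\]
Now fix $R\in\mathcal{R}_X$: the hypothesis that $X$ has no abelian direct $\Ops$-factor forces $R$ to be non-abelian, while \eqref{eq:central} and (ii) give $R\psi\leq R\zeta_1(G)\leq X\zeta_1(G)=Y\zeta_1(G)$. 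If $R\psi\in\mathcal{R}_{Y'}$ then $R\psi\leq Y'\cap Y\zeta_1(G)=\zeta_1(Y')$ would be abelian, a contradiction; hence $R\psi\in\mathcal{R}_Y$.

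Finally, apply \thmref{thm:KRS} in the reverse direction to the Remak $\Ops$-decompositions $\mathcal{T}$ and $\mathcal{R}\psi=\mathcal{R}_X\psi\sqcup\mathcal{R}_{X'}$, with $\mathcal{X}=\mathcal{R}_Y$. The identical argument, now using that $Y$ has no abelian direct $\Ops$-factor together with $X\zeta_1(G)=X\times\zeta_1(X')$, produces $\sigma\in\Aut_{\Ops\cup G}G$ with $\mathcal{R}_Y\sigma\subseteq\mathcal{R}_X\psi$. The two inclusions give $|\mathcal{R}_Y|\leq|\mathcal{R}_X\psi|=|\mathcal{R}_X|$ and $|\mathcal{R}_X|=|\mathcal{R}_X\psi|\leq|\mathcal{R}_Y|$, so $\mathcal{R}_X\psi=\mathcal{R}_Y$ and therefore $X\psi=\langle\mathcal{R}_X\psi\rangle=\langle\mathcal{R}_Y\rangle=Y$. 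The main obstacle is precisely ruling out leakage of $\mathcal{R}_X\psi$ into $\mathcal{R}_{Y'}$; the non-abelian-factor hypothesis is exactly calibrated to do this, since any abelian factor of $X$ could be exchanged into the central portion $\zeta_1(Y')$ of $Y'$ and defeat the conclusion.
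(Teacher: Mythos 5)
Your proof is correct and follows essentially the same route as the paper: the forward direction from \eqref{eq:central}, and the reverse via \thmref{thm:KRS}, choosing Remak decompositions refining the two complemented decompositions of $G$, then using the modular law together with the no-abelian-factor hypothesis to confine the exchanged factors inside $Y$, and a symmetric application to pin down the cardinality. The only cosmetic difference is that you build the ambient Remak decomposition $\mathcal{R}_X\sqcup\mathcal{R}_{X'}$ by hand from Remak decompositions of $X$ and $X'$ rather than choosing a refinement of $\{X,X'\}$ directly, which amounts to the same thing.
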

\begin{proof}
By \eqref{eq:central}, $\Aut_{\Ops\cup G} G$ is the
identity on $G/\zeta_1(G)$; therefore (i) implies (ii).

Next we show (ii) implies (i).  Recall that $\mathfrak{A}$ is the class of abelian groups.
Let $\{X,A\}$ and $\{Y,B\}$ be direct $\Ops$-decompositions of $G$.  Choose 
Remak $(\Ops\cup G)$-decompositions $\mathcal{R}$ and $\mathcal{C}$ 
which refine $\{X,A\}$ and $\{Y,B\}$ respectively.
Let $\mathcal{X}=\{R\in\mathcal{R}: R\leq X\}$.  By \thmref{thm:KRS} there is a
$\varphi\in\Aut_{\Ops\cup G} G$ such that $\mathcal{X}\varphi\subseteq \mathcal{C}$.
However, $\varphi$ is the identity on $G/\zeta_1(G)$.  Hence,
$\langle\mathcal{X}\rangle\zeta_1(G)=X\zeta_1(G)\varphi=Y\zeta_1(G)$.
Thus, $\mathcal{X}\varphi\subseteq \{C\in \mathcal{C}: C\leq Y\zeta_1(G)\}-\mathfrak{A}$.
Yet, $\mathcal{C}$ refines $\{Y,B\}$ and $Y$ has no direct $\Ops$-factor in $\mathfrak{A}$.
Thus, $$\{C\in \mathcal{C}: C\leq Y\zeta_1(G)=Y\times \zeta_1(B)\}-\mathfrak{A}
=\{C\in\mathcal{C}:C\leq Y\}.$$
Thus, $\mathcal{X}\varphi\subseteq\mathcal{Y}$.  By reversing the roles of $X$ and $Y$ 
we see that $\mathcal{Y}\varphi'\subseteq\mathcal{X}$ for some $\varphi'$.  Thus,
$|\mathcal{X}|=|\mathcal{Y}|$.  So we conclude that $\mathcal{X}\varphi=\mathcal{Y}$
and $X\varphi=Y$.
\end{proof}

\begin{thm}\label{thm:Extend}
There is a polynomial-time algorithm which, given an $\Ops$-group $G$ and a set
$\mathcal{K}$ of $(\Ops\cup G)$-subgroups such that
\begin{enumerate}[(a)]
\item $\mathfrak{X}(\langle\mathcal{K}\rangle)=\mathfrak{X}(G)$ and 
\item $\mathcal{K}$ is a direct $(\Ops\cup G)$-decomposition of $\langle \mathcal{K}\rangle$,
\end{enumerate}
returns a direct $\Ops$-decomposition $\mathcal{H}$ of $G$ such that 
\begin{enumerate}[(i)]
\item $|\mathcal{H}-\mathcal{K}|\leq 1$,
\item if $K\in\mathcal{K}$ such that
$\langle \mathcal{H}\cap\mathcal{K}, K\rangle$ has a direct $\Ops$-complement in $G$, then
$K\in\mathcal{H}$; and 
\item if $K\in\mathcal{K}-\mathfrak{X}$
such that $K$ is a direct $(\Ops\cup G)$-factor of $G$, then $K\in\mathcal{H}$.
\end{enumerate}
\end{thm}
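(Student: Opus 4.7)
The plan is a greedy loop powered by \thmref{thm:FindComp-invariant}. Initialize $\mathcal{H}:=\emptyset$ and enumerate $\mathcal{K}$ in any order (we shall place $\mathcal{K}-\mathfrak{X}$ before $\mathcal{K}\cap\mathfrak{X}$ to help with (iii)). For each $K\in\mathcal{K}$ in turn, form $L:=\langle\mathcal{H},K\rangle$, which equals $\langle\mathcal{H}\rangle\times K$ because $\mathcal{H}\cup\{K\}\subseteq\mathcal{K}$ is part of a direct $(\Ops\cup G)$-decomposition. Invoke \textalgo{Direct-$\Ops$-Complement} on $L$ in $G$; if it succeeds, replace $\mathcal{H}$ by $\mathcal{H}\cup\{K\}$. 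After the loop, run \textalgo{Direct-$\Ops$-Complement} once more to find a direct $\Ops$-complement $C$ of $\langle\mathcal{H}\rangle$ in $G$, and append $C$ to $\mathcal{H}$ when $C\neq 1$. Polynomial timing follows from calling \thmref{thm:FindComp-invariant} at most $|\mathcal{K}|+1$ times.

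Property (i) is immediate since $\mathcal{H}-\mathcal{K}\subseteq\{C\}$. For (ii), if the final $\mathcal{H}\cap\mathcal{K}$ yields $\langle\mathcal{H}\cap\mathcal{K},K\rangle$ with direct $\Ops$-complement $D$, then at the earlier step $j$ when $K$ was tested the intermediate set $\mathcal{H}_j$ was a subset of $\mathcal{H}\cap\mathcal{K}$, and the late additions $\mathcal{E}:=(\mathcal{H}\cap\mathcal{K})-\mathcal{H}_j$ are disjoint from $\mathcal{H}_j\cup\{K\}$ inside the direct decomposition $\mathcal{K}$. Hence $\langle\mathcal{H}\cap\mathcal{K},K\rangle=\langle\mathcal{H}_j,K\rangle\times\langle\mathcal{E}\rangle$, and $\langle\mathcal{E}\rangle\times D$ serves as a direct $\Ops$-complement of $\langle\mathcal{H}_j,K\rangle$ in $G$. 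Thus the test succeeded at step $j$ and $K$ was added.

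For (iii), at step $j$ we maintain the invariant (trivial for $\mathcal{H}_0=\emptyset$ and preserved by the complement test) that $\langle\mathcal{H}_j\rangle$ is a direct $\Ops$-factor of $G$, say $G=\langle\mathcal{H}_j\rangle\times J_j$; we are also given $G=K\times K'$. Working inside $\langle\mathcal{K}\rangle$ with the grader decomposition $\mathfrak{X}(G)=\mathfrak{X}(\langle\mathcal{K}\rangle)=\prod_{K''\in\mathcal{K}}\mathfrak{X}(K'')$, one checks $K\cap\langle\mathcal{H}_j\rangle\mathfrak{X}(G)=\mathfrak{X}(K)$, so $\overline{K}\cap\overline{\langle\mathcal{H}_j\rangle}=1$ in $\overline{G}:=G/\mathfrak{X}(G)$. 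Pick a Remak $\Ops$-decomposition $\mathcal{R}$ of $G$; by \lemref{lem:KRS}(i) applied to $M=\mathfrak{X}(G)\geq\zeta_1(G)$, the set $\mathcal{R}\mathfrak{X}(G)$ refines both $\{\langle\mathcal{H}_j\rangle,J_j\}\mathfrak{X}(G)$ and $\{K,K'\}\mathfrak{X}(G)$, yielding disjoint $\mathcal{R}_H,\mathcal{R}_K\subseteq\mathcal{R}$ (each outside $\mathfrak{X}(G)$) whose joins span $\overline{\langle\mathcal{H}_j\rangle}$ and $\overline{K}$ respectively. Then $R:=\langle\mathcal{R}_H\cup\mathcal{R}_K\rangle$ is a direct $\Ops$-factor of $G$ satisfying $R\mathfrak{X}(G)=\langle\mathcal{H}_j,K\rangle\mathfrak{X}(G)>\mathfrak{X}(G)$ (since $K\notin\mathfrak{X}$), discharging hypothesis (a) of \propref{prop:extendable} with $H:=\langle\mathcal{H}_j,K\rangle$.

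The main obstacle is hypothesis (b) of \propref{prop:extendable}: exhibiting an $\mathfrak{X}$-separated direct $(\Ops\cup G)$-decomposition of $H\mathfrak{X}(G)$ containing $H$. The natural candidate $\{H\}\cup\{\mathfrak{X}(K''):K''\in\mathcal{K}-(\mathcal{H}_j\cup\{K\})\}$ is direct because $H\cap\mathfrak{X}(G)=\mathfrak{X}(H)$ by the decomposition of $\mathfrak{X}(G)$ along $\mathcal{K}$; but $\mathfrak{X}$-separation additionally requires that $H$ have no direct $(\Ops\cup G)$-factor lying in $\mathfrak{X}$. To secure this the plan is to strengthen the inductive invariant so that $\mathcal{H}_j$ is $\mathfrak{X}$-separated throughout, exploiting \propref{prop:direct-class}(v) together with the ordering of $\mathcal{K}-\mathfrak{X}$ first to push stray $\mathfrak{X}$-pieces into the final complement $C$. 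The hypothesis $\zeta_1(G)\leq\mathfrak{X}(G)$ is critical here: by \eqref{eq:central} it makes $\Aut_{\Ops\cup G} G$ act trivially on $G/\mathfrak{X}(G)$, so the Krull-Schmidt exchanges of \thmref{thm:KRS} needed to implement the separation stay inside $\mathfrak{X}(G)$-cosets and do not disturb the structure of $R$ relative to $H$ modulo $\mathfrak{X}(G)$.
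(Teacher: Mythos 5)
Your algorithm is essentially the same greedy \textalgo{Direct-$\Ops$-Complement} loop as the paper's, and your proofs of (i) and (ii) are sound — the $\langle\mathcal{E}\rangle\times D$ splitting argument cleanly shows a single pass through $\mathcal{K}$ suffices, which the paper achieves instead by a ``while some $K$ can still be added'' loop. The difficulty is your treatment of (iii), and there you have left a genuine gap.

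You attempt (iii) by discharging the two hypotheses of \propref{prop:extendable} for $H:=\langle\mathcal{H}_j,K\rangle$. Hypothesis (a) you argue in detail, but for hypothesis (b) you explicitly write ``The main obstacle is hypothesis (b)\ldots To secure this the plan is to strengthen the inductive invariant so that $\mathcal{H}_j$ is $\mathfrak{X}$-separated throughout, exploiting \propref{prop:direct-class}(v)\ldots'' — i.e.\ you name the obstruction and sketch a plan but do not carry it out. What is actually needed is that $H$ has no direct $(\Ops\cup G)$-factor in $\mathfrak{X}$, so that your candidate $\{H\}\cup\{\mathfrak{X}(K''):K''\in\mathcal{K}-(\mathcal{H}_j\cup\{K\})\}$ is $\mathfrak{X}$-separated. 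Under the (implicit but necessary) hypothesis that $\mathcal{K}$ is $\mathfrak{X}$-separated — which the paper also invokes at the start of its (iii) argument, though it does not appear in the theorem statement — and given your ordering of $\mathcal{K}-\mathfrak{X}$ first so that $\mathcal{H}_j\cup\{K\}\subseteq\mathcal{K}-\mathfrak{X}$, each generator of $H$ has no direct $\Ops$-factor in $\mathfrak{X}$, and then \propref{prop:direct-class}(iii) would yield the needed separatedness of $H$. As written, none of this is verified, so (iii) is not proved. The paper does not route through \propref{prop:extendable} at all: it takes a Remak $(\Ops\cup G)$-decomposition $\mathcal{R}$ refining $\{F,K\}$ and another $\mathcal{T}$ refining $\mathcal{H}$, exchanges $\mathcal{X}=\{R\in\mathcal{R}:R\leq K\}$ into $\mathcal{T}$ via \thmref{thm:KRS}, and uses $\zeta_1(G)\leq\mathfrak{X}(G)$ to track the exchange modulo $\mathfrak{X}(G)$, producing a direct $\Ops$-decomposition $\mathcal{J}$ of $G$ with $(\mathcal{H}\cap\mathcal{K})\sqcup\{K\}\subseteq\mathcal{J}\cap\mathcal{K}$, contradicting maximality of the greedy choice. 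You should either complete the \propref{prop:extendable} route (filling in the separatedness argument and stating the $\mathfrak{X}$-separated hypothesis on $\mathcal{K}$ explicitly) or adopt the paper's direct Krull--Schmidt exchange.
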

\begin{proof}
\emph{Algorithm.} 
\begin{code}{Extend$(~G,~\mathcal{K}~)$}
	$\mathcal{L}=\emptyset$; $\lfloor G\rfloor = G$;\\
	\textit{/* Using the algorithm for \thmref{thm:FindComp-invariant} to determine 
				the existence of $H$, execute the following. */}\\
	\cwhile{$\exists K\in\mathcal{K}, \exists H, \mathcal{L}\sqcup\{K,H\}$ is a direct
		$\Ops$-decomposition of $G$}
	\begin{block*}
		$\lfloor G\rfloor = H$;\\
		$\mathcal{L} = \mathcal{L}\sqcup\{K\}$;\\
		$\mathcal{K} = \mathcal{K}-\{K\}$;\\
	\end{block*}
	\creturn{$\mathcal{H}=\mathcal{L}\sqcup\{\lfloor G\rfloor\}$}
\end{code}

\emph{Correctness.}  We maintain the following loop invariant (true at the start and end
of each iteration of the loop): $\mathcal{L}\sqcup\{\lfloor G\rfloor\}$ is a direct
$(\Ops\cup G)$-decomposition of $G$ and $\mathcal{L}\subseteq \mathcal{K}$. The loop exits once $\mathcal{L}\sqcup\{\lfloor G\rfloor\}$
satisfies (ii).  Hence, $\mathcal{H}=\mathcal{L}\sqcup\{\lfloor G\rfloor\}$ satisfies (i) and (ii).

For (iii), suppose that $\mathcal{K}$ is $\mathfrak{X}$-separated and that 
$K\in(\mathcal{K}-\mathfrak{X})-\mathcal{H}$ such that $K$ is a direct 
$(\Ops\cup G)$-factor of $G$.  Let $\langle F^{\Ops}\rangle\leq F\leq G$ such that $\{F,K\}$ 
is a direct $(\Ops\cup G)$-decomposition of $G$ and $\mathcal{R}$ a Remak 
$(\Ops\cup G)$-decomposition of $G$ which refines $\{F,K\}$.  Also let 
$\mathcal{T}$ be a Remak $(\Ops\cup G)$-decomposition of $G$ which refines $\mathcal{H}$.
Set $\mathcal{X}=\{R\in\mathcal{R}: R\leq K\}$, and note
that $\mathcal{X}\subseteq \mathcal{R}-\mathfrak{X}$ as $K$ has no direct $\Ops$-factor
in $\mathfrak{X}$.  By \thmref{thm:KRS} we can exchange $\mathcal{X}$ with 
a $\mathcal{Y}\subseteq \mathcal{T}-\mathfrak{X}$ to create a Remak $(\Ops\cup G)$-decomposition
$(\mathcal{T}-\mathcal{Y})\sqcup \mathcal{X}$ of $G$.  As $\zeta_1(G)\leq\mathfrak{X}(G)$
we get $\mathcal{R}\mathfrak{X}(G)=\mathcal{T}\mathfrak{X}(G)$ and 
$\mathcal{X}\mathfrak{X}(G)=\mathcal{Y}\mathfrak{X}(G)$ (\lemref{lem:KRS},
\propref{prop:back-forth}).  Thus, by (a) and then (b),
\begin{align*}
\langle\mathcal{Y}\rangle \cap 
\langle\mathcal{H}\cap \mathcal{K}\rangle
	& \equiv \langle\mathcal{X}\rangle \cap 
		\langle\mathcal{H}\cap \mathcal{K}\rangle
		& \pmod{\mathfrak{X}(G)}\\
	& \equiv K \cap \langle\mathcal{H}\cap \mathcal{K}\rangle 
		& \pmod{\mathfrak{X}(\langle \mathcal{K}\rangle)}\\
	& \leq K\cap \langle \mathcal{K}-\{K\}\rangle\\
	& \equiv 1
\end{align*}
Therefore $\langle\mathcal{Y}\rangle \leq 
\langle(\mathcal{T}-\mathfrak{X})-\{T\in\mathcal{T}:
T\leq \langle\mathcal{H}-\mathcal{K}\rangle\}\rangle$.  
Thus, 
$$\mathcal{J}=(\mathcal{H}\cap\mathcal{K})\sqcup \{K\}\sqcup \{\langle (\mathcal{T}
-\mathcal{Y})-\{T\in\mathcal{T}:T\leq \langle\mathcal{H}\cap\mathcal{K}\rangle\}$$ 
is a direct $\Ops$-decomposition of $G$ and $(\mathcal{H}\cap\mathcal{K})\sqcup\{K\}\subseteq 
\mathcal{J}\cap\mathcal{K}$ which shows that $\mathcal{L}$ is not maximal.  By the contrapositive we have (iii).

\emph{Timing.}  This loop makes $|\mathcal{K}|\leq \log_2 |G|$ calls to a polynomial-time algorithm
for \textalgo{Direct-$\Ops$-Complement}.
\end{proof}

Under the hypothesis of \thmref{thm:Extend} it is not possible to extend (iii) to 
say that if $K\in\mathcal{K}$ and $K$ is a direct $\Ops$-factor of $G$ then $K\in \mathcal{H}$.  
Consider the following example (where $\Ops=\emptyset$).
\begin{ex}
Let $G=D_8\times \mathbb{Z}_2$, $D_8=\langle a,b|a^4,b^2,(ab)^2\rangle$.  Use 
$\mathfrak{A}$ (the class of abelian groups) for $\mathfrak{X}$ and 
$\mathcal{K}=\{\langle (0,1)\rangle,\langle (a^2,1)\rangle\}$.
Each member of $\mathcal{K}$ is a direct factor of $G$, but $\mathcal{K}$ is not 
contained in any direct decomposition of $G$.
\end{ex}

\begin{lem}\label{lem:count}
If $\mathcal{K}$ is a $\mathfrak{X}$-refined direct $(\Ops\cup G)$-decomposition of $G$ such
that $\mathcal{K}\mathfrak{X}(G)$ refines $\mathcal{R}\mathfrak{X}(G)$ for some Remak
$(\Ops\cup G)$-decomposition of $G$, then $\mathcal{K}$ is a Remak 
$(\Ops\cup G)$-decomposition of $G$.
\end{lem}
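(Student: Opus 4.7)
The plan is to refine $\mathcal{K}$ to a Remak $(\Ops\cup G)$-decomposition $\mathcal{K}^*$ and then show $\mathcal{K}^* = \mathcal{K}$ by comparing cardinalities on and off $\mathfrak{X}$. Existence of $\mathcal{K}^*$ is immediate: iteratively split any decomposable member of $\mathcal{K}$ into two nontrivial direct $(\Ops\cup G)$-factors until all members are directly $\Ops$-indecomposable.

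By \thmref{thm:KRS} applied to the two Remak $(\Ops\cup G)$-decompositions $\mathcal{K}^*$ and $\mathcal{R}$, taking $\mathcal{X} = \mathcal{R}$ (and using the analogous exchange in the reverse direction to force equal cardinalities), there exists a $\varphi \in \Aut_{\Ops\cup G} G$ with $\mathcal{R}\varphi = \mathcal{K}^*$. The key observation is that because $\zeta_1(G) \leq \mathfrak{X}(G)$ and $\varphi$ is the identity modulo $\zeta_1(G)$ by \eqref{eq:central}, a short check shows $R\varphi\cdot \mathfrak{X}(G) = R\mathfrak{X}(G)$ for every $R \in \mathcal{R}$, whence $\mathcal{K}^*\mathfrak{X}(G) = \mathcal{R}\mathfrak{X}(G)$. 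Combining the hypothesis that $\mathcal{K}\mathfrak{X}(G)$ refines $\mathcal{R}\mathfrak{X}(G)$ with the fact that $\mathcal{K}^*$ refines $\mathcal{K}$ (and therefore $\mathcal{K}^*\mathfrak{X}(G)$ refines $\mathcal{K}\mathfrak{X}(G)$) yields a cycle of refinements that forces the equality $\mathcal{K}\mathfrak{X}(G) = \mathcal{K}^*\mathfrak{X}(G)$. Since $K \in \mathcal{K}$ contributes to $\mathcal{K}\mathfrak{X}(G)$ exactly when $K \notin \mathfrak{X}$, this reads $|\mathcal{K} - \mathfrak{X}| = |\mathcal{K}^* - \mathfrak{X}|$.

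To match the part inside $\mathfrak{X}$, I would use the $\mathfrak{X}$-refined hypothesis on how $\mathcal{K}^*$ refines $\mathcal{K}$: each $K \in \mathcal{K} \cap \mathfrak{X}$ is directly $\Ops$-indecomposable, so $\{K^* \in \mathcal{K}^* : K^* \leq K\} = \{K\}$; each $K \in \mathcal{K} - \mathfrak{X}$ is $\mathfrak{X}$-separated, so no $K^* \in \mathcal{K}^*$ with $K^* \leq K$ lies in $\mathfrak{X}$. This yields $|\mathcal{K} \cap \mathfrak{X}| = |\mathcal{K}^* \cap \mathfrak{X}|$, and adding to the previous count gives $|\mathcal{K}| = |\mathcal{K}^*|$. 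Since $\mathcal{K}^*$ refines $\mathcal{K}$ and has the same cardinality, each block of the refinement is a singleton and $\mathcal{K} = \mathcal{K}^*$, so $\mathcal{K}$ is Remak. The main obstacle is the first step: without $\zeta_1(G) \leq \mathfrak{X}(G)$ the alignment $\mathcal{K}^*\mathfrak{X}(G) = \mathcal{R}\mathfrak{X}(G)$ fails, and there is no direct handle for comparing the two decompositions modulo $\mathfrak{X}(G)$.
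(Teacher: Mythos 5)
Your proof is correct and follows essentially the same route as the paper: establish $\mathcal{K}\mathfrak{X}(G)=\mathcal{R}\mathfrak{X}(G)$ via the central nature of the exchange automorphism (the paper cites \lemref{lem:KRS}(i), which is exactly your re-derivation from \thmref{thm:KRS} and \eqref{eq:central}, using $\zeta_1(G)\leq\mathfrak{X}(G)$ assumed throughout Section \ref{sec:merge}), then count $|\mathcal{K}-\mathfrak{X}|$ and $|\mathcal{K}\cap\mathfrak{X}|$ separately using $\mathfrak{X}$-refinedness, and conclude by comparing totals. The only cosmetic difference is that you make the intermediate Remak refinement $\mathcal{K}^*$ explicit and show $\mathcal{K}=\mathcal{K}^*$, whereas the paper compresses that step by directly comparing $|\mathcal{K}|$ to $|\mathcal{R}|$ and invoking the constancy of Remak decomposition sizes.
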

\begin{proof}
As $\mathcal{R}$ is a Remak $(\Ops\cup G)$-decomposition
of $G$, by \lemref{lem:KRS}, $\mathcal{R}\mathfrak{X}(G)$ refines $\mathcal{K}\mathfrak{X}(G)$ 
and so $\mathcal{K}\mathfrak{X}(G)=\mathcal{R}\mathfrak{X}(G)$.  Hence, $|\mathcal{K}-\mathfrak{X}|=|\mathcal{R}-\mathfrak{X}|$ and because $\mathcal{K}$ is $\mathfrak{X}$-refined we also have:
$|\mathcal{K}\cap\mathfrak{X}|=|\mathcal{R}\cap\mathfrak{X}|$.  Therefore,
$|\mathcal{K}|=|\mathcal{K}-\mathfrak{X}|+|\mathcal{K}\cap\mathfrak{X}|
=|\mathcal{R}-\mathfrak{X}|+|\mathcal{R}\cap\mathfrak{X}|=|\mathcal{R}|$.
As every Remak $(\Ops\cup G)$-decomposition of $G$ has the same size, it follows that
$\mathcal{K}$ cannot be refined by a larger direct $(\Ops\cup G)$-decomposition of $G$.
Hence $\mathcal{K}$ is a Remak $(\Ops\cup G)$-decomposition of $G$.
\end{proof}

\begin{thm}\label{thm:merge}
There is a polynomial-time algorithm which, given $G\in\mathbb{G}_n$,
sets $\mathcal{A},\mathcal{H}\subseteq\mathbb{G}_n$, and a function $\theta:\Ops\to\Aut G$,
such that 
\begin{enumerate}[(a)]
\item $\mathcal{A}$ is a Remak $(\Ops\cup G)$-decomposition of $\mathfrak{X}(G)$,
\item $\forall H\in\mathcal{H}$, $\mathfrak{X}(H)=\mathfrak{X}(G)$,
\item $\mathcal{H}/\mathfrak{X}(G)$ is a direct $\Ops$-decomposition of $G/\mathfrak{X}(G)$;
\end{enumerate}
returns an $\mathfrak{X}$-refined direct $\Ops$-decomposition $\mathcal{K}$ of $G$ with the following property. If $\mathcal{R}$ is a direct $\Ops$-decomposition
of $G$ where $\mathcal{H}$ refines $\mathcal{R}\mathfrak{X}(G)$
then $\mathcal{K}\mathfrak{X}(G)$ refines
$\mathcal{R}\mathfrak{X}(G)$; in particular, if $\mathcal{R}$ is Remak then $\mathcal{K}$ is Remak.
\end{thm}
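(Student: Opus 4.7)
The plan is to realize \textalgo{Merge} by walking a direct chain from $\mathfrak{X}(G)$ up to $G$ whose successive levels are generated by initial segments of $\mathcal{H}$, and applying \textalgo{Extend} from \thmref{thm:Extend} at each step to lift the current direct decomposition one level up.

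Concretely, order $\mathcal{H}=\{H_1,\dots,H_m\}$ and set $L_0:=\mathfrak{X}(G)$ and $L_i:=\langle H_1,\dots,H_i,\mathfrak{X}(G)\rangle$. Hypothesis (b) gives $\mathfrak{X}(H_j)=\mathfrak{X}(G)$ for each $j$; together with property (iii) of an up grader this forces $\mathfrak{X}(G)\leq H_j$, and hence $L_m=G$ and $\mathfrak{X}(L_i)=\mathfrak{X}(G)$ for all $i$, so the input to \textalgo{Extend} is legal at every step. Initialize $\mathcal{K}_0:=\mathcal{A}$, which is $\mathfrak{X}$-refined by \propref{prop:direct-class}(iv), and iteratively set $\mathcal{K}_{i+1}:=\textalgo{Extend}(L_{i+1},\mathcal{K}_i)$; return $\mathcal{K}:=\mathcal{K}_m$. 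That each $\mathcal{K}_i$ remains $\mathfrak{X}$-refined follows by induction using \propref{prop:direct-class}(iv)--(v), together with the fact that \textalgo{Extend} adds at most one new factor (and that factor, if outside $\mathfrak{X}$, cannot itself possess a direct $\mathfrak{X}$-factor, for otherwise \propref{prop:extendable} would contradict the $\mathfrak{X}$-refinement of $\mathcal{K}_i$).

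For the stated refinement guarantee, fix any direct $\Ops$-decomposition $\mathcal{R}$ of $G$ with $\mathcal{H}$ refining $\mathcal{R}\mathfrak{X}(G)$. By \propref{prop:chain-chain}, $L_0<L_1<\cdots<L_m$ is a direct chain with directions $\mathcal{R}$. I induct on $i$ to show that $\mathcal{K}_i\mathfrak{X}(G)$ refines $\mathcal{R}\mathfrak{X}(G)\cap L_i$. Assuming this at step $i$, let $R$ be the direction of $L_i$; \thmref{thm:chain} produces an $\mathfrak{X}$-separated direct $(\Ops\cup G)$-decomposition of $L_{i+1}$ that contains every $K\in\mathcal{K}_i-\mathfrak{X}$ with $K\leq\langle\mathcal{R}-\{R\}\rangle\mathfrak{X}(G)$. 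In particular each such $K$ is a direct $(\Ops\cup G)$-factor of $L_{i+1}$, so property (iii) of \thmref{thm:Extend} keeps $K\in\mathcal{K}_{i+1}$. The at-most-one extra non-$\mathfrak{X}$ factor in $\mathcal{K}_{i+1}-\mathcal{K}_i$ must then fall inside $R\mathfrak{X}(G)\cap L_{i+1}$, completing the induction. Taking $i=m$ shows $\mathcal{K}\mathfrak{X}(G)$ refines $\mathcal{R}\mathfrak{X}(G)$; when $\mathcal{R}$ is Remak, \lemref{lem:count} upgrades this to the conclusion that $\mathcal{K}$ itself is Remak.

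The timing is immediate: the outer loop runs $m\leq|\mathcal{H}|\leq\log_2|G|$ times and makes one call to the polynomial-time routine \textalgo{Extend} per iteration. The main obstacle is the inductive refinement step, since the algorithm never sees $\mathcal{R}$: correctness rests on the interplay of \thmref{thm:chain} (guaranteeing enough candidate direct factors of $L_{i+1}$ exist to realize any compatible $\mathcal{R}$) with Extend's preservation guarantee (iii) (which forces those candidates into the output). That pairing is what lets the blind, greedy strategy succeed against every compatible $\mathcal{R}$ simultaneously.
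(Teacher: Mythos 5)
Your proposal follows the same overall strategy as the paper: walk the direct chain $L_0<L_1<\cdots<L_m$ built from $\mathcal{H}$, call $\textalgo{Extend}$ at each level starting from $\mathcal{A}$, maintain the invariant that $\mathcal{K}_i\mathfrak{X}(G)$ refines $\mathcal{R}\mathfrak{X}(G)\cap L_i$, and finish with \lemref{lem:count}. The algorithm, the direct-chain bookkeeping via \propref{prop:chain-chain}, and the use of \thmref{thm:chain} together with \thmref{thm:Extend}(iii) to retain the factors in directions other than $R$ all match the paper. However, there are two genuine gaps in how the inductive step is closed.

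First, you claim the new factor $\lfloor H\rfloor\in\mathcal{K}_{i+1}-\mathcal{K}_i$ ``cannot itself possess a direct $\mathfrak{X}$-factor, for otherwise \propref{prop:extendable} would contradict the $\mathfrak{X}$-refinement of $\mathcal{K}_i$.'' \propref{prop:extendable} is a local-to-global tool for promoting a factor of $H\mathfrak{X}(G)$ to a factor of the ambient group; it does not detect or forbid an abelian direct factor of $\lfloor H\rfloor$. The actual obstruction the paper identifies is different: if $\lfloor H\rfloor=H_0\times A$ with $1\ne A\in\mathfrak{X}$, then $A\le\mathfrak{X}(L_{i+1})=\mathfrak{X}(G)=\mathfrak{X}(L_i)$, so $A\le L_i$, and an exchange via \thmref{thm:KRS} lets you swap $A$ for some $B\in\mathcal{K}_i\cap\mathfrak{X}$; the resulting decomposition contains one more element of $\mathcal{K}_i$ than $\textalgo{Extend}$'s output, contradicting the maximality implicit in \thmref{thm:Extend}(i)--(ii). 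Without an argument of this shape, (P.\ref{P:4}) — that $\mathcal{K}_{i+1}$ stays $\mathfrak{X}$-refined — is not established.

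Second, the sentence ``The at-most-one extra non-$\mathfrak{X}$ factor in $\mathcal{K}_{i+1}-\mathcal{K}_i$ must then fall inside $R\mathfrak{X}(G)\cap L_{i+1}$'' is asserted, not proved, and it is the crux of the refinement invariant. Knowing (from \thmref{thm:chain} and \thmref{thm:Extend}(iii)) that the factors $\mathcal{J}=\{K\in\mathcal{K}_i-\mathfrak{X}:K\le\langle\mathcal{R}-\{R\}\rangle\mathfrak{X}(G)\}$ survive into $\mathcal{K}_{i+1}$ does not by itself locate the complementary part of $\mathcal{K}_{i+1}$ inside $R\mathfrak{X}(G)$; one must argue that every directly indecomposable non-$\mathfrak{X}$ factor of $\langle\mathcal{K}_{i+1}-\mathcal{J}\rangle\mathfrak{X}(G)$ lies in $R\mathfrak{X}(G)$. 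The paper does this by pairing \lemref{lem:KRS} (so that any Remak decomposition of $L_{i+1}$ refines $\{R\mathfrak{X}(G)\cap L_{i+1},\langle\mathcal{R}-\{R\}\rangle\mathfrak{X}(G)\cap L_{i+1}\}$ modulo $\mathfrak{X}(G)$) with the disjointness $\langle\mathcal{K}_{i+1}-\mathcal{J}\rangle\mathfrak{X}(G)\cap\langle\mathcal{R}-\{R\}\rangle\mathfrak{X}(G)=\mathfrak{X}(G)$. Your proof needs to supply this Krull-Schmidt step explicitly; as written it is a gap. (A minor side issue: $\mathfrak{X}(G)\le H_j$ already follows from $\mathfrak{X}(H_j)\le H_j$ and hypothesis (b); invoking \defref{def:grader}(iii), which concerns direct $\Ops$-factors, is not the right justification, and more work is needed to get $\mathfrak{X}(L_i)=\mathfrak{X}(G)$ for intermediate $L_i$, which you use implicitly when calling $\textalgo{Extend}$.)
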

\begin{proof}
\emph{Algorithm.}
\begin{code}{Merge$(~\mathcal{A},~\mathcal{H}~)$}
    $\mathcal{K} = \mathcal{A}$;\\
    $\forall H\in\mathcal{H}$\\
    \begin{block*}
		$\mathcal{K}=${\tt Extend}$(~\langle H,\mathcal{K}\rangle,~\mathcal{K}~)$;
    \end{block*}
    \creturn{$\mathcal{K}$}
\end{code}

\emph{Correctness.} Fix a direct $\Ops$-decomposition $\mathcal{R}$ of $G$
where $\mathcal{H}$ refines $\mathcal{R}\mathfrak{X}(G)$.  We can assume
$\mathcal{R}$ is $\mathfrak{X}$-refined.

The loop runs through a maximal chain $\mathscr{C}$ of subsets of $\mathcal{H}$ and so we
track the iterations by considering the members of $\mathscr{C}$.  By \propref{prop:chain-chain},
$\mathcal{L}=\{L=L_{\mathcal{C}}=\langle\mathcal{C},\mathfrak{X}(G)\rangle: 
\mathcal{C}\in\mathscr{C}\}$ is a direct chain.
We claim the following properties as loop invariants.
At the iteration $\mathcal{C}\in\mathscr{C}$, we claim that $(\mathcal{C},L,\mathcal{K})$ satisfies:
\begin{enumerate}[(P.1)]
\item\label{P:1} $\mathfrak{X}(L)=\mathfrak{X}(G)$,
\item\label{P:3} $\mathcal{K}\mathfrak{X}(G)$ refines
$\mathcal{R}\mathfrak{X}(G)\cap L$, and
\item\label{P:4} $\mathcal{K}$ is an $\mathfrak{X}$-refined direct $(\Ops\cup G)$-decomposition 
of  $L$.
\end{enumerate}
Thus, when the loop completes, $L=\langle\mathcal{H}\rangle=G$.  
By (P.\ref{P:3})
$\mathcal{K}\mathfrak{X}(G)$ refines $\mathcal{R}\mathfrak{X}(G)$.  By (P.\ref{P:4}), $\mathcal{K}$ 
is an $\mathfrak{X}$-refined direct $\Ops$-decomposition of $G$.  
Following \lemref{lem:count}, if $\mathcal{R}$ is a Remak 
$(\Ops\cup G)$-decomposition of $G$ then $\mathcal{K}$ is a Remak 
$(\Ops\cup G)$-decomposition.  We prove (P.\ref{P:1})--(P.\ref{P:4}) by induction.

As we begin with $\mathcal{K}=\mathcal{A}$, in the base case $\mathcal{C}=\emptyset$, 
$L=\mathfrak{X}(G)$, and so (P.\ref{P:1}) holds.
As $\mathcal{K}\mathfrak{X}(G)=\emptyset$ and $\mathcal{R}\mathfrak{X}(G)\cap\mathfrak{X}(G)=\emptyset$
we have (P.\ref{P:3}).  Also (P.\ref{P:4}) holds because of (a).

Now suppose for induction that for some $\mathcal{C}\in\mathscr{C}$, 
$(\mathcal{C},L,\mathcal{K})$ satisfies (P.\ref{P:1})--(P.\ref{P:4}).  Let
$\mathcal{D}=\mathcal{C}\sqcup\{H\}\in\mathscr{C}$ be the successor to $\mathcal{C}$,
for the appropriate $H\in\mathcal{H}-\mathcal{C}$.
Set $M=\langle H,L\rangle$, and 
$\mathcal{M}={\tt Extend}(M,\mathcal{K})$. 
Since $H\leq M$ it follows from (b) that $\mathfrak{X}(G)\leq \mathfrak{X}(M)
\leq \mathfrak{X}(H)=\mathfrak{X}(G)$ so that $\mathfrak{X}(M)=\mathfrak{X}(G)$; hence, (P.\ref{P:1}) holds for $(\mathcal{D}, M,\mathcal{M})$.  

Next we prove (P.\ref{P:3}) holds for $(\mathcal{D}, M,\mathcal{M})$.   
As $L,M\in\mathcal{L}$ and $\mathcal{L}$
is a direct chain with directions $\mathcal{R}$, $\mathcal{R}\cap L$ and $\mathcal{R}\cap M$ 
are direct $(\Ops\cup G)$-decomposition of $L$ and $M$, respectively.
Following \thmref{thm:Extend}(i),
$|\mathcal{M}-\mathcal{K}|\leq 1$.  As $H\nleq L$, $\mathcal{M}\neq \mathcal{K}$,
and there is a group $\lfloor H\rfloor$ in 
$\mathcal{M}-\mathcal{K}$ with $H\leq \lfloor H\rfloor\mathfrak{X}(G)$.
By assumption, $\mathcal{H}$ refines $\mathcal{R}\mathfrak{X}(G)$.  
Hence, there is a unique $R\in \mathcal{R}-\mathfrak{X}$
such that $\mathfrak{X}(G)<H\leq R\mathfrak{X}(G)$.   Indeed, $R$ is the direction 
of $L$. Let 
$C=\langle (\mathcal{R}-\{R\})-\mathfrak{X}\rangle$ and define
	$$\mathcal{J}=\{K\in\mathcal{K}-\mathfrak{X}: K\leq C\mathfrak{X}(G)\}.$$
As the direction of $L$ is $R$, 
$C\mathfrak{X}(G)\cap M=C\mathfrak{X}(G)\cap L=\langle\mathcal{J}\rangle\mathfrak{X}(G)$ 
and by \thmref{thm:chain}, $\mathcal{J}$ lies in a $\mathfrak{X}$-separated direct $(\Ops\cup G)$-decomposition of $M$.
Thus, by \thmref{thm:Extend}(ii), $\mathcal{J}\subseteq \mathcal{M}\cap \mathcal{K}$.
Also, $M = \langle\mathcal{M}-\mathcal{J}\rangle\times \langle \mathcal{J}\rangle$ 
and $\mathfrak{X}(M)=\mathfrak{X}(G)$, so
\begin{align*}
	M/\mathfrak{X}(G)
		& = \langle\mathcal{M}-\mathcal{J}\rangle\mathfrak{X}(G)/\mathfrak{X}(G)\times
				\langle\mathcal{J}\rangle\mathfrak{X}(G)/\mathfrak{X}(G)\\
		& = \langle\mathcal{M}-\mathcal{J}\rangle\mathfrak{X}(G)/\mathfrak{X}(G)\times
				(C\mathfrak{X}(G) \cap M)/\mathfrak{X}(G).
\end{align*}
Thus, $\langle\mathcal{M}-\mathcal{J}\rangle\mathfrak{X}(G)\cap C\mathfrak{X}(G)=\mathfrak{X}(G)$.
Suppose that $X$ is a directly $(\Ops\cup G)$-indecomposable direct $(\Ops\cup G)$-factor of 
$\langle\mathcal{M}-\mathcal{J}\rangle\mathfrak{X}(G)$ which does not lie in $\mathfrak{X}$.
As $\mathcal{R}\cap M$ is a direct $(\Ops\cup M)$-decomposition of $M$ and $X$ lies in a
Remak $(\Ops\cup G)$-decomposition of $M$, 
then by \lemref{lem:KRS}, $X\leq R\mathfrak{X}(M)=R\mathfrak{X}(G)$ or 
$X\leq C\mathfrak{X}(M)=C\mathfrak{X}(G)$.  
Yet, $X\not\in\mathfrak{X}$ so that $X\nleq \mathfrak{X}(G)$ and 
	$$X\cap C\mathfrak{X}(G)\leq 
		\langle\mathcal{M}-\mathcal{J}\rangle\mathfrak{X}(G)\cap C\mathfrak{X}(G)
			=\mathfrak{X}(G);$$
hence, $X\nleq C\mathfrak{X}(G)$.  Thus, $X\leq R\mathfrak{X}(G)$ and as $X$ is 
arbitrary, we get
	$$\langle\mathcal{M}-\mathcal{J}\rangle\mathfrak{X}(G)\leq R\mathfrak{X}(G).$$  
As $M/\mathfrak{X}(G)=(R\mathfrak{X}(G)\cap M)/\mathfrak{X}(G)\times (C\mathfrak{X}(G)\cap M)/\mathfrak{X}(G)$
we indeed have 
	$$\langle\mathcal{M}-\mathcal{J}\rangle\mathfrak{X}(G)= R\mathfrak{X}(G)\cap M.$$
In particular, $\mathcal{M}\mathfrak{X}(G)$ refines $\mathcal{R}\mathfrak{X}(G)\cap M$ and so 
(P.\ref{P:3}) holds.

Finally to prove (P.\ref{P:4}) it suffices to show that $\lfloor H\rfloor$ has no direct $(\Ops\cup G)$-factor in $\mathfrak{X}$.  Suppose otherwise:
so $\lfloor H\rfloor$ has a direct $(\Ops\cup G)$-decomposition $\{H_0,A\}$ where $A\in\mathfrak{X}$ 
and $A$ is directly $(\Ops\cup G)$-indecomposable.
Swap out $\lfloor H\rfloor$ in $\mathcal{M}$ for $\{H_0,A\}$ creating
	$$\mathcal{M}'=(\mathcal{M}-\{\lfloor H\rfloor \})\sqcup\{H_0,A\}
		=(\mathcal{M}\cap\mathcal{K})\sqcup\{H_0,A\}.$$
As $A\in \mathfrak{X}$ it follows that
$A\leq \mathfrak{X}(M)=\mathfrak{X}(G)=\mathfrak{X}(L)$.  In particular, $A\leq L\leq M$.  As 
$A$ is a direct $(\Ops\cup G)$-factor of $M$, $A$ is also a direct 
$(\Ops\cup G)$-factor of $L$.
Since $\langle A,\mathcal{M}\cap \mathcal{K}\rangle\leq L$ it follows that
	$$\mathcal{M}'\cap L=\{H_0 \cap L, A\}\sqcup (\mathcal{M}\cap \mathcal{K})$$ 
is a direct $(\Ops\cup G)$-decomposition of $L$.
Furthermore, $A$ is directly $(\Ops\cup G)$-in\-de\-comp\-o\-sa\-ble, $A\in\mathfrak{X}$, 
and $A$ lies in a Remak $(\Ops\cup G)$-decomposition of $L$.  Also
$\mathcal{K}\cap\mathfrak{X}$ lies in a Remak $(\Ops\cup G)$-decomposition $\mathcal{T}$
of $L$ in which $\mathcal{K}\cap\mathfrak{X}=\mathcal{T}\cap\mathfrak{X}$ 
(\propref{prop:direct-class}(iv) and (v)); thus, by \thmref{thm:KRS} there is a 
$B\in\mathcal{K}\cap \mathfrak{X}$ such that 
	$$(\mathcal{M}'\cap L-\{A\})\sqcup \{B\}$$ 
is a direct $(\Ops\cup G)$-decomposition of $L$.  Hence, 
$\mathcal{M}''=(\mathcal{M}'-\{A\})\sqcup\{B\}$
is a direct $(\Ops\cup G)$-decomposition of $M$.  However, 
$\mathcal{M}''\cap \mathcal{K}=(\mathcal{M}\cap \mathcal{K})\cup\{B\}$.  By 
\thmref{thm:Extend}(i), $\mathcal{M}\cap \mathcal{K}$ is maximal with respect to inclusion in 
$\mathcal{K}$, such 
that $\mathcal{M}\cap \mathcal{K}$ is contained in a direct $(\Ops\cup G)$-decomposition
of $M$.  Thus, $B\in\mathcal{M}\cap\mathcal{K}$.  That is, impossible since it would
imply that $\mathcal{M}'\cap L$ and $(\mathcal{M}'-\{A\})\cap L$ are both direct 
$(\Ops\cup G)$-decompositions of $L$, i.e. that $A\cap L=1$,  But $1<A\leq L$.
This contradiction demonstrates that $\lfloor H\rfloor$ has no direct 
$(\Ops\cup G)$-factor in $\mathfrak{X}$.  Therefore, $\mathcal{M}$ is $\mathfrak{X}$-refined.

Having shown that $M$ and $\mathcal{M}$ satisfy (P.\ref{P:1})--(P.\ref{P:4}), 
at the end of the loop
$\mathcal{K}$ and $L$ are reassigned to $\mathcal{M}$ and $M$ respectively and
so maintain the loop invariants.

\emph{Timing.}  The algorithm loops over every element of $\mathcal{H}$ applying the
polynomial-time algorithm of \thmref{thm:Extend} once in each loop.  Thus, {\tt Merge}
is a polynomial-time algorithm.
\end{proof}

\section{Bilinear maps and $p$-groups of class $2$}\label{sec:bi}
In this section we introduce bilinear maps and a certain commutative ring as a means to access direct decompositions of a $p$-group of class $2$.  In our minds, those groups represent the most difficult case of the direct product problem.  This is because $p$-groups of class $2$ have so many normal subgroups, and many of those pairwise intersect trivially making them appear to be direct factors when they are not.  Thus, a greedy search is almost certain to fail.  Instead, we have had to consider a certain commutative ring that can be derived from a $p$-group.  As commutative rings have unique Remak decomposition, and a decomposable $p$-group will have many Remak decompositions, we might expect such a method to have lost vital information.  However, in view of results such as \thmref{thm:Lift-Extend} we recognize that in fact what we will have constructed leads us to a matching for the extension $1\to \zeta_1(G)\to G\to G/\zeta_1(G)\to 1$.

Unless specified otherwise, in this section $G$ is a $p$-group of class $2$.

\subsection{Bilinear maps}
Here we introduce $\Ops$-bilinear maps and direct 
$\Ops$-decompositions of $\Ops$-bilinear maps.  This allows us to solve the match 
problem for $p$-groups of class $2$.

Let $V$ and $W$ denote abelian $\Ops$-groups.
A map $b:V\times V\to W$ is $\Ops$-\emph{bilinear} if
\begin{align}
	b(u+u',v+v') & = b(u,v)+b(u',v)+b(u,v')+b(u',v'), \textnormal{ and }\\
        b(ur,v) & = b(u,v)r  = b( u,vr),
\end{align}
for all $u,u'v,v'\in V$ and all $r\in \Ops$.  Every $\Ops$-bilinear map is also 
$\mathbb{Z}$-bilinear.  Define
\begin{equation}
	b(X,Y) = \langle b(x,y) : x\in X, y\in Y\rangle
\end{equation}
for $X,Y\subseteq V$.  If $X\leq V$ then define the \emph{submap}
\begin{equation}
	b_X:X\times X\to b(X,X)
\end{equation}
as the restriction of $b$ to inputs from $X$.
The \emph{radical} of $b$ is
\begin{equation}
	\rad b = \{ v\in V : b(v,V)=0=b(V,v) \}.
\end{equation}
We say that $b$ is \emph{nondegenerate} if $\rad b=0$.
Finally, call $b$ \emph{faithful} $\Ops$-bilinear when 
$(0:_{\Ops} V)\cap (0:_{\Ops} W)=0$,
where $(0:_{\Ops} X)=\{r\in \Ops: Xr=0\}$, $X\in \{V,W\}$.

\begin{defn}
Let $\mathcal{B}$ be a family of $\Ops$-bilinear maps 
$b:V_b\times V_b\to W_b$, $b\in\mathcal{B}$.
Define $\oplus\mathcal{B}=\bigoplus_{b\in\mathcal{B}} b$ as the 
$\Ops$-bilinear map 
$\bigoplus_{b\in\mathcal{B}} V_b\times\bigoplus_{b\in\mathcal{B}} V_b
\to \bigoplus_{b\in\mathcal{B}} W_b$ where:
\begin{equation}
	\left(\oplus\mathcal{B}\right)\left(
		(u_b)_{b\in\mathcal{B}},(v_b)_{b\in\mathcal{B}}\right)
			= (b(u_b,v_b))_{b\in\mathcal{B}},\qquad
			\forall (u_b)_{b\in\mathcal{B}},(v_b)_{b\in\mathcal{B}}
				\in \bigoplus_{b\in\mathcal{B}}V_b.
\end{equation} 
\end{defn}

\begin{lem}\label{lem:internal-direct-bi}
If $b:V\times V\to W$ is an $\Ops$-bilinear map, 
$\mathcal{C}$ a finite set of submaps of $b$ such that 
\begin{enumerate}[(i)]
\item $\{X_c: c:X_c\times X_c\to Z_c\in\mathcal{C}\}$ is a direct 
$\Ops$-decomposition of $V$, 
\item $\{Z_c: c:X_c\times X_c\to Z_c\in\mathcal{C}\}$ is a direct 
$\Ops$-decomposition of $W$, and
\item $b(X_c,X_{d})=0$ for distinct $c,d\in\mathcal{C}$;
\end{enumerate}
then
$b=\bigoplus\mathcal{C}$.
\end{lem}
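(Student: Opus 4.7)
The plan is to verify the identity $b=\bigoplus\mathcal{C}$ by direct computation, using the three hypotheses to reduce an arbitrary evaluation of $b$ on $V\times V$ to a diagonal sum indexed by $\mathcal{C}$, and then to identify this with the defining formula for $\oplus\mathcal{C}$.

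First I would use hypothesis (i) to write each $u,v\in V$ uniquely as $u=\sum_{c\in\mathcal{C}} u_c$ and $v=\sum_{c\in\mathcal{C}} v_c$ with $u_c,v_c\in X_c$; this is the tuple presentation $(u_c)_{c\in\mathcal{C}},(v_c)_{c\in\mathcal{C}}\in\bigoplus_{c\in\mathcal{C}} X_c$ required by the definition of $\oplus\mathcal{C}$. Next, by $\Ops$-bilinearity of $b$, expand
\[
b(u,v)=\sum_{c,d\in\mathcal{C}} b(u_c,v_d).
\]
Hypothesis (iii) kills every off-diagonal term $b(u_c,v_d)$ with $c\ne d$, leaving $b(u,v)=\sum_{c\in\mathcal{C}} b(u_c,v_c)$. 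Since each submap $c:X_c\times X_c\to Z_c$ is by definition the restriction of $b$, we have $b(u_c,v_c)=c(u_c,v_c)\in Z_c$.

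Finally I would invoke hypothesis (ii): the $Z_c$'s form a direct $\Ops$-decomposition of $W$, so the sum $\sum_{c\in\mathcal{C}} c(u_c,v_c)$ is internally the tuple $(c(u_c,v_c))_{c\in\mathcal{C}}\in\bigoplus_{c\in\mathcal{C}} Z_c$. Comparing with the defining formula of $\oplus\mathcal{C}$, this shows $b(u,v)=(\oplus\mathcal{C})((u_c)_c,(v_c)_c)$ for all $u,v\in V$, completing the proof. There is essentially no obstacle here beyond bookkeeping: the lemma is a formal consequence of $\Ops$-bilinearity together with the orthogonality of the pieces in $V$, $W$, and $b$ itself; its purpose is to record the converse direction of the forthcoming correspondence between direct decompositions of $b$ and of the groups on which it acts.
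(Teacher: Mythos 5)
Your proof is correct and follows essentially the same route as the paper: write $u,v$ in their $X_c$-components via (i), expand $b(u,v)$ by bilinearity, annihilate the off-diagonal terms via (iii), and read off the result as $(\oplus\mathcal{C})(u,v)$ using (ii). No further comment is needed.
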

\begin{proof}
By $(i)$, we may write each $u\in V$ as 
$u=(u_c)_{c\in\mathcal{C}}$ with $u_c\in X_c$, for all
$c:X_c\times X_c\to Z_c\in
\mathcal{C}$.  By $(iii)$ followed by $(ii)$ we have that
$b(u,v)=\sum_{c,d\in\mathcal{C}} b(u_c,v_d)
		=\sum_{c\in\mathcal{C}} c(u_c,v_c)
		=\left(\oplus\mathcal{C}\right)(u,v).$
\end{proof}

\begin{defn}\label{def:ddecomp-bi}
A \emph{direct $\Ops$-decomposition} of an $\Ops$-bilinear map $b:V\times V\to W$
is a set $\mathcal{B}$ of submaps of $b$ 
satisfying the hypothesis of \lemref{lem:internal-direct-bi}.
Call $b$ directly $\Ops$-indecomposable if its only direct $\Ops$-decomposition
is $\{b\}$.
A Remak $\Ops$-decomposition of $b$ is an $\Ops$-decompositions
whose members or directly $\Ops$-indecomposable.
\end{defn}

The bilinear maps we consider were created by Baer \cite{Baer:class-2} and are the
foundation for the many Lie methods that have been associated to $p$-groups.  Further
details of our account can be found in \cite[Section 5]{Warfield:nil}.

The principle example of such maps is the commutation of an $\Ops$-group $G$ where
$\gamma_2(G)\leq \zeta_1(G)$.  There we define $V=G/\zeta_1(G)$, $W=\gamma_2(G)$, and
$b=\Bi(G):V\times V\to W$ where
\begin{equation}\label{eq:Bi}
	b(\zeta_1(G) x,\zeta_1(G) y )=b(x,y),\qquad \forall x,\forall y, x,y\in G.
\end{equation}
It is directly verified that $b$ is $\mathbb{Z}_{p^e}[\Ops]$-bilinear where $G^{p^e}=1$, and
furthermore, nondegenerate.  When working in $V$ and $W$ we use additive notation.

Given $H\leq G$ we define $U=H\zeta_1(G)/\zeta_1(G)\leq V$,
$Z=H\cap \gamma_2(G)\leq W$, and $c:=\Bi(H;G):U\times U\to Z$ where
\begin{equation}
	c(u,v) = b(u,v),\qquad \forall u\forall v, u,v\in U.
\end{equation}

\begin{prop}\label{prop:p-group-bi}
If $G$ is a $\Ops$-group and $\gamma_2(G)\leq \zeta_1(G)$, then every direct $\Ops$-decomposition
$\mathcal{H}$ of $G$ induces a direct $\Ops$-decomposition 
\begin{equation}
	\Bi(\mathcal{H}) = \{ \Bi(H; G) : H\in\mathcal{H}\}.
\end{equation}
If $\Bi(P)$ is directly $\Ops$-indecomposable and $\zeta_1(G)\leq \Phi(G)$, then
$G$ is directly $\Ops$-indecomposable.
\end{prop}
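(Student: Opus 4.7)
The plan is to verify the three conditions of Lemma \ref{lem:internal-direct-bi} for the submaps $\Bi(H;G)$ induced by $\mathcal{H}$, and then to leverage this correspondence to deduce direct $\Ops$-indecomposability of $G$ from that of $\Bi(G)$ under the Frattini hypothesis.

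For the first assertion, I would invoke Proposition \ref{prop:V-inter-1} with $\mathfrak{V}$ equal to the variety $\mathfrak{A}$ of abelian groups: the marginal subgroup $\zeta_1$ is an up $\Ops$-grader and the verbal subgroup $\gamma_2$ is a down $\Ops$-grader for $\mathfrak{A}$. Applied to the direct $\Ops$-decomposition $\mathcal{H}$, condition (iii) of Definition \ref{def:grader} gives $\zeta_1(H) = H \cap \zeta_1(G)$ and $\gamma_2(H) = H \cap \gamma_2(G)$ for each $H \in \mathcal{H}$, and the graded property yields internal direct products $\zeta_1(G) = \prod_{H \in \mathcal{H}} \zeta_1(H)$ and $\gamma_2(G) = \prod_{H \in \mathcal{H}} \gamma_2(H)$. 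These deliver conditions (i) and (ii) of Lemma \ref{lem:internal-direct-bi} via the isomorphisms $V = G/\zeta_1(G) \cong \bigoplus_H U_H$ and $W = \gamma_2(G) \cong \bigoplus_H Z_H$, once we discard the trivial summands coming from abelian factors $H \leq \zeta_1(G)$ (for which $U_H = 0 = Z_H$). Condition (iii) is immediate: distinct direct factors $H, K \in \mathcal{H}$ centralize each other, so $[H, K] = 1$ and hence $\Bi(G)(U_H, U_K) = 0$. The lemma then identifies $\Bi(G)$ with $\bigoplus_H \Bi(H;G)$.

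For the second assertion, I would argue by contradiction: suppose $\{A, B\}$ is a direct $\Ops$-decomposition of $G$ with both factors nontrivial. By the first part, after discarding any trivial summand, $\{\Bi(A;G), \Bi(B;G)\}$ is a direct $\Ops$-decomposition of $\Bi(G)$. Indecomposability of $\Bi(G)$ forces at least one submap to be trivial, say $\Bi(B;G)$ with $U_B = 0$; equivalently, $B \leq \zeta_1(G)$. The hypothesis $\zeta_1(G) \leq \Phi(G)$ then gives $B \leq \Phi(G)$, and applying the Frattini non-generator property to $G = A \cdot B \leq A \cdot \Phi(G)$ forces $G = A$, whence $B = A \cap B = 1$, a contradiction.

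The main obstacle I anticipate is the bookkeeping around abelian direct factors: an abelian factor $H \leq \zeta_1(G)$ yields a null submap $\Bi(H;G)$ which does not strictly belong to a direct decomposition of the bilinear map $\Bi(G)$. The Frattini hypothesis $\zeta_1(G) \leq \Phi(G)$ is precisely what rules out such factors from appearing in a genuine direct decomposition of $G$, via the non-generator characterization of $\Phi(G)$; without it the converse implication fails.
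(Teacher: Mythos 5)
Your proposal is correct and follows essentially the same route as the paper's proof: both parts rely on Proposition \ref{prop:V-inter-1} (with the abelian variety) together with Lemma \ref{lem:induced} to verify the three conditions of Lemma \ref{lem:internal-direct-bi}, and both use the Frattini non-generator property to exclude abelian direct factors. The only cosmetic difference is that you reduce the second claim to a two-element decomposition $\{A,B\}$ and argue by contradiction, while the paper works with an arbitrary $\mathcal{H}$ and shows directly that $|\mathcal{H}\zeta_1(G)|=1$ and $\mathcal{H}\cap\mathfrak{A}=\emptyset$; your explicit remark about discarding the null submaps $\Bi(H;G)$ for $H\leq\zeta_1(G)$ is a point the paper handles implicitly through its set-subtraction conventions for $\mathcal{H}\zeta_1(G)/\zeta_1(G)$ and $\mathcal{H}\cap\gamma_2(G)$.
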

\begin{proof}
Set $b:=\Bi(G)$.
By \lemref{lem:induced} and \propref{prop:V-inter-1}, $\mathcal{H}\zeta_1(G)/\zeta_1(G)$
is a direct $\Ops$-decomposition of $V=G/\zeta_1(G)$ and $\mathcal{H}\cap \gamma_2(G)$ is
a direct $\Ops$-decomposition of $W=\gamma_2(G)$.  Furthermore, for each $H\in\mathcal{H}$,
$$b(H\zeta_1(G)/\zeta_1(G), \langle\mathcal{H}-\{H\}\rangle\zeta_1(G)/\zeta_1(G))
=[H,\langle\mathcal{H}-\{H\}\rangle]=0\in W.$$  
In particular, $\Bi(\mathcal{H})$ is
a direct $\Ops$-decomposition of $b$.

Finally, if $\Bi(P)$ is directly indecomposable then $|\Bi(\mathcal{H})|=1$.  Thus,
$\mathcal{H}\zeta_1(G)=\{G\}$.  Therefore $\mathcal{H}$ has exactly one non-abelian member.
Take $Z\in\mathcal{H}\cap\mathfrak{A}$.  As $Z$ is abelian, $Z\leq \zeta_1(G)$.  If
$\zeta_1(G)\leq \Phi(G)$ then the elements of $G$ are non-generators.  In particular,
$G=\langle\mathcal{H}\rangle=\langle \mathcal{H}-\{Z\}\rangle$.  But by definition no
proper subset of decomposition generates the group.  So $\mathcal{H}\cap\mathfrak{A}=\emptyset$.
Thus, $\mathcal{H}=\{G\}$ and $G$ is directly $\Ops$-indecomposable.
\end{proof}

Baer and later others observed a partial reversal of the map $G\mapsto \Bi(G)$.  Our account
follows \cite{Warfield:nil}.  In particular,
if $b:V\times V\to W$ is a $\mathbb{Z}_{p^e}$-bilinear map then we may define a group
$\Grp(b)$ on the set $V\times W$ where the product is given by:
\begin{equation}
	(u,w)*(u',w') = (u+u', w+b(u,u')+w'),
\end{equation}
for all $(u,w)$ and all $(u',w')$ in $V\times W$.  The following are immediate from the definition.
\begin{enumerate}[(i)]
\item $(0,0)$ is the identity and for all $(u,w)\in V\times W$, $(u,w)^{-1}=(-u,-w + b(u,u))$.
\item For all $(u,w)$ and all $(v,w)$ in $V\times W$, $[(u,w), (v,w')] = (0, b(u,v)-b(v,u))$.
\end{enumerate}
If $b$ is $\Ops$-bilinear then $\Grp(b)$ is an $\Ops$-group where 
$$\forall s\in \Ops, \forall (u,w)\in V\times W,\qquad (u,w)^{s}=(u^s, w^s).$$
In light of (ii), if $p>2$ and $b$ is alternating, i.e. for all $u$ and all $v$ in $V$, 
$b(u,v)=-b(v,u)$, then $[(u,w),(v,w')]=(0,2b(u,v))$.  For that reason it is typical to consider
$\Grp(\frac{1}{2}b)$ in those settings so that $[(u,w),(v,w')]=(0,b(u,v))$.  We shall not require
this approach.  If $G^p=1$ then $G\cong \Grp(\Bi(G))$ \cite[Proposition 3.10(ii)]{Wilson:unique-cent}.

\begin{coro}\label{coro:exp-p}
If $G$ is a $p$-group with $G^p=1$ and $\gamma_2(G)\leq \zeta_1(G)$ then $G$ is directly
$\Ops$-indecomposable if, and only if, $\Bi(G)$ is directly $\Ops$-indecomposable and 
$\zeta_1(G)\leq \Phi(G)$.
\end{coro}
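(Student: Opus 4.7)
The plan is to handle the two directions asymmetrically: the $\Leftarrow$ direction is exactly \propref{prop:p-group-bi}, so I would invoke it and move on. All real content is in the forward direction.

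For the $\Rightarrow$ direction, the lever is the $\Ops$-isomorphism $G \cong \Grp(\Bi(G))$ flagged in the paragraph preceding the corollary, cited to \cite[Proposition 3.10(ii)]{Wilson:unique-cent} and valid precisely because $G^p = 1$. I plan to use this single identification to derive both required conclusions.

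The first conclusion, $\zeta_1(G) \leq \Phi(G)$, I would obtain by direct inspection of $\Grp(b)$ for any nondegenerate $\Ops$-bilinear $b\colon V\times V\to W$ that is surjective onto $W$: from the formulas $(u,w)(u',w')=(u+u',w+b(u,u')+w')$ and $[(u,w),(u',w')]=(0,b(u,u')-b(u',u))$ recorded in \secref{sec:bi}, one reads off $\zeta_1(\Grp(b))=0\oplus W$ and $\gamma_2(\Grp(b))=0\oplus b(V,V)=0\oplus W$. Applied to $b=\Bi(G)$, which is nondegenerate by construction of $V=G/\zeta_1(G)$ and satisfies $b(V,V)=\gamma_2(G)=W$, the $\Ops$-iso\-mor\-phism transports the equality $\zeta_1=\gamma_2$ back to $G$. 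Combined with $\Phi(G)=\gamma_2(G)G^p=\gamma_2(G)$, this yields $\zeta_1(G)=\gamma_2(G)=\Phi(G)$.

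For the second conclusion, that $\Bi(G)$ is directly $\Ops$-indecomposable, I would argue by contrapositive: if $\Bi(G)=b_1\oplus b_2$ is a nontrivial direct $\Ops$-decomposition of bilinear maps in the sense of \defref{def:ddecomp-bi}, then the explicit formulas for $\Grp$ make it immediate that $\Grp(\Bi(G))\cong \Grp(b_1)\times \Grp(b_2)$ as $\Ops$-groups, which transports via the $\Ops$-isomorphism to a nontrivial direct $\Ops$-decomposition of $G$, contradicting indecomposability. The step I expect to require the most care is verifying $\Ops$-equivariance of both the isomorphism $G\cong \Grp(\Bi(G))$ and the $\Grp$ functor on direct sums; these are routine from the definitions, but must be checked to guarantee that the resulting decomposition of $G$ is $\Ops$-invariant and not merely a decomposition of the underlying group.
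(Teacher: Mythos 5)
Your handling of the reverse direction and of the claim that $\Bi(G)$ is directly $\Ops$-indecomposable coincides with the paper's: cite \propref{prop:p-group-bi} for $\Leftarrow$, and for $\Rightarrow$ pass through the isomorphism $G\cong\Grp(\Bi(G))$ and show that a direct $\Ops$-decomposition of $\Bi(G)$ induces one of $\Grp(\Bi(G))$. (The paper does this internally with the subgroups $\Grp(c;b)=X_c\times Z_c$; your observation that $\Grp(b_1\oplus b_2)\cong\Grp(b_1)\times\Grp(b_2)$ is the same content packaged externally, and the $\Ops$-equivariance caveats you flag are exactly the right ones to check.)

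The argument you propose for $\zeta_1(G)\leq\Phi(G)$ does not work, and it is worth seeing exactly why. You deduce $\zeta_1(G)=\gamma_2(G)$ by transporting $\zeta_1(\Grp(b))=\gamma_2(\Grp(b))$ across the isomorphism, using \emph{no} input from the indecomposability hypothesis; if that reasoning were sound it would establish $\zeta_1(G)=\gamma_2(G)$ for \emph{every} exponent-$p$ group of class at most $2$, which is false (take $G=\mathbb{Z}_p$, or $G=\mathbb{Z}_p\times H$ with $H$ extraspecial of exponent $p$). The hidden defect is that $G\cong\Grp(\Bi(G))$ \emph{cannot hold} unless $\zeta_1(G)=\gamma_2(G)$ already: the underlying set of $\Grp(\Bi(G))$ is $(G/\zeta_1(G))\times\gamma_2(G)$, so $|\Grp(\Bi(G))|=[G:\zeta_1(G)]\cdot|\gamma_2(G)|$ whereas $|G|=[G:\zeta_1(G)]\cdot|\zeta_1(G)|$, and an isomorphism forces $|\zeta_1(G)|=|\gamma_2(G)|$, hence equality given $\gamma_2(G)\leq\zeta_1(G)$. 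So the isomorphism you invoke presupposes exactly the conclusion you want to extract from it, and the cited \cite[Proposition 3.10(ii)]{Wilson:unique-cent} must implicitly carry a hypothesis along the lines of $\gamma_2(G)=\zeta_1(G)$. The paper's own proof of the corollary never addresses $\zeta_1(G)\leq\Phi(G)$ at all, so you have found a genuine soft spot, but your patch is circular; a direct argument (e.g.\ extracting a central cyclic direct $\Ops$-factor when $\zeta_1(G)\not\leq\Phi(G)$, with care about $\Ops$-invariance) or a reexamination of the corollary's hypotheses is needed. A secondary imprecision: $\zeta_1(\Grp(b))=0\oplus W$ is not a feature of ``any nondegenerate surjective $\Ops$-bilinear $b$''; symmetric $b$ give abelian $\Grp(b)$. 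The identity requires $b$ alternating with $2$ invertible, which holds for $b=\Bi(G)$ and $p$ odd, but should be stated.
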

\begin{proof}
The reverse directions is \propref{prop:p-group-bi}.  We focus on the forward direction.
As $G^p=1$ it follows that $G\cong \Grp(\Bi(G))=:\hat{G}$.  Set 
$b:=\Bi(G)$.  Let $\mathcal{B}$ be a direct $\Ops$-decomposition 
of $b$, and therefore also of $\Bi(G)$.  For each $c:X_c\times X_c\to Z_c\in\mathcal{B}$, 
define $\Grp(c,b)=X_c\times Z_c\leq V\times W$.  We claim that $\Grp(c;b)$ is an $\Ops$-subgroup of
$\Grp(b)$.  In particular, $(0,0)\in \Grp(c;b)$ and for all $(x,w),(y,w')\in \Grp(c;b)$,
$(x,w)*(-y,-w'+b(y,y))= (x-y, w-b(x,y)-w'+b(y,y))\in X_c\times Z_c=\Grp(c;b)$.  Furthermore,
\begin{align*}
	\left[\Grp(c;b), \Grp\left( \sum_{d\in\mathcal{C}-\{c\}} d; b\right) \right]
		& = \left( 0, 2b\left( X_c, \sum_{d\in\mathcal{C}-\{c\}} X_d\right) \right)=(0,0).
\end{align*}
Combined with $\Grp(b)=\langle \Grp(c;b): c\in\mathcal{C}\rangle$ it follows that 
$\Grp(c;b)$ is normal in $\Grp(b)$.  Finally, 
\begin{align*}
\Grp(c;b) \cap \Grp\left(\sum_{d\in\mathcal{C}-\{c\}} d; b\right)
	& = (X_c\times Z_c) \cap \sum_{d\in\mathcal{C}-\{c\}}(X_d\times Z_d) = 0\times 0.
\end{align*}
Thus, $\mathcal{H}=\{\Grp(c;b): c\in\mathcal{C}\}$ is a direct $\Ops$-decomposition of $\Grp(b)$.
As $G$ is directly $\Ops$-indecomposable it follows that $\mathcal{H}=\{G\}$ and so
$\mathcal{C}=\{b\}$.  Thus, $b$ is directly $\Ops$-indecomposable.
\end{proof}

\subsection{Centroids of bilinear maps}
\label{sec:enrich}
In this section we replicate the classic interplay of idempotents of a ring and direct decompositions
of an algebraic object, but now for context of bilinear maps.  The relevant ring is the centroid, 
defined similar to centroid of a nonassociative ring \cite[Section X.1]{Jacobson:Lie}.  As with 
nonassociative rings, the idempotents of the centroid of a bilinear map correspond to direct decompositions.   
Myasnikov \cite{Myasnikov} may have been the 
first to generalize such methods to bilinear maps.

\begin{defn}\label{def:centroid}
The \emph{centroid} of an $\Ops$-bilinear $b:V\times V\to W$ is
\begin{equation*}
	C_{\Ops}(b) = \{ (f,g)\in \End_{\Ops} V\oplus \End_{\Ops} W:
		b(uf,v)=b(u,v)g=b(u,vf),\forall u,v\in V\}.
\end{equation*}
If $\Ops=\emptyset$ then write $C(b)$.
\end{defn}

\begin{lem}\label{lem:centroid}
Let $b:V\times V\to W$ be an $\Ops$-bilinear map.  Then the following hold.
\begin{enumerate}[(i)]
\item $C_{\Ops}(b)$ is a subring of $\End_{\Ops} V\oplus \End_{\Ops} W$, and 
$V$ and $W$ are $C_{\Ops}(b)$-modules.  

\item If $b$ is $K$-bilinear for a ring $K$, then $K/(0:_K V)\cap (0:_K W)$ embeds
in $C(b)$.  
In particular, $C(b)$ is the largest 
ring over which $b$ is faithful bilinear.

\item If $b$ is nondegenerate and $W=b(V,V)$ then $C_{\Ops}(b)=C(b)$
and $C(b)$ is commutative.
\end{enumerate}
\end{lem}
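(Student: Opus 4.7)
The plan is to dispatch parts (i) and (ii) by direct calculation and to concentrate effort on (iii), where the real content lies.

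For (i), I would verify that $C_{\Ops}(b)$ is closed under the componentwise sum and product inherited from $\End_{\Ops} V \oplus \End_{\Ops} W$ and contains $(1_V, 1_W)$. Closure under the product $(f_1, g_1)(f_2, g_2) = (f_1 f_2, g_1 g_2)$ follows from $b(u(f_1 f_2), v) = b((uf_1)f_2, v) = b(uf_1, v)\, g_2 = b(u,v)\, g_1 g_2$, together with the symmetric calculation $b(u, v(f_1 f_2)) = b(u,v)\, g_1 g_2$; closure under sum is just bilinearity. The module structures on $V$ and $W$ are read off the two projections.

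For (ii), I would send each $k \in K$ to $\mu(k) := (f_k, g_k)$, where $f_k$ and $g_k$ are the `multiplication by $k$' endomorphisms of $V$ and $W$ respectively. Then $K$-bilinearity of $b$ is precisely the statement that $(f_k, g_k) \in C(b)$, and $\mu$ is a ring homomorphism whose kernel is exactly $\{k \in K : Vk = 0 = Wk\} = (0:_K V) \cap (0:_K W)$. The `largest ring' assertion then reduces to noting, via (i), that $C(b)$ is itself a ring over which $b$ is faithful bilinear --- faithfulness holding because $(f,g) \in C(b)$ acts as zero on both $V$ and $W$ only when $f = 0$ and $g = 0$.

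For (iii), the crucial identity is obtained by evaluating $b(uf_1, vf_2)$ two ways for arbitrary $(f_i, g_i) \in C(b)$: pushing $f_1$ across first gives $b(uf_1, v)\, g_2 = b(u,v)\, g_1 g_2$, while pushing $f_2$ across first gives $b(u, (vf_2)f_1) = b(u, vf_2)\, g_1 = b(u,v)\, g_2 g_1$. Since $W = b(V,V)$, this forces $g_1 g_2 = g_2 g_1$ on all of $W$. Feeding that back yields $b(u(f_1 f_2 - f_2 f_1), v) = 0$ and, by the dual argument, $b(v, u(f_1 f_2 - f_2 f_1)) = 0$ for all $u, v \in V$, so nondegeneracy forces $f_1 f_2 = f_2 f_1$. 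Thus $C(b)$ is commutative.

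Finally, $C_{\Ops}(b) \subseteq C(b)$ is immediate from the definitions, and the reverse inclusion falls out of commutativity: $\Ops$-bilinearity of $b$ places each `multiplication by $s$' pair $(s_V, s_W)$, $s \in \Ops$, inside $C(b)$, so any $(f,g) \in C(b)$ commutes with every such $(s_V, s_W)$, which is precisely the statement that $f \in \End_{\Ops} V$ and $g \in \End_{\Ops} W$. The main obstacle in the whole argument is spotting the two-way evaluation of $b(uf_1, vf_2)$; once that identity is in hand, everything else cascades from nondegeneracy and the surjectivity hypothesis $W = b(V,V)$.
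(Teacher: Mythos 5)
Your proof is correct and follows essentially the same route as the paper: parts (i) and (ii) are treated as direct verifications, and for (iii) both arguments hinge on evaluating a cross term two ways and then invoking nondegeneracy together with $W=b(V,V)$. The one small variation is that you obtain $C(b)\subseteq C_{\Ops}(b)$ as a corollary of commutativity, observing that each pair $(s_V,s_W)$, $s\in\Ops$, already lies in $C(b)$, whereas the paper first proves $(f,g)\in C(b)\Rightarrow f\in\End_{\Ops}V,\ g\in\End_{\Ops}W$ by a separate shuffling computation analogous to the one you use for commutativity, and then proves commutativity.
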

\begin{proof}
Parts (i) and (ii) are immediate from the definitions.  For part (iii), if $s\in \Ops$ and
$(f,g)\in C(b)$, then $b((su)f,v)=b(su,vf)=sb(u,vf)=b(s(uf),v)$ for all $u$ and
all $v\in V$.  As $b$ is nondegenerate and
$b((su)f-s(uf),V)=0$, it follows that $(su)f=s(uf)$.  In a similar fashion, $g\in\End_{\Ops}W$
so that $(f,g)\in C_{\Ops}(b)$.  For part (iii) we repeat the same shuffling game
above: if $(f,g),(f',g')\in C(b)$ then $b(u(ff'),v)=b(u,vf)f'=b(u(f'f),v)$.  By the 
nondegenerate assumption we get that $ff'=f'f$ and also $gg'=g'g$.
\end{proof}

\begin{remark}
If $\rad b=0$ and $(f,g),(f',g)\in C(b)$ then $f=f'$.  If
$W=b(V,V)$ and $(f,g),(f,g')\in C(b)$ then $g=g'$.  So if $\rad b=0$
and $W=b(V,V)$ then the first variable determines the second and vice-versa.
\end{remark}

\subsection{Idempotents, frames, and direct $\Ops$-decompositions}\label{sec:bi-direct}
In this section we extend the usual interplay of idempotents and direct decompositions to the context of bilinear maps and them $p$-groups of class $2$.  This allows us to prove \thmref{thm:indecomp-class2}.
This section follows the notation described in Subsection \ref{sec:rings}.

\begin{lem}\label{lem:idemp}
Let $b:V\times V\to W$ be an $\Ops$-bilinear map.  
\begin{enumerate}[(i)]
\item A set $\mathcal{B}$ of $\Ops$-submaps of $b$ is a direct $\Ops$-decomposition 
of $b$ if, and only if,
\begin{equation}
	\mathcal{E}(\mathcal{B}) 
		=\{ (e(V_c),e(W_c)) : c:V_c\times V_c\to W_c\in \mathcal{B}\}.
\end{equation}
is a set of pairwise orthogonal idempotents of $C_{\Ops}(b)$ which sum to $1$.
\item
$\mathcal{B}$ is a Remak $\Ops$-decomposition of $b$ if, and only if,
$\mathcal{E}(\mathcal{B})$ is a frame.  
\item If $b$ is nondegenerate and $W=b(V,V)$,
then $b$ has a unique Remak $\Ops$-decomposition of $b$.
\end{enumerate}
\end{lem}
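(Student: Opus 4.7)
The plan is to parallel the classical correspondence between idempotents in an endomorphism ring and direct module decompositions, transported to the bilinear setting by using the centroid $C_\Ops(b)$ in place of an endomorphism ring.

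For part (i), I would first handle the forward direction. Given a direct $\Ops$-decomposition $\mathcal{B}$ of $b$, Lemma~\ref{lem:internal-direct-bi} tells me $V=\bigoplus_c V_c$ and $W=\bigoplus_c W_c$ as $\Ops$-modules, so the coordinate projections $e(V_c):V\to V_c$ and $e(W_c):W\to W_c$ are $\Ops$-endomorphisms and are pairwise orthogonal, summing to $1$ on $V$ and $W$. The only real content is showing each pair $(e(V_c),e(W_c))$ lies in $C_\Ops(b)$, i.e.\ $b(u\,e(V_c),v)=b(u,v)\,e(W_c)=b(u,v\,e(V_c))$. Writing $u=\sum u_d$, $v=\sum v_d$ and using condition (iii) of \lemref{lem:internal-direct-bi} (so $b(u_c,v_d)=0$ for $c\neq d$), both sides collapse to $b(u_c,v_c)\in W_c$. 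For the reverse direction, given pairwise orthogonal idempotents $\{(e_c,f_c)\}$ in $C_\Ops(b)$ summing to $1$, set $V_c:=Ve_c$, $W_c:=Wf_c$. These decompose $V$ and $W$ as $\Ops$-modules by \S\ref{sec:rings}. Using the centroid identity, $b(ue_c,ve_d)=b(u,v)f_df_c$, which vanishes for $c\neq d$ and lies in $Wf_c$ otherwise, so $b$ restricts to submaps $c:V_c\times V_c\to W_c$ satisfying the hypotheses of \lemref{lem:internal-direct-bi}.

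For part (ii), I would invoke the bijection from (i) and argue that indecomposability matches primitivity. If an idempotent $(e,f)$ splits as the sum of two orthogonal idempotents in $C_\Ops(b)$, the construction of (i) refines the corresponding submap, contradicting directly $\Ops$-indecomposability; conversely, if a submap $c:V_c\times V_c\to W_c$ admits a nontrivial direct $\Ops$-decomposition, applying (i) to that submap inside $C_\Ops(b)$ writes $(e(V_c),e(W_c))$ as a sum of two nonzero orthogonal idempotents. Since a Remak decomposition is precisely a decomposition into directly indecomposable summands, this is exactly the frame condition.

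For part (iii), I would appeal to \lemref{lem:centroid}(iii), which gives $C_\Ops(b)=C(b)$ commutative under the hypotheses $\rad b=0$ and $W=b(V,V)$. By \lemref{lem:lift-idemp}(iii), a commutative finite ring has a unique frame. Combined with (ii), uniqueness of the frame of $C_\Ops(b)$ translates to uniqueness of the Remak $\Ops$-decomposition of $b$.

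The main obstacle is keeping the bookkeeping honest in part (i): one has to verify that the centroid condition really forces the cross terms $b(V_c,V_d)$ to vanish for $c\neq d$ and that the $W$-component idempotents $f_c$ are determined by how $e_c$ acts, so that the resulting $W_c$ genuinely receives the restricted bilinear map. Everything else is formal once the dictionary between centroid idempotents and bilinear submaps is established.
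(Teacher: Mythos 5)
Your proposal is correct and follows essentially the same route as the paper: in (i) you prove membership of the projection pairs in $C_{\Ops}(b)$ by exploiting the vanishing of cross terms $b(V_c,V_d)$, and in the converse direction you use the centroid identity to kill $b(Ve_c,Ve_d)$ for $c\neq d$; parts (ii) and (iii) are handled exactly as in the paper via primitivity of idempotents and the commutativity of $C_{\Ops}(b)$ from \lemref{lem:centroid} together with uniqueness of frames in commutative rings from \lemref{lem:lift-idemp}. The only cosmetic difference is that the paper computes $b(ue,ve')=b(uee',v)=0$ directly from orthogonality of the $e$'s, while you route through the $f$'s, which amounts to the same thing.
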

\begin{proof}
For $(i)$, by \defref{def:ddecomp-bi}, $\{V_b: b\in\mathcal{B}\}$ is a direct
decomposition of $V$ and $\{W_b:b\in\mathcal{B}\}$ is a direct decomposition 
of $W$.  Thus, $\mathcal{E}(\mathcal{B})$ is a set of
pairwise orthogonal idempotents which sum to $1$.

Let $(e,f)\in\mathcal{E}(\mathcal{X})$. As 
$1-e=\sum_{(e',f')\in\mathcal{E}(\mathcal{B})-\{(e,f)\}} e'$ it follows that 
for all $u,v\in V$ we have $b(ue,v(1-e))\in b(Ve,V(1-e))=0$ by 
the assumptions on $\mathcal{B}$.  Also, $b(ue,v e)\in Wf$. Together we have:
\begin{eqnarray*}
	b(ue,v) & = &  b(ue,v e) + b(ue,v(1-e))
		= b(ue,v e),\\
	b(u,ve) & = &  b(u(1-e),v e) + b(ue,ve)
		= b(ue,v e),\textnormal{ and }\\
	b(u,v)f & = & \left(\sum_{(e',f')\in \mathcal{E}(\mathcal{B})}
		b(ue',ve')f'\right)f	= b(ue,v e)f=b(ue,ve).
\end{eqnarray*}
Thus $b(ue,v)=b(u,v)f=b(u,ve)$ which proves
$(e,f)\in C_{\Ops}(b)$; hence, $\mathcal{E}(\mathcal{B})\subseteq C_{\Ops}(b)$.

Now suppose that $\mathcal{E}$ is a set of pairwise orthogonal idempotents 
of $C_{\Ops}(b)$ which sum to $1$.  It follows that
$\{Ve: (e,f)\in\mathcal{E}\}$ is a direct $\Ops$-decomposition of $V$ and
$\{Wf: (e,f)\in\mathcal{E}\}$ is a direct $\Ops$-decomposition of $W$.  Finally,
$b(ue,ve')=b(uee',v)=0$.  Thus, $\{b|_{(e,f)}:V_e\times V_e\to W_f: (e,f)\in\mathcal{E}\}$
is a direct $\Ops$-decomposition of $C(b)$.

Now $(ii)$ follows.  For $(iii)$, we now by \lemref{lem:centroid}(ii) that
$C(b)=C_{\Ops}(b)$ is commutative Artinian.  The rest follows from \lemref{lem:lift-idemp}(iv).
\end{proof}

\begin{thm}\label{thm:Match-class2}
If $G$ is a $p$-group and $\gamma_2(G)\leq \zeta_1(G)$, then there is a unique frame
$\mathcal{E}$ in $C(\Bi(G))$.  Furthermore, if $\gamma_2(G)=\zeta_1(G)$ then every Remak $\Ops$-decomposition $\mathcal{H}$ of
$G$ matches a unique partition of $(\mathcal{K},\mathcal{Q})$ where
\begin{align*}
	\mathcal{K} := \{ W\hat{e}: (e,\hat{e})\in \mathcal{E}\},\\
	\mathcal{Q} := \{ Ve : (e,\hat{e})\in\mathcal{E}\}.
\end{align*}
If $G^p=1$ then every Remak $\Ops$-decomposition of $G$ matches $(\mathcal{K},\mathcal{Q})$.
\end{thm}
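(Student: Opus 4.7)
The plan is to exploit Lemma \ref{lem:idemp}'s correspondence between direct $\Ops$-decompositions of the bilinear map $b := \Bi(G)$ and sets of pairwise orthogonal idempotents in the centroid $C(b)$, combined with the uniqueness of the frame in a commutative ring. Set $b: V \times V \to W$ with $V = G/\zeta_1(G)$ and $W = \gamma_2(G)$; by construction $b$ is nondegenerate and $W = b(V,V)$. Lemma \ref{lem:centroid}(iii) then gives that $C(b) = C_{\Ops}(b)$ is commutative, and Lemma \ref{lem:lift-idemp}(iii) supplies the unique frame $\mathcal{E}$ claimed in the first assertion.

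For the second assertion, assume $\gamma_2(G) = \zeta_1(G)$ (which forces $G$ to have no abelian direct $\Ops$-factor, since such a factor would enlarge $\zeta_1(G)$ beyond $\gamma_2(G)$) and fix a Remak $\Ops$-decomposition $\mathcal{H}$ of $G$. Proposition \ref{prop:p-group-bi} shows that $\Bi(\mathcal{H})$ is a direct $\Ops$-decomposition of $b$, and Lemma \ref{lem:idemp}(i) identifies this with a set of pairwise orthogonal idempotents $\{(e_H,\hat{e}_H) : H \in \mathcal{H}\} \subseteq C(b)$ summing to $1$, where $Ve_H = H\zeta_1(G)/\zeta_1(G)$ and $W\hat{e}_H = H \cap \gamma_2(G) = H \cap \zeta_1(G)$. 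Commutativity together with uniqueness of $\mathcal{E}$ forces each $(e_H, \hat{e}_H) = \sum_{(e,\hat{e}) \in \mathcal{E}_H}(e,\hat{e})$ for a unique subset $\mathcal{E}_H \subseteq \mathcal{E}$, producing a partition $\{\mathcal{E}_H\}$ of $\mathcal{E}$. Transferring through the bijections $(e,\hat{e}) \mapsto W\hat{e}$ and $(e,\hat{e}) \mapsto Ve$ yields partitions $[\mathcal{K}]$ of $\mathcal{K}$ and $[\mathcal{Q}]$ of $\mathcal{Q}$ whose block joins recover $H \cap \zeta_1(G)$ and $H\zeta_1(G)/\zeta_1(G)$ respectively; this is the matching. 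Uniqueness of the partition follows because $\mathcal{E}$ and the idempotents $(e_H,\hat{e}_H)$ are determined intrinsically by $b$, independent of the choice of $\mathcal{H}$.

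For the final claim, when $G^p = 1$, Corollary \ref{coro:exp-p} identifies $G$ with $\Grp(b)$, and the construction in its proof shows that every direct $\Ops$-decomposition of $b$ lifts to a direct $\Ops$-decomposition of $G$. Applied to the Remak $\Ops$-decomposition of $b$ coming from the unique frame $\mathcal{E}$, this produces a Remak $\Ops$-decomposition $\hat{\mathcal{H}}$ of $G$ whose projection idempotents in $C(b)$ are exactly the elements of $\mathcal{E}$. By Theorem \ref{thm:KRS}, any Remak $\Ops$-decomposition $\mathcal{H}$ of $G$ differs from $\hat{\mathcal{H}}$ by a central $(\Ops \cup G)$-automorphism, which fixes each $Ve$ and $W\hat{e}$ setwise; hence the blocks $\mathcal{E}_H$ are all singletons and $\mathcal{H}$ matches $(\mathcal{K},\mathcal{Q})$ exactly.

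The main obstacle is the careful bookkeeping tying the ring-theoretic idempotents to the group-theoretic direct factors: verifying the identities $Ve_H = H\zeta_1(G)/\zeta_1(G)$ and $W\hat{e}_H = H \cap \gamma_2(G)$, and crucially the equality $H \cap \gamma_2(G) = H \cap \zeta_1(G)$, which requires the hypothesis $\gamma_2(G) = \zeta_1(G)$ and is the reason the full matching conclusion cannot be stated under $\gamma_2(G) \leq \zeta_1(G)$ alone.
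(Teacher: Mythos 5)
Your argument is correct and follows the same route as the paper: the paper's proof of this theorem is the one-line citation ``This follows from Proposition \ref{prop:p-group-bi}, Lemma \ref{lem:idemp}, and Corollary \ref{coro:exp-p}''; you have filled in exactly the bookkeeping that citation presupposes.

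Two points that deserve slightly more care than you give them. First, the claim that the partition is independent of $\mathcal{H}$ should not be dismissed as ``determined intrinsically by $b$'' --- the idempotents $(e_H,\hat e_H)$ do depend \emph{a priori} on $\mathcal{H}$; the right justification is Theorem~\ref{thm:KRS} together with \eqref{eq:central}: any two Remak $\Ops$-decompositions are related by a $\varphi\in\Aut_{\Ops\cup G}G$, and such $\varphi$ is the identity on $V=G/\zeta_1(G)$ and on $W=\gamma_2(G)$, so $U_{H\varphi}=U_H$ and $Z_{H\varphi}=Z_H$, forcing the idempotent sets, hence the partition, to coincide. Second, in the $G^p=1$ case you assert without proof that the lifted decomposition $\hat{\mathcal{H}}$ is Remak; this does require a check (for each $c$ in the unique Remak decomposition of $b$, the group $\Grp(c;b)$ is directly indecomposable by Corollary~\ref{coro:exp-p}, since $c$ is indecomposable and $\zeta_1(\Grp(c;b))=\gamma_2(\Grp(c;b))\leq\Phi(\Grp(c;b))$). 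A slightly shorter route avoids constructing $\hat{\mathcal{H}}$ at all: each $H\in\mathcal{H}$ is directly $\Ops$-indecomposable with $H^p=1$, so Corollary~\ref{coro:exp-p} gives $\Bi(H)$ directly indecomposable; and under $\gamma_2(G)=\zeta_1(G)$ one has $\Bi(H;G)\cong\Bi(H)$ (since $\zeta_1(H)=H\cap\zeta_1(G)$ and $\gamma_2(H)=H\cap\gamma_2(G)$ by gradedness). Hence $\Bi(\mathcal{H})$ is already a Remak decomposition of $b$, and uniqueness of the frame forces its idempotents to be $\mathcal{E}$, giving singleton blocks directly.
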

\begin{proof}
This follows from \propref{prop:p-group-bi}, \lemref{lem:idemp}, and \corref{coro:exp-p}.
\end{proof}

\subsection{Proof of \thmref{thm:indecomp-class2}}
This follows from \thmref{thm:Match-class2}.
\hfill $\Box$

\subsection{Centerless groups}
We close this section with a brief consideration of groups with a trivial center.

\begin{lemma}\label{lem:centerless}
Let $G$ be an $\Ops$-group with $\zeta_1(G)=1$ and $N$ a minimal $(\Ops\union G)$-subgroup of $G$.  Then the following hold.
\begin{enumerate}[(i)]
\item $G$ has a unique Remak $\Ops$-decomposition $\mathcal{R}$.
\item There is a unique $R\in\mathcal{R}$ such that $N\leq R$.
\item $\{C_R(N),\langle\mathcal{R}-\{R\}\rangle\}$ is a direct
$(\Ops\union G)$-decomposition of $C_G(N)$.
\item Every Remak $(\Ops\union G)$-decomposition $\mathcal{H}$ of
$C_G(N)$ refines $\{C_R(N),\langle\mathcal{R}-\{R\}\rangle\}$.
\end{enumerate}
\end{lemma}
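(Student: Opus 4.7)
The plan: Part (i) is immediate from Theorem \ref{thm:KRS} and equation \eqref{eq:central}. Any two Remak $\Ops$-decompositions of $G$ are related by some $\varphi\in\Aut_{\Ops\cup G}G$, but that group acts trivially modulo $\zeta_1(G)=1$, hence is trivial, so the Remak decomposition is unique. For (ii), writing $\mathcal{R}=\{R_1,\dots,R_k\}$, each $N\cap R_i$ is an $(\Ops\cup G)$-subgroup of the minimal $N$, so by minimality equals $N$ or $1$. If $N\cap R_i=1$ for every $i$, then $[N,R_i]\le N\cap R_i=1$ (both are $G$-normal), forcing $N\le\bigcap_i C_G(R_i)=\zeta_1(G)=1$, which is absurd. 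So $N\le R$ for a unique $R\in\mathcal{R}$, uniqueness coming from $R_i\cap R_j=1$ for $i\ne j$. For (iii), set $S=\langle\mathcal{R}-\{R\}\rangle$, so $G=R\times S$; since $S$ centralizes $R\supseteq N$, an easy computation gives $[rs,n]=[r,n]$ for every $n\in N$, whence $C_G(N)=C_R(N)\times S$, and both summands are readily $(\Ops\cup G)$-subgroups (of $C_G(N)$ and of $G$).

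The heart of the argument is (iv). Since the center distributes over direct products, $\zeta_1(G)=1$ forces $\zeta_1(R_j)=1$ for every $j$ and so $\zeta_1(S)=1$; combined with (iii) this yields $\zeta_1(C_G(N))=\zeta_1(C_R(N))\le C_R(N)\le R$. Fix a Remak $(\Ops\cup G)$-decomposition $\mathcal{K}$ of $C_R(N)$; then $\mathcal{K}\sqcup(\mathcal{R}-\{R\})$ is a Remak $(\Ops\cup G)$-decomposition of $C_G(N)$. Applying Theorem \ref{thm:KRS} to $C_G(N)$, viewed as an $(\Ops\cup G\cup C_G(N))$-group, produces a $\varphi\in\Aut_{\Ops\cup G\cup C_G(N)}C_G(N)$ with $\mathcal{H}\varphi=\mathcal{K}\sqcup(\mathcal{R}-\{R\})$, and \eqref{eq:central} tells us $\varphi$ is the identity modulo $\zeta_1(C_R(N))$.

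The main obstacle, and the place where the hypothesis $\zeta_1(G)=1$ is doing essential work, is to show $\varphi$ fixes $S$ pointwise. For $s\in S$ write $s\varphi=sa$ with $a\in\zeta_1(C_R(N))\le R$. Since $s^r=s$ for every $r\in R$ and $\varphi$ commutes with $G$-conjugation, $sa=(s\varphi)^r=(s^r)\varphi=sa^r$, so $a$ is $R$-centralized; combined with $a\in R$ this forces $a\in\zeta_1(R)=1$. The same calculation shows $\varphi(C_R(N))\le C_R(N)$, so $\varphi$ preserves the decomposition $C_G(N)=C_R(N)\times S$ with $\varphi|_S=\mathrm{id}_S$. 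Consequently each $H\in\mathcal{H}$ equals $\varphi^{-1}(H\varphi)$ and lies in $C_R(N)$ or in $S$ according as $H\varphi\in\mathcal{K}$ or $H\varphi\in\mathcal{R}-\{R\}$. The generating condition of the refinement definition in Section \ref{sec:decomps} then follows automatically from $C_R(N)\cap S=1$ and $\langle\mathcal{H}\rangle=C_G(N)$.
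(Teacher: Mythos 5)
Your parts (i)--(iii) match the paper's proof essentially verbatim: (i) via the Krull--Schmidt exchange and $\zeta_1(G)=1$, (ii) via the dichotomy $N\cap R_i\in\{1,N\}$ and the contradiction $N\leq\zeta_1(G)=1$, and (iii) via the observation that $S:=\langle\mathcal{R}-\{R\}\rangle$ centralizes $N\leq R$.

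Part (iv) is where you diverge from the paper, and your route is genuinely different but correct. The paper argues \emph{outside} $C_G(N)$: starting from the canonical Remak decomposition $\mathcal{S}$ of $C_G(N)$ refining $\{C_R(N),S\}$, it uses Theorem~\ref{thm:KRS} to exchange $\mathcal{R}-\{R\}\subseteq\mathcal{S}$ into an arbitrary Remak decomposition $\mathcal{K}$ of $C_G(N)$, obtaining a $\mathcal{J}\subseteq\mathcal{K}$; it then checks that $\{R\}\sqcup\mathcal{J}$ is a Remak $\Ops$-decomposition of the \emph{whole group} $G$ and invokes the uniqueness from (i) to conclude $\mathcal{J}=\mathcal{R}-\{R\}$. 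You instead work entirely \emph{inside} $C_G(N)$: you apply Theorem~\ref{thm:KRS} with $\mathcal{X}=\mathcal{H}$ to produce the conjugating central automorphism $\varphi$, note that $\zeta_1(C_G(N))=\zeta_1(C_R(N))\leq R$, and then do the key computation $(s\varphi)^r=(s^r)\varphi$ together with $\zeta_1(R)=1$ to show $\varphi$ fixes $S$ pointwise and stabilizes $C_R(N)$, whence each $H\in\mathcal{H}$ lies in one piece or the other. Both arguments ultimately rest on $\zeta_1(G)=1$ propagating to $\zeta_1(R)=1$ and $\zeta_1(S)=1$; the paper's version is a bit shorter because it delegates all the work to the uniqueness result in (i) rather than touching the central automorphism, while yours is more self-contained and makes explicit exactly why the displacement $a$ of $\varphi$ on $S$ must vanish. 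One small remark: you do not need "the same calculation" to see $\varphi(C_R(N))\leq C_R(N)$; it is immediate from $x\varphi\in x\,\zeta_1(C_G(N))\subseteq C_R(N)$ for $x\in C_R(N)$. Also note that, combined with your parts (i) (applied to the $(\Ops\cup G)$-group $S$, which has trivial center), your conclusion actually yields the slightly stronger fact the paper establishes and later uses, namely that $\mathcal{R}-\{R\}\subseteq\mathcal{H}$.
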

\begin{proof}
Given Remak $\Ops$-decompositions $\mathcal{R}$ and $\mathcal{S}$ of $G$, by
\lemref{lem:KRS} and the assumption that $\zeta_1(G)=1$, it follows that 
$\mathcal{R}=\mathcal{R}\zeta_1(G)=\mathcal{S}\zeta_1(G)=\mathcal{S}$.  This proves (i).

For (ii), if $N$ is a minimal $(\Ops\union G)$-subgroup of $G$ then 
$[R,N]\leq R\intersect N\in \{1,N\}$, for all $R\in\mathcal{R}$.  If $[R,N]=1$
for all $R\in\mathcal{R}$ then $N\leq \zeta_1(G)=1$ which contradicts the assumption
that $N$ is minimal.  Thus, for some $R\in\mathcal{R}$, $N\leq R$.  The
uniqueness follows as $R\intersect\langle\mathcal{R}-\{R\}\rangle=1$.

By (ii), $[N,\langle R-\{R\}\rangle]=[R,\langle \mathcal{R}-\{R\}\rangle]=1$ which shows 
$\langle \mathcal{R}-\{R\}\rangle\leq C_G(N)$.  Hence,
$C_G(N)=C_R(N)\times \langle\mathcal{R}-\{R\}\rangle$.  
This proves (iii).

Finally we prove (iv).  Let $\mathcal{K}$ be a Remak $(\Ops\cup G)$-decomposition of $C_G(N)$.  
Let $\mathcal{S}$ be a Remak $(\Ops\cup G)$-decomposition of $C_G(N)$ which refines the direct 
$(\Ops\cup G)$-decomposition $C_G(N)=C_R(N)\times \langle\mathcal{R}-\{R\}\rangle$
given by (iii).  Note that $\mathcal{R}-\{R\}\subseteq \mathcal{S}$ as
members of $\mathcal{R}$ cannot be refined further.  By
\thmref{thm:KRS}, there is a $\mathcal{J}\subseteq\mathcal{K}$ such that we may exchange 
$\mathcal{R}-\{R\}\subseteq \mathcal{S}$ with $\mathcal{J}$; hence, $\{C_R(N)\}\sqcup \mathcal{J}$ 
is a direct $(\Ops\union G)$-decomposition of $C_G(N)$.  Now $R\intersect \langle\mathcal{J}\rangle
\leq C_R(N)\intersect \langle\mathcal{J}\rangle=1$.  Also
\begin{equation} 
	\langle R,\mathcal{J}\rangle=\langle R, C_R(N),\mathcal{J}\rangle
		=\langle R,\mathcal{R}-\{R\}\rangle=G.
\end{equation}
As every member of $\mathcal{J}$ is an $(\Ops\union G)$-subgroup of $G$, it follows 
that the are normal in $G$ and so $\{R\}\sqcup\mathcal{J}$ is a direct $\Ops$-decomposition 
of $G$.  As the members of $\mathcal{J}$ are $\Ops$-indecomposable it follows that 
$\{R\}\sqcup\mathcal{J}$ is a Remak $\Ops$-decomposition of $G$.  However, $G$ has a unique 
Remak $\Ops$-decomposition so $\mathcal{J}=\mathcal{R}-\{R\}$.  As $\mathcal{J}$ was a subset 
of an arbitrary Remak $(\Ops\union G)$-decomposition of $C_G(N)$ it follows that every Remak 
$(\Ops\union G)$-decomposition of $C_G(N)$ contains $\mathcal{R}-\{R\}$. 
\end{proof}

\begin{prop}\label{prop:centerless-Extend}
For groups $G$ with $\zeta_1(G)=1$, the set $\mathcal{M}$ of minimal 
$(\Ops\cup G)$-subgroups is a direct $(\Ops\cup G)$-decomposition of the socle of $G$
and furthermore there is a unique partition of $\mathcal{M}$ which extends to the
Remak $\Ops$-decomposition of $G$.
\end{prop}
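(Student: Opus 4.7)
The plan is to reduce everything to Lemma \ref{lem:centerless} and the semisimple structure of the socle. I would begin by observing that any two distinct $M,N\in\mathcal{M}$ have $M\cap N$ an $(\Ops\cup G)$-subgroup properly contained in the minimal $M$, hence $M\cap N=1$, and consequently $[M,N]\leq M\cap N=1$. So the elements of $\mathcal{M}$ pairwise centralize and pairwise intersect trivially, giving $\langle\mathcal{M}\rangle=\soc(G)$ as a central product.

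To upgrade this central product to a direct decomposition and simultaneously locate the partition claimed in the second assertion, I would invoke Lemma \ref{lem:centerless}(i) to fix the unique Remak $\Ops$-decomposition $\mathcal{R}$ of $G$, and Lemma \ref{lem:centerless}(ii) to assign to each $N\in\mathcal{M}$ its unique host $R_N\in\mathcal{R}$. Setting $\mathcal{M}_R=\{N\in\mathcal{M}:N\leq R\}$ gives a canonical partition of $\mathcal{M}$ indexed by $\mathcal{R}$. Directness of $\mathcal{R}$ yields $\langle\mathcal{M}_R\rangle\cap\langle\mathcal{M}_{R'}\rangle=1$ for $R\neq R'$. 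Within a single Remak factor $R$, which inherits $\zeta_1(R)=1$ because $\zeta_1$ is $\Ops$-graded (Proposition \ref{prop:V-inter-1}), I would iterate Lemma \ref{lem:centerless}(iii): for each $N\in\mathcal{M}_R$, the lemma exhibits $\{C_R(N),\langle\mathcal{R}-\{R\}\rangle\}$ as a direct $(\Ops\cup G)$-decomposition of $C_G(N)$, which after induction on $|\mathcal{M}_R|$ peels the minimal normals off one at a time as direct $(\Ops\cup G)$-factors of $\soc(R)$.

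For the uniqueness of the partition extending to a Remak $\Ops$-decomposition of $G$: since $\mathcal{R}$ is unique by Lemma \ref{lem:centerless}(i) and $R_N$ is forced by Lemma \ref{lem:centerless}(ii), any partition of $\mathcal{M}$ that refines a Remak $\Ops$-decomposition of $G$ must coincide with $\{\mathcal{M}_R:R\in\mathcal{R}\}$. This is essentially a rigidity statement that falls out of the unique Remak decomposition available in the centerless setting.

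The main obstacle is the internal directness within a single Remak factor: isomorphic abelian minimal normals could in principle overlap via ``diagonal'' subgroups inside an isotypic component, and one must show that the centerless hypothesis precludes this pathology. Concretely, any overlap $M\leq\langle\mathcal{M}_R-\{M\}\rangle$ would force $M$ abelian and fixed by its own centralizer in $R$, and the iterative application of Lemma \ref{lem:centerless}(iii) (combined with $\zeta_1(R)=1$ pushed down the induction) is what rules this out.
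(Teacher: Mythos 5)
The paper supplies no proof for this proposition---it appears directly after Lemma \ref{lem:centerless} with no argument---so there is no ``paper's own proof'' to compare against; I will evaluate your attempt on its own terms. Your opening observations are correct: distinct minimal $(\Ops\cup G)$-subgroups intersect trivially and hence pairwise centralize, and Lemma \ref{lem:centerless}(i)--(ii) canonically assigns each $N\in\mathcal{M}$ to a unique member of the unique Remak $\Ops$-decomposition $\mathcal{R}$, giving a canonical partition $\{\mathcal{M}_R:R\in\mathcal{R}\}$. But the gap is exactly where your final paragraph places it---internal directness inside a single Remak factor---and the claim that $\zeta_1(G)=1$ together with iterating Lemma \ref{lem:centerless}(iii) ``rules out'' diagonal overlaps is unjustified and in fact false.

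Consider $G=(\mathbb{Z}_3\times\mathbb{Z}_3)\rtimes\langle t\rangle$ with $t$ of order $2$ inverting every element of $\mathbb{Z}_3\times\mathbb{Z}_3$, and take $\Ops=\emptyset$. Then $\zeta_1(G)=1$, $G$ is directly indecomposable, and $\soc(G)=\mathbb{Z}_3\times\mathbb{Z}_3$. Since inversion stabilizes every subgroup, each of the four order-$3$ subgroups of the socle is normal in $G$, so $|\mathcal{M}|=4$. This $\mathcal{M}$ is not a direct $(\Ops\cup G)$-decomposition of $\soc(G)$; it is not even an $(\Ops\cup G)$-decomposition, because any two of its members already generate the socle (violating the ``no proper subset generates'' clause of the paper's definition), and $M\cap\langle\mathcal{M}-\{M\}\rangle=1$ fails as well. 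Lemma \ref{lem:centerless}(iii) decomposes $C_G(N)$ against the \emph{other} Remak factors; it provides no separation among distinct abelian minimal normals living inside a single isotypic component of one Remak factor, which is precisely the configuration in this example. So the first claim of the proposition is false as literally stated---most likely the intended object is the set of $(\Ops\cup G)$-homogeneous components of the socle, or a choice of a direct-complement-closed subfamily of $\mathcal{M}$---and your argument, which you candidly flagged as incomplete at exactly this point, cannot be repaired to prove it. The uniqueness-of-partition claim, which you base soundly on Lemma \ref{lem:centerless}(i)--(ii) and (iv), only makes sense once the first claim is restated for a set that actually is a direct decomposition.
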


The following consequence shows how the global Remak decomposition of a group with
trivial solvable radical is determined precisely from a unique partition of the Remak
decomposition of it socle.   
\begin{coro}
If $G$ has trivial solvable radical and $\mathcal{R}$ is its Remak decomposition then
$\mathcal{R}=\{C_G(C_G(\soc(R))): R\in\mathcal{R}\}.$
\end{coro}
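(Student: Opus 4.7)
The plan is to verify, for each $R\in\mathcal{R}$, the two equalities $C_G(\soc(R))=\langle\mathcal{R}-\{R\}\rangle$ and $C_G(\langle\mathcal{R}-\{R\}\rangle)=R$; combining them yields the claim.

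First, since the solvable radical of $G$ is trivial, so is $\zeta_1(G)$, and \lemref{lem:centerless}(i) therefore guarantees that the Remak decomposition $\mathcal{R}$ is well-defined and unique. Each direct factor $R\in\mathcal{R}$ is itself a centerless group with trivial solvable radical (these properties pass to direct factors of $G$), so $\soc(R)$ has no abelian minimal normal subgroups and is a direct product of non-abelian simple groups. Consequently $\zeta_1(\soc(R))=1$, and the standard argument applies: if $C_R(\soc(R))$ were nontrivial it would contain a minimal normal subgroup $N$ of $R$; but $N\leq \soc(R)$ would then centralize itself, forcing $N\leq \zeta_1(\soc(R))=1$, a contradiction. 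Hence $C_R(\soc(R))=1$.

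Next I would use the direct decomposition to compute the global centralizer. Every $g\in G$ factors uniquely as $g=rs$ with $r\in R$, $s\in\langle\mathcal{R}-\{R\}\rangle$, and $s$ automatically centralizes $\soc(R)\leq R$. So $g\in C_G(\soc(R))$ iff $r\in C_R(\soc(R))$, which by the previous paragraph forces $r=1$. Thus $C_G(\soc(R))=\langle\mathcal{R}-\{R\}\rangle$. Applying the same type of splitting argument a second time, an element $g=rs$ centralizes $\langle\mathcal{R}-\{R\}\rangle$ iff $s$ does; but then $s\in \zeta_1(\langle\mathcal{R}-\{R\}\rangle)=1$ (the complement also has trivial center, since the center of a direct product is the product of the centers). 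Hence $C_G(\langle\mathcal{R}-\{R\}\rangle)=R$.

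Stringing the two identities together gives $C_G(C_G(\soc(R)))=R$ for every $R\in\mathcal{R}$, which is precisely the required description of the Remak decomposition. The only subtle step is verifying $C_R(\soc(R))=1$, i.e.\ that in a group of trivial solvable radical the socle is self-centralizing; the rest is just bookkeeping with the direct product structure, exploiting that distinct direct factors of $G$ commute and that both $R$ and its complement inherit the trivial-center property from $G$.
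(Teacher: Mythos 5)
Your proof is correct, and it is the natural direct argument; the paper gives no explicit proof of this corollary, but the route you take---establishing $C_R(\soc(R))=1$ for each directly indecomposable factor $R$, then using the splitting $g=rs$ across $R\times\langle\mathcal{R}-\{R\}\rangle$ to get $C_G(\soc(R))=\langle\mathcal{R}-\{R\}\rangle$ and $C_G(\langle\mathcal{R}-\{R\}\rangle)=R$---is exactly what the surrounding material (\lemref{lem:centerless} and \propref{prop:centerless-Extend}) is set up to yield. The key observation that $\soc(R)$ is self-centralizing in $R$ follows as you say from the triviality of the solvable radical, since $C_R(\soc(R))\normaleq R$ would otherwise contain a minimal normal subgroup $N$ of $R$, and $N\leq C_{\soc(R)}(\soc(R))=\zeta_1(\soc(R))=1$ (alternatively: $N$ would be abelian, contradicting triviality of the solvable radical). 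The remaining steps, that trivial center and trivial solvable radical pass to direct factors and that the center of a direct product is the product of the centers, are standard and correctly invoked.
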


\section{The Remak Decomposition Algorithm}\label{sec:Remak}
In this section we prove \thmref{thm:FindRemak}.    The approach is to break up a given group
into sections for which a Remak $(\Ops\cup G)$-decomposition can be computed directly.
The base cases include $\Ops$-modules (\corref{coro:FindRemak-abelian}), $p$-groups of class $2$ (which
follows from \thmref{thm:Match-class2}), and groups with a trivial center.  We use \thmref{thm:Lift-Extend} as justification that we can interlace these base cases
to sequentially lift direct decomposition via the algorithm {\tt Merge} of \thmref{thm:merge}.

\subsection{Finding Remak $\Ops$-decompositions for nilpotent groups of class $2$}
\label{sec:FindRemak-class2}
In this section we prove \thmref{thm:FindRemak} for the case of nilpotent groups $G$ 
of class $2$.  The algorithm depends on \thmref{thm:Match-class2} and \thmref{thm:merge}.

To specify a $\mathbb{Z}$-bilinear map $b:V\times V\to W$ for computation we need only provide
the \emph{structure constants} with respect to fixed bases of $V$ and $W$.
Specifically let $\mathcal{X}$ be a basis of $V$ and $\mathcal{Y}$ a basis of $W$.  Define
$B_{xy}^{(z)}\in\mathbb{Z}$ so that the following equation is satisfied:
\begin{align*}
	b\left(\sum_{x\in\mathcal{X}} \alpha_x x,\sum_{y\in\mathcal{X}} \beta_y y \right)
		& = \sum_{z\in\mathcal{Z}} \left(\sum_{x,y\in\mathcal{X}} \alpha_x B_{xy}^{(z)} \beta_y\right)z
			& (\forall x\in\mathcal{X},\forall\alpha_x,\beta_x\in\mathbb{Z}).
\end{align*}

\begin{lem}\label{lem:Remak-bilinear}
There is a deterministic polynomial-time algorithm, which given $\Ops$-modules
$V$ and $W$ and a nondegenerate $\Ops$-bilinear map $b:V\times V\to W$
with $W=b(V,V)$, returns a Remak $\Ops$-decomposition of $b$.
\end{lem}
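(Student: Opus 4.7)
The plan is to reduce the problem to a computation in the commutative ring $C(b)$ and then invoke the ring-theoretic tools already assembled. By \lemref{lem:idemp}(iii), the nondegeneracy hypothesis and $W=b(V,V)$ ensure that $C_{\Ops}(b)=C(b)$ is commutative, so by \lemref{lem:lift-idemp}(iii) it has a unique frame, and by parts (ii) and (iii) of \lemref{lem:idemp} this frame corresponds bijectively to the unique Remak $\Ops$-decomposition of $b$.  So the only work is (a)~to build $C(b)$ as an explicit associative ring given by structure constants, (b)~to feed it to the {\sc Frame} algorithm of \thmref{thm:Frame}, and (c)~to translate the resulting frame back into submaps of $b$.

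First I would use {\sc Primary-Decomposition} and the usual linear-algebra setup from the proof of \corref{coro:FindRemak-abelian} to compute $\mathbb{Z}_{p^e}$-bases $\XX\subseteq V$ and $\mathcal{Y}\subseteq W$ together with structure constants $B_{xy}^{(z)}$ for $b$, and bases for $\End_{\Ops} V$ and $\End_{\Ops} W$ (using {\sc Solve} to impose $\Ops$-commutation, exactly as in \corref{coro:FindRemak-abelian}).  The centroid condition
\begin{equation*}
b(uf,v)=b(u,v)g=b(u,vf)\qquad(\forall u,v\in V)
\end{equation*}
is linear in the coefficients of $f$ and $g$ relative to those bases, so a further application of {\sc Solve} returns a $\mathbb{Z}_{p^e}$-basis $\mathcal{C}$ of $C(b)\leq \End_{\Ops} V\oplus \End_{\Ops}W$.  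Composition of endomorphisms gives the structure constants of $C(b)$ relative to $\mathcal{C}$, producing the encoded associative unital ring required by {\sc Frame}.

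Next, I would apply \thmref{thm:Frame} to $C(b)$ to obtain a frame $\mathcal{E}\subseteq C(b)$.  By \lemref{lem:idemp}(i)-(ii) (using that $\mathcal{E}$ consists of pairwise orthogonal primitive idempotents summing to $1$), the collection
\begin{equation*}
\mathcal{B}:=\bigl\{\, b_{(e,\hat e)}\colon Ve\times Ve\to W\hat e \;\bigm|\; (e,\hat e)\in\mathcal{E}\,\bigr\}
\end{equation*}
is a Remak $\Ops$-decomposition of $b$, and each submap is encoded by taking $\XX e$ and $\mathcal{Y}\hat e$ as generators of $Ve$ and $W\hat e$ and restricting the structure constants $B_{xy}^{(z)}$ accordingly.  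Correctness is immediate from \lemref{lem:idemp}; timing is polynomial because each of {\sc Primary-Decomposition}, {\sc Solve}, {\sc Frame}, and the basis manipulations is polynomial, and the primes involved satisfy $p\leq n$ since $V,W\in\mathbb{G}_n$.

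The only mildly delicate point is the first one: we must ensure that the centroid is presented as a $\mathbb{Z}_{p^e}$-algebra with structure constants over its own basis (rather than, say, as a subring of a larger matrix ring), since {\sc Frame} is stated in that format.  Once this packaging is done, everything else is routine bookkeeping driven by the equivalence between frames and Remak decompositions established in \lemref{lem:idemp}.
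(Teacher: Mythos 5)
Your proof follows the same route as the paper: compute $C_{\Ops}(b)$ by solving the linear system defining the centroid inside $\End_{\Ops}V\oplus\End_{\Ops}W$, run {\sc Frame} on it, and read off the Remak $\Ops$-decomposition from the resulting frame via \lemref{lem:idemp}; you simply spell out the bookkeeping (bases, structure constants) that the paper leaves implicit. One small slip: the claim that nondegeneracy and $W=b(V,V)$ force $C_{\Ops}(b)=C(b)$ to be commutative is \lemref{lem:centroid}(iii), not \lemref{lem:idemp}(iii) — the latter is the uniqueness of the Remak $\Ops$-decomposition, which you in fact use a line later.
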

\begin{proof}
\emph{Algorithm}.
Solve a system of linear equations in the (additive) abelian group 
$\End_{\Ops} V\times \End_{\Ops} W$ to find generators for $C_{\Ops}(b)$.
Use {\sc Frame} to find a frame $\mathcal{E}$ 
of $C_{\Ops}(b)$.  Return
$\{b|_{(e,f)}:Ve\times Ve\to Wf : (e,f)\in\mathcal{E}\}$.

\emph{Correctness}.
This is supported by \lemref{lem:idemp} and  \thmref{thm:Frame}.

\emph{Timing}.
This follows from the timing of {\sc Solve} and {\sc Frame}.
\end{proof}

\begin{thm}\label{thm:FindRemak-class2}
There is a polynomial-time algorithm which, given a nilpotent $\Ops$-group of class $2$, returns a Remak $\Ops$-decomposition 
of the group.
\end{thm}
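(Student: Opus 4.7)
The plan is to reduce to the case of a $p$-group via Sylow decomposition, and then invoke \textalgo{Merge} (\thmref{thm:merge}) with the up $\Ops$-grading pair $(\mathfrak{A},G\mapsto \zeta_1(G))$ for the class $\mathfrak{A}$ of abelian groups. Since a finite nilpotent group is the internal direct product of its Sylow subgroups and each Sylow is characteristic, I would first compute $|G|$ via {\sc Order}, and for each prime divisor $p$ (necessarily $p\leq n$ when $G\in\mathbb{G}_n$) extract the $p$-Sylow as $G_p=\langle g^m:g\in\XX\rangle$, where $|G|=p^em$ with $\gcd(p,m)=1$. Each $G_p$ is a nilpotent $\Ops$-group of class at most $2$, and the disjoint union of Remak $\Ops$-decompositions of the $G_p$ is a Remak $\Ops$-decomposition of $G$, so it suffices to treat a single $p$-group.

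For a class-$2$ $p$-group $G$, I would compute $Z:=\zeta_1(G)$ via {\sc Normal-Centralizer} and $W:=\gamma_2(G)=\langle [x,y]:x,y\in\XX\rangle$ (the normal closure is unnecessary in class $2$), then apply \corref{coro:FindRemak-abelian} to obtain a Remak $(\Ops\cup G)$-decomposition $\mathcal{A}$ of the abelian $(\Ops\cup G)$-group $Z$. Set $V:=G/Z$ and form the nondegenerate $\mathbb{Z}_{p^e}[\Ops]$-bilinear map $b:=\Bi(G):V\times V\to W$ of \eqref{eq:Bi}, with $W=b(V,V)$; by \lemref{lem:centroid}(iii), $C(b)=C_\Ops(b)$ is commutative, so \lemref{lem:Remak-bilinear} returns its unique frame $\mathcal{E}$. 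For each $(e,\hat e)\in\mathcal{E}$ let $H_e$ be the preimage in $G$ of $Ve$ under $G\to V$, set $\mathcal{H}=\{H_e:(e,\hat e)\in\mathcal{E}\}$, and return the output of \textalgo{Merge} applied to $(\mathcal{A},\mathcal{H})$.

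Correctness requires verifying the hypotheses of \thmref{thm:merge} and then its Remak conclusion. Property (a) is immediate from the construction of $\mathcal{A}$, and property (c), that $\mathcal{H}/Z=\{Ve\}$ is a direct $\Ops$-decomposition of $V$, follows from \lemref{lem:idemp}(i). For (b), each $H_e$ satisfies $\zeta_1(H_e)=Z$ because the submap $b_e:Ve\times Ve\to W\hat e$ is again nondegenerate: for $u\in Ve$ with $b_e(u,Ve)=0$, the identity $b(u,V)=b(u,Ve)=0$ (using $e\in C(b)$) forces $u=0$. For the Remak conclusion I need some Remak $\Ops$-decomposition $\mathcal{R}$ of $G$ such that $\mathcal{H}$ refines $\mathcal{R}Z$; \thmref{thm:indecomp-class2} supplies this, since applied inductively to any Remak $\mathcal{R}=\{R_i\}$ of $G$ it gives $C(\Bi(G))\cong\bigoplus_i C(\Bi(R_i))$, each summand is a finite commutative ring with a unique frame by \lemref{lem:lift-idemp}(iii), and the global frame $\mathcal{E}$ is the disjoint union of these local frames, so $\{Ve\}$ refines $\{R_iZ/Z\}=\mathcal{R}Z/Z$.

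The hard part is precisely this last refinement step: \thmref{thm:Match-class2} as stated gives a unique matching only when $\gamma_2(G)=\zeta_1(G)$ (or $G^p=1$), yet for a general class-$2$ $p$-group abelian direct factors may make $\gamma_2(G)$ strictly smaller than $\zeta_1(G)$. The additivity statement of \thmref{thm:indecomp-class2} is the decisive tool here: abelian direct factors contribute trivially to $\Bi(G)$ and to its centroid, so the frame $\mathcal{E}$ still refines the non-abelian part of every Remak decomposition of $G$, and \textalgo{Merge} re-introduces the abelian pieces via $\mathcal{A}$. The timing follows because every subroutine invoked --- {\sc Order}, {\sc Normal-Centralizer}, \corref{coro:FindRemak-abelian}, \lemref{lem:Remak-bilinear}, and \textalgo{Merge} --- is polynomial-time.
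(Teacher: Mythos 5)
Your proposal is correct and follows essentially the same route as the paper's proof: Sylow reduction, construction of $\Bi(P)$, finding the unique frame of the centroid via \lemref{lem:Remak-bilinear}, pulling back to a set $\mathcal{H}$ of preimages, and running \textalgo{Merge} against a Remak decomposition of $\zeta_1(P)$ obtained from \corref{coro:FindRemak-abelian}. Where you add value is in explicitly verifying hypotheses of \thmref{thm:merge} that the paper leaves implicit: the check that $\zeta_1(H_e)=\zeta_1(G)$ via nondegeneracy of $b_e$ (so \lemref{lem:centralize} is unneeded here), and the observation that the bald citation of \thmref{thm:Match-class2} is not quite enough when $\gamma_2(G)<\zeta_1(G)$ — the actual mechanism is \propref{prop:p-group-bi} combined with the uniqueness of the commutative ring's frame (equivalently the additivity of $C(\cdot)$ from \thmref{thm:indecomp-class2}), which shows $\mathcal{H}$ refines $\mathcal{R}\zeta_1(G)$ for every Remak $\mathcal{R}$, and then \textalgo{Merge} supplies the abelian direct factors. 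That is precisely how the paper's argument is meant to be read, so your proof is a more detailed exposition of the same argument rather than a different one.
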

\begin{proof}
Let $G\in\mathbb{G}_n^{\Ops}$ with $\gamma_2(G)\leq \zeta_1(G)$.

\emph{Algorithm}.
Use {\sc Order} to compute $|G|$.  For each prime $p$ dividing $|G|$, write
$|G|=p^e m$ where $(p,m)=1$ and set $P:=G^{m}$.  Set $b_p:=\Bi(P)$.  
Use the algorithm of \lemref{lem:Remak-bilinear} to
find a direct $\Ops$-decomposition $\mathcal{B}$ of $b$.
Define each of the following:
\begin{align*}
	\mathcal{X}(\mathcal{B}) & = \{ X_c : c:X_c\times X_c\to Z_c\in \mathcal{B}\}\\
	\mathcal{H} & = \{H\leq P: \zeta_1(P)\leq H, H/\zeta_1(P)\in \mathcal{X}(\mathcal{B})\}.
\end{align*}
Use \corref{coro:FindRemak-abelian} to build a Remak $\Ops$-decomposition
$\mathcal{Z}$ of $\zeta_1(P)$.  Set $\mathcal{R}_p:={\tt Merge}(\mathcal{Z},\mathcal{H})$.
Return $\bigcup_{p\mid |G|} \mathcal{R}_p$.

\emph{Correctness}.
By \lemref{lem:Remak-bilinear} the set $\mathcal{B}$ is the unique Remak
$\Ops$-decomposition of $\Bi(G)$.  By \thmref{thm:Match-class2} and
\thmref{thm:merge} the return a Remak $\Ops$-decomposition of $G$.

\emph{Timing}.
The algorithm uses a constant number of polynomial time subroutines.
\end{proof}

We have need of one final observation which allows us to modify certain decompositions into ones that match the hypothesis \thmref{thm:merge}(b) when the up grading pair is $(\mathfrak{N}_c,G\mapsto \zeta_c(G))$.
\begin{lemma}\label{lem:centralize}
There is a polynomial-time algorithm which, given an $\Ops$-decomposition $\mathcal{H}=
\mathcal{H}\zeta_c(G)$
of a group $G$, returns the finest $\Ops$-decomposition $\mathcal{K}$ refined by $\mathcal{H}$
and such that for all $K\in\mathcal{K}$, $\zeta_c(K)=\zeta_c(G)$.  (The proof also
shows there is a unique such $\mathcal{K}$.)
\end{lemma}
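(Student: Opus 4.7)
The plan is to construct $\mathcal{K}$ by a greedy merging procedure on the blocks of a partition of $\mathcal{H}$ (where each block $\mathcal{C}\subseteq\mathcal{H}$ corresponds to $\langle\mathcal{C}\rangle\in\mathcal{K}$). First compute $\zeta_c(G)$ by $c$ nested applications of \textalgo{Normal-Centralizer}. Initialize $\mathcal{K}:=\mathcal{H}$ and iterate: for each block $K$, compute $\zeta_c(K)$; if every block satisfies $\zeta_c(K)=\zeta_c(G)$, return $\mathcal{K}$. Otherwise pick $K$ with $\zeta_c(K)>\zeta_c(G)$ and an element $z\in\zeta_c(K)-\zeta_c(G)$; because $z\notin\zeta_c(G)$ there exist $g_1,\ldots,g_c\in G$ with $[z,g_1,\ldots,g_c]\neq 1$. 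Using \textalgo{Member}, express each $g_i$ as a straight-line program in the generators of the current blocks of $\mathcal{K}$; at least one expression must involve a generator in some block $K'\neq K$, since otherwise all $g_i\in K$ together with $z\in\zeta_c(K)$ would force the iterated commutator to vanish. Merge $K$ and $K'$ in $\mathcal{K}$, taking the $(\Ops\cup G)$-subgroup generated by their combined generators via \textalgo{Member} and normal closure.

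For correctness and uniqueness, the key is to show every merge is forced by any valid refinement. Let $\mathcal{K}^*$ be a valid $\Ops$-decomposition refined by $\mathcal{H}$, and let $L\in\mathcal{K}^*$ be the block containing $K$ at the moment of a merge. Then $z\in L$ and $z\notin\zeta_c(L)=\zeta_c(G)$, so witnesses of non-$c$-centrality of $z$ must already exist inside $L$; analyzing the iterated commutator $[z,g_1,\ldots,g_c]$ by decomposing each $g_i$ into its components relative to the blocks of $\mathcal{K}^*$ (each of which contains $\zeta_c(G)$, so the decomposition makes sense modulo $\zeta_c(G)$), the nontrivial contribution must arise from components inside $L$, which forces the block $K'$ identified by the algorithm to lie inside $L$. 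Thus the merge is consistent with $\mathcal{K}^*$, and by induction on the number of merges the algorithm's output refines $\mathcal{K}^*$. Because $\mathcal{K}^*$ was arbitrary among valid refinements, the output is the unique finest valid partition.

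For timing, the main loop runs at most $|\mathcal{H}|-1$ times (each iteration merges two blocks into one). Within each iteration, $\zeta_c(K)$ costs $c$ applications of \textalgo{Normal-Centralizer}; locating $z\in\zeta_c(K)-\zeta_c(G)$ is a basis computation in the abelian group $\zeta_c(K)/\zeta_c(G)$, handled as in Corollary~\ref{coro:FindRemak-abelian}; factoring the $g_i$ uses \textalgo{Member}; forming the merged subgroup is routine. All subroutines are polynomial-time. The main obstacle is the forcing step in the uniqueness argument: the decomposition of $g_i$ into $\mathcal{K}^*$-components is not literal (since $\mathcal{K}^*$ need not be a direct decomposition of $G$, only an irredundant generating family), so the commutator analysis must track iterated commutators carefully modulo $\zeta_c(G)$, exploiting that every $\mathcal{K}^*$-block contains $\zeta_c(G)$ and that $\zeta_c$ descends predictably from $G$ to subgroups containing $\zeta_c(G)$. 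That delicate analysis, rather than the algorithm itself, is the technical heart of the proof.
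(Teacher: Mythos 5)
Your algorithm is not the one in the paper, and the step you single out as ``the technical heart'' is not a mere technicality: it is a genuine gap, and I do not think it can be closed for the merge criterion you chose. Your rule picks a block $K'$ by looking at which blocks appear in \emph{some} straight-line program for the witnesses $g_1,\dots,g_c$. That choice is not canonical: since $G$ is merely generated (not directly decomposed) by the blocks, each $g_i$ has many SLP expressions, and different expressions can involve different blocks. Worse, the witnesses themselves are found anywhere in $G$, not inside the hypothetical block $L\in\mathcal{K}^*$ containing $K$. Nothing forces $g_i$, or the block $K'$ extracted from its SLP, to lie in $L$; the sentence ``the nontrivial contribution must arise from components inside $L$'' asserts exactly what has to be proved. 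If the algorithm merges $K$ with a block outside $L$, the loop invariant (output refines every valid $\mathcal{K}^*$) is broken and subsequent merges may overshoot the finest valid coarsening, so correctness --- and hence the uniqueness claim, which in your argument is a corollary of the forcing --- is not established. Concretely: with $c=2$, $z\in\zeta_2(K)\setminus\zeta_2(G)$, and $[z,g_1,g_2]\neq 1$ where $g_1$ happens to be chosen from a block $K''\neq K$ that in fact should remain a separate $\mathcal{K}^*$-block (valid witnesses inside $L$ also exist, but your search may not find them first), your procedure merges $K$ with $K''$ and the invariant fails.

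The paper avoids witnesses entirely. Its proof is one line: observe that the target $\mathcal{K}$ is characterized by the fixed point $\mathcal{K}=\{\langle H\in\mathcal{H}: [K,H,\dots,H]\neq 1\rangle: K\in\mathcal{K}\}$ and compute it by transitive closure. The merging test is the commutator subgroup condition $[K,\underbrace{H,\dots,H}_c]\neq 1$ applied directly to pairs from $\mathcal{H}$; there is no choice of $z$, no choice of $g_i$, and no choice of SLP, so the resulting partition is determined and the forcing/uniqueness issues you wrestle with do not arise. If you want to salvage an element-based argument you would need a canonical way to attribute the nonvanishing of an iterated commutator to specific blocks --- morally a multilinearity argument modulo $\zeta_{c-1}$-terms --- but that is a real argument you would have to supply, not a footnote.
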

\begin{proof}
Observe that $\mathcal{K}=\{\langle H\in\mathcal{H}: [K,H,\dots,H]\neq 1\rangle: K\in\mathcal{K}\}$.
We can create $\mathcal{K}$ by a transitive closure algorithm.
\end{proof}

\begin{thm}\label{thm:FindRemak-Q}
{\sc Find-$\Ops$-Remak} has polynomial-time solution.
\end{thm}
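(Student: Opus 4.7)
The plan is to assemble \thmref{thm:FindRemak-Q} by iterated application of \textalgo{Merge} (\thmref{thm:merge}) driven by a descending hierarchy of up $\Ops$-grading pairs, bottoming out in the three base cases already established: the abelian case (\corref{coro:FindRemak-abelian}), the $p$-group-of-class-$2$ case (\thmref{thm:FindRemak-class2}), and the centerless case (\propref{prop:centerless-Extend}). The grading pairs that will drive the recursion are $(\mathfrak{S},G\mapsto O_{\mathfrak{S}}(G))$ and $(\mathfrak{N},G\mapsto O_{\mathfrak{N}}(G))$ from \corref{coro:canonical-grader-II}, together with $(\mathfrak{N}_c,G\mapsto \zeta_c(G))$ from \corref{coro:canonical-graders}; each satisfies the hypothesis $\zeta_1(G)\leq \mathfrak{X}(G)$ demanded by \thmref{thm:merge}.

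First I would form the short exact sequence $1\to O_{\mathfrak{S}}(G)\to G\to G/O_{\mathfrak{S}}(G)\to 1$. The quotient has trivial solvable radical, hence trivial Fitting subgroup, hence trivial center, and so \propref{prop:centerless-Extend} applies: using {\sc Minimal-$\Ops$-Normal} and {\sc Normal-Centralizer} one reads off the (unique) Remak $\Ops$-decomposition of $G/O_{\mathfrak{S}}(G)$ from centralizers of its minimal normal $\Ops$-subgroups. For $R := O_{\mathfrak{S}}(G)$, descend via $(\mathfrak{N},O_{\mathfrak{N}})$ to its Fitting subgroup, then split into primary components with {\sc Primary-Decomposition} to land inside a family of nilpotent $p$-groups. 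For each such $p$-group $P$, iterate the up grader $\zeta_c$: each quotient $P/\zeta_{c-1}(P)$ has nilpotence class at most $2$ and is handled by \thmref{thm:FindRemak-class2}, producing a Remak $\Ops$-decomposition of the quotient which, after refining it via \lemref{lem:centralize} so that hypothesis (b) of \thmref{thm:merge} is satisfied, is lifted to $P$ by \textalgo{Merge}. The abelian base case of \textalgo{Merge}, namely the Remak $\Ops$-decomposition of $\mathfrak{X}(G)$, is supplied by \corref{coro:FindRemak-abelian}.

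At each stage the output of \textalgo{Merge} is genuinely Remak because \thmref{thm:Lift-Extend} ensures that every Remak $\Ops$-decomposition of the ambient group descends to a partition of the Remak $\Ops$-decomposition of the quotient, and \thmref{thm:merge} guarantees that our lift agrees with that partition. The matching problem that makes extension and lifting hard in general is sidestepped because the only genuinely hard matching case, namely $p$-groups of class $2$, is fully resolved by \thmref{thm:Match-class2} via the unique frame of the commutative centroid $C(\Bi(P))$; the centerless case supplies its own uniqueness through \propref{prop:centerless-Extend}.

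The main obstacle is therefore the bookkeeping: at each level of the recursion the decomposition fed to \textalgo{Merge} must satisfy $\mathfrak{X}(H)=\mathfrak{X}(G)$ for every piece $H$, and this is exactly where \lemref{lem:centralize} is deployed to refine. Polynomial timing is automatic because each grading filtration has length $O(\log|G|)$, each layer invokes a polynomial-time subroutine a bounded number of times, and $p\leq n$ whenever $G\in\mathbb{G}_n^{\Ops}$, so the recursion tree has polynomial size and each node admits polynomial-time work.
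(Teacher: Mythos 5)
Your proposal takes a genuinely different route from the paper's, but as written it has several gaps that would need to be filled before it constitutes a proof.

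The paper's recursion is much leaner than your hierarchy. It has exactly four cases: $G$ trivial, $G$ abelian, $\zeta_1(G)=1$, and $G>\zeta_2(G)\geq\zeta_1(G)>1$. In the centerless case it finds a minimal $(\Ops\cup G)$-subgroup $N$, \emph{recurses on $C_G(N)$}, and then calls \textalgo{Extend} to complete the decomposition; in the remaining case it recurses on $G/\zeta_1(G)$, computes a Remak $(\Ops\cup G)$-decomposition of $\zeta_2(G)$ via \thmref{thm:FindRemak-class2}, applies \lemref{lem:centralize}, and calls \textalgo{Merge}. The recursion terminates because each recursive call is on a strictly smaller group ($C_G(N)$ or $G/\zeta_1(G)$). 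Your top-down scheme through $(\mathfrak{S},O_{\mathfrak{S}})$, $(\mathfrak{N},O_{\mathfrak{N}})$, Primary-Decomposition, and $\zeta_c$ is plausible in spirit but never appears in the paper.

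The first real gap: you claim that in the quotient $G/O_{\mathfrak{S}}(G)$ one can ``read off'' the Remak decomposition from centralizers of minimal normal subgroups via \propref{prop:centerless-Extend}. That proposition is an existence/uniqueness statement, not an algorithm, and its corollary $\mathcal{R}=\{C_G(C_G(\soc R)):R\in\mathcal{R}\}$ is circular, since $\soc R$ already presupposes knowing which minimal normal subgroups lie in each $R$. The paper's actual device is the recursion on $C_G(N)$ followed by \textalgo{Extend} (\thmref{thm:Extend}), which you never invoke. Without this the centerless base case is unresolved.

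The second gap is termination. When $G$ is itself solvable, $O_{\mathfrak{S}}(G)=G$ and the solvable-radical split makes no progress; when $G$ is nilpotent, $O_{\mathfrak{N}}(G)=G$ and the Fitting split makes no progress. You would have to add explicit fall-through cases for these, and then for solvable but non-nilpotent $G$ explain how a Remak decomposition of $G/O_{\mathfrak{N}}(G)$ (solvable, possibly still with large solvable radical and center) is produced. You gesture at this but the recursion tree is never pinned down.

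A third gap: \textalgo{Merge} requires hypothesis (b), $\mathfrak{X}(H)=\mathfrak{X}(G)$ for every lifted piece $H$, and \lemref{lem:centralize} is proved only for the grader $\zeta_c$. For the graders $O_{\mathfrak{S}}$ and $O_{\mathfrak{N}}$ you invoke, you would need analogues of that lemma; the paper supplies none because its algorithm never merges along those graders.

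Finally, a small but genuine error: ``each quotient $P/\zeta_{c-1}(P)$ has nilpotence class at most $2$'' is false in general (it has class $\mathrm{cl}(P)-c+1$). What is true, and what the paper uses, is that $\zeta_2(G)$ has class at most $2$, and the recursion is on $G/\zeta_1(G)$ rather than on $P/\zeta_{c-1}(P)$ for growing $c$.
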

\begin{proof}
Let $G\in \mathbb{G}_n^{\Ops}$.

\emph{Algorithm.}
If $G=1$ then return $\emptyset$.  Otherwise, compute $\zeta_1(G)$.  
If $G=\zeta_1(G)$ then use {\sc Abelian.Remak-$\Ops$-Decomposition} and return 
the result.  Else, if $\zeta_1(G)=1$ then use {\sc Minimal-$\Ops$-Normal} to find 
a minimal $(\Ops\cup G)$-subgroup $N$ of $G$.  Use {\sc Normal-Centralizer} to compute 
$C_G(N)$.  If $C_G(N)=1$ then return $\{G\}$.  Otherwise, recurse with 
$C_G(N)$ in the role of $G$ to find a Remak $\Ops$-decomposition 
$\mathcal{K}$ of $C_G(N)$.  Call $\mathcal{H}:=\textalgo{Extend}(G,\mathcal{K})$ to create 
a direct $\Ops$-decomposition $\mathcal{H}$ extending $\mathcal{K}$ maximally.  Return $\mathcal{H}$.

Now $G>\zeta_1(G)>1$.  Compute $\zeta_2(G)$ and use \thmref{thm:FindRemak-class2} to 
construct a Remak $(\Ops\cup G)$-decomposition $\mathcal{A}$ of $\zeta_2(G)$.  If
$G=\zeta_2(G)$ then return $\mathcal{A}$; otherwise, $G>\zeta_2(G)$ (consider \figref{fig:rel-ext}).  
Use a recursive call on $G/\zeta_1(G)$ to find $\mathcal{H}=\mathcal{H}\zeta_1(G)$ such that 
$\mathcal{H}/\zeta_1(G)$ is a Remak $\Ops$-decomposition of $G/\zeta_1(G)$.  Apply \lemref{lem:centralize} to $\mathcal{H}$ and then set $\mathcal{J}:=\textalgo{Merge}(\mathcal{A},\mathcal{H})$, and return $\mathcal{J}$.

\emph{Correctness.}
The case $G=\zeta_1(G)$ is proved by \corref{coro:FindRemak-abelian} and the case $G=\zeta_2(G)$ is proved in \thmref{thm:FindRemak-class2}.  

Now suppose that $G>\zeta_1(G)=1$.  Following \lemref{lem:centerless}, $G$ has a unique Remak $\Ops$-decomposition $\mathcal{R}$ and there is a unique $R\in\mathcal{R}$ such that $N\leq R$ and $\langle\mathcal{R}-\{R\}\rangle\leq C_G(N)$.  So if $C_G(N)=1$ then $G$ is directly indecomposable and the return of the algorithm is correct.  Otherwise the algorithm makes a recursive call to find a Remak $(\Ops\cup G)$-decomposition $\mathcal{K}$ of $C_G(N)$.  By \lemref{lem:centerless}(iv), $\mathcal{K}$ contains $\mathcal{R}-\{R\}$ and so there is a unique maximal extension of $\mathcal{K}$, namely $\mathcal{R}$, and so by \thmref{thm:Extend}, the algorithm $\textalgo{Extend}$ creates the Remak $\Ops$-decomposition of $G$ so the return in this case is correct.

Finally suppose that $G>\zeta_2(G)\geq\zeta_1(G)>1$.  There we have the commutative
diagram \figref{fig:rel-ext} which is exact in rows and columns.
\begin{figure}
\begin{equation*}
\xymatrix{
		& 				  &   1	     &	1 & \\
1\ar[r] & \zeta_2(G)\ar[r] & G \ar[r]\ar[u] & G/\zeta_2(G)\ar[r]\ar[u] & 1\\
1\ar[r] & \zeta_1(G)\ar[r]\ar[u] & G \ar[r]\ar@{=}[u] & G/\zeta_1(G)\ar[r]\ar[u] & 1\\
		& 	1\ar[u]			  &   1\ar[u]	     &	 & \\
}
\end{equation*}
\caption{The relative extension $1<\zeta_1(G)\leq\zeta_2(G)<G$.  The rows and columns are exact.}\label{fig:rel-ext}
\end{figure}
By \thmref{thm:Lift-Extend}, $\mathcal{H}\zeta_2(G)$ refines $\mathcal{R}\zeta_2(G)$ and so
the algorithm \textalgo{Merge} is guaranteed by
\thmref{thm:merge} to return a Remak $\Ops$-decomposition of $G$ (consider \figref{fig:recurse}).
\begin{figure}
\begin{equation}
\xymatrix{
		& 				  &   1	     &	1 & \\
1\ar[r] & \prod\mathcal{A}\ar[r] & \prod{\tt Merge}(\mathcal{A},\mathcal{H}) \ar[r]\ar[u] & \prod \mathcal{H}/\zeta_2(G)\ar[r]\ar[u] & 1\\
1\ar[r] & \zeta_1(G)\ar[r]\ar[u] & G \ar[r]\ar@{=}[u] & \prod\mathcal{H}/\zeta_1(G)\ar[r]\ar[u] & 1\\
		& 	1\ar[u]			  &   1\ar[u]	     &	 & \\
}
\end{equation}
\caption{The recursive step parameters feed into {\tt Merge} to produce a Remak
$\Ops$-decomposition of $G$.}\label{fig:recurse}
\end{figure}

\emph{Timing.}  The algorithm enters a recursive call only if $\zeta_1(G)=1$ or 
$G>\zeta_2(G)\geq \zeta_1(G)>1$.  As these two case are exclusive there is at most one recurse call 
made by the algorithm.  
The remainder of the algorithm uses polynomial time methods as indicated.
\end{proof}

\subsection{Proof of \thmref{thm:FindRemak}}
This is a corollary to \thmref{thm:FindRemak-Q}
\hfill $\Box$

\begin{coro}\label{coro:FindRemak-matrix}
$\textalgo{FindRemak}$ has a deterministic polynomial-time solution for matrix $\Gamma_d$-groups.
\end{coro}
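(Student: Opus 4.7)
The plan is to argue that the entire algorithmic pipeline behind \thmref{thm:FindRemak-Q} transfers verbatim to the $\Gamma_d$-matrix setting, because each primitive it depends upon has been shown to have a polynomial-time solution there. In other words, I would treat \thmref{thm:FindRemak-Q} as a reduction from \textalgo{Remak-$\Ops$-Decomposition} to the toolkit primitives listed in Section \ref{sec:tools}, and then discharge those primitives in the $\Gamma_d$ model.

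First I would enumerate exactly which subroutines are invoked (directly or transitively) by the proof of \thmref{thm:FindRemak-Q}: \textsc{Order}, \textsc{Member}, \textsc{Solve}, \textsc{Presentation}, \textsc{Minimal-Normal}, \textsc{Normal-Centralizer}, \textsc{Primary-Decomposition}, \textsc{Frame}, \textsc{Irreducible}, and \textsc{Minimal-$\Ops$-Normal}, together with the derived algorithms \textsc{Abelian.Remak-$\Ops$-Decomposition} (\corref{coro:FindRemak-abelian}), \textsc{Direct-$\Ops$-Complement} (\thmref{thm:FindComp-invariant}), \textalgo{Extend} (\thmref{thm:Extend}), \textalgo{Merge} (\thmref{thm:merge}), and \textalgo{FindRemak-class2} (\thmref{thm:FindRemak-class2}). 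A careful inspection confirms that these derived routines rely only on the listed toolkit primitives plus standard commutative linear algebra over $\mathbb{Z}_{p^e}$ where $p$ is a prime dividing $|G|$; in particular, the $C_{\Ops}$-centroid computation, the idempotent lifting of \lemref{lem:lift-idemp}, and Ronyai's \textsc{Frame} procedure (\thmref{thm:Frame}) are all polynomial in $p+n$.

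Next I would invoke \remref{rem:matrix}, which records that for $\Gamma_d$-matrix groups — where the primes dividing group orders are bounded by a constant depending on $d$ — each of the toolkit problems admits a polynomial-time solution by the results of Luks and Taku (\cite{Luks:mat,Taku}). In particular, the two obstructions flagged in that remark, namely integer factorization and the discrete-log reformulation of \textsc{Member}, both disappear: the bound on the primes renders factorization trivial, and \textsc{Member} becomes feasible by the Luks--Taku machinery. Consequently \textsc{Primary-Decomposition} also carries over, since its polynomial-time reduction in Section \ref{sec:tools} used only \textsc{Order}, \textsc{Member}, and unimodular linear algebra.

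With the primitives in hand, I would then simply compose: the correctness proofs of \textalgo{Extend}, \textalgo{Merge}, \textalgo{FindRemak-class2}, and \thmref{thm:FindRemak-Q} are model-independent — they are group-theoretic arguments phrased in terms of the abstract toolkit — so substituting the $\Gamma_d$ implementations of the primitives yields the desired deterministic polynomial-time algorithm. The main obstacle, as anticipated in \remref{rem:matrix}, is exactly verifying that Ronyai's \textsc{Frame} algorithm remains polynomial when invoked on the centroid $C(\Bi(P))$ of a $p$-subgroup of a $\Gamma_d$-matrix group; but since the characteristic $p$ is bounded and the centroid is presented by a basis of bounded-prime-power rank, \thmref{thm:Frame} applies without modification. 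This completes the reduction.
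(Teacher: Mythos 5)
Your proposal is correct and follows essentially the same route as the paper's proof, which simply cites Section~\ref{sec:tools}, Remark~\ref{rem:matrix}, and Theorem~\ref{thm:FindRemak-Q}. You merely spell out the composition in more detail by enumerating the toolkit primitives and noting that the correctness arguments for \textalgo{Extend}, \textalgo{Merge}, and Theorem~\ref{thm:FindRemak-Q} are model-independent.
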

\begin{proof}
This follows from Section \ref{sec:tools}, \remref{rem:matrix}, and \thmref{thm:FindRemak-Q}.
\end{proof}
\subsection{General operator groups}\label{sec:gen-ops}
Now we suppose that $G\in \mathbb{G}_n$ is a $\Ops$-group for a general set $\Omega$ of
operators.  That is, $\Ops\theta\subseteq \End G$.  To solve \textalgo{Remak-$\Ops$-Decomposition}
in full generality it suffices to reduce to the case where $\Omega$ acts as automorphisms on $G$,
where we invoke \thmref{thm:FindRemak-Q}.
For that suppose we have $\omega\theta\in \End G-\Aut G$.  By Fitting lemma we have that:
\begin{equation}
	G=\ker \omega^{\ell(G)} \times \im \omega^{\ell(G)}.
\end{equation}
To compute such a decomposition we compute $\im\omega^{\ell(G)}$ and then apply
{\sc Direct-$\Ops$-Complement} to compute $\ker \omega^{\ell(G)}$.  As $\Ops$ is part of the input, we may test each $\omega\in\Omega$ to find those $\omega$ where $\omega\theta\notin\Aut G$, and with each produce a direct $\Ops$-decomposition.  The restriction of $\omega$ to the constituents induces either zero map, or an automorphism.  Thus the remaining cases are handled by \thmref{thm:FindRemak-Q}. 
\hfill $\Box$

\section{An example}\label{sec:ex}
Here we give an example of the execution of the algorithm for \thmref{thm:FindRemak-Q} which covers several
of the interesting components (but of course fails to address all situations).
We will operate without a specific representation in mind, since we are interested in demonstrating
the high-level techniques of the algorithm for \thmref{thm:FindRemak-Q}.

We trace through how the algorithm might process the group
$$G=D_8 \times Q_8\times \SL(2,5)\times \big(\SL(2,5)\circ \SL(2,5)\big).$$

First the algorithm recurses until it reaches the group 
$$\hat{G}=G/\zeta_2(G)\cong \PSL(2,5)^3.$$
At this point it finds a minimal normal subgroup $N$ of $\hat{G}$, of which there
are three, so we pick $N=\PSL(2,5)\times 1\times 1$.  Next the algorithm computes
a Remak decomposition of $C_G(N)=1\times \PSL(2,5)\times \PSL(2,5)$.  At this point
the algorithm returns the unique Remak decomposition 
	$$\mathcal{Q}:=\{\PSL(2,5)\times 1\times 1,1\times \PSL(2,5)\times 1,1\times 1\times \PSL(2,5)\}.$$
These are pulled back to the set $\{H_1, H_2, H_3\}$ of subgroups in $G$.

Next the algorithm constructs a Remak $G$-decomposition of $\zeta_2(G)$.
For that the algorithm constructs the bilinear map of commutation from 
$\zeta_2(G)/\zeta_1(G)\cong \mathbb{Z}_2^4$
into $\gamma_2(\zeta_2(G))=\langle z_1,z_2\rangle\cong \mathbb{Z}_2^2$, i.e.
$$b:=\Bi(\zeta_2(G)):\mathbb{Z}_2^4\times \mathbb{Z}_2^4\to \mathbb{Z}_2^2$$ 
Below we have described the structure constants for $b$ in a nice basis but remark that
unless we already know the direct factors of $\zeta_2(G)$ it is unlikely to have such a natural form.
\begin{equation}
	b(u,v) = u
	\begin{bmatrix}
		0 & z_1 &  & \\
		-z_1 & 0 & & \\
		& & 0 & z_2 \\
		& & -z_2 & 0
	\end{bmatrix} v^t,\qquad \forall u,v\in \mathbb{Z}_2^4.
\end{equation}
A basis for the centroid of $b$ is computed:
\begin{equation}
	C(b) = \left\{\left(\begin{bmatrix}
		a & 0 &  & \\
		0 & a & & \\
		& & b & 0 \\
		& & 0 & b
		\end{bmatrix},
		\begin{bmatrix} a & 0 \\ 0 & b\end{bmatrix}\right) : 
			a,b\in\mathbb{Z}_2\right\}\cong \mathbb{Z}_2\oplus\mathbb{Z}_2.
\end{equation}
Next, the unique frame 
$\mathcal{E}=\{ (I_2\oplus 0_2, 1\oplus 0), (0_2\oplus I_2,0\oplus 1)\}$ 
of $C(b)$ is built and used to create the subgroups
$\mathcal{K}:=\{D_8\times Z(Q_8), Z(D_8)\times Q_8\}$ in $\zeta_2(G)$.  Here, using
an arbitrary basis $\mathcal{X}$ for $\zeta_1(G)=\mathbb{Z}_2^2\times \mathbb{Z}_4^2$, 
the algorithm {\tt Merge}$(\mathcal{X},\mathcal{K})$
constructs a Remak decomposition $\mathcal{A}:=\{H,K,C_1, C_2\}$ of $\zeta_2(G)$
where $H\cong D_8$, $K\cong Q_8$, and $C_1\cong C_2\cong \mathbb{Z}_4$. 

Finally, the algorithm {\tt Merge}$(\mathcal{A},\mathcal{H})$ returns a 
Remak decomposition of $G$.  To explain the merging process we trace that algorithm
through as well.

Let $R=\SL(d,q)\times 1\times 1$ and $S=1\times (\SL(d,q)\circ\SL(d,q))$.
These groups are directly indecomposable direct factors of $G$ and 
serve as the hypothesized directions of for the direct chain used by {\tt Merge}.
Without loss of generality we index the $H$'s so that $H_2=R\zeta_2(G)$ and $H_1H_3=S\zeta_2(G)$
and 
$$G/\zeta_2(G)=\PSL(d,q)\times \PSL(d,q)\times \PSL(d,q)
	=H_2/\zeta_2(G)\times H_1/\zeta_2(G)\times H_3/\zeta_2(G).$$
Furthermore, $\zeta_2(H_i)=\zeta_2(G)$ for all $i\in\{1,2,3\}$.  Therefore, 
$(\mathcal{A},\mathcal{H})$ satisfies the hypothesis of \thmref{thm:merge}.

The loop in {\tt Merge} begins with $\mathcal{K}_0=\mathcal{A}$ and seeks to extend
$\mathcal{A}$ to $H_1$ by selecting an appropriate subset $\mathcal{A}_1\subseteq \mathcal{K}_0=\mathcal{A}$
and finding a complement $\lfloor H_1\rfloor\leq H_1$ such that 
$\mathcal{K}_1=\mathcal{A}_1\sqcup\{\lfloor H_1\rfloor\}$ is a direct decomposition of $H_1$.
The configuration at this stage is seen in \figref{fig:merge-1}.  By \thmref{thm:Extend}, we have $H,K\in \mathcal{A}_1$ (as those lie outside the center) and one of the $C_i$'s (though no unique choice exists there).

In the second loop iteration we extend $\mathcal{K}_1$ to a $\mathfrak{N}_2$-refined
direct decomposition if $H_1 H_2$.  This selects a subset $\mathcal{A}_2\subseteq \mathcal{K}_1\cap \zeta_2(G)$.  Also $H_1$ and $H_2$ are in different
directions, specifically $H_2=R\zeta_2(G)$ and $H_1\leq S\zeta_2(G)$, so
the algorithm is forced to include $\lfloor H_1\rfloor\in\mathcal{K}_2$ (cf. \thmref{thm:Extend}(iii)) and then creates
a complement $\lfloor H_2\rfloor\cong\SL(2,5)$ to $\langle \mathcal{A}_2,\lfloor H_1\rfloor\rangle$.  
The configuration is illustrated in \figref{fig:merge-2}.  As before, we have $H,K\in\mathcal{K}_2$ as well, but the cyclic groups are now gone as the centers of $\lfloor H_i\rfloor$, $i\in\{1,2\}$, fill out a direct decomposition of $\zeta_2(G)$.

Finally, in the third loop iteration, the direction is back towards $S$ and so 
the extension $\mathcal{K}_3$ of $\mathcal{K}_2$ to $H_1 H_2 H_3$ contains
$\lfloor H_2\rfloor$ and is $\mathfrak{N}_2$-refined.  However, the group $\lfloor H_1\rfloor$
is not a direct factor of $G$ as it is one term in nontrivial central product.  Therefore
that group is replaced by a subgroup $\lfloor H_1 H_3\rfloor\cong \SL(d,q)\circ\SL(d,q)$.  The final configuration
is illustrated in \figref{fig:merge-3}.  $\mathcal{K}_3$ is a Remak decomposition of $G$.

\begin{figure}
\begin{equation*}
\begin{xy}<10mm,0mm>:<0mm,7mm>::
( 0, 0) *+{1}							= "1";
( 2, 1) *+{\zeta_2(\lfloor H_1\rfloor)}	= "zH1f";
( 4, 2) *+{\lfloor H_1\rfloor}			= "H1f";
(-1, 2) *+{\langle \mathcal{A}_1\rangle} 	= "A1";
( 1, 3) *+{\zeta_2(G)}					= "zG";
( 3, 4) *+{H_1}							= "H1";
"1"; 		"zH1f" 		**@{-};
"1"; 		"A1" 		**@{-};
"zH1f"; 		"H1f" 		**@{-};
"zH1f"; 		"zG" 		**@{-};
"A1";		"zG"			**@{-};
"zG";		"H1"			**@{-};
"H1f";		"H1"			**@{-};
\end{xy}
\end{equation*}
\caption{The lattice encountered during the first iteration of the loop in the algorithm 
${\tt Merge}(\mathcal{A},\{H_1,H_2,H_3\})$.}\label{fig:merge-1}
\end{figure}
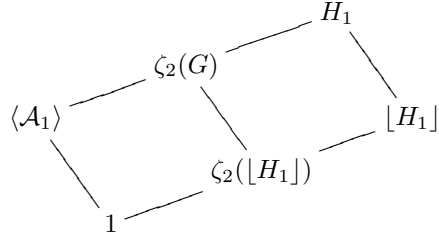

\begin{figure}
\begin{equation*}
\begin{xy}<10mm,0mm>:<0mm,7mm>::
( 0, 0) *+{1}							= "1";
(-2, 1) *+{\zeta_2(\lfloor H_2\rfloor)}	= "zH2f";
(-4, 2) *+{\lfloor H_2\rfloor}			= "H2f";
( 2, 1) *+{\zeta_2(\lfloor H_1\rfloor)}	= "zH1f";
( 0, 2)									= "zH1fzH2f";
(-2, 3) 									= "zH1fH2f";
( 4, 2) *+{\lfloor H_1\rfloor}			= "H1f";
( 2, 3) 									= "H1fzH2f";
( 0, 4) 									= "H1fH2f";
( 0, 1) *+{\langle \mathcal{A}_2\rangle} 	= "A";
(-2, 2) 									= "zH2fA";
(-4, 3) 									= "H2fA";
( 2, 2) 									= "zH1fA";
( 0, 3) *+{\zeta_2(G)}					= "zG";
(-2, 4) *+{H_2}							= "H2";
( 4, 3) 									= "H1fA";
( 2, 4) *+{H_1}							= "H1";
( 0, 5) *+{H_1 H_2}						= "H1H2";
"1"; 		"zH2f" 		**@{-};		"zH2f"; 		"H2f" 		**@{-};
"zH1f";		"zH1fzH2f"	**@{..};		"zH1fzH2f";	"zH1fH2f"	**@{..};
"H1f";		"H1fzH2f"	**@{..};		"H1fzH2f";	"H1fH2f"		**@{..};
"1"; 		"zH1f" 		**@{-}; 		"zH1f";		"H1f"		**@{-};
"zH2f";		"zH1fzH2f"	**@{..}; 	"zH1fzH2f";	"H1fzH2f"	**@{..};
"H2f";		"zH1fH2f"	**@{..}; 	"zH1fH2f";	"H1fH2f"		**@{..};
"A"; 		"zH2fA" 		**@{-}; 		"zH2fA"; 	"H2fA" 		**@{-};
"zH1fA";		"zG"			**@{-};		"zG";		"H2"			**@{-};
"H1fA";		"H1"			**@{-};		"H1";		"H1H2"		**@{-};
"A"; 		"zH1fA" 		**@{-};		"zH1fA";		"H1fA"		**@{-};
"zH2fA";		"zG"			**@{-};		"zG";		"H1"			**@{-};
"H2fA";		"H2"			**@{-};		"H2";		"H1H2"		**@{-};
"1"; 	"A"	**@{-};	"zH2f";	"zH2fA"	**@{-}; "H2f"; 	"H2fA"	**@{-};
"zH1f"; 	"zH1fA"	**@{-};	"zH1fzH2f";	"zG"	**@{..}; "zH1fH2f"; 	"H2"	**@{..};
"H1f"; 	"H1fA"	**@{-};	"H1fzH2f";	"H1"	**@{..}; "H1fH2f"; 	"H1H2"	**@{..};
\end{xy}
\end{equation*}
\caption{The lattice encountered during the second iteration of the loop in the algorithm 
${\tt Merge}(\mathcal{A},\{H_1,H_2,H_3\})$.}\label{fig:merge-2}
\end{figure}

\begin{figure}
\begin{equation*}
\begin{xy}<10mm,0mm>:<0mm,7mm>::
( 0, 0) *+{1}							= "1";
(-2, 1) *+{\zeta_2(\lfloor H_2\rfloor)}	= "zH2f";
(-4, 2) *+{\lfloor H_2\rfloor}			= "H2f";
( 2, 1) *+{\zeta_2(\lfloor H_1 H_3\rfloor)}= "zH13f";
( 0, 2) 									= "zH13fzH2f";
(-2, 3) 									= "H2fzH13f";
( 4, 2) 									= "H1f";
( 2, 3) 									= "H1fzH2f";
( 0, 4) 									= "H1fH2f";
(-0.5, 1) *+{\langle \mathcal{A}_3\rangle} 	= "A";
(-2.5, 2) 									= "zH2fA";
(-4.5, 3) 									= "H2fA";
(1.5, 2) 									= "zH13fA";
(-0.5, 3) *+{\zeta_2(G)}						= "zG";
(-2.5, 4) *+{H_2}							= "H2";
(3.5, 3) 									= "H1fA";
(1.5, 4) *+{H_1}								= "H1";
(-0.5, 5) 									= "H1H2";
%
( 2, 2) 										= "H3f";
(1.5, 3) 									= "H3fA";
(-0.5, 4) *+{H3}								= "H3";
(-2.5, 5) *+{H_2H_3}							= "H2H3";
( 4, 3)	*+{\lfloor H_1 H_3\rfloor}			= "H13f";
(3.5, 4) 									= "H13fA";
(1.5, 5) *+{H_1H_3}							= "H1H3";
(-0.5, 6) *+{G}								= "G";
"1"; 		"zH2f"		**@{-};	"zH2f";		"H2f"		**@{-};
"zH13f";		"zH13fzH2f"	**@{..};"zH13fzH2f";	"H2fzH13f"	**@{..};
"H1f"; 		"H1fzH2f"	**@{..};"H1fzH2f";	"H1fH2f"		**@{..};
"1"; 		"zH13f"		**@{-};"zH13f";		"H1f"		**@{-};
"zH2f";		"zH13fzH2f"	**@{..};"zH13fzH2f";	"H1fzH2f"	**@{..};
"H2f";		"H2fzH13f"	**@{..};"H2fzH13f";	"H1fH2f"		**@{..};
"A"; 		"zH2fA"		**@{-};"zH2fA";		"H2fA"		**@{-};
"zH13fA";	"zG"			**@{-};"zG";		"H2"			**@{-};
"H1fA"; 		"H1"			**@{..};"H1";		"H1H2"		**@{..};
"A"; 		"zH13fA"		**@{-};"zH13fA";	"H1fA"		**@{..};
"zH2fA";		"zG"			**@{-};"zG";		"H1"			**@{..};
"H2fA";		"H2"			**@{-};"H2";		"H1H2"		**@{..};
"H3f";		"H3fA"		**@{-};"H3fA";		"H3"			**@{-};
"H3"; 		"H2H3"		**@{-};"H13f";		"H13fA"		**@{-};
"H13fA";		"H1H3"		**@{-};"H1H3";		"G"			**@{-};
"H3f";		"H13f"		**@{-};"H3fA";		"H13fA"		**@{-};
"H3";		"H1H3"		**@{-};"H2H3";		"G"			**@{-};
"1";			"A"			**@{-};"zH2f";		"zH2fA"		**@{-};
"H2f";		"H2fA"		**@{-};"zH13f";		"zH13fA"		**@{-};
"zH13fA";	"H3fA"		**@{-};"H2fzH13f";	"H2"			**@{..};
"H2";		"H2H3"		**@{-};"zH13fzH2f";	"zG"			**@{..};
"zG";		"H3"			**@{-};"H1fH2f";	"H1H2"		**@{..};
"H1H2";		"G"			**@{..};"H1fzH2f";	"H1"			**@{..};
"H1"; 		"H1H3"		**@{..};"H1f";		"H1fA"		**@{..};
"H1fA";		"H13fA"		**@{..};"zH13f";		"H3f"		**@{-};
"H1f";		"H13f"		**@{-};
\end{xy}
\end{equation*}
\caption{The lattice of encountered during the third iteration of the loop in the algorithm 
${\tt Merge}(\mathcal{A},\{H_1,H_2,H_3\})$.}\label{fig:merge-3}
\end{figure}
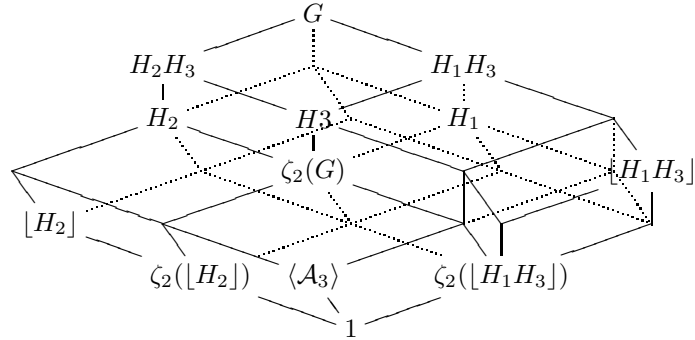

\section{Closing remarks}\label{sec:closing}

Historically the problem of finding a Remak decomposition focused on groups given by their multiplication table since even there there did not seem to be a polynomial-time solution.  It was known that a Remak decomposition could be found by checking all partitions of all minimal generating sets of a group $G$ and so the problem had a sub-exponential complexity of $|G|^{\log |G|+O(1)}$.  That placed it in the company of other interesting problems including testing for an isomorphism between two groups \cite{Miller:nlogn}.   Producing an algorithm that is polynomial-time in the size of the group's multiplication table (i.e. polynomial in $|G|^2$) was progress, achieved independently in \cite{KN:direct} and \cite{Wilson:thesis}.  Evidently, \thmref{thm:FindRemak} provides a polynomial-time solution for groups input in this way (e.g. use a regular representation). With a few observations we sharpen \thmref{thm:FindRemak} in that specific context to the following:
\begin{thm}\label{thm:nearly-linear}
There is a deterministic nearly-linear-time algorithm which, given a group's multiplication table,
returns a Remak decomposition of the group.
\end{thm}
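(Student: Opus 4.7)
The plan is to revisit the algorithm underlying Theorem \ref{thm:FindRemak-Q} applied to the regular representation of $G$ and to verify that every subroutine can be implemented in time $|G|^2(\log|G|)^{O(1)}$ when the input is the Cayley table of $G$, which has size $|G|^2$. The key advantage of this input model is that products, inverses, conjugation, and equality are all constant time, so that characteristic subgroups --- $\zeta_1(G)$, $\gamma_2(G)$, $\zeta_2(G)$, centralizers $C_G(H)$, normal closures, and the socle --- can each be produced by direct enumeration in $O(|G|^2)$ time. Quotient Cayley tables $G/M$ can be built in $O(|G|^2)$, a minimal normal subgroup can be isolated by inspecting conjugacy-class closures, and {\sc Member} collapses to a table lookup.

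For the base cases, I would dispatch the abelian case via Smith normal form on a relation matrix of dimension $O(\log|G|)$. For a $p$-group $P$ of class two, both $V = P/\zeta_1(P)$ and $W = \gamma_2(P)$ have $\mathbb{F}_p$-dimension $O(\log|P|)$, so the structure constants of $\Bi(P)$, a basis for its centroid $C(\Bi(P))$, and the unique frame of this commutative ring supplied by Theorem \ref{thm:Frame} can all be produced in $(\log|P|)^{O(1)}$ arithmetic operations; Theorem \ref{thm:Match-class2} then matches the frame to a Remak decomposition of $P$, and the lift from $V$ and $W$ back to $P$ traverses at most $|P|$ coset representatives. For $G$ with trivial center, Lemma \ref{lem:centerless} shows that the Remak decomposition is uniquely determined by the socle together with iterated centralizers, all of which fit in $O(|G|^2)$.

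Assembling the global Remak decomposition proceeds by the recursive scheme of Theorem \ref{thm:FindRemak-Q}, which recurses either on $C_G(N)$ for a minimal $(\Ops\cup G)$-subgroup $N$ or on $G/\zeta_1(G)$, both of strictly smaller order; the recursion depth is therefore $O(\log|G|)$. The glueing step \textalgo{Merge} invokes \textalgo{Extend} at most $|\mathcal{H}| = O(\log|G|)$ times, and each \textalgo{Extend} call runs \textalgo{Direct-$\Ops$-Complement}, which in the Cayley-table model reduces to solving a linear system over an abelian normal subgroup of dimension and generator count bounded by $O(\log|G|)$, costing $O(|G|^2)$ per call. Summing the $|G|^2(\log|G|)^{O(1)}$ cost at each of $O(\log|G|)$ recursion levels yields the advertised nearly linear bound.

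The principal obstacle is that the polynomial-time algorithm of Section \ref{sec:Remak} is written in terms of the Kantor--Luks black-box routines {\sc Presentation}, {\sc Solve}, and {\sc Minimal-Normal}, whose quoted complexities are unspecified polynomials and are not obviously nearly linear. The substantive work in this proof is to replace each such invocation by a direct computation on the multiplication table --- exploiting that constructive presentations come for free from the regular action and that every linear system encountered has coefficients drawn from the table --- and to audit the resulting timing; once this audit is complete, the correctness argument of Theorem \ref{thm:FindRemak-Q} transfers verbatim.
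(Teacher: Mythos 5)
Your proposal is correct and follows essentially the same route as the paper: both arguments reuse the polynomial-time algorithm of Theorem \ref{thm:FindRemak-Q} and verify that each primitive from Section \ref{sec:tools} admits an $O(|G|^2\log^{c}|G|)$ implementation in the Cayley-table model (direct enumeration for center/commutator/centralizer/normal closure, a conjugacy-class closure search for \textsc{Minimal-Normal}, a coset-based presentation from a minimal generating set of size $O(\log|G|)$, and brute-force solutions of the small linear systems). The paper is slightly more explicit about the implementations of \textsc{Presentation} and \textsc{Minimal-Normal}, which you flag as ``the substantive work'' without fully spelling out, but your sketch covers the same ground and the remaining audit is routine.
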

\begin{proof}
The algorithm for \thmref{thm:FindRemak-Q} is polynomial in $\log |G|$.  As the input length here is $|G|^2$, it suffices to show that the problems listed in Section \ref{sec:tools} have $O(|G|^2\log^c |G|)$-time or better solutions.  Evidently, \textalgo{Order}, \textalgo{Member}, \textalgo{Solve} each have brute-force linear-times solutions.  \textalgo{Presentation} can be solved in linear-time by selecting a minimal generating set $\{g_1,\dots,g_{\ell}\}$ (which has size $\log |G|$) and acting on the cosets of $\{\langle g_i,\dots, g_{\ell}\rangle : 1\leq i\leq \ell\}$ produce defining relations of the generators in fashion similar to \cite[Exercise 5.2]{Seress:book}.  For \textalgo{Minimal-Normal}, begin with an element and takes it normal closure.  If this is a proper subgroup recurse, otherwise, try an element which is not conjugate to the first and repeat until either a proper normal subgroup is discovered or it is proved that group is simple. That takes $O(|G|^2)$-time.  The remaining algorithms \textalgo{Primary-Decomposition}, \textalgo{Irreducible}, and \textalgo{Frame} have brute force linear-time solutions.  Thus, the algorithm can be modified to run in times $O(|G|^2 \log^c |G|)$.
\end{proof}

Section \ref{sec:lift-ext} lays out a framework which permits for a local view of the direct products of group.  We have some lingering questions in this area.  
\begin{enumerate}
\item What is the best series of subgroups to use for the algorithm of \thmref{thm:FindRemak-Q}?  

Corollaries \ref{coro:canonical-graders} and \ref{coro:canonical-grader-II} offer alternatives series to use in the algorithm.  There is an option for a top-down algorithm based on down graders.  That may allow for a black-box algorithm since verbal subgroups can be constructed in black-box groups; see \cite[Section 2.3.4]{Seress:book}.

\item Is their a parallel NC solution for \textalgo{Remak-$\Ops$-Decomposition}?  

We can speculate how this may proceed.  First, select an appropriate series $1\leq G_1\leq\cdots \leq G_n=G$ for $G$ and distribute and use parallel linear algebra methods to find Remak decompositions $\mathcal{A}_{i0}$ of each $G_{i+1}/G_{i}$, for $1\leq i<n$.  Then for $0\leq j\leq \log n$, for each $1\leq i\leq n/2^j$ in parallel compute $\mathcal{A}_{i(j+1)}:=\textalgo{Merge}(\mathcal{A}_{ij},\mathcal{A}_{(i+1)j})$.  When $j=\lfloor \log n\rfloor$ we have a direct decomposition $\mathcal{A}_{1\log n}$ of $G$ and have used poly-logarithmic time. Unfortunately, \thmref{thm:merge}(a) is not satisfied in these recursions, so we cannot be certain that the result is a Remak decomposition.
\end{enumerate}

\section*{Acknowledgments}
I am indebted to W. M. Kantor for taking a great interest in this work and offering guidance.  Thanks to E. M. Luks, C.R.B. Wright, and \'{A}. Seress for encouragement and many helpful remarks.


\begin{thebibliography}{10}
\expandafter\ifx\csname url\endcsname\relax
  \def\url#1{\texttt{#1}}\fi
\expandafter\ifx\csname urlprefix\endcsname\relax\def\urlprefix{URL }\fi

\bibitem{Asmanov}
S.~A. A{\v{s}}manov, Verbal subgroups of complete direct products of groups,
  Uspehi Mat. Nauk 25~(3(153)) (1970) 259--260.

\bibitem{Baer:class-2}
R.~Baer, Groups with abelian central quotient group, Trans. Amer. Math. Soc.
  44~(3) (1938) 357--386.

\bibitem{Curtis-Reiner}
C.~W. Curtis, I.~Reiner, Methods of representation theory. {V}ol. {I}, John
  Wiley \& Sons Inc., New York, 1981.

\bibitem{EG:fast-alge}
W.~Eberly, M.~Giesbrecht, Efficient decomposition of associative algebras over
  finite fields, J. Symbolic Comput. 29~(3) (2000) 441--458.

\bibitem{Fitting:direct}
H.~Fitting, \"{U}ber die direkten {P}roduktzerlegungen einer {G}ruppe in direkt
  unzerlegbare {F}aktoren, Math. Z. 39~(1) (1935) 16--30.

\bibitem{Hall:margin}
P.~Hall, Verbal and marginal subgroups, J. Reine Angew. Math. 182 (1940)
  156--157.

\bibitem{Herstein:rings}
I.~N. Herstein, Noncommutative rings, The Carus Mathematical Monographs, No.
  15, Published by The Mathematical Association of America, 1968.

\bibitem{Meataxe1}
D.~F. Holt, S.~Rees, Testing modules for irreducibility, J. Austral. Math. Soc.
  Ser. A 57~(1) (1994) 1--16.

\bibitem{Ivanyos:fast-alge}
G.~Ivanyos, Fast randomized algorithms for the structure of matrix algebras
  over finite fields (extended abstract), in: Proceedings of the 2000
  International Symposium on Symbolic and Algebraic Computation (St.\ Andrews),
  ACM, New York, 2000, pp. 175--183 (electronic).

\bibitem{Meataxe2}
G.~Ivanyos, K.~Lux, Treating the exceptional cases of the {M}eat{A}xe,
  Experiment. Math. 9~(3) (2000) 373--381.

\bibitem{Jacobson:Lie}
N.~Jacobson, Lie algebras, Interscience Tracts in Pure and Applied Mathematics,
  No. 10, Interscience Publishers (a division of John Wiley \& Sons), New
  York-London, 1962.

\bibitem{KL:quotient}
W.~M. Kantor, E.~M. Luks, Computing in quotient groups, in: STOC '90:
  Proceedings of the twenty-second annual ACM symposium on Theory of computing,
  ACM, New York, NY, USA, 1990, pp. 524--534.

\bibitem{KLM:Sylow}
W.~M. Kantor, E.~M. Luks, P.~D. Mark, Sylow subgroups in parallel, Journal of
  Algorithms 31 (1999) 132--195.

\bibitem{KN:direct}
N.~Kayal, T.~Nezhmetdinov, Factoring groups efficiently, in: Electronic
  Colloquium on Computational Complexity, No.~74, 2008.

\bibitem{Krull:direct}
W.~Krull, {\"Uber verallgemeinerte endliche Abelsche Gruppen.}, M. Z. 23 (1925)
  161--196.

\bibitem{Kurosh:groups}
A.~G. Kurosh, The theory of groups, Chelsea Publishing Co., New York, 1960,
  translated from the Russian and edited by K. A. Hirsch. 2nd English ed. 2
  volumes.

\bibitem{Luks:mat}
E.~M. Luks, Computing in solvable matrix groups, 1992, pp. 111--120.

\bibitem{Luks:comp}
E.~M. Luks, Finding direct complements, lecture notes, University of Oregon
  Algebraic Algorithms seminar (August 9, 2005).

\bibitem{Wedderburn:direct}
J.~H. Maclagan-Wedderburn, On the direct product in the theory of finite
  groups, Ann. of Math. (2) 10~(4) (1909) 173--176.

\bibitem{McDonald:fin-ring}
B.~R. McDonald, Finite rings with identity, Marcel Dekker Inc., New York, 1974,
  pure and Applied Mathematics, Vol. 28.

\bibitem{Miller:nlogn}
G.~L. Miller, On the {$n^{log}\, n$} isomorphism technique: A preliminary
  report, Tech. Rep. TR17, Rochester, Rochester (March 1977).

\bibitem{Taku}
T.~Miyazaki, Deterministic algorithms for management of matrix groups, in:
  Groups and computation, {III} ({C}olumbus, {OH}, 1999), vol.~8 of Ohio State
  Univ. Math. Res. Inst. Publ., de Gruyter, Berlin, 2001, pp. 265--280.

\bibitem{Myasnikov}
A.~G. Myasnikov, Definable invariants of bilinear mappings, Sibirsk. Mat. Zh.
  31~(1) (1990) 104--115, 220.

\bibitem{HNeumann:variety}
H.~Neumann, Varieties of groups, Springer-Verlag New York, Inc., New York,
  1967.

\bibitem{Neumann:perm-grp}
P.~Neumann, Some algorithms for computing with finite permutation groups, in:
  Proceedings of groups---St.\ Andrews 1985, vol. 121 of London Math. Soc.
  Lecture Note Ser., Cambridge Univ. Press, Cambridge, 1986.

\bibitem{Ore:lattice1}
O.~Ore, {On the foundation of abstract algebra. I.}, Ann. of Math 2 (1935)
  406--437.

\bibitem{Remak:direct}
R.~Remak, {\"Uber die Zerlegung der endlichen Gruppen in direkte unzerlegbare
  Faktoren.}, J. f\:{u}r Math. 139 (1911) 293--308.

\bibitem{Robinson}
D.~J.~S. Robinson, A course in the theory of groups, vol.~80 of Graduate Texts
  in Mathematics, Springer-Verlag, New York, 1993.

\bibitem{Ronyai}
L.~R{\'o}nyai, Computations in associative algebras, in: Groups and computation
  (New Brunswick, NJ, 1991), vol.~11 of DIMACS Ser. Discrete Math. Theoret.
  Comput. Sci., Amer. Math. Soc., Providence, RI, 1993, pp. 221--243.

\bibitem{Rotman:grp}
J.~J. Rotman, An introduction to the theory of groups, vol. 148 of Graduate
  Texts in Mathematics, 4th ed., Springer-Verlag, New York, 1995.

\bibitem{Schmidt:direct}
O.~Schmidt, {Sur les produits directs.}, S. M. F. Bull. 41 (1913) 161--164.

\bibitem{Seress:book}
{\'A}.~Seress, Permutation group algorithms, vol. 152 of Cambridge Tracts in
  Mathematics, Cambridge University Press, Cambridge, 2003.

\bibitem{Warfield:nil}
R.~B. Warfield, Jr., Nilpotent groups, Springer-Verlag, Berlin, 1976, lecture
  Notes in Mathematics, Vol. 513.

\bibitem{Wilson:thesis}
J.~B. Wilson, Group decompositions, Jordan algebras, and algorithms for
  $p$-groups, University of Oregon, 2008, doctoral dissertation.

\bibitem{Wilson:unique-cent}
J.~B. Wilson, Decomposing p-groups via jordan algebras, Journal of Algebra 322
  (2009) 2642--2679.

\bibitem{Wilson:algo-cent}
J.~B. Wilson, Finding central decompositions of $p$-groups, {J}. {G}roup theory
  12 (2009) 813--830.

\bibitem{Wright:comp}
C.~R.~B. Wright, Direct factors -- bitesize version, lecture notes, University
  of Oregon Algebraic Algorithms seminar (August 9, 2005).

\end{thebibliography}

\def\cprime{$'$}

\end{document}